\DeclareSymbolFont{cyrletters}{OT2}{wncyr}{m}{n}
\DeclareMathSymbol{\Sha}{\mathalpha}{cyrletters}{"58}
\definecolor{pAlgae}{RGB}{87,115,135}
\definecolor{airforceblue}{rgb}{0.36, 0.54, 0.66}
	\definecolor{bondiblue}{rgb}{0.0, 0.58, 0.71}
\definecolor{britishracinggreen}{rgb}{0.0, 0.26, 0.15}
\definecolor{camouflagegreen}{rgb}{0.47, 0.53, 0.42}
\definecolor{darkcyan}{rgb}{0.0, 0.55, 0.55}
\definecolor{pinegreen}{rgb}{0.0, 0.47, 0.44}
\newcommand{\blue}{\color{blue}}
\let\oldFootnote\footnote
\newcommand\nextToken\relax
\renewcommand\footnote[1]{%
    \oldFootnote{#1}\futurelet\nextToken\isFootnote}
\newcommand\isFootnote{%
    \ifx\footnote\nextToken\textsuperscript{,}\fi}
\newcommand{\mylabel}[2]{#2\def\@currentlabel{#2}\label{#1}}
\newtheorem*{thma}{Theorem A}
\newtheorem*{thmc}{Theorem C}
\newtheorem*{thmd}{Theorem D}
\newtheorem*{core}{Corollary E}
\newtheorem*{cortothmA}{Corollary B}
\newcommand{\LL}{\Lambda}
\newcommand{\QQ}{\mathbb{Q}}
\newcommand{\lra}{\longrightarrow}
\newcommand{\ZZ}{\mathbb{Z}}
\newcommand{\ra}{\rightarrow}
\newcommand{\be}{\begin{equation}}
\newcommand{\ee}{\end{equation}}
\newcommand{\al}{\mathcal{L}}
\newcommand{\RR}{\mathcal{R}}
\newcommand{\cl}{\textup{cl}}
\newcommand{\Dst}{\mathbb D_{\mathrm{st}}}
\newcommand{\F}{\mathrm{Fil}}
\newcommand{\Fr}{\mathrm{Fr}}
\newcommand{\res}{\mathrm{res}}
\newcommand{\pr}{\mathrm{pr}}
\newcommand{\spl}{\mathrm{spl}}
\newcommand{\Ddagrig}{\mathbb{D}^\dagger_{\textup{rig}}}
\newcommand{\DdagrigA}{\mathbb{D}^\dagger_{\textup{rig},A}}
\newcommand{\DdagrigfrakB}{\mathbb{D}^\dagger_{\textup{rig},\frak{B}}}
\newcommand{\CDst}{\mathcal D_{\mathrm{st}}}
\newcommand{\CDcris}{\mathcal D_{\mathrm{cris}}}
\newcommand{\Dcris}{\mathbb D_{\mathrm{cris}}}
\newcommand{\BrigA}{\widetilde{\mathbb B}_{\mathrm{rig},A}^{\dagger}}
\newcommand{\Brigr}{\widetilde{\mathbb B}_{\mathrm{rig}}^{\dagger,r}}
\newcommand{\BrigrA}{\widetilde{\mathbb B}_{\mathrm{rig},A}^{\dagger, r}}
\newcommand{\RG}{\mathbf{R}\Gamma}
\newcommand{\Iw}{\mathrm{Iw}}
\newcommand{\CC}{\mathbb C}
\numberwithin{equation}{section}
\newtheorem{thm}{Theorem}[section]
\newtheorem{lemma}[thm]{Lemma}
\newenvironment{define}{\par\medskip\noindent\refstepcounter{thm}
\bgroup{\hspace*{-0.15 cm}\bf{Definition}
\thethm.}\bgroup}{\egroup \egroup\par\medskip}\newtheorem{prop}[thm]{Proposition}
\newtheorem{cor}[thm]{Corollary}
\newenvironment{rem}{\par\medskip\noindent\refstepcounter{thm}
\bgroup{\hspace*{-0.15 cm}\bf{Remark} \thethm.}\bgroup}{\egroup
\egroup\par\medskip} \DeclareMathOperator{\id}{id}\parskip 2pt
\newcounter{Athm}[section]\setcounter{Athm}{1}
\renewcommand{\theAthm} {\arabic{Athm}}
\long\def\symbolfootnote[#1]#2{\begingroup%
\def\thefootnote{\fnsymbol{footnote}}\footnote[#1]{#2}\endgroup}
\begin{document}
\title[Exceptional zeros and Perrin-Riou's conjecture]{O\lowercase{n the exceptional zeros of $p$-non-ordinary $p$-adic} $L$-\lowercase{functions and a conjecture of} {P}\lowercase{errin}-{R}\lowercase{iou}}

\author{Denis Benois}
\author{K\^az\i m B\"uy\"ukboduk}

\address{Denis Benois \hfill\break\indent Institut de Math\'ematiques, Universit\'e de Bordeaux  \hfill\break\indent 351, Cours de la Lib\'eration 33405  \hfill\break\indent Talence, France}
\email{\normalsize denis.benois@math.u-bordeaux.fr}

\address{K\^az\i m B\"uy\"ukboduk\hfill\break\indent UCD School of Mathematics and Statistics \hfill\break\indent University College Dublin \hfill\break\indent  Ireland }
\email{\normalsize kazim.buyukboduk@ucd.ie}
\keywords{$p$-adic height pairings, Selmer complexes, $p$-adic $L$-functions}

\begin{abstract}
Our goal in this article is to prove a form of $p$-adic Birch and Swinnerton-Dyer formula for the second derivative of the $p$-adic $L$-function associated to a newform $f$ which is non-crystalline semistable at $p$ at its central critical point, by expressing this quantity in terms of a $p$-adic (cyclotomic) regulator defined on an extended trianguline Selmer group. We also prove a two-variable version of this result for height pairings we construct by considering infinitesimal deformations afforded by a Coleman family passing through $f$. This, among other things, leads us to a proof of an appropriate version of Perrin-Riou's conjecture in this set up.
\end{abstract}

\maketitle
\tableofcontents
\section{Introduction}
Fix forever an odd prime $p$. The primary goal in this article is to address a conjecture of Mazur--Tate--Teitelbaum for a $p$-semistable non-ordinary eigenform $f$ of arbitrary even weight, extending the results of \cite{kbbexceptional, venerucciarticle} in weight $2$. To that end, we will prove a formula relating the $p$-adic Abel--Jacobi image of a Heegner cycle on a suitably defined Kuga-Sato variety over a Shimura curve to the Beilinson--Kato element, generalizing the work of Venerucci in weight $2$. To achieve so, we will rely on a result of Seveso and the theory of $p$-adic heights we develop here by considering infinitesimal deformations afforded by a Coleman family $\mathbf{f}$ passing through $f$ (extending the approach in \cite{venerucciarticle}). If we specialize our results to forms of weight $2$, the theory we develop in this article allows us to recover Venerucci's results in \cite{venerucciarticle}, relying on the finiteness of the Tate--Shafarevich groups of elliptic curves of analytic rank at most one, the validity of the rank part of the Birch and Swinnerton--Dyer Conjecture and the non-vanishing of the $\al$-invariant.

Let $r_{\textup{an}}(f)$ denote the order of vanishing of the Hecke $L$-function $L(f,s)$ at its central critical point $s=k/2$. 
{Some of our results here simultaneously extend the work of Kato, Kurihara and Tsuji on the Mazur--Tate--Teitelbaum conjecture\footnote{Unpublished, but see  \cite[Th\'eor\`eme 4.17]{Cz04} and \cite[Corollary 4.3.3]{Ben14a}. Note that  \cite{Ben14a} treats also the near central point case.} (where similar $p$-adic leading term formulas has been established in the case $r_{\textup{an}}(f)=0$ but only for the cyclotomic $p$-adic $L$-function)
and \cite{kbbexceptional, venerucciarticle} (where only the $p$-ordinary case (weight $k=2$) has been treated).} Although the influence of these works (particularly the latter two) on our approach here will be evident to the reader, it turns out to be a rather non-trivial task to build the machine suitable for the very general set up we handle here. We believe that the framework we outline in this article is flexible enough to treat many other interesting examples and we hope that our work here would also serve as a signpost for future investigations on the exceptional zero phenomenon for non-ordinary $p$-adic $L$-functions (particularly when the corresponding complex analytic $L$-function vanishes). 

One of our main results expresses the second derivative of the  Amice--V\'elu, Manin, Vi\v{s}ik $p$-adic $L$-function $L_{p,\alpha}(f,\omega^{k/2},s)$ (where $\omega$ is the Teichm\"uller character) in terms of a regulator on an extended trianguline Selmer group (as studied in \cite{Ben15, Ben14b}). Along the way, under the hypothesis that $r_{\textup{an}}(f)=1$, we establish a relation between a certain Heegner cycle on an appropriately defined Kuga-Sato variety over a Shimura curve and the Beilinson--Kato class, confirming a higher-weight and $p$-semistable analogue of a prediction of Perrin-Riou in weight $2$.  This allows\footnote{The relation we obtain between the Heegner cycle and the Beilinson--Kato element has non-trivial content granted a suitable Gross--Zagier--Zhang formula and when the order of vanishing of $L(f,s)$ at $s=k/2$ is exactly $1$. See Theorem~\ref{thm:BKvsHeegner} and  Theorem~\ref{thm:PRconj} for details.} us to compute the order of vanishing of $L_{p,\alpha}(f,\omega^{k/2},s)$ at $s=k/2$ when $r_\textup{an}(f)=1$ and therefore obtain in this situation  a strong evidence towards a conjecture of Mazur--Tate--Teitelbaum.

Before we present our main results with more precision, let us introduce our set up.  Fix a positive integer $N$ coprime to $p$. Define $S$ to be the set consisting of the archimedean prime and the rational primes dividing $Np$.  Let $f=\underset{n=1}{\overset{\infty}\sum} a_nq^n$ be an elliptic newform of even  weight $k$ for
$\Gamma_0(Np)$ such that $a_p=p^{k/2-1}$. Denote by $W_f$ Deligne's $p$-adic Galois representation associated to $f$ and set
$V_f=W_f(k/2)$. We call $V_f$ the central critical twist of $W_f$. The two-dimensional representation $V_f$ has coefficients in a finite extension $E$ of $\mathbb Q_p$. Furthermore, it is unramified outside $S$ and  semi-stable at $p$. 

Let $\Dst (V_f)$ denote Fontaine's semistable Dieudonn\'e module associated to $V_f$ (viewed as a continuous $\textup{Gal}(\overline{\QQ}_p/{\QQ}_p)$-representation). Then $\Dst (V_f)$ is a two-dimensional $E$-vector space equipped with a Frobenius operator $\varphi$ and a monodromy $N$ given by 
\begin{align*}
&\Dst (V_f)= Ee_{\alpha}+Ee_{\beta}, \text{ where $\varphi (e_\alpha)=\alpha e_\alpha,$ $\varphi (e_\beta)=\beta e_\beta,$}\\
&N (e_\beta)=e_\alpha, \text{ and $N (e_\alpha)=0,$}  \\
& \beta=1,  \text{ and $\alpha=p^{-1}$\,.}
\end{align*}
Let $\mathbb{D}^\dagger_{\textup{rig}}(V_f)$ denote  Fontaine's (\'etale) $(\varphi,\Gamma)$-module associated to $V_f.$ We set $D:=Ee_\alpha$ which is the unique non-trivial $(\varphi,N)$-submodule of $\Dst (V_f).$ Let $\mathbb D_f$ denote the $(\varphi,\Gamma)$-submodule of  $\mathbb{D}^\dagger_{\textup{rig}}(V_f)$ associated  to $D$ by Berger,  and $\widetilde{\mathbb{D}}_f=\mathbb{D}^\dagger_{\textup{rig}}(V_f)/\mathbb D_f$. Both $\mathbb D_f$ and $\widetilde{\mathbb D}_f$ are $(\varphi,\Gamma)$-modules of rank $1$, and as such may be described rather explicitly (see Proposition~\ref{prop:ex sequence with Dalpha}). This is crucial for our calculations.

Attached to the pair $(V_f,D)$, one associates a \emph{Selmer complex} $\widetilde C_\textup{f}^{\bullet}(V_f)=S^{\bullet}
 (V_f,\mathbb{D}_f)$ (see Section~\ref{subsec:twovariablesheights}). Let $\mathbf R\Gamma (V_f,\mathbb D_f)$ denote the associated object in the derived category and define the \emph{extended trianguline Selmer group} $\widetilde H_\textup{f}^1(V_f)=\mathbf R^1\Gamma (V_f,\mathbb D_f)$ as the cohomology of the Selmer complex in degree $1$.  We define a variety of height pairings on these extended Selmer groups by considering various infinitesimal deformations of $V_f$ relying on the general framework established in \cite{Ben14b, Pot13},  generalizing previous constructions in \cite{venerucciarticle} in the $p$-ordinary situation (which in turn is based on the general theory developed in \cite{Ne06}). Among others, we have a \emph{symmetric} (cyclotomic) $p$-adic height pairing 
 $$\frak{h}_p\,: \widetilde H_\textup{f}^1(V_f) \times \widetilde H_\textup{f}^1(V_f) \lra E$$
 and a \emph{skew-symmetric} $p$-adic height pairing 
$$\frak{h}_\mathbf{f}^{\textup{c-wt}}\,: \widetilde H_\textup{f}^1(V_f) \times \widetilde H_\textup{f}^1(V_f) \lra E$$
which we call the \emph{central critical height pairing}. See Section~\ref{subsec:selmercomplexformodforms} for the definition of the former and Definition~\ref{def:centralcriticalheight} for the latter. These two height pairings appear naturally as the specializations of a \emph{two-variable} height pairing 
$$\mathcal{A}\left(\mathbb{H}_\mathbf{f}\right)     : \widetilde H_\textup{f}^1(V_f) \times \widetilde H_\textup{f}^1(V_f) \lra \frak{I}/\frak{I}^2$$
 where $\frak{I}\subset E[[\kappa-k,s]]$ is the ideal of functions those vanish at $(k,0)$. The pairing $\mathcal{A}\left(\mathbb{H}_\mathbf{f}\right)$  is introduced in Section~\ref{subsec:twovariablesheights}. It also satisfies an appropriate functional equation.
 
 The extended trianguline Selmer group compares with the Bloch--Kato Selmer group $H^1_\textup{f}(\QQ,V_f)$ via the exact sequence
 $$0\lra H^0(\widetilde{\mathbb D}_f) \stackrel{\partial_0}{\lra} \widetilde H^1_\textup{f}(V_f)
\lra H^1_\textup{f}(\QQ,V_f) \lra 0\,.$$
This exact sequence  admits a natural splitting $\spl: \widetilde H^1_f(V_f) \ra H^0(\widetilde{\mathbb D}_f)$ (Proposition \ref{prop:splitting}), and in our setting it turns out that the $E$-vector space $H^0(\widetilde{\mathbb D}_f)$ is one-dimensional. This is a reflection of the (simple) \emph{exceptional zero} that the $p$-adic $L$-function $L_{p,\alpha}(f,\omega^{k/2},s)$ possesses at the central critical value $s=k/2$.  

Until the end of this introduction, suppose $L(f,k/2)=0$. It follows from \cite{ka1} (see also \cite{Cz04},  \cite{Ben14a}) that $\textup{ord}_{s=k/2}\, L_{p,\alpha}(f,\omega^{k/2},s)\geq 2$. Furthermore, Kato's work \cite{ka1} equips us with an element $[z_f^{\textup{BK}}]\in H^1_\textup{f}(\QQ,V_f)$, which is the first layer of the Euler system of Beilinson--Kato elements. We note that a natural extension of a conjecture of Perrin-Riou~\cite[\S 3.3.2]{pr93grenoble} would assert that the class $[z_f^{\textup{BK}}]$ is non-zero if and only if $\textup{ord}_{s=k/2}L(f,s)=1$; see Theorem~D below for our result towards this prediction. We let $[\frak{z}_f^{\textup{BK}}]\in \widetilde{H}^1_\textup{f}(V_f)$ 
denote the canonical  lift of the class $[z_f^{\textup{BK}}]$ to $\widetilde{H}^1_\textup{f}(V_f)$. Let $d_{\widetilde{\delta}}$ denote a distinguished generator of $H^0(\widetilde{\mathbb D}_f)$ that we introduce at the very end of Section~\ref{subsec:selmercomplexformodforms} and which depends on the choice of
$e_{\alpha}.$ 
Finally, for a class $[x] \in H^1_\textup{f}(\QQ,V_f)$ we write
$$\Omega\left([x]\right):=\left(1-\frac{1}{p}\right)\Gamma(k/2)^{-1}\left\langle\Psi_2,\textup{res}_p([x])\right\rangle\cdot\left\langle\Psi_1,\partial_0(d_{\widetilde{\delta}})\right\rangle \in E\,.$$
Here $\{\Psi_1,\Psi_2\}$ is a distinguished basis of $H^1(\widetilde{\mathbb D}_f)$ which is given by Definition~\ref{def:psi1psi2} and the pairing $\left\langle\,,\,\right\rangle: H^1(\widetilde{\mathbb D}_f)\times H^1({\mathbb D}_f)\ra E$ is the natural pairing.

We are now ready to state a sample of our main results. The first is a $p$-adic 
Birch and Swinnerton-Dyer  formula for the second derivative of the $p$-adic $L$-function.
\begin{thma} 
\label{thma}
We have
$$\frac{\Omega \cdot [e_{\alpha},b_f^*]_{V_f}}{2}\cdot \frac{d^2}{ds^2}  L_{p,\alpha}(f,\omega^{k/2},s)\big{|}_{s=k/2}=
\det\left(\begin{array}{cc} \frak{h}_p\left(\partial_0(d_{\widetilde{\delta}}),\partial_0(d_{\widetilde{\delta}})\right) & \frak{h}_p\left(\partial_0(d_{\widetilde{\delta}}),[\frak{z}_f^{\textup{BK}}]\right)\\\\  \frak{h}_p\left([\frak{z}_f^{\textup{BK}}],\partial_0(d_{\widetilde{\delta}})\right) &  \frak{h}_p\left([\frak{z}_f^{\textup{BK}}],[\frak{z}_f^{\textup{BK}}]\right)\end{array}\right)\,,$$
where $\Omega=\Omega\left([z_f^{\textup{BK}}]\right)$, $b_f^*$ denote the  basis
of $\F^0\Dst (V_f)$ defined by Kato in \cite[Theorem 12.5]{ka1}, and
\[
[\,,\,]_{V_f}\,:\,\Dst (V_f)\times \Dst (V_f)\rightarrow E
\]
the canonical duality. 
\end{thma}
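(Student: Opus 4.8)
The plan is to deduce Theorem~A from the two-variable machinery by specializing the two-variable height pairing $\frak{H}_{\mathbf{f}}$ to its cyclotomic component and then matching the determinant against a leading-term formula for $L_{p,\alpha}(f,\omega^{k/2},s)$. First I would recall that, because $L(f,k/2)=0$, the Bloch--Kato Selmer group $H^1_{\textup{f}}(\QQ,V_f)$ has the Beilinson--Kato class $[z_f^{\textup{BK}}]$ inside it, and that the extended Selmer group $\widetilde H^1_\textup{f}(V_f)$ is spanned (modulo the precise rank hypotheses encoded in the exact sequence with $H^0(\widetilde{\mathbb D}_f)$ and the splitting $\spl$) by the two classes $\partial_0(d_{\widetilde\delta})$ and $[\frak z_f^{\textup{BK}}]$. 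The symmetric height pairing $\frak h_p$ evaluated on this basis produces exactly the $2\times 2$ Gram determinant on the right-hand side; the content of the theorem is that this regulator computes the second derivative of the cyclotomic $p$-adic $L$-function up to the explicit constant $\Omega\cdot[e_\alpha,b_f^*]_{V_f}/2$.

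Next I would invoke the $p$-adic Rankin--Selberg/Beilinson-type input, i.e. the interpolation formula for $L_{p,\alpha}(f,\omega^{k/2},s)$ together with Kato's explicit reciprocity law, to express $\frac{d}{ds}L_{p,\alpha}(f,\omega^{k/2},s)|_{s=k/2}$ in terms of the image of $[z_f^{\textup{BK}}]$ under the Perrin-Riou/Bloch--Kato exponential (or rather its dual), paired against $b_f^*$. Because of the exceptional zero, the first derivative is itself governed by a ``trivial zero'' correction; differentiating once more and using that $H^0(\widetilde{\mathbb D}_f)$ is one-dimensional (with generator $d_{\widetilde\delta}$) one obtains a formula for the second derivative as a product of the derivative of an $\al$-type invariant and the regulator-type term $\langle\Psi_2,\textup{res}_p([z_f^{\textup{BK}}])\rangle$. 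The factor $\Omega([z_f^{\textup{BK}}])$ is precisely designed to absorb these boundary terms $\langle\Psi_1,\partial_0(d_{\widetilde\delta})\rangle$ and $\langle\Psi_2,\textup{res}_p([z_f^{\textup{BK}}])\rangle$, and the constant $(1-1/p)\Gamma(k/2-1)^{-1}$ is the Euler-type / $\Gamma$-factor coming from the interpolation formula and the comparison between $\Dcris$ and $\Dst$.

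The decisive step is the identification of the symmetric $p$-adic height $\frak h_p$ on $\widetilde H^1_\textup{f}(V_f)$ with the analytic quantity. Here I would use the derivative of the two-variable $p$-adic $L$-function in the cyclotomic direction, Nekov\'a\v{r}'s theory of height pairings via Selmer complexes (as adapted in \cite{Ben14b,Pot13} to the trianguline setting), and the compatibility of $\frak h_p$ with the Bockstein map attached to the cyclotomic deformation. Concretely, one writes the second derivative of $L_{p,\alpha}$ as a sum of two terms in the Taylor expansion: a ``regulator'' term which is the determinant of the height pairing restricted to $H^1_\textup{f}(\QQ,V_f)$ (here one-dimensional, generated by the image of $[\frak z_f^{\textup{BK}}]$), and a ``$\mathcal{L}$-invariant'' term which accounts for the extra class $\partial_0(d_{\widetilde\delta})$ coming from the exceptional zero; the $2\times 2$ determinant exactly repackages the product of these two contributions together with the off-diagonal terms, which vanish or combine thanks to the skew-symmetry properties recorded for the two-variable pairing and its functional equation.

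I expect the main obstacle to be controlling the off-diagonal and diagonal entries $\frak h_p(\partial_0(d_{\widetilde\delta}),\partial_0(d_{\widetilde\delta}))$ and $\frak h_p(\partial_0(d_{\widetilde\delta}),[\frak z_f^{\textup{BK}}])$ in the non-ordinary semistable setting: in the ordinary case of \cite{venerucciarticle} one has explicit formulas for the height of the ``exceptional'' class in terms of the $\mathcal L$-invariant, but here one must redo this using the explicit description of $\mathbb D_f$ and $\widetilde{\mathbb D}_f$ as rank-one $(\varphi,\Gamma)$-modules (Proposition~\ref{prop:ex sequence with Dalpha}) and the explicit computation of $H^0$ and $H^1$ of $\widetilde{\mathbb D}_f$, together with the distinguished basis $\{\Psi_1,\Psi_2\}$. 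A second subtlety is bookkeeping the normalizations: matching Kato's basis $b_f^*$ of $\F^0\Dst(V_f)$, the canonical duality $[\,,\,]_{V_f}$, the choice of $e_\alpha$ and hence of $d_{\widetilde\delta}$, and the period in $\Omega$, so that all constants conspire into exactly the stated $(1-1/p)\Gamma(k/2-1)^{-1}$ and the factor $1/2$ from the second-order Taylor coefficient. Once these local computations are in place, the global statement follows by combining Kato's explicit reciprocity law with the derivative formula for the two-variable $p$-adic height and specializing to the cyclotomic line.
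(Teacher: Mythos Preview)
Your proposal is essentially correct and follows the same route as the paper: deduce Theorem~A by applying the two-variable leading-term formula (Theorem~C, which in the body is Theorem~\ref{thm:leadingtermformula}(ii)) with $\frak{X}$ an interpolation of the Beilinson--Kato class, and then specialize at $\kappa=k$ using the identity $\mathcal A(\mathbb H_{\mathbf f}(\cdot,\cdot))|_{\kappa=k}=-\frak h_p(\cdot,\cdot)\cdot s$ to convert the $\mathbb H_{\mathbf f}$-regulator into the $\frak h_p$-Gram determinant. The link to $L_{p,\alpha}(f,\omega^{k/2},s)$ is exactly Kato's reciprocity identity $L_p([z_{f,\Iw}^{\textup{BK}}],s)=L_{p,\alpha}(f,\omega^{k/2},s+k/2)[e_\alpha,b_f^*]$, as you say.

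One point of emphasis: the paper does \emph{not} proceed by decomposing the second derivative into a ``regulator term on $H^1_\textup{f}(\QQ,V_f)$'' plus an ``$\mathcal L$-invariant term'' and then repackaging; rather, it proves the $2\times2$ determinant identity directly via the Rubin-style formulae (Theorem~\ref{thm:thepropertiesofthetwovarheight}(iii) and Corollary~\ref{thm:rubinsformula}), which compute $\mathbb H_{\mathbf f}(\partial_0(d_{\widetilde\delta}),\partial_0(d_{\widetilde\delta}))$ and $\mathbb H_{\mathbf f}([x_\textup{f}],\partial_0(d_{\widetilde\delta}))$ explicitly in terms of $\langle\Psi_2,\cdot\rangle$, and then matches these against the Taylor coefficients of $L_p(\frak X,\kappa,s)$ computed from the large logarithm (Proposition~\ref{prop:partialderivativespartiallywelldefine}). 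The decomposition you describe is closer to the reformulation in Corollary~B, which the paper derives \emph{after} Theorem~A using the comparison of $\frak h_p$ with Nekov\'a\v r's $h^{\textup{Nek}}$. Also note (as the paper remarks after Theorem~\ref{tim:main}) that for Theorem~A alone one can work purely with the cyclotomic deformation $\overline V_f$ and Kato's $[z_{f,\Iw}^{\textup{BK}}]$, without invoking the Hansen--Wang interpolation over the Coleman family; your plan implicitly assumes the two-variable class, which is needed for Theorem~C but not strictly for Theorem~A.
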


Note that the submodule $D\subset \Dst (V_f)$ defines a canonical splitting 
of the Hodge filtration on $\Dst (V)$\,:
\[
\Dst (V_f)=D\oplus \F^0\Dst (V_f).
\]
We let 
\[
h^{\textup{Nek}}\,:\,H^1_{\textup{f}}(\QQ,V_f)\otimes H^1_{\textup{f}}(\QQ,V_f)\lra E
\]
denote Nekov\'a\v r's  $p$-adic height pairing \cite{Ne92} associated to this splitting. Theorem~11 of \cite{Ben14b} gives a precise relationship between $h^{\textup{Nek}}$ and $\frak{h}_p$ which generalizes \cite[Theorem~11.4.6]{Ne06} to the non-ordinary case. Applying a standard argument (see, for example, the proof of \cite[Theorem~7.13]{Ne92}) we obtain the following reformulation of Theorem~A.

\begin{cortothmA} Assume that the Fontaine-Mazur $\al$-invariant $\mathcal{L}_{\textup{FM}}(f)$ does not vanish. Then, 
\begin{multline*}\left (1-\frac{1}{p}\right )\frac{[e_{\alpha},b_f^*]_{V_f}}{2\Gamma(k/2)} \langle \Psi_2, \res_p([z_f^{\textup{BK}}])\rangle \cdot \frac{d^2}{ds^2}  L_{p,\alpha}(f,\omega^{k/2},s)\big{|}_{s=k/2}=\\
\mathcal L_{\textup{FM}}(f)\cdot h^{\textup{Nek}}([z^{\textup{BK}}_f], [z^{\textup{BK}}_f]). 
\end{multline*}
\end{cortothmA}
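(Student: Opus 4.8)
The plan is to deduce Corollary~B directly from Theorem~A by rewriting the $2\times 2$ determinant on the right-hand side in terms of Nekov\'a\v r's pairing $h^{\textup{Nek}}$ on $H^1_\textup{f}(\QQ,V_f)$. First I would invoke \cite[Theorem~11]{Ben14b} (the non-ordinary generalization of \cite[Theorem~11.4.6]{Ne06}), which compares the height pairing $\frak{h}_p$ on the extended Selmer group $\widetilde H^1_\textup{f}(V_f)$ with $h^{\textup{Nek}}$ on the Bloch--Kato Selmer group $H^1_\textup{f}(\QQ,V_f)$: under the exact sequence $0\to H^0(\widetilde{\mathbb D}_f)\xrightarrow{\partial_0}\widetilde H^1_\textup{f}(V_f)\to H^1_\textup{f}(\QQ,V_f)\to 0$, the pairing $\frak{h}_p$ restricted to the image of $\partial_0$ is governed by the $\mathcal L$-invariant, while on (lifts of) classes in $H^1_\textup{f}(\QQ,V_f)$ it recovers $h^{\textup{Nek}}$ up to the same normalizing factors. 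Concretely I expect the identities $\frak{h}_p(\partial_0(d_{\widetilde\delta}),\partial_0(d_{\widetilde\delta}))=0$ (antisymmetry/degeneracy of the height on the one-dimensional exceptional line $H^0(\widetilde{\mathbb D}_f)$), $\frak{h}_p(\partial_0(d_{\widetilde\delta}),[\frak z_f^{\textup{BK}}])= c\cdot\langle\Psi_1,\partial_0(d_{\widetilde\delta})\rangle\langle\Psi_2,\res_p([z_f^{\textup{BK}}])\rangle$ for an explicit constant $c$ involving $\mathcal L_{\textup{FM}}(f)$ and elementary factors, and $\frak{h}_p([\frak z_f^{\textup{BK}}],[\frak z_f^{\textup{BK}}])=h^{\textup{Nek}}([z_f^{\textup{BK}}],[z_f^{\textup{BK}}])$ (again up to explicit factors absorbed in $\Omega$); this is exactly the ``standard argument'' of \cite[proof of Theorem~7.13]{Ne92}, where one expands a height determinant on an extended module along the exceptional subspace.

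Next I would substitute these evaluations into the determinant of Theorem~A. Because $\frak{h}_p(\partial_0(d_{\widetilde\delta}),\partial_0(d_{\widetilde\delta}))=0$, the determinant collapses to $-\frak{h}_p(\partial_0(d_{\widetilde\delta}),[\frak z_f^{\textup{BK}}])\cdot\frak{h}_p([\frak z_f^{\textup{BK}}],\partial_0(d_{\widetilde\delta}))$, and using the (anti)symmetry of $\frak h_p$ this becomes a perfect square times $h^{\textup{Nek}}$ is \emph{not} what appears --- rather, the cross terms carry the $\mathcal L$-invariant and the diagonal BK--BK term disappears from the determinant, leaving only the product of the two off-diagonal entries. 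Here the hypothesis $\mathcal L_{\textup{FM}}(f)\neq 0$ enters: it guarantees $\frak{h}_p(\partial_0(d_{\widetilde\delta}),[\frak z_f^{\textup{BK}}])\neq 0$ precisely when $\langle\Psi_2,\res_p([z_f^{\textup{BK}}])\rangle\neq 0$, so that one may legitimately factor the expression and identify $\det = \mathcal L_{\textup{FM}}(f)\cdot(\textup{explicit factor})\cdot h^{\textup{Nek}}([z_f^{\textup{BK}}],[z_f^{\textup{BK}}])$. I would then carefully match the normalizing constants: the factor $(1-1/p)$, the $\Gamma(k/2-1)$, the duality pairing value $[e_\alpha,b_f^*]_{V_f}$, and the factor $\langle\Psi_1,\partial_0(d_{\widetilde\delta})\rangle$ hidden inside $\Omega=\Omega([z_f^{\textup{BK}}])$ all have to cancel against the analogous factors produced by \cite[Theorem~11]{Ben14b}, leaving the clean statement of Corollary~B.

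The main obstacle I anticipate is bookkeeping of normalizations rather than any conceptual difficulty: one must pin down the exact relationship between $\frak h_p$ and $h^{\textup{Nek}}$ from \cite{Ben14b}, including the precise appearance of $\mathcal L_{\textup{FM}}(f)$ (as opposed to Greenberg's or Benois's $\al$-invariant, which agree here under semistability but are normalized differently), and track how the distinguished basis $\{\Psi_1,\Psi_2\}$ of $H^1(\widetilde{\mathbb D}_f)$, the generator $d_{\widetilde\delta}$ of $H^0(\widetilde{\mathbb D}_f)$, and Kato's basis $b_f^*$ of $\F^0\Dst(V_f)$ interact under the canonical duality $[\,,\,]_{V_f}$. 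A secondary point requiring care is the sign: the determinant of a skew-symmetrized $2\times 2$ block and the antisymmetry of $\frak h_p$ (versus the symmetry of the relevant restriction) must be reconciled so that no spurious minus sign survives. Once these normalization and sign issues are settled, the derivation is a short linear-algebra manipulation on top of Theorem~A and \cite[Theorem~11]{Ben14b}.
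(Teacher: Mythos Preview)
Your overall strategy---deduce Corollary~B from Theorem~A by expanding the $2\times 2$ regulator and invoking the comparison of $\frak h_p$ with $h^{\textup{Nek}}$ from \cite{Ben14b}---is the right one, and it is exactly what the paper does. However, your computation of the individual matrix entries is wrong in a way that derails the argument.

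The pairing $\frak h_p$ is \emph{symmetric}, not skew-symmetric, so there is no reason for $\frak h_p(\partial_0(d_{\widetilde\delta}),\partial_0(d_{\widetilde\delta}))$ to vanish. In fact the Rubin-style formula (\ref{eqn:Rubin1}) specialized to $\kappa=k$ gives
\[
\frak h_p\bigl(\partial_0(d_{\widetilde\delta}),\partial_0(d_{\widetilde\delta})\bigr)=-\bigl\langle\Psi_2,\partial_0^{\textup{loc}}(d_{\widetilde\delta})\bigr\rangle,
\]
which is nonzero precisely because $\mathcal L_{\textup{FM}}(f)\neq 0$. Likewise (\ref{eqn:Rubin2}) gives $\frak h_p([\frak z_f^{\textup{BK}}],\partial_0(d_{\widetilde\delta}))=-\langle\Psi_2,\res_p([z_f^{\textup{BK}}])\rangle$. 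So the determinant is
\[
-\bigl\langle\Psi_2,\partial_0^{\textup{loc}}(d_{\widetilde\delta})\bigr\rangle\cdot\frak h_p\bigl([\frak z_f^{\textup{BK}}],[\frak z_f^{\textup{BK}}]\bigr)-\bigl\langle\Psi_2,\res_p([z_f^{\textup{BK}}])\bigr\rangle^2,
\]
not the product of off-diagonal entries you predicted. Your second misconception is that $\frak h_p([\frak z_f^{\textup{BK}}],[\frak z_f^{\textup{BK}}])$ agrees with $h^{\textup{Nek}}([z_f^{\textup{BK}}],[z_f^{\textup{BK}}])$ up to scalars. The comparison theorem in \cite{Ben14b} (Theorem~11 together with Lemma~10) instead gives an additive correction:
\[
h^{\textup{Nek}}([z_f^{\textup{BK}}],[z_f^{\textup{BK}}])=\frak h_p([\frak z_f^{\textup{BK}}],[\frak z_f^{\textup{BK}}])+\frac{\langle\Psi_2,\res_p([z_f^{\textup{BK}}])\rangle^2}{\langle\Psi_2,\partial_0^{\textup{loc}}(d_{\widetilde\delta})\rangle}.
\]
It is exactly this correction term that cancels the $\langle\Psi_2,\res_p([z_f^{\textup{BK}}])\rangle^2$ appearing in the determinant, leaving $-\langle\Psi_2,\partial_0^{\textup{loc}}(d_{\widetilde\delta})\rangle\cdot h^{\textup{Nek}}([z_f^{\textup{BK}}],[z_f^{\textup{BK}}])$. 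Dividing by $\langle\Psi_1,\partial_0^{\textup{loc}}(d_{\widetilde\delta})\rangle$ and using $\langle\Psi_2,\partial_0^{\textup{loc}}(d_{\widetilde\delta})\rangle/\langle\Psi_1,\partial_0^{\textup{loc}}(d_{\widetilde\delta})\rangle=-\mathcal L_{\textup{FM}}(f)$ then yields the statement. The hypothesis $\mathcal L_{\textup{FM}}(f)\neq 0$ is used to divide by $\langle\Psi_2,\partial_0^{\textup{loc}}(d_{\widetilde\delta})\rangle$ in the comparison formula, not to ensure nonvanishing of off-diagonal entries as you suggested.
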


Theorem~A is Theorem~\ref{tim:main} in the main text and it in fact follows from the following leading term formula for the two-variable $p$-adic $L$-function $L_p(\frak{X},\kappa,s)$ defined in Section~\ref{subsec:PRslogformodforms}. Here $\frak{X} \in H^1(G_{\QQ,S},\overline{V}_\mathbf{f})$ and $\overline{V}_\mathbf{f}:=V_\mathbf{f}\otimes_E \mathcal{H}^\iota$ is the cyclotomic deformation of the big Galois representation attached to a Coleman family $\mathbf{f}$ that specializes in weight $k$ to our eigenform $f$. Let $\textup{pr}: H^1(G_{\QQ,S},\overline{V}_\mathbf{f}) \ra H^1(G_{\QQ,S},{V}_{f})$ denote the obvious projection.

\begin{thmc}
Let $[x_{\textup{f}}]\in \widetilde H^1_f(V_f)$  be the canonical lift of $[x]\in  {H}^1_\textup{f}(\QQ,V_f)$. Suppose further that there is an element $\frak{X} \in H^1(G_{\QQ,S},\overline{V}_\mathbf{f})$  with the property that $\textup{pr}\left(\frak{X}\right)=[x]$. Then,\\
\begin{align*}\Omega([x])\cdot L_p(\frak{X},\kappa,s) \equiv \det\left(\begin{array}{cc} \mathcal{A}\left(\mathbb{H}_\mathbf{f}\right) \left(\partial_0(d_{\widetilde{\delta}}),\partial_0(d_{\widetilde{\delta}})\right) & \mathcal{A}\left(\mathbb{H}_\mathbf{f}\right) \left(\partial_0(d_{\widetilde{\delta}}),[x_{\textup f}]\right)\\\\  \mathcal{A}\left(\mathbb{H}_\mathbf{f}\right) \left([x_{\textup f}],\partial_0(d_{\widetilde{\delta}})\right) &  \mathcal{A}\left(\mathbb{H}_\mathbf{f}\right) \left([x_{\textup f}],[x_{\textup f}]\right)\end{array}\right) \mod \frak{I}^3\,.\end{align*}
\end{thmc}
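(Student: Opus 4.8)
The plan is to establish the congruence by computing both sides modulo $\frak{I}^3$ in terms of the two-variable $(\varphi,\Gamma)$-module machinery of \cite{Ben14b,Pot13}, following the blueprint of Venerucci's weight-two argument in \cite{venerucciarticle} but carried out at the level of the Coleman family $\mathbf f$ and its cyclotomic deformation $\overline V_\mathbf{f}$. First I would recall the definition of $L_p(\frak{X},\kappa,s)$ from Section~\ref{subsec:PRslogformodforms}: it is obtained by pairing the Perrin-Riou big logarithm $\logprcyc$ (or rather its two-variable analogue attached to $\mathbb D_\mathbf{f}$) against $\res_p(\frak{X})$, and then cutting by the distinguished differential built from $e_\alpha$. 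The exceptional zero at $(\kappa,s)=(k,0)$ forces this quantity to lie in $\frak{I}^2$, so the content of the theorem is the identification of its leading term in $\frak{I}^2/\frak{I}^3$ with the stated $2\times 2$ determinant of height-pairing values.

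The core of the argument is a comparison of two descriptions of the same $\frak{I}^2/\frak{I}^3$-valued invariant. On the one side, $\frak{H}_\mathbf{f}=\mathcal{A}(\mathbb H_\mathbf{f})$ is by construction (Section~\ref{subsec:twovariablesheights}) the connecting map in the cohomology of the two-variable Selmer complex $S^\bullet(\overline V_\mathbf{f},\mathbb D_\mathbf{f})$ associated to the deformation along both the weight variable $\kappa$ and the cyclotomic variable $s$; its values on the pair $\{\partial_0(d_{\widetilde\delta}),[x_f]\}$ (a basis-up-to-torsion of the relevant $2$-dimensional extended Selmer group, given that $H^0(\widetilde{\mathbb D}_f)$ is one-dimensional and $[x]\in H^1_\textup{f}$) assemble into the displayed Gram-type determinant. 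On the other side, the leading term of $L_p(\frak{X},\kappa,s)$ is computed by feeding the existence of the global lift $\frak{X}$ with $\textup{pr}(\frak{X})=[x]$ into the interpolation/reciprocity formula for the big logarithm: the exceptional zero contributes one factor measuring the derivative of the local condition (this is where $\partial_0(d_{\widetilde\delta})$, the $L$-invariant, and the normalizing constants $(1-1/p)\Gamma(k/2-1)^{-1}$ in $\Omega$ enter), and the vanishing of $L(f,k/2)$ together with the Selmer-theoretic position of $[x]$ contributes the second factor. Matching these requires the explicit description of $\mathbb D_f$, $\widetilde{\mathbb D}_f$ and of the bases $\{\Psi_1,\Psi_2\}$, $d_{\widetilde\delta}$ from Section~\ref{subsec:selmercomplexformodforms} and Proposition~\ref{prop:ex sequence with Dalpha}, together with the functional equation satisfied by $\frak{H}_\mathbf{f}$ to handle the off-diagonal (skew) entries.

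Concretely I would proceed in the following steps. (1) Express $L_p(\frak{X},\kappa,s) \bmod \frak{I}^3$ as a pairing $\langle \logpru(\res_p\frak{X}), \text{(distinguished differential)}\rangle$ and expand $\logpru$ to second order around $(k,0)$ using the two-variable regulator formalism; the first-order vanishing is the exceptional zero. (2) Identify the first-order term of this expansion with the local contribution $\partial_0(d_{\widetilde\delta})$ paired via $\langle\ ,\ \rangle: H^1(\widetilde{\mathbb D}_f)\times H^1(\mathbb D_f)\to E$, producing the factor $\langle\Psi_1,\partial_0(d_{\widetilde\delta})\rangle$ inside $\Omega([x])$. (3) Identify the remaining second-order data with the image of $\frak{X}$ under the Bockstein/height map for $S^\bullet(\overline V_\mathbf{f},\mathbb D_\mathbf{f})$, i.e. with $\frak{H}_\mathbf{f}$ evaluated against $[x_f]$, using that $\textup{pr}(\frak{X})=[x]$ to descend the two-variable class. (4) Assemble steps (1)--(3) into the determinant, the diagonal entry $\frak{H}_\mathbf{f}(\partial_0(d_{\widetilde\delta}),\partial_0(d_{\widetilde\delta}))$ coming from the purely local (weight-direction) derivative of the exceptional-zero factor and the cross terms from the mixed derivatives, invoking the functional equation for signs. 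I expect the main obstacle to be step (3): controlling the two-variable big logarithm to second order and showing that its mixed weight–cyclotomic derivative is exactly the Bockstein map computing $\frak{H}_\mathbf{f}$ — i.e. a precise compatibility between Perrin-Riou's (two-variable) exponential and the Selmer-complex height pairing — which in the weight-two ordinary case is \cite[\S\S 4--5]{venerucciarticle} and here must be re-derived in the trianguline, non-ordinary, semistable setting from the formalism of \cite{Ben14b,Pot13}. Once Theorem~C is in hand, Theorem~A follows by specializing $\frak{X}$ to (a lift of) $[z_f^{\textup{BK}}]$, setting $\kappa=k$, reading off the coefficient of $s^2$, and using $\frak{h}_p=\frak{H}_\mathbf{f}\bmod(\kappa-k)$.
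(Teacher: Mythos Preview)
Your plan is correct and matches the paper's own approach closely: the paper first expands $L_p(\frak{X},\kappa,s)$ modulo $\frak{I}^3$ via the two-variable large logarithm (Proposition~\ref{prop:partialderivativespartiallywelldefine}, which is your step (1)), then computes each entry of the $2\times2$ height determinant explicitly using the Rubin-style formulae of Theorem~\ref{thm:thepropertiesofthetwovarheight}(iii), Corollary~\ref{thm:rubinsformula} and Theorem~\ref{thm:rubinsformula2var} (your steps (2)--(3)), and finally compares the coefficients of $s^2$, $s(\kappa-k)$, $(\kappa-k)^2$ one at a time (your step (4)). The ``main obstacle'' you flag is precisely the content of Section~\ref{subsec:RS2}, and your diagnosis that it amounts to a Bockstein-versus-big-logarithm compatibility is accurate; the only implementation detail you did not anticipate is that the coefficient matching is streamlined by passing to the modified basis $\Phi_1=\Psi_1$, $\Phi_2=\Psi_2+\mathcal{L}_{\textup{FM}}(f)\Psi_1$ of $H^1(\widetilde{\mathbb D}_f)$, chosen so that $\langle\Phi_2,\partial_0^{\textup{loc}}(d_{\widetilde\delta})\rangle=0=\langle\Phi_1,[x_p^+]\rangle$, together with the skew-symmetry of $\frak{h}_\mathbf{f}^{\textup{c-wt}}$ to pin down the mixed coefficients $a_1(\kappa),a_2(\kappa)$ in terms of $a_1(\gamma),a_2(\gamma)$.
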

This is Theorem~\ref{thm:leadingtermformula}(ii) below and we may deduce Theorem~A from its statement plugging in $\kappa=k$. It may also be used together with the main results of \cite{sevesoCJM} to deduce the following version of Perrin-Riou's prediction in \cite[\S 3.3.2]{pr93grenoble}, granted the interpolation of Beilinson-Kato elements in Coleman families, as proved in \cite[Theorem A]{BB_CriticalKato1}, see also \cite{ColmezWang, hansenarxiv, NakamuraArXiv, wangarxiv}. Before we state our result, let us introduce the necessary notation. Let $L$ be an algebraic number field containing all coefficients $a_n$ of $f$. Let $E \supset \QQ_{p^2}$ denote its completion at a fixed (arbitrary) prime above $p$. Let the $\mathcal{M}_{k-2}{/\QQ}$ denote the Iovita--Spiess Chow motive of weight $k$ modular forms (whose $p$-adic realisation affords representations associated to cusp forms which are new at $pN^-$). There exists a map
$$\log\Phi^{\textup{AJ}}:  \textup{CH}^{{k}/{2}}(\mathcal{M}_{k-2}\otimes H) \lra M_k(\Gamma,E)^*,$$
(which is essentially identical to the $p$-adic Abel--Jacobi map) where 
\begin{itemize}
\item $\Gamma$ is a certain `congruence subgroup' of a suitably chosen quaternion algebra, 
\item $H$ is a certain extension of $K$,  
\item $M_k(\Gamma,E)^*$ is the dual of the space of rigid analytic modular forms for $\Gamma$. 
\item $\textup{CH}^{{k}/{2}}(\mathcal{M}_{k-2}\otimes H)$ is  the Chow group of codimension $k/2$ cycles on $\mathcal{M}_{k-2}\otimes H$.
\end{itemize}
Let  $y^\epsilon\in \textup{CH}^{{k}/{2}}(\mathcal{M}_{k-2}\otimes H)$ denote a Heegner cycle which is given as in Theorem~\ref{thm:sevesomainmazurKitag}. 

For Theorem D, we will assume that $L(f,s)$ vanishes at $s=k/2$ to odd order. This allows us to choose the character $\epsilon$ (that plays a role in the definition of the Heegner cycle, for which we refer the reader to \cite{sevesoCJM}) to be the trivial character. This is our convention for the rest of this introduction.

\begin{thmd}
Suppose that $\log\Phi^{\textup{AJ}}(y^\epsilon)\big{|}_{f^{\textup{rig}}}$ is non-trivial. Then the restriction of the Beilinson-Kato class $\textup{res}_p\left([z_{f}^{\textup{BK}}]\right)$ at the prime $p$ does not vanish. 
\end{thmd}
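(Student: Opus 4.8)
The plan is to argue by contraposition via the leading term formula of Theorem~C together with Seveso's comparison between the Heegner cycle and the second derivative of the cyclotomic $p$-adic $L$-function. Suppose that $\textup{res}_p\left([z_f^{\textup{BK}}]\right)=0$; I want to deduce that $\log\Phi^{\textup{AJ}}(y^\epsilon)\big{|}_{f^{\textup{rig}}}$ vanishes. Since we are assuming that $L(f,s)$ vanishes at $s=k/2$ to odd order (hence in particular $L(f,k/2)=0$), Kato's theorem gives us the class $[z_f^{\textup{BK}}]\in H^1_\textup{f}(\QQ,V_f)$ and the inequality $\textup{ord}_{s=k/2}L_{p,\alpha}(f,\omega^{k/2},s)\geq 2$, so the second derivative $\tfrac{d^2}{ds^2}L_{p,\alpha}(f,\omega^{k/2},s)\big{|}_{s=k/2}$ is the relevant leading term. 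Granting the interpolation of Beilinson--Kato elements in the Coleman family $\mathbf f$ (as claimed in \cite{hansenarxiv,wangarxiv}), there is a class $\frak{X}\in H^1(G_{\QQ,S},\overline{V}_\mathbf{f})$ with $\textup{pr}(\frak X)=[z_f^{\textup{BK}}]$, so Theorem~C applies with $[x]=[z_f^{\textup{BK}}]$ and $[x_\textup{f}]=[\frak z_f^{\textup{BK}}]$.

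First I would unwind the right-hand side of Theorem~C under the hypothesis $\textup{res}_p\left([z_f^{\textup{BK}}]\right)=0$. The entry $\frak{H}_\mathbf{f}\left([x_f],[x_f]\right)$ and the off-diagonal entries $\frak{H}_\mathbf{f}\left(\partial_0(d_{\widetilde\delta}),[x_f]\right)$, $\frak{H}_\mathbf{f}\left([x_f],\partial_0(d_{\widetilde\delta})\right)$ all involve the local class $\textup{res}_p\left([z_f^{\textup{BK}}]\right)$ through the local terms in the definition of the height pairing; vanishing of this local restriction forces these entries to lie in $\frak I^2$ (indeed the height pairing $\frak H_\mathbf f$ factors through local conditions at $p$, and the two deformation directions each contribute one factor of $\frak I$). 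Consequently the determinant on the right of Theorem~C reduces modulo $\frak I^3$ to $\frak H_\mathbf f\left(\partial_0(d_{\widetilde\delta}),\partial_0(d_{\widetilde\delta})\right)$ times an element of $\frak I^2$, but $\frak H_\mathbf f\left(\partial_0(d_{\widetilde\delta}),\partial_0(d_{\widetilde\delta})\right)\in\frak I$ already (the diagonal self-pairing of the exceptional class is an $\frak I$-adic quantity governed by the $\al$-invariant), so the whole determinant lies in $\frak I^3$. Therefore $\Omega\left([z_f^{\textup{BK}}]\right)\cdot L_p(\frak X,\kappa,s)\equiv 0\bmod \frak I^3$. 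Specializing at $\kappa=k$ and reading off the $s^2$-coefficient, and noting that $\Omega\left([z_f^{\textup{BK}}]\right)$ becomes (up to the nonzero factor $[e_\alpha,b_f^*]_{V_f}/2$ and $\Gamma$-factors) proportional to $\langle\Psi_2,\textup{res}_p([z_f^{\textup{BK}}])\rangle$, which is itself zero since $\textup{res}_p([z_f^{\textup{BK}}])=0$, I have to be slightly careful: the vanishing of $\Omega$ makes Theorem~A read $0=0$ and is not directly informative. The fix is to use instead the full two-variable statement of Theorem~C before dividing by $\Omega$, or equivalently to use Seveso's formula \cite{sevesoCJM} (Theorem~\ref{thm:sevesomainmazurKitag}), which expresses $\log\Phi^{\textup{AJ}}(y^\epsilon)\big{|}_{f^{\textup{rig}}}$ directly in terms of the Mazur--Kitagawa two-variable $p$-adic $L$-function restricted to the central critical line (or its relevant derivative), an object that matches $L_p(\frak X,\kappa,s)$ up to an explicit nonzero normalization.

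Concretely, the second and decisive step is: Seveso's main result identifies $\log\Phi^{\textup{AJ}}(y^\epsilon)\big{|}_{f^{\textup{rig}}}$ (up to an explicit nonzero period/height factor) with the appropriate leading coefficient of $L_p(\frak X,\kappa,s)$ along the line through $(k,0)$; since we have just shown that $L_p(\frak X,\kappa,s)\equiv 0 \bmod \frak I^3$, that leading coefficient vanishes, and hence $\log\Phi^{\textup{AJ}}(y^\epsilon)\big{|}_{f^{\textup{rig}}}=0$, proving the contrapositive. I would organize the write-up so that the matching between $L_p(\frak X,\kappa,s)$ and the Mazur--Kitagawa $p$-adic $L$-function occurring in \cite{sevesoCJM} is stated as a clean lemma (this is essentially a factorization/comparison of $p$-adic $L$-functions statement, using that $\frak X$ comes from interpolated Beilinson--Kato classes), and then the theorem follows by combining that lemma, Theorem~C, and the local vanishing analysis above.

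The main obstacle I expect is precisely this last comparison: one must verify that the two-variable object $L_p(\frak X,\kappa,s)$ built here from $\frak X$ agrees, on the central critical line and to the required order in $\frak I$, with the two-variable $p$-adic $L$-function whose restriction Seveso relates to the Heegner cycle — including pinning down the exact normalizing periods and checking they are nonzero under our running hypotheses. This requires the interpolation of Beilinson--Kato classes in the Coleman family \cite{hansenarxiv,wangarxiv} (which is why it enters the hypotheses), and care with the dichotomy between the complex-analytic vanishing order being exactly $1$ versus $\geq 3$; restricting, as we do, to odd vanishing order and the trivial choice of $\epsilon$ is exactly what makes the Heegner cycle the correct companion object and keeps the argument uniform. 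The local triviality analysis of the height-pairing entries (first step) is, by contrast, routine given the explicit description of $\mathbb D_f$ and $\widetilde{\mathbb D}_f$ from Proposition~\ref{prop:ex sequence with Dalpha} and the construction of $\frak H_\mathbf f$ in Section~\ref{subsec:twovariablesheights}.
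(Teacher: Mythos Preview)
Your proposal has a genuine gap at the crucial step. You correctly observe that when $\textup{res}_p([z_f^{\textup{BK}}])=0$ the factor $\Omega([z_f^{\textup{BK}}])$ vanishes, so Theorem~C degenerates to $0\equiv\det(\ldots)\bmod\frak I^3$. But you then assert ``since we have just shown that $L_p(\frak X,\kappa,s)\equiv 0\bmod\frak I^3$'' --- and you have not shown this. All you have is that the right-hand determinant lies in $\frak I^3$; with $\Omega=0$ the identity $\Omega\cdot L_p\equiv\det(\ldots)$ gives no information whatsoever about $L_p$. There is no ``full two-variable statement of Theorem~C before dividing by $\Omega$'': the theorem is already stated with $\Omega$ as a multiplicative factor, and that factor genuinely kills the information you need.

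The paper closes this gap not through Theorem~C but through Proposition~\ref{prop:whendoescentralcriticalLvanish}, which establishes directly that $\textup{ord}_{\kappa=k}\,\frak L_p(\frak X,\kappa)>2$ if and only if $\textup{res}_p([x])=0$. The proof of the $(\Longleftarrow)$ direction is the substantive point you are missing: assuming $\textup{res}_p([x])=0$, one writes $\textup{res}_p(\frak X)=\tfrac{\gamma-1}{\log_p\chi(\gamma)}\mathbf z_\gamma+\varpi_\kappa\mathbf z_\kappa$ with $\mathbf z_\gamma,\mathbf z_\kappa\in H^1_{\Iw}(\mathbb D^\dagger_{\textup{rig}}(V_{\mathbf f}))$, and then uses the explicit leading-term formulas for Perrin-Riou's logarithm (Theorem~\ref{thm:keyleadingtermformulas}) to show that each $L_p(\mathbf z_?,\kappa,s)$ restricted to the central critical line $s=(\kappa-k)/2$ already vanishes to order $\geq 2$, so $\frak L_p(\frak X,\kappa)$ vanishes to order $\geq 3$. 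Once this is in hand, Theorem~D follows in two lines: Seveso's formula gives $\textup{ord}_{\kappa=k}\,\frak L_p(\kappa)=2$, the comparison with the Mazur--Kitagawa function (Proposition~\ref{thm:BKvscentralcriticalL}) transfers this to $\frak L_p(\frak X,\kappa)$, and the proposition forces $\textup{res}_p([z_f^{\textup{BK}}])\neq 0$. Your identification of the comparison lemma as the ``main obstacle'' is apt, but the analytic input from Theorem~\ref{thm:keyleadingtermformulas} is what is actually missing from your argument.
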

This is Theorem~\ref{thm:PRconj} in the main body of this article. Theorem~D may be used together with Theorem~A  in order to deduce the following evidence towards the Mazur--Tate--Teitelbaum conjecture.
\begin{core}
\label{cor:orderofvanishingofpadicLintro} 
Assume that the (cyclotomic) $p$-adic height pairing $\frak{h}_p$ is non-degenerate.  Under the hypothesis of Theorem~D we have
$$\textup{ord}_{s=k/2} L_{p,\alpha}(f,\omega^{k/2},s) =2.$$ 
\end{core}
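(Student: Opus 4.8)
The plan is to combine Theorem A (the $p$-adic Beilinson formula for the second derivative) with Theorem D (non-vanishing of $\textup{res}_p([z_f^{\textup{BK}}])$) and the non-degeneracy hypothesis on $\frak{h}_p$, and to show that under these hypotheses the $2\times 2$ determinant appearing on the right-hand side of Theorem A is non-zero, while the order of vanishing is exactly $2$ (not higher). First I would record that, since $L(f,k/2)=0$, Kato's work already gives $\textup{ord}_{s=k/2}L_{p,\alpha}(f,\omega^{k/2},s)\geq 2$, so it suffices to prove that $\frac{d^2}{ds^2}L_{p,\alpha}(f,\omega^{k/2},s)\big|_{s=k/2}\neq 0$; by Theorem A this is equivalent (given that the scalar factors $\Omega\cdot[e_\alpha,b_f^*]_{V_f}/2$ are non-zero) to the non-vanishing of
\[
\det\left(\begin{array}{cc} \frak{h}_p\left(\partial_0(d_{\widetilde{\delta}}),\partial_0(d_{\widetilde{\delta}})\right) & \frak{h}_p\left(\partial_0(d_{\widetilde{\delta}}),[\frak{z}_f^{\textup{BK}}]\right)\\ \frak{h}_p\left([\frak{z}_f^{\textup{BK}}],\partial_0(d_{\widetilde{\delta}})\right) &  \frak{h}_p\left([\frak{z}_f^{\textup{BK}}],[\frak{z}_f^{\textup{BK}}]\right)\end{array}\right).
\]
The first thing to check is that $\Omega=\Omega([z_f^{\textup{BK}}])\neq 0$: unwinding the definition, $\Omega$ is a product of $\langle\Psi_2,\textup{res}_p([z_f^{\textup{BK}}])\rangle$ and $\langle\Psi_1,\partial_0(d_{\widetilde{\delta}})\rangle$ (up to the non-zero constant $(1-1/p)\Gamma(k/2-1)^{-1}$); the second factor is non-zero by the explicit description of the distinguished bases $\{\Psi_1,\Psi_2\}$ and $d_{\widetilde{\delta}}$, and the first factor is non-zero precisely because Theorem D guarantees $\textup{res}_p([z_f^{\textup{BK}}])\neq 0$ and $\Psi_2$ detects the relevant component of $H^1(\widetilde{\mathbb D}_f)$.

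Next I would analyze the determinant. Since $\widetilde H^1_\textup{f}(V_f)$ sits in the exact sequence with $H^0(\widetilde{\mathbb D}_f)$ one-dimensional and $H^1_\textup{f}(\QQ,V_f)$, and since under the hypothesis $r_{\textup{an}}(f)=1$ (implicit through Theorem D's hypothesis, which forces $L(f,s)$ to vanish to odd — hence, with Kato's bound on ranks, exactly first — order) the Bloch--Kato Selmer group $H^1_\textup{f}(\QQ,V_f)$ is one-dimensional and spanned by $[z_f^{\textup{BK}}]$, the two classes $\partial_0(d_{\widetilde{\delta}})$ and $[\frak{z}_f^{\textup{BK}}]$ form a basis of the two-dimensional space $\widetilde H^1_\textup{f}(V_f)$: indeed $\partial_0(d_{\widetilde{\delta}})$ spans the image of $H^0(\widetilde{\mathbb D}_f)$ and $[\frak{z}_f^{\textup{BK}}]$ maps to a generator of $H^1_\textup{f}(\QQ,V_f)$ (non-zero by Theorem D via $\textup{res}_p$). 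Therefore the displayed matrix is exactly the Gram matrix of the symmetric pairing $\frak{h}_p$ with respect to a basis of $\widetilde H^1_\textup{f}(V_f)$, and its determinant is non-zero if and only if $\frak{h}_p$ is non-degenerate — which is the standing hypothesis of the Corollary. This yields $\frac{d^2}{ds^2}L_{p,\alpha}(f,\omega^{k/2},s)\big|_{s=k/2}\neq 0$, hence $\textup{ord}_{s=k/2}L_p(f,s)\leq 2$, and combined with Kato's lower bound, equality.

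The main obstacle is the bookkeeping identifying $\textup{ord}_{s=k/2}L_p(f,s)$ with $\textup{ord}_{s=k/2}L_{p,\alpha}(f,\omega^{k/2},s)$ and confirming that the hypotheses of Theorem D indeed force $H^1_\textup{f}(\QQ,V_f)$ to be exactly one-dimensional with $[z_f^{\textup{BK}}]$ a generator, rather than merely non-zero; one needs Kato's Euler-system divisibility (bounding $\dim_E H^1_\textup{f}(\QQ,V_f)\le 1$ once $\textup{res}_p([z_f^{\textup{BK}}])\neq 0$) together with the fact that $[z_f^{\textup{BK}}]\neq 0$ lies in $H^1_\textup{f}$. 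A secondary point requiring care is verifying that $\langle\Psi_1,\partial_0(d_{\widetilde{\delta}})\rangle\neq 0$ and the compatibility of the canonical lift $[\frak{z}_f^{\textup{BK}}]$ with the splitting $\spl$, so that $\{\partial_0(d_{\widetilde{\delta}}),[\frak{z}_f^{\textup{BK}}]\}$ really is a basis; both follow from the explicit constructions in Section~\ref{subsec:selmercomplexformodforms}, but must be invoked precisely.
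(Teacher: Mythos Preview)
Your proposal is correct and follows essentially the same approach as the paper's proof: use Theorem~D to obtain $\Omega\neq 0$ and $[z_f^{\textup{BK}}]\neq 0$, observe that $\{\partial_0(d_{\widetilde{\delta}}),[\frak{z}_f^{\textup{BK}}]\}$ are linearly independent in $\widetilde{H}^1_\textup{f}(V_f)$, and then invoke the non-degeneracy of $\frak{h}_p$ to conclude the regulator determinant in Theorem~A is non-zero. You are actually more careful than the paper on one point: the paper's proof only records that the pair is linearly independent and then appeals to non-degeneracy, whereas you correctly flag that one must also know $\dim_E\widetilde{H}^1_\textup{f}(V_f)=2$ (equivalently $\dim_E H^1_\textup{f}(\QQ,V_f)=1$) for this step to go through, and you supply the missing ingredient via Kato's Euler-system bound once $[z_f^{\textup{BK}}]\neq 0$.
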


We expect that the hypothesis (both on Theorem D and Corollary E) requiring the non-vanishing of $\log\Phi^{\textup{AJ}}(y^\epsilon)\big{|}_{f^{\textup{rig}}}$ may soon be replaced by the hypothesis that $r_{\textup{an}}(f)=1$. Indeed, a suitable Gross--Zagier--Zhang formula (which is currently unavailable in the level generality we require) together with the injectivity of the Abel--Jacobi map and the non-degeneracy of Beilinson's height pairing (none of which is currently known) would show that the non-vanishing of the Abel--Jacobi image of Heegner cycle $y^\epsilon$ is equivalent to asking that $r_{\textup{an}}(f)=1$. This in particular would show in this situation that
$$\textup{ord}_{s=k/2} L_{p,\alpha}(f,\omega^{k/2},s)=1+r_{\textup{an}}(f)\,,$$
as predicted by Mazur--Tate--Teitelbaum. We further remark that the $p$-adic Abel--Jacobi map is always expected to be injective in this set up. 
\begin{rem}
\label{rem:nonexceptional}
Although we have not checked the details, it seems that our theory here applies equally well for $p$-semistable modular forms whose $p$-adic $L$-functions do \emph{not} necessarily possess an exceptional zero and whose Hecke $L$-function vanish at their central critical point to order $1$. This leads us to a $p$-adic Gross--Zagier formula which expresses the leading term of its $p$-adic $L$-function in terms of the $p$-adic height of the Beilinson--Kato element, allowing us to extend and generalize the results of \cite{pr93grenoble} to higher weight forms.
\end{rem}
\subsection*{Acknowledgements} The both authors wish to thank M. Seveso 
who kindly pointed to us that his work should be sufficient to complement our initial results to yield a proof of Perrin-Riou's conjecture in this context. They also thank 
Enis Kaya and the anonymous  referee for pointing out several inaccuracies.
The second named author (K.B.) thanks the first author for his interest in his prior work \cite{kbbexceptional} and repeated invitations to University of Bordeaux. It is  during these visits that much of this research was carried out. K.B. also thanks Daniel Disegni, Olivier Fouquet, Benjamin Howard, Rodolfo Venerucci, Xinyi Yuan and Wei Zhang for helpful conversations, and  Henri Darmon for inviting K.B. to a CRM workshop during which he had the chance to interact with M. Seveso. 

K.B. was partially supported by the T\"UB\.ITAK Grant 113F059, Science Academy Young Investigator Award (BAGEP) and EU Horizon 2020 MC-GF Grant CriticalGZ.
\section{Selmer complexes}
\subsection{$(\varphi,\Gamma)$-modules}
In this Section we review the theory of Selmer complexes 
and  $p$-adic heights following \cite{Ben14b} and apply 
the general machinery to the case of $p$-adic representations 
arising from modular forms. In particular, we construct an
infinitesimal deformation of the $p$-adic height pairing 
in the weight direction generalizing the work of Venerucci~\cite{venerucciarticle}.

We start by introducing some general notation. Let 
$\overline{\QQ}_p$ be an algebraic closure  of $\QQ_p,$  $G_p=\mathrm{Gal}(\overline{\QQ}_p/\QQ_p)$  and $\CC_p$  the $p$-adic completion of $\overline{\QQ}_p.$ We denote by $v_p\,:\,\CC_p \rightarrow \mathbb R \cup\{+\infty\}$ the $p$-adic valuation on $\CC_p$ normalized by $v_p(p)=1$ and set $\vert x\vert_p= \left (\frac{1}{p}\right )^{v_p(x)}.$ Fix a system 
$\varepsilon =(\zeta_{p^n})_{n\geqslant 1}$ of $p^n$th roots of the unity such that $\zeta_p\neq 1$ and $\zeta_{p^{n+1}}^p=\zeta_{p^n}$ for all $n.$ 
We set  $\Gamma=\mathrm{Gal}(\QQ_p(\zeta_{p^{\infty}})/\mathbb Q_p)$ and 
denote by  $\chi\,:\,\Gamma \rightarrow \ZZ_p^*$  the cyclotomic character. 
The group $\Gamma$ decomposes canonically into the direct sum 
$\Gamma=\Delta\times \Gamma_0,$ where $\Delta =\mathrm{Gal}(\QQ_p(\zeta_p)/\QQ_p)$
and $\Gamma_0=\mathrm{Gal}(\QQ_p(\zeta_{p^{\infty}})/\mathbb Q_p(\zeta_p)).$
We denote by $\left\langle \chi\right\rangle \,:\,\Gamma \rightarrow \ZZ_p^*$ the composition of $\chi \vert_{\Gamma_0}$
with the canonical projection $\Gamma\rightarrow \Gamma_0.$

 For each $0\leqslant r<1$ we denote by $\mathrm{ann} (r,1)$
the $p$-adic annulus 
\[
\mathrm{ann} (r,1)=\{ x\in \CC_p \mid  p^{-1/r}\leqslant \vert x\vert_p <1\}.
\] 
If  $E$ is a finite extension   of $\QQ_p$ we denote by $\RR_E^{(r)}$ the
ring of power series $f(\pi)=\underset{n=-\infty}{\overset{\infty}\sum} a_n\pi^n$
with coefficients in $E$ converging on $\mathrm{ann}(r,1).$ 
The Robba ring $\RR_E$ is defined to be the union $\RR_E=\underset{0\leqslant r<1}
\cup\RR_E^{(r)}.$ We equip $\mathcal R_E$ with a continuous action of the group $\Gamma= \mathrm{Gal} (\QQ_p(\zeta_{p^{\infty}})/\QQ_p)$  and a Frobenius $\varphi$ given by 
\begin{align}  
&\tau (f(\pi))=f((1+\pi)^{\chi (\tau)}-1),\qquad \tau \in \Gamma,\\
&\varphi (f(\pi))=f((1+\pi)^p-1).
\end{align}
More precisely, we have
\begin{align*}
&\tau (\RR_E^{(r)})=\RR_E^{(r)},\qquad \tau\in \Gamma,\\
&\varphi (\RR_E^{(r)})=\RR_E^{(pr)}.
\end{align*}
The rings $\RR_E^{(r)}$ are equipped with a canonical Fr\'echet topology
\cite{Ber02}.
For each    affinoid $E$-algebra $A$ define  
\[
\RR_A^{(r)}=\RR_E^{(r)}\widehat\otimes_E A,\qquad  \RR_A=\underset{0\leqslant r<1}
\cup\RR_A^{(r)}.
\] 
The actions of $\varphi$ and $\Gamma$ on $\RR_E$  extend by linearity to $\RR_A.$ 

\begin{define} i) A $(\varphi,\Gamma)$-module over $\RR_A^{(r)}$ is 
a finitely generated projective $\RR_A^{(r)}$-module $\mathbb D^{(r)}$
equipped with the following structures:

a) A $\varphi$-semilinear map
\[
\varphi \,:\, \mathbb D^{(r)} \rightarrow \mathbb D^{(r)}\otimes_{\RR_A^{(r)}}\RR_A^{(pr)}
\]
such that the induced linear map 
\[
\varphi^*\,:\,\mathbb D^{(r)}\otimes_{\RR_A^{(r)},\varphi}\RR_A^{(pr)} \rightarrow \mathbb D^{(r)}\otimes_{\RR_A^{(r)}}\RR_A^{(pr)}
\]
is an isomorphism of $\RR_A^{(pr)}$-modules. 

b) A semilinear continuous action of $\Gamma$ on $\mathbb D^{(r)}$ commuting 
with $\varphi.$ 
\\\\
ii) $\mathbb D$ is a $(\varphi,\Gamma)$-module over $\RR_A$ if 
$\mathbb D=\mathbb D^{(r)}\otimes_{\RR_A^{(r)}} \RR_A$ for some
$(\varphi,\Gamma)$-module $\mathbb D^{(r)}$ over $\RR_A^{(r)}.$ 
\end{define}

The theory of $(\varphi,\Gamma)$-modules was initiated by Fontaine in his fundamental paper \cite{Fo}. The reader may consult  \cite{Cz03} and \cite{Cz08} for an introduction and further references. 

For each continuous character $\delta \,:\,\QQ_p^* \rightarrow A^*$, we denote by $\RR_A(\delta)$ the $(\varphi,\Gamma)$-module $\RR_A\cdot e_{\delta}$ 
of rank $1$ generated by $e_{\delta}$ and such that 
\begin{align*}
&\tau (e_{\delta})=\delta (\chi (\tau))\cdot e_{\delta},\qquad \tau \in \Gamma,\\
&\varphi (e_{\delta})=\delta (p)\cdot e_{\delta}.
\end{align*}
If $A=E$ is a finite extension of $\QQ_p,$ each $(\varphi,\Gamma)$-module of rank $1$ over $\RR_E$ is isomorphic to $\RR_E(\delta)$ for some character $\delta$, see \cite{Cz08}.

A $p$-adic representation of $G_p=\mathrm{Gal}(\overline{\QQ}_p/\QQ_p)$ with coefficients in an affinoid algebra $A$ is a finitely generated projective   $A$-module equipped with a continuous linear action of $G_p.$  The theory of $(\varphi,\Gamma)$-modules associates to each $p$-adic representation  $V$ of $G_p$ a $(\varphi,\Gamma)$-module $\DdagrigA (V)$ over $\RR_A.$  The functor $V\mapsto \DdagrigA (V)$ is fully faithful and commutes with base change. If $A=E,$ we write $\Ddagrig (V)$
instead $\DdagrigA (V)$ to simplify notation. 
A theorem of Kedlaya proved in \cite{Ke} implies that the functor $\Ddagrig$ 
establishes an equivalence between the category of $p$-adic representations 
with coefficients in $E$ and 
the category of $(\varphi,\Gamma)$-modules over $\RR_E$ of slope $0.$

For each $(\varphi,\Gamma)$-module $\mathbb D$ we denote by $\mathbb D^*$
its dual module $\mathbb D^*=\mathrm{Hom}_{\RR_A}(\mathbb D,\RR_A).$
The twist $\mathbb D^*(\chi)$  of $\mathbb D^*$ by the cyclotomic character 
is the Tate dual of $\mathbb D.$ 
\subsection{Relation to  $p$-adic Hodge theory}
A filtered $\varphi$-module (resp. a filtered $(\varphi,N)$-module) is a finite-dimensional $E$-vector space $M$ equipped with an exhaustive decreasing filtration $(\F^iM)_{i\in\ZZ}$ and a bijective 
Frobenius $\varphi \,:\,M \rightarrow M$ (resp. a bijective Frobenius $\varphi$ and  a nilpotent monodromy $N\,:\,M \rightarrow M$ such that 
$N \varphi =p\varphi N$).
In \cite{Ber08}, Berger associated to each $(\varphi, \Gamma)$-module $\mathbb D$ 
a  filtered  $(\varphi,N)$-module  $\CDst (\mathbb D)$ and a filtered $\varphi$-module 
$\CDcris (\mathbb D)$ such that  $\CDcris (\mathbb D)=\CDst (\mathbb D)^{N=0}.$ 
Moreover 
\[
\dim_E\CDcris (\mathbb D)\leqslant \dim_E\CDst (\mathbb D)\leqslant \mathrm{rank}_{\mathcal R_E}(\mathbb D).
\]
We say that a $(\varphi,\Gamma)$-module  $\mathbb D$ is semistable (resp. crystalline) if  we have $\dim_E\CDst (\mathbb D)= \mathrm{rank}_{\mathcal R_E}(\mathbb D)$
(resp. if $ \dim_E\CDcris (\mathbb D)= \mathrm{rank}_{\mathcal R_E}(\mathbb D)$).
The functor $\mathbb D\mapsto \CDcris (\mathbb D)$ (resp. $\mathbb D\mapsto \CDst (\mathbb D)$) establishes an  equivalence between the category of crystalline 
(resp. semistable) $(\varphi,\Gamma)$-modules and the category of filtered 
$\varphi$-modules (resp. $(\varphi,N)$-modules) (see \cite{Ber08}). If $V$ is a 
$p$-adic representation with coefficients in $E$, then we have canonical and functorial isomorphisms 
\[
\Dst (V)=\CDst (\Ddagrig (V)),\qquad \Dcris  (V)=\CDcris (\Ddagrig (V)),
\]
where $\Dst$ and $\Dcris$ are the classical Fontaine functors. 

\subsection{Cohomology of $(\varphi,\Gamma)$-modules}
We review the cohomology of $(\varphi,\Gamma)$-modules. Let $\Gamma=\Delta\times \Gamma_0$  be the canonical decomposition of $\Gamma$ into the direct product of 
$\Delta=\mathrm{Gal}(\QQ_p(\zeta_p)/\QQ_p)$ and the pro-$p$-cyclic group 
$\Gamma_0=\mathrm{Gal}(\QQ_p(\zeta_{p^{\infty}})/\QQ_p(\zeta_p)).$ 
Fix a generator $\gamma\in \Gamma_0.$  Let $A$ be an affinoid algebra over 
$E .$
For any $(\varphi,\Gamma)$-module $\mathbb D$ over $\RR_A$ we can consider the Fontaine--Herr
complex    
\[
C^{\bullet}_{\varphi,\gamma}(\mathbb D)\,\,:\,\, 
\mathbb{D}^{\Delta}  \xrightarrow{d_0} 
\mathbb{D}^{\Delta} \oplus \mathbb{D}^{\Delta} 
\xrightarrow{d_1} \mathbb{D}^{\Delta},
\]
where  $d_0(x)=((\varphi-1)(x), (\gamma-1)x)$ and $d_1(y,z)=(\gamma-1)(y)-(\varphi-1)(z)$
(see \cite{H1}, \cite{Liu}, \cite{KPX}). Define 
\[
H^i(\mathbb D)=H^i(C^{\bullet}_{\varphi,\gamma}(\mathbb D)).
\] 
Then $H^i(\mathbb D)$ are finitely  generated  $A$-modules 
(\cite[Theorem 0.2]{Liu}, \cite[Theorem 4.4.2]{KPX}). In addition, we have 
a canonical isomorphism
\begin{equation}
\label{BrauerisomorphismforphiGammamodules}
H^2(\RR_A(\chi)) \simeq A
\end{equation}
given by $[x]\mapsto -\left (1-\frac{1}{p}\right )^{-1}\log^{-1} \chi (\gamma) \,\res (xdt),$ 
where $t=\log (1+\pi).$

If $\mathbb D=\DdagrigA (V),$ where $V$ is a $p$-adic representation of $G_p$  with coefficients in $A,$ 
there exist canonical (up to the choice of $\gamma$) and functorial isomorphisms
\begin{equation}
\label{formula:computationgaloiscohviaphigamma}
H^i(\DdagrigA (V))\simeq H^i(\QQ_p,V), \qquad i\in \ZZ.
\end{equation}
(see \cite[Theorem 0.1]{Liu}, \cite[Theorem 2.8]{Pot13}). Note that $H^i(\mathcal R_A(\chi))\simeq H^i(\QQ_p, A(\chi)).$

If $\mathbb D^*(\chi)$ is the dual $(\varphi,\Gamma)$-module, then 
the canonical pairing  $\mathbb D\otimes \mathbb D^*(\chi) \rightarrow \mathcal R_E(\chi)$ induces a well defined cup-product 
\[
\cup_{\varphi,\gamma}\,:\,C^{\bullet}_{\varphi,\gamma}(\mathbb D) \otimes 
C^{\bullet}_{\varphi,\gamma}(\mathbb D^*(\chi) ) \rightarrow C^{\bullet}_{\varphi,\gamma}(\mathcal R_E(\chi)).
\]
Together with the isomorphism (\ref{BrauerisomorphismforphiGammamodules})
this gives the duality for $(\varphi,\Gamma)$-modules 
\[
C^{\bullet}_{\varphi,\gamma}(\mathbb D)\simeq \mathbf{R}\mathrm{Hom}_A
(C^{\bullet}_{\varphi,\gamma}(\mathbb D^*(\chi)),A).
\]

For our construction of $p$-adic heights we need the derived version of the isomorphism
(\ref{formula:computationgaloiscohviaphigamma}) proved in \cite{Ben14b}.  Namely, let $\Brigr$ denote the ring of $p$-adic periods constructed by Berger in \cite{Ber02}. Set $\BrigrA=\Brigr \widehat\otimes_{\QQ_p} A,$ 
$\BrigA=\underset{r>0}\cup \BrigrA,$  $V^{\dagger}_{\mathrm{rig}}=V\otimes_A\BrigA$ and consider the complex $C^{\bullet}(G_p,V^{\dagger}_{\mathrm{rig}}).$
The exact sequence 
\[
0\rightarrow A\rightarrow \BrigA \xrightarrow{\varphi-1}\BrigA \rightarrow 0
\]
induces an exact sequence
\[
0\rightarrow C^{\bullet}(G_p,V)\rightarrow 
C^{\bullet}(G_p,V^{\dagger}_{\mathrm{rig}}) \xrightarrow{\varphi-1}
C^{\bullet}(G_p,V^{\dagger}_{\mathrm{rig}}) \rightarrow 0.
\]
Consider the total complex 
\[
K_p^{\bullet}(V)=\mathrm{Tot} \left (C^{\bullet}(G_p,V^{\dagger}_{\mathrm{rig}}) \xrightarrow{\varphi-1}
C^{\bullet}(G_p,V^{\dagger}_{\mathrm{rig}})
\right ).
\]
The canonical map $V\rightarrow V^{\dagger}_{\mathrm{rig}}$ induces a morphism
\[
\xi \,:\, C^{\bullet}(G_p,V)\rightarrow  K^{\bullet}_p(V)
\]
given by 

\[
\xi (x)=(0,x)\in C^{n-1}(G_p,V^{\dagger}_{\mathrm{rig}})\oplus C^n(G_p,V^{\dagger}_{\mathrm{rig}}), \qquad x\in C^n(G_p,V).
\]
On the other hand, consider the complex  
\[
C^{\bullet}_{\gamma}(\Ddagrig (V))\,\,:\,\,
\Ddagrig (V)^{\Delta}\xrightarrow{\gamma-1}\Ddagrig (V)^{\Delta},
\]
where the terms are placed in degrees $0$ and $1.$ We have a morphism 
of complexes 
\[
\alpha \,:\, C^{\bullet}_{\gamma}(\Ddagrig (V)) \lra C^{\bullet}(G_p,V^{\dagger}_{\mathrm{rig}})
\]
defined by 
\begin{equation*}
\begin{aligned}
&\alpha (x)=x,  &&{\hbox{ if } } x\in C^0_{\gamma}(\Ddagrig (V))\,,\\
&\alpha (x)(g)=\frac{g-1}{\gamma-1}(x){\,,} && {\hbox{ if } }  x\in C^1_{\gamma}(\Ddagrig (V)), \quad g\in G_p.
\end{aligned}
\end{equation*}
Since $C^{\bullet}_{\varphi,\gamma}(\Ddagrig (V))=\mathrm{Tot} \left(C^{\bullet}_{\gamma}(\Ddagrig (V))\xrightarrow{\varphi-1}C^{\bullet}_{\gamma}(\Ddagrig (V))\right ),$ 
this morphism induces a morphism (which we denote again by $\alpha$):
\begin{equation*}
\alpha \,:\, C^{\bullet}_{\varphi,\gamma}(\Ddagrig (V)) \rightarrow K_p^{\bullet}(V).
\end{equation*}

\begin{prop}
\label{proposition quasi-isomorphisms} The maps $\alpha$ and $\xi$ are quasi-isomorphisms and we have a diagram 
\begin{equation*}
\xymatrix{
C^{\bullet}(G_p,V) \ar[r]^{\xi}_{\simeq} &K_p^{\bullet}(V)\\
& C^{\bullet}_{\varphi,\gamma}(\Ddagrig (V))\,. \ar[u]^{\alpha}_{\simeq}
}
\end{equation*}

\end{prop}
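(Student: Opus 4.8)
The plan is to establish the two quasi-isomorphisms separately and then observe that the triangle commutes by construction. For the map $\xi$, I would argue that the cone of $V \to V^\dagger_{\mathrm{rig}}$ has trivial Galois cohomology. Concretely, the short exact sequence $0 \to E \to \Brig \xrightarrow{\varphi-1} \Brig \to 0$ (which encodes the fundamental exact sequence of $p$-adic Hodge theory, i.e. that $\Brig^{\varphi=1} = E$ and $\varphi - 1$ is surjective on $\Brig$) tensors with $V$ to give $0 \to V \to V^\dagger_{\mathrm{rig}} \xrightarrow{\varphi-1} V^\dagger_{\mathrm{rig}} \to 0$; applying $C^\bullet(G_p,-)$ yields the displayed short exact sequence of complexes, and the associated total complex $K_p^\bullet(V)$ is by definition quasi-isomorphic to the kernel $C^\bullet(G_p,V)$. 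The map $\xi$, sending $x$ to $(0,x)$, is precisely the canonical map realizing this quasi-isomorphism, so this half is essentially a formal consequence of the exactness already recorded in the excerpt.

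For the map $\alpha$, I would use the comparison between $(\varphi,\Gamma)$-module cohomology and Galois cohomology. The key input is that the Herr-style complex $C^\bullet_\gamma(\Ddagrig(V))\colon \Ddagrig(V)^\Delta \xrightarrow{\gamma-1} \Ddagrig(V)^\Delta$ computes continuous cochain cohomology of the procyclic group $\Gamma_0$ acting on $\Ddagrig(V)^\Delta$, and that $H^i(\Gamma, \Ddagrig(V)) \simeq H^i(\mathcal{H}_{\QQ_p}, V^\dagger_{\mathrm{rig}})$ where $\mathcal{H}_{\QQ_p} = \Gal(\overline{\QQ}_p/\QQ_p(\zeta_{p^\infty}))$ — this is the classical $(\varphi,\Gamma)$-module dictionary (Herr, Liu, Pottharst). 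The formula $\alpha(x)(g) = \frac{g-1}{\gamma-1}(x)$ is the standard explicit cocycle representing the image of a class under the inflation-type comparison map; one checks directly that it is a morphism of complexes (the cocycle relation and $\Gamma$-equivariance are a short computation using that $\frac{g-1}{\gamma-1}$ makes sense $p$-adically on $\Ddagrig(V)^\Delta$), and that it induces an isomorphism on cohomology by reduction to the known comparison isomorphism \eqref{formula:computationgaloiscohviaphigamma}. Taking $\mathrm{Tot}(-\xrightarrow{\varphi-1}-)$ of both sides and using that totalization preserves quasi-isomorphisms promotes this to the desired quasi-isomorphism $\alpha\colon C^\bullet_{\varphi,\gamma}(\Ddagrig(V)) \to K_p^\bullet(V)$.

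Finally, the commutativity of the triangle follows by unwinding definitions: both $\xi$ and the composite through $\alpha$ land in $K_p^\bullet(V) = \mathrm{Tot}(C^\bullet(G_p,V^\dagger_{\mathrm{rig}}) \xrightarrow{\varphi-1} C^\bullet(G_p,V^\dagger_{\mathrm{rig}}))$, and on the level of the underlying complex of $G_p$-cochains valued in $V^\dagger_{\mathrm{rig}}$ both recover the canonical inclusion induced by $V \hookrightarrow V^\dagger_{\mathrm{rig}}$; since $\xi$ and $\alpha$ are both quasi-isomorphisms, so is the third arrow, and the diagram commutes up to the explicit homotopy coming from $\alpha$ being a section of the projection.

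The main obstacle I anticipate is not the existence of the maps but the verification that $\alpha$ is well-defined and a morphism of complexes — in particular that the operator $\frac{g-1}{\gamma-1}$ acts with the right convergence properties on $\Ddagrig(V)^\Delta \otimes \Brig$ and that the resulting cochains are genuinely continuous — together with the bookkeeping needed to see that the two totalizations are compared compatibly. This is exactly the technical heart of the derived comparison isomorphism, and it is where one must invoke (rather than reprove) the finiteness and comparison results of Liu, Kedlaya--Pottharst and Pottharst cited above, as carried out in \cite{Ben14b}.
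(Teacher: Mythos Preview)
Your sketch is essentially correct and matches the standard argument: the paper itself does not prove this proposition but simply cites \cite[Proposition~2.4.2]{Ben14b} (see also \cite[Proposition~9]{Ben15b}), and your outline is a faithful summary of what one finds there. One small misreading: the displayed diagram is not asserted to be a commutative triangle---there is no third arrow between $C^{\bullet}(G_p,V)$ and $C^{\bullet}_{\varphi,\gamma}(\Ddagrig(V))$---so your third paragraph is addressing a nonexistent claim; the proposition only asserts that $\xi$ and $\alpha$ are each quasi-isomorphisms into the common target $K_p^{\bullet}(V)$.
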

\begin{proof} This is \cite[Proposition 2.4.2]{Ben14b}. See also
\cite[Proposition 9]{Ben15b}.
\end{proof}

\subsection{Iwasawa cohomology} 
\label{subsec:iwasawacohomology}
For each $n\geqslant 1$ we set $K_n=\QQ_p(\zeta_{p^{n+1}})^{\Delta}.$
Thus $K_n/\QQ_p$ is the cyclotomic  extension of degree $p^n.$
Let 
$\Lambda=O_E[[\Gamma_0]]$ denote the Iwasawa algebra over $O_E.$ Then 
the choice of a generator $\gamma \in \Gamma_0$ fixes an isomorphism 
$\Lambda \simeq O_E[[X]]$ such that $\gamma \mapsto 1+X.$ We denote by 
$\mathcal H$ the algebra of power series $f(X)=\underset{i=0}{\overset\infty\sum}
a_iX^i$ with coefficients in $E$ that converge on the $p$-adic open unit disk. 
If $A$ if an affinoid $E$-algebra, we set $\Lambda_A=\Lambda\widehat\otimes_{O_E} 
A$ and $\mathcal H_A=\mathcal H\widehat\otimes_EA.$
 The map $\iota (\tau)=\tau^{-1},$ $\tau \in \Gamma_0$  defines  involutions $\iota \,:\,\Lambda_A \rightarrow \Lambda_A$ and $\mathcal H_A \rightarrow \mathcal H_A.$ 

Let $V$ be a $p$-adic representation of $G_{p}$ with coefficients in 
an affinoid algebra $A.$
Fix a unit ball $A^o$ of $A.$ Then $V=T\otimes_{O_E}E$ for some finitely generated projective  $A^o$-submodule $T$ of $V$ stable under  $G_{p}.$  We write 
$V\otimes_{A}\Lambda_A^{\iota}$ (respectively
$V\otimes_{A}\mathcal H_A^{\iota}$) for $V\otimes_{A}\Lambda_A$ (respectively
 $V\widehat\otimes_{A}\mathcal H_A$) equipped with the diagonal  action of $G_p$ 
and the $\Lambda_A$-module (respectively $\mathcal H_A$-module) structure   given  by 
\[
\tau (v\otimes \lambda)=v\otimes \tau^{-1}\lambda,\qquad \tau \in \Gamma_0,
\quad v\in V,\quad \lambda \in\Lambda_A \,\,\textrm{(resp. $\lambda \in\mathcal H_A$).} 
\]
Consider the complexes $C^{\bullet}(G_{p},V\otimes_{A}\Lambda_A^{\iota})$
and $C^{\bullet}(G_{p},V\otimes_{A}\mathcal H_A^{\iota})$ of continuous cochains 
with coefficients in $V\otimes_{A}\Lambda_A^{\iota}$ and 
$V\otimes_{A}\mathcal H_A^{\iota}$ respectively.  Then 
\[
H^i\left (C^{\bullet}(G_{p},V\otimes_{A}\Lambda_A^{\iota})\right ) \simeq
H^i_{\Iw} (\QQ_p,V),
\]
where 
\[
H^i_{\Iw} (\QQ_p,V)=\varprojlim_{n} H^i(G_{K_n},T)\otimes_{O_E}E
\]
and 
\[
H^i(\QQ_p,V\otimes_{A}\mathcal H_A^{\iota}) \simeq
H^i_{\Iw} (\QQ_p,V)\otimes_{\Lambda_A}\mathcal H_A
\]
( see \cite{Pot13} Theorem~1.6).  To simplify notation, we set $\overline V=V\otimes_{A}\mathcal H_A^{\iota}.$ 

For any $(\varphi,\Gamma)$-module $\mathbb D$ over $\RR_A$ define 
$\overline{\mathbb D}=\mathbb D\otimes_{\RR_A}\DdagrigA (\mathcal H_A^{\iota})$
(see \cite{KPX} and \cite{Potcyc} for more detail).  The Iwasawa cohomology 
$H^{\bullet}_{\Iw}(\mathbb D)$ of $\mathbb D$ is defined to be the cohomology of the complex 
\[
C^{\bullet}_{\Iw}(\mathbb D)=C^{\bullet}_{\varphi,\gamma}(\overline{\mathbb D}).
\]
Note that this complex is quasi-isomorphic to the complex
\[
\left [\mathbb D^{\Delta}\xrightarrow{\psi-1}\mathbb D^{\Delta} \right ],
\]
where $\psi$ is the left inverse to $\varphi$ and the terms are placed in degrees $1$ and $2$ (\cite{KPX}, Theorem~4.4.8).  We have  natural  quasi-isomorphisms 
\begin{align*}
&C^{\bullet}_{\Iw}(\mathbb D)\otimes^{\mathbb L}_{\mathcal H_A}A\simeq 
C^{\bullet}_{\varphi, \gamma}(\mathbb D),\\
&C^{\bullet}(G_{p},\overline V) \simeq 
C^{\bullet}_{\Iw}(\DdagrigA (V)).
\end{align*}
In particular, 
\[
H_{\Iw}^*(\QQ_p,V)\otimes_{\Lambda_A}\mathcal H_A\simeq H^*_{\Iw}(\DdagrigA (V)).
\]

\subsection{The Bloch--Kato Selmer group} 
In this section we assume that $A=E.$
As usual, the first cohomology group  $H^1(\mathbb D)$ classifies extensions of 
the trivial $(\varphi,\Gamma)$-module $\mathcal R_E$ by $\mathbb D$: 
\[
0\rightarrow \mathbb D\rightarrow \mathbb D_x \rightarrow \mathcal R_E\rightarrow 0.
\]
We denote by $H^1_{\textup{f}}(\mathbb D)$ the subgroup of crystalline extensions, namely
\[
H^1_{\textup{f}}(\mathbb D)=\{ [x]\in H^1(\mathbb D) \mid \dim_E\CDcris (\mathbb D_x)=
\dim_E\CDcris (\mathbb D)+1\}.
\]
Note that this definition agrees with $H^1_{\textup{f}}$ of Bloch and Kato \cite[Proposition 1.4.2]{Ben11}, 
\cite{Nak}.  

Let $\delta\,\,:\,\,\QQ_p^*\rightarrow E^*$ be a continuous character.  
The $(\varphi,\Gamma)$-module  $\mathcal R_E(\delta)$ is semistable (and therefore crystalline) if and only if $\delta \vert_{\ZZ_p^*}(u)=u^m$ for some $m\in\ZZ.$
The following proposition summarizes main information about the cohomology of such modules.

\begin{prop}
\label{prop:propertiesofmodulesofrank1} 
Let $\delta\,:\,\QQ_p^*\rightarrow E^*$ be a continuous character such that 
$\delta \vert_{\ZZ_p^*}(u)=u^m$ for some $m\in\ZZ.$

i) If $m\leqslant 0$ and $\delta\neq x^m,$ then $\dim_E H^1(\mathcal R_E(\delta))=1$
and $H^1_{\textup{f}}(\mathcal R_E(\delta))=0.$ 

ii) If $m\leqslant 0$ and $\delta= x^m,$ then the map
\begin{align*}
&i_{\delta}\,\,:\,\,\CDcris (\mathcal R_E(\delta))\times \CDcris (\mathcal R_E(\delta))
\rightarrow H^1(\mathcal R_E(\delta)),\\
&i_{\delta}(x,y)=\cl (t^{-m}x,t^{-m}y),\qquad t=\log (1+\pi) 
\end{align*}
is an isomorphism. Let $i_{\delta, f}$ and $i_{\delta, c}$ denote the restriction 
of $i_{\delta}$ on the first and  second summand respectively. 
Then $\textup{im} (i_{\delta,f})=H^1_{\textup{f}}(\mathcal R_E(\delta))$ and we obtain the decomposition
\begin{equation}
\label{formula:decompositioncohomisoclinic}
H^1(\mathcal R_E(\delta))\overset{(\pr_f,\pr_c)}{=} H^1_{\textup{f}}(\mathcal R_E(\delta))\oplus H^1_c(\mathcal R_E(\delta)),
\end{equation}
where $H^1_c(\mathcal R_E(\delta))=\textup{im} (i_{\delta,c}).$

iii) If $m\geqslant 1$ and  $\delta (x)\neq \vert x\vert x^{m},$ then 
$H^1_{\textup{f}}(\mathcal R_E(\delta))=H^1(\mathcal R_E(\delta))$ is a
one-dimensional  $E$-vector space. 

iv) If $\delta (x)=\vert x\vert x^{m},$ then  $\chi \delta^{-1} (x)=x^{1-m}.$
Consider  local duality 
\[
\cup\,\,:\,\,H^1(\mathcal R_E (\delta))\times H^1(\mathcal R_E (\chi\delta^{-1}))
\rightarrow E
\]
and denote by $[\,\,,\,\,]\,:\,\CDcris (\mathcal R_E(\delta))\times 
\CDcris (\mathcal R_E(\chi\delta^{-1})) \rightarrow E$ the canonical pairing. 
Then the map
\begin{equation*}
i_{\delta}\,\,:\,\,\CDcris (\mathcal R_E(\delta))\times \CDcris (\mathcal R_E(\delta))
\rightarrow H^1(\mathcal R_E(\delta))
\end{equation*}
defined by 
\begin{equation*}
i_{\delta}(x,y)\cup i_{\chi\delta^{-1}} (\alpha,\beta)= [y,\alpha]-[x,\beta]
\end{equation*}
is an isomorphism.  Let $i_{\delta, f}$ and $i_{\delta, c}$ denote the restriction 
of $i_{\delta}$ on the first and  second summand respectively. 
Then $\textup{im} (i_{\delta,f})=H^1_{\textup{f}}(\mathcal R_E(\delta))$ and again we have 
the decomposition (\ref{formula:decompositioncohomisoclinic}) with
$H^1_c(\mathcal R_E(\delta))=\textup{im} (i_{\delta,c}).$ 
\end{prop}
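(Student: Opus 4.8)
The statement is a structural result about the cohomology of rank-one $(\varphi,\Gamma)$-modules $\mathcal R_E(\delta)$ in the various ``critical'' ranges for the Hodge--Tate weight $m$, and the natural strategy is to exploit the explicit description of $H^i(\mathcal R_E(\delta))$ already available in the literature (Colmez, Liu, and the computations recalled via $C^\bullet_{\varphi,\gamma}$ in the excerpt). First I would record the Euler characteristic formula $\dim_E H^0 - \dim_E H^1 + \dim_E H^2 = -[\,\QQ_p:\QQ_p\,]\cdot\mathrm{rank} = -1$ together with the vanishing $H^0(\mathcal R_E(\delta))=0$ whenever $\delta\neq x^{-j}$ for $j\geq 0$ (and dually $H^2(\mathcal R_E(\delta))=0$ whenever $\chi\delta^{-1}\neq x^{-j}$, i.e. $\delta\neq |x|x^{j+1}$), which already pins down $\dim_E H^1(\mathcal R_E(\delta))$ in most cases. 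In the two exceptional cases ($\delta = x^m$ with $m\leq 0$, and $\delta = |x|x^m$ with $m\geq 1$) one gets $\dim_E H^1 = 2$ from the Euler characteristic, and the content of (ii) and (iv) is to produce an \emph{explicit} isomorphism from $\CDcris(\mathcal R_E(\delta))^{\oplus 2}$ and to identify the finite/singular parts under it.

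For part (i) I would combine the Euler characteristic computation above with the definition of $H^1_{\textup{f}}$ in terms of crystalline extensions: an extension $\mathcal R_E(\delta)_x$ of $\mathcal R_E$ by $\mathcal R_E(\delta)$ with $m\leq 0$ and $\delta\neq x^m$ has $\CDcris$ of dimension exactly $1$ (the contribution of $\mathcal R_E(\delta)$ to $\CDcris$ vanishes since its Hodge--Tate weight is $-m\geq 0$ and $\delta$ is not the ``trivial'' crystalline character, so no crystalline class appears), forcing $H^1_{\textup{f}}=0$; this is essentially \cite[Prop.~1.4.2]{Ben11}. For part (iii), $m\geq 1$ and $\delta\neq|x|x^m$ forces $H^0=H^2=0$, hence $\dim_E H^1 = 1$, and one checks every class is crystalline because the would-be obstruction lives in $H^1/H^1_{\textup{f}}$, which injects into $H^1_{\textup{f}}(\mathcal R_E(\chi\delta^{-1}))^\vee$ by Bloch--Kato duality, and the latter vanishes by part (i) applied to $\chi\delta^{-1}$ (which has Hodge--Tate weight $1-m\leq 0$ and is $\neq x^{1-m}$). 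So (i) and (iii) are formal consequences of the Euler characteristic, the $H^0/H^2$ vanishing, and Bloch--Kato local duality, together with dimension counts on $\CDcris$ of extensions.

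The substantive part is (ii) and (iv): there one must check that the maps $i_\delta$ built from $x\mapsto \cl(t^{-m}x,\ldots)$ (resp. defined via the cup-product pairing) are well-defined cocycles, that they are injective, and hence isomorphisms by the dimension count $\dim_E H^1 = 2 = 2\dim_E\CDcris(\mathcal R_E(\delta))$. Well-definedness amounts to verifying that $t^{-m}x$, for $x\in\CDcris$, gives a class killed by the relevant $(\varphi-1,\gamma-1)$ operators in the Fontaine--Herr complex --- a direct computation using $\varphi(t)=pt$, $\gamma(t)=\chi(\gamma)t$, and the definition of $\CDcris(\mathcal R_E(\delta))=(\mathcal R_E(\delta)[1/t])^{\Gamma,\varphi\text{-isoclinic}}$. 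Identifying $\mathrm{im}(i_{\delta,f})$ with $H^1_{\textup{f}}$ requires showing the extension class $i_{\delta,f}(x)$ is crystalline (one exhibits the extension inside $\CDcris$, raising its dimension by one) while $i_{\delta,c}(y)$ is not (its pairing against $H^1_{\textup{f}}(\mathcal R_E(\chi\delta^{-1}))$ via local duality is nonzero, by the formula $i_\delta(x,y)\cup i_{\chi\delta^{-1}}(\alpha,\beta) = [y,\alpha]-[x,\beta]$ together with the compatibility of local duality with the finite subspaces). The main obstacle I anticipate is precisely this last orthogonality/non-orthogonality bookkeeping in (iv) --- making the pairing formula $i_\delta(x,y)\cup i_{\chi\delta^{-1}}(\alpha,\beta)=[y,\alpha]-[x,\beta]$ both well-posed (it simultaneously \emph{defines} $i_\delta$ and must be checked consistent with the isomorphism $H^2(\mathcal R_E(\chi))\simeq E$ of \eqref{BrauerisomorphismforphiGammamodules}) and compatible with the orthogonality $H^1_{\textup{f}}(\mathcal R_E(\delta))^\perp = H^1_{\textup{f}}(\mathcal R_E(\chi\delta^{-1}))$ under local duality. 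I would handle this by reducing to the concrete case via an explicit basis of $\CDcris$ and the known explicit generators of $H^1(\mathcal R_E(x^m))$, citing \cite{Cz08}, \cite{Nak}, \cite{Ben11} for the parts that are already in the literature and only spelling out the normalization of the generators $t^{-m}x$ that matches the conventions used elsewhere in the paper.
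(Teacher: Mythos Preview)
Your plan is correct and follows the standard line of argument (Euler characteristic plus local duality for the generic cases, explicit cocycles for the exceptional ones), but you should know that the paper does not actually prove this proposition: its entire proof reads ``See Proposition~1.5.3, Proposition~1.5.4 and Theorem~1.5.7 of \cite{Ben11}.'' What you have sketched is essentially the content of those cited results, so your proposal is not so much an alternative as a reconstruction of the argument the paper defers to the literature; if you want to match the paper, a citation suffices.
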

\begin{proof} See Proposition~1.5.3, Proposition~1.5.4 and  Theorem~1.5.7 of \cite{Ben11}.
\end{proof}

\subsection{Selmer complexes} In this subsection we review the construction of Selmer 
complexes for non-ordinary Galois representations following \cite{Pot13} and \cite{Ben14b}. Fix a finite set of primes $S$ such that $p\in S.$ Let 
$G_{\QQ,S}$ denote the Galois group of the maximal algebraic extension of $\QQ$ 
unramified outside $S$ and $\infty$. For each prime $\ell$ we fix a decomposition group at $\ell$ which we identify with $G_{\ell}=\mathrm{Gal} (\overline \QQ_\ell/\QQ_\ell),$ and denote by $I_{\ell}$ the inertia subgroup of 
$G_{\ell}.$ If  $G$ is a topological group and $M$ a continuous $G$-module, we will write 
$C^{\bullet}(G,M)$ for the complex of continuous cochains of $G$ with coefficients in $M.$

 Let $V$ be a $p$-adic representation of $G_{\QQ,S}$
with coefficients in an affinoid algebra $A$. We denote by $\DdagrigA (V)$ the $(\varphi,\Gamma)$-module associated to the restriction of $V$ on the decomposition group at $p.$
Let $\mathbb D$ be a $(\varphi,\Gamma)$-submodule of $\DdagrigA (V)$ such that 
$\mathbb D$ is an $\RR_A$-direct summand of $\DdagrigA (V).$
Set $U_p^+(V, \mathbb D)=C^{\bullet}_{\varphi,\gamma}(\mathbb D).$  Composing the quasi-isomorphism $\alpha$ of Proposition~\ref{proposition quasi-isomorphisms}  with the canonical morphism 
\linebreak
$U_p^+(V, \mathbb D) \rightarrow C^{\bullet}_{\varphi,\gamma}(\DdagrigA (V)),$ we obtain a map 
\[
i_p^+\,:\,U_p^+(V, \mathbb D) \rightarrow K^{\bullet}_p(V),
\]
which we will consider as a local condition at $p.$  For each $\ell\in S\setminus \{p\}$ we set 
\[
U_{\ell}^+(V)= \left [V^{I_{\ell}} \xrightarrow{\Fr_{\ell}-1} V^{I_{\ell}}\right ],
\]
where $\Fr_{\ell}$ denotes the geometric Frobenius and  the terms are placed in degrees $0$ and $1.$
Define
\[
i_{\ell}^+\,:\,U_{\ell}^+(V) \rightarrow C^{\bullet}(G_{\ell},V)
\]
by 
\[
\begin{aligned}
&i_{\ell}^+(x)= x &&\text{in degree $0$,}\\
&(i_{\ell}^+(x))(\Fr_{\ell})=x &&\text{in degree $1$.}
\end{aligned}
\]
Set  
\[
K_S^{\bullet}(V)=K_p^{\bullet}(V) \bigoplus \left (\underset{\ell\in S\setminus \{p\}}\bigoplus C^{\bullet}(G_{\ell},V)\right )
\] 
and $U^+_S(V, \mathbb D)=U^+_{p}(V,\mathbb D) \oplus \left (\underset{\ell\in S\setminus \{p\}}\bigoplus U_{\ell}^+(V)
\right ).$ To uniformize notation, we will often write $K_{\ell}^{\bullet}(V)$ for $C^{\bullet}(G_{\ell},V)$ and $U_{\ell}^+(V,\mathbb D)$ for  $U_{\ell}^+(V)$ if $\ell\neq p.$ We have a diagram
\[
\xymatrix{
C^{\bullet}(G_{\QQ,S},V) \ar[r]^{\res_S} &K_S^{\bullet}(V)\\
& U_S^+(V, \mathbb D) \ar[u]^{i_S^+},
}
\]
where $i_S^+=(i_{\ell}^+)_{\ell\in S}$ and $\res_S$ denotes the localization map. 
\begin{define}
The Selmer complex associated to these data is defined as 
\[
S^{\bullet}(V, \mathbb D)=\mathrm{cone} \left [C^{\bullet}(G_{\QQ,S},V) \oplus 
U_S^+(V,\mathbb D) \xrightarrow{\res_S-i_S^+} K_S^{\bullet}(V) \right ][-1].
\]
 \end{define}
Each element $x_\textup{f}\in S^{i}(V, \mathbb D)$ may be written as a triple
\[
x_{\textup{f}}=(x,(x_{\ell}^+)_{\ell\in S}, (\lambda_{\ell})_{\ell\in S}),
\]
where $x\in C^{i}(G_{\QQ,S},V),$
$(x_{\ell}^+)_{\ell\in S}\in U_S^+(V,\mathbb D)^i$ and $(\lambda_{\ell})_{\ell\in S}\in K_S^{i-1}(V),$ 
and $x_{\textup{f}}$ is a cocycle if and only if 
\[
d(x)=0, \quad d \left ((x_{\ell}^+)_{\ell\in S}\right )=0, \quad
i_S((x_{\ell}^+)_{\ell\in S})=\res_S(x)+d(\lambda_{\ell})_{\ell\in S}.
\]
To simplify notation, we will often write $x_{\textup{f}}=(x,(x_{\ell}^+), (\lambda_{\ell}))$
in place of 
\linebreak
$x_{\textup{f}}=(x,(x_{\ell}^+)_{\ell\in S}, (\lambda_{\ell})_{\ell\in S}).$
\begin{define}
We denote by   $\RG (V,\mathbb D)$ the class of $S^{\bullet}(V, \mathbb D)$
in the  derived category of  $A$-modules  and define  
\[
H^i(V, \mathbb D):= \mathbf{R}^i\Gamma (V,\mathbb D).
\]
For each cocycle $x\in S^{i}(V, \mathbb D)$ we write $[x]$ for the class 
of $x$ in $H^i(V, \mathbb D).$
\end{define}

\begin{define}
\label{def:thesingularcone}
For each $\ell \in S$ we define a complex
$$\widetilde{U}_\ell(V,\mathbb{D}):=\textup{cone}\left(U_\ell^+(V,\mathbb{D})\stackrel{-i_\ell^+}{\lra} K_\ell^\bullet(V)\right)\,.$$
and set $\widetilde{U}_S(V,\mathbb{D}):={\displaystyle\bigoplus_{\ell\in S} \widetilde{U}_\ell(V,\mathbb{D})}$.
\end{define}
We have the following tautological exact triangles in the derived category
\be\label{eqn:selmersequence}
\widetilde{U}_S(V,\mathbb{D})[-1]\lra\RG (V,\mathbb D)\lra \RG(G_{\QQ,S},V)\stackrel{\widetilde{\textup{res}}_S}{\lra} \widetilde{U}_S(V,\mathbb{D})\,,
\ee
and for each $\ell \in S$,
\be\label{eqn:localsequence}
{U}_\ell^+(V,\mathbb{D})\lra K_\ell^\bullet(V)\lra \widetilde{U}_\ell(V,\mathbb{D})\lra {U}_\ell^+(V,\mathbb{D})[1]\,.
\ee

Set $\widetilde{\mathbb{D}}=\DdagrigA (V)/\mathbb D.$
Let $\RG(G_p,\mathbb{D})$ (resp. $\RG(G_p,\widetilde{\mathbb{D}})$)
denote the class of the complex $C_{\varphi,\gamma}^{\bullet}(\mathbb D)$ 
(resp. $C_{\varphi,\gamma}^{\bullet}(\widetilde{\mathbb D})$) 
in the corresponding derived category. We have a distinguished triangle
\be
\label{eqn:localsequencetilde1}
\RG(G_p,\mathbb{D})\lra \RG(G_p,\DdagrigA (V))\stackrel{\frak{s}}{\lra} \RG(G_p,\widetilde{\mathbb{D}})\lra\RG(G_p,\mathbb{D})[1]{\,.}
\ee
The quasi-isomorphism $\alpha$ of Proposition~\ref{proposition quasi-isomorphisms} together with the sequence (\ref{eqn:localsequence}) and (\ref{eqn:localsequencetilde1}) induces a functorial quasi-isomorphism 
\be\label{eqn:identifyingsungularquotients}\RG(G_p,\widetilde{\mathbb{D}})\stackrel{\rm qis}{\lra}  \widetilde{U}_p(V,\mathbb{D})\,.\ee
It is easy to see that $H^0(\widetilde{U}_{\ell}(V,\mathbb{D}))=0$
and the composition of (\ref{eqn:selmersequence}) and 
(\ref{eqn:identifyingsungularquotients}) induces a map
\be
\label{eqn:definitionofcoboundary}
\partial_0\,:\,H^0(\widetilde{\mathbb{D}})\rightarrow \mathbf R^1\Gamma (V,\mathbb D).
\ee
We give below an explicit description of this map. 
Let $d\in H^0(\widetilde{\mathbb D})$ and let $z\in \DdagrigA (V)^{\Delta}$
be any lift of $d.$ Then the class of $\partial_0(d)$ in 
$\mathbf R^1\Gamma (V,\mathbb D)$ can be represented by the cocycle 
\be
\label{eqn:formulaforcoboundary}
(0, (a^+_{\ell}),(\mu_{\ell})) \in C^1(G_{\QQ,S},V)\oplus U^+_S(V,\mathbb D)^1\oplus K^0(V),
\ee
such that  $a_{\ell}^+=\mu_{\ell}=0$ for all $\ell\neq p,$ and 
\[
\mu_p=\alpha (z),\qquad a_p^+=((\varphi-1)z, (\gamma-1)z),
\]
where $\alpha$ is the map from Proposition~\ref{proposition quasi-isomorphisms}.

\subsection{Duality for Selmer complexes}
We review the duality theory for Selmer complexes.
As usual, we denote by $\tau_{\geqslant m}$ the truncation map. 
Define
\[
Z^{\bullet}=\mathrm{cone} \left (\tau_{\geqslant 2} C^{\bullet}(G_{\QQ,S},
A(1))\xrightarrow{\res_S} \tau_{\geqslant 2} K^{\bullet}_S(A(1))\right ) [-1].
\]
The computation of the Brauer group in Class Field Theory  
yields an exact sequence 
\[
0\rightarrow H^2_S(A(1)) \rightarrow \underset{l\in S}\bigoplus H^2(\QQ_\ell, A(1))
\rightarrow A \rightarrow 0,
\]
which allows to construct a canonical, up to homotopy, quasi-isomorphism
\[
r_S\,:\,Z^{\bullet} \simeq A[-3] 
\]
(see \cite{Ne06}, Section 5.4.1). The formula
\begin{equation*}
(a_{i-1},a_i) \cup_{K,p} (b_{j-1}, b_j)= (a_i\cup b_{j-1}+(-1)^ja_{i-1}\cup \varphi (b_j),
a_i\cup b_j),
\end{equation*}
where $\cup$ denotes the cup-product of continuous Galois cochains, 
defines a morphism of complexes 
\[
\cup_{K,p}\,:\,K_p^{\bullet}(V)\otimes K_p^{\bullet}(V^*(1)) \rightarrow K^{\bullet}_p(A(1))
\]
(see \cite{Ben14b}, Proposition 1.1.5). To simplify notation,
we will write $\cup$ instead $\cup_{K,p}.$

Let $\mathbb D^{\perp}=\mathrm{Hom}_{\mathcal R}(\Ddagrig (V)/\mathbb D, \mathcal R_A(\chi))$
be the orthogonal complement to $\mathbb D$ under the canonical duality
$\DdagrigA (V)\times \DdagrigA (V^*(1)) \rightarrow \RR_A(\chi).$
Following  Nekov\'a\v r, we define a pairing 
\begin{equation*}
\cup_{V,\mathbb D}\,\,:\,\,S^{\bullet} (V,\mathbb D)\otimes_{A}
S^{\bullet} (V^*(1),\mathbb D^{\perp}) \rightarrow A[-3]
\end{equation*}
as the composition map 
\[
S^{\bullet} (V,\mathbb D)\otimes_{A}
S^{\bullet} (V^*(1),\mathbb D^{\perp}) \xrightarrow{} Z^{\bullet}
\xrightarrow{r_S} A[-3],
\]
where the first arrow is induced by the cup-product 
\begin{equation}
\label{formulaforcupproductofSelmercomplexes}
(x, (x_{\ell}^+),(\lambda_{\ell})) \cup (y, (y_{\ell}^+),(\mu_{\ell}))=
(x\cup y, (\lambda_{\ell}\cup i_S^+(y_{\ell}^+)+(-1)^{\deg (x)} \res_S (x)\cup \mu_{\ell}))
\end{equation}
(see \cite{Ne06}, Proposition~1.3.2).
Therefore we have a morphism in the derived category of $A$-modules 
\begin{equation}
\label{cupproductforSelmercomplexes}
\RG (V,\mathbb D)\otimes^{\mathbb L}_{A}
\RG (V^*(1),\mathbb D^{\perp}) \rightarrow A[-3].
\end{equation}
\begin{prop}
\label{prop:symmetryofcupproduct}
We have a commutative diagram
\begin{equation*}
\xymatrix{
\RG (V,\mathbb D) \otimes_A^{\mathbb L} \RG (V^*(1),\mathbb D^{\perp})
\ar[rr]^(.7){\cup_{V,\mathbb D}} \ar[d]^{s_{12}} &&A[-3] \ar[d]^{=}\\
\RG (V^*(1),\mathbb D^{\perp}) \otimes_A^{\mathbb L} \RG (V,\mathbb D)
\ar[rr]^(.7){\cup_{V^*(1),\mathbb D^{\perp}}}  &&A[-3],
}
\end{equation*}
where $s_{12}(x\otimes y)= (-1)^{\deg (x)\deg (y)}y\otimes x.$
\end{prop}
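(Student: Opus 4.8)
The plan is to prove the commutativity of the diagram by reducing it to the known symmetry properties of the two ingredients from which the pairing $\cup_{V,\mathbb{D}}$ is built: the cup-product of continuous Galois cochains $\cup$ and the cup-product of $(\varphi,\gamma)$-complexes, together with the compatibility of the trivialisation $r_S$ with the switch map. First I would recall that $\cup_{V,\mathbb{D}}$ is the composite $S^{\bullet}(V,\mathbb{D})\otimes_A S^{\bullet}(V^*(1),\mathbb{D}^\perp)\to Z^{\bullet}\xrightarrow{r_S} A[-3]$, where the first arrow is given by the explicit formula \eqref{formulaforcupproductofSelmercomplexes}, and similarly for $\cup_{V^*(1),\mathbb{D}^\perp}$. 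Thus it suffices to show two things: (a) the first arrows fit into a commutative square with $s_{12}$ and the graded-commutativity involution on $Z^{\bullet}$; and (b) the quasi-isomorphism $r_S:Z^{\bullet}\simeq A[-3]$ is (homotopy-)invariant under that involution, i.e. it intertwines the switch on $Z^{\bullet}$ with the identity on $A[-3]$. Step (b) is essentially the statement that the local and global fundamental classes in class field theory are symmetric under the switch map, which is contained in Nekov\'a\v r's treatment \cite[\S5.4]{Ne06} (it is the source-level input for the symmetry of the Poitou--Tate pairing), so I would cite it directly.

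The core computation is step (a), and it is a sign-chase. Writing $x_{\mathrm f}=(x,(x_\ell^+),(\lambda_\ell))$ and $y_{\mathrm f}=(y,(y_\ell^+),(\mu_\ell))$, one compares $x_{\mathrm f}\cup y_{\mathrm f}$ with $y_{\mathrm f}\cup x_{\mathrm f}$ using \eqref{formulaforcupproductofSelmercomplexes}. The global component $x\cup y$ versus $y\cup x$ is handled by the standard graded-commutativity of the cochain cup-product up to the explicit homotopy of Nekov\'a\v r \cite[\S3.4]{Ne06}. The local components involve the terms $\lambda_\ell\cup i_S^+(y_\ell^+)$ and $\res_S(x)\cup\mu_\ell$; here one must use the analogous graded-commutativity for $\cup_{K,p}$ on $K_p^{\bullet}$ (Proposition~1.1.5 of \cite{Ben14b}, as recalled in the displayed formula for $\cup_{K,p}$ above) together with the fact that $i_S^+$ and $\res_S$ are morphisms of complexes, so that $\lambda_\ell\cup i_S^+(y_\ell^+)\pm i_S^+(x_\ell^+)\cup\mu_\ell$ is carried to the correspondingly swapped expression up to a controlled homotopy and the sign $(-1)^{\deg x\deg y}$. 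At the level of $(\varphi,\Gamma)$-modules, the orthogonality $\mathbb{D}^\perp=\mathrm{Hom}_{\mathcal R}(\Ddagrig(V)/\mathbb{D},\mathcal{R}_A(\chi))$ guarantees that the local condition pairs with its annihilator, so the Selmer-complex cup-product is well defined on both sides and the homotopies above respect the subcomplexes $U_S^+$; this is where one invokes that $\mathbb{D}$ is an $\mathcal R_A$-direct summand.

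Assembling (a) and (b): the two homotopies (global, via \cite[\S3.4]{Ne06}, and local, via \cite[Prop.~1.1.5]{Ben14b}) combine into a homotopy on $Z^{\bullet}$ between the composite $s_{12}$ followed by $x\cup y\mapsto y\cup x$ and $(-1)^{\deg x\deg y}$ times the switch, and then $r_S$ being symmetric kills the remaining discrepancy, yielding the commutative square in the statement. I expect the main obstacle to be purely bookkeeping: tracking the Koszul signs through the three-step cone construction ($K_p^{\bullet}=\mathrm{Tot}$ of a $\varphi-1$ complex, then the cone defining $S^{\bullet}$, then the cone defining $Z^{\bullet}$) so that all the homotopies are compatible and the single global sign $(-1)^{\deg x\deg y}$ emerges, rather than any conceptual difficulty — the symmetry is ultimately inherited from that of ordinary Galois cohomology and local Tate duality. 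In fact this proposition is the exact analogue in the $(\varphi,\Gamma)$-module setting of \cite[Prop.~6.3.4]{Ne06}, and I would organise the proof to parallel that reference, substituting $\cup_{K,p}$ and the identification \eqref{eqn:identifyingsungularquotients} for the classical local terms.
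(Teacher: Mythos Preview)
Your outline is sound and indeed parallels Nekov\'a\v r's argument in \cite[\S6.3]{Ne06}, adapted to the $(\varphi,\Gamma)$-module local conditions via the cup-product $\cup_{K,p}$. However, the paper does not give a self-contained proof here: its entire proof is a one-line citation to \cite[Theorem~3.1.8]{Ben14b}, where the first author carries out precisely the sign-chase and homotopy construction you describe. So your proposal is correct and matches the underlying argument, but strictly speaking there is no in-paper proof to compare against beyond that reference.
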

\begin{proof} This is \cite{Ben14b}, Theorem~3.1.8.
\end{proof}

If $A=E$ is a finite extension of $\QQ_p,$ this pairing induces a canonical duality
\begin{equation*}
\mathrm{Hom}_E(\RG (V,\mathbb D),E)\simeq \RG (V^*(1),\mathbb D^{\perp}),
\end{equation*}
but this is not true in general.

We also review here the Iwasawa theoretic analog of $\RG (V,\mathbb D).$ 
Recall that $\overline V=V\otimes_A\mathcal H_A^{\iota}$ and $\overline{\mathbb D}=
\mathbb{D}\otimes_{\RR_A}\DdagrigA (\mathcal H_A^{\iota}).$
With the previous notations define
\[
S^{\bullet}_{\Iw}(V,\mathbb D)=S^{\bullet}(\overline V,
\overline{\mathbb D}).
\]
We will write $\RG_{\Iw}(V,\mathbb D)$ for the class of 
$S^{\bullet}_{\Iw}(V,\mathbb D)$ in the derived category of $\mathcal H_A$-modules. 
Note that the following version of the control theorem holds true:
\begin{equation*}
\RG_{\Iw}(V,\mathbb D)\otimes_{\mathcal H}^{\mathbb L}E = \RG (V,\mathbb D).
\end{equation*}

\section{$p$-adic height pairings}  

\subsection{Construction of $p$-adic heights} 
\label{subsec:constructpadicheights}
We provide in this section an overview of the construction of $p$-adic heights for $p$-adic representations 
over affinoid algebras following \cite{Ben14b}. We shall make use of this general framework in order to obtain the analogues of the pairings considered in \cite{venerucciarticle} in the $p$-ordinary set up (that were in turn constructed relying on the general machine developed in \cite{Ne06}). We keep previous notation and conventions.  Let $A$ be an affinoid algebra over 
$E$ and let $V$ be a $p$-adic representation of $G_{\QQ,S}$ with coefficients in $A.$ 
We fix a $(\varphi,\Gamma)$-submodule $\mathbb D$ of $\DdagrigA (V)$ which is a
$\RR_A$-module direct summand of $\DdagrigA (V).$ Let $J_A$ denote the kernel of the augmentation map $\mathcal H_A\rightarrow A.$ Note that $J_A=(X)$ and $J_A/J_A^2\simeq A$ 
as $A$-modules. Since Selmer complexes commute with base change (see \cite{Pot13}, Theorem~1.12), the exact sequence 
\[
0\rightarrow J_A/J_A^2\rightarrow \mathcal H_A/J_A^2 \rightarrow A \rightarrow 0
\]
gives rise to a distinguished triangle 
\begin{equation}
\label{eqn: connecting cyclotomic beta}
\RG (V,\mathbb D)\otimes_AJ_A/J_A^2 \rightarrow \RG _{\Iw}(V,\mathbb D)\otimes^{\mathbb L}_{\mathcal H}\mathcal H/(X^2) \rightarrow  \RG (V,\mathbb D) \xrightarrow{\beta^{\mathrm{cyc}}_{V,\mathbb D}}
\RG (V,\mathbb D)[1]\otimes_AJ_A/J_A^2.
\end{equation}

\begin{define}
\label{definition:height pairing} The $p$-adic height pairing associated to the data 
$(V,\mathbb D)$ is defined as the morphism
\begin{multline*}
h_{V,\mathbb D}\,\,: \,\,
\RG (V,\mathbb D)\otimes_A^{\mathbb L}\RG (V^*(1),\mathbb D^{\perp})
\xrightarrow{\beta^{\mathrm{cyc}}_{V,\mathbb D}\otimes\mathrm{id}}\\
\left (\RG (V,\mathbb D)[1]\otimes J_A/J_A^2\right )\otimes_E^{\mathbb L}\RG (V^*(1),\mathbb D^{\perp})
\xrightarrow{\cup_{V,\mathbb D}} J_A/J_A^2[-2]{\,.}
\end{multline*}
\end{define}

The pairing $h_{V,\mathbb D}$ induces a pairing on cohomology groups 
\[
h_{V,\mathbb D}\,\,:\,\,
H^1(V,\mathbb D)\times H^1(V^*(1),\mathbb D^{\perp})
\lra J_A/J_A^2.
\]

\begin{prop}
\label{prop: propertiesofheights} The following diagram 
\begin{equation}
\nonumber
\xymatrix{
\RG(V,\mathbb D) \otimes_A^{\mathbb L} \RG(V^*(1),\mathbb D^{\perp}) 
\ar[rr]^(.7){h_{V,\mathbb D}^{}} \ar[d]^{s_{12}} &&J_A/J_A^2[-2]\ar[d]^{=}\\
\RG(V^*(1),\mathbb D^{\perp}) \otimes_A^{\mathbb L} \RG(V,\mathbb D)
\ar[rr]^(.7){h_{V^*(1),\mathbb D^{\perp}}^{}}
&&J_A/J_A^2[-2],
}
\end{equation}
where $s_{12}(a\otimes b)=(-1)^{\deg (a)\deg(b)} b\otimes a,$ commutes.

In particular, 
\[
h_{V,\mathbb D}^{i,j}(x,y)=(-1)^{ij}h_{V^*(1),\mathbb D^{\perp}}^{j,i}(y,x).
\]
\end{prop}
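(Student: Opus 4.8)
The plan is to deduce the symmetry of $h_{V,\mathbb D}$ from two inputs that are already available: the symmetry of the cup-product pairing $\cup_{V,\mathbb D}$ on Selmer complexes (Proposition~\ref{prop:symmetryofcupproduct}), and a compatibility between the Bockstein-type map $\beta^{\mathrm{cyc}}_{V,\mathbb D}$ and its counterpart $\beta^{\mathrm{cyc}}_{V^*(1),\mathbb D^{\perp}}$ under the canonical identification of the two Iwasawa Selmer complexes. Since $h_{V,\mathbb D}$ is by Definition~\ref{definition:height pairing} the composite of $\beta^{\mathrm{cyc}}_{V,\mathbb D}\otimes\mathrm{id}$ with $\cup_{V,\mathbb D}$ (up to the shift $J_A/J_A^2[-2]$), the statement will follow once we know that transposing the two factors via $s_{12}$ intertwines $\beta^{\mathrm{cyc}}_{V,\mathbb D}$ on the first variable with $\beta^{\mathrm{cyc}}_{V^*(1),\mathbb D^{\perp}}$ on the second.

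First I would unwind the definition of $\beta^{\mathrm{cyc}}_{V,\mathbb D}$ as the connecting morphism of the distinguished triangle obtained by tensoring $\RG_{\Iw}(V,\mathbb D)$ with $0\to J_A/J_A^2\to \mathcal H_A/J_A^2\to A\to 0$, and observe that the cyclotomic deformation is self-dual in the appropriate sense: the involution $\iota$ on $\mathcal H_A$ identifies $\overline{V^*(1)}$ with $(\overline V)^*(1)$ and $\overline{\mathbb D^{\perp}}$ with $(\overline{\mathbb D})^{\perp}$, so that $\RG_{\Iw}(V^*(1),\mathbb D^{\perp})$ is canonically the Iwasawa dual of $\RG_{\Iw}(V,\mathbb D)$ twisted by $\iota$. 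Under this identification the generator $X$ of $J_A$ is sent to (a unit multiple of) $\iota(X)$, which acts on the dual side with a controlled sign; tracking this sign is exactly what produces the $(-1)^{ij}$ in the conclusion. Concretely I would write the height pairing as the cup-product on $\RG_{\Iw}$ followed by the boundary map $J_A/J_A^2\otimes J_A/J_A^2 \to J_A/J_A^2$ coming from the multiplication $\mathcal H_A/J_A^3 \to \mathcal H_A/J_A^2$, and then invoke the graded-commutativity of that cup-product (the Iwasawa analogue of Proposition~\ref{prop:symmetryofcupproduct}, proved in \cite{Ben14b}).

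The key steps in order are: (1) record that $\cup_{V,\mathbb D}$ is graded-symmetric in the sense of Proposition~\ref{prop:symmetryofcupproduct}, and that the same holds for the Iwasawa-theoretic cup-product $\RG_{\Iw}(V,\mathbb D)\otimes^{\mathbb L}\RG_{\Iw}(V^*(1),\mathbb D^{\perp})\to \mathcal H_A[-3]$; (2) identify $\beta^{\mathrm{cyc}}_{V,\mathbb D}$ with cup-product against the canonical class in $H^1_{\Iw}$ corresponding to $X\in J_A/J_A^2$, i.e.\ present the height pairing as a "triple Massey-type" composite of the Iwasawa cup-product; (3) combine (1) and (2): transposing the two outer factors via $s_{12}$ moves $X$ from the first slot to the second and contributes the sign $(-1)^{\deg x}$ from the Koszul rule built into formula~(\ref{formulaforcupproductofSelmercomplexes}), and comparing the two composites degree by degree yields $h_{V,\mathbb D}^{i,j}(x,y)=(-1)^{ij}h_{V^*(1),\mathbb D^{\perp}}^{j,i}(y,x)$; (4) deduce the commutative diagram in the derived category from the cochain-level identity, using that all the morphisms involved ($\beta^{\mathrm{cyc}}$, the cup-products, $s_{12}$) are defined functorially.

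The main obstacle I expect is step (2)–(3): making the bookkeeping of signs completely rigorous when one represents $h_{V,\mathbb D}$ as an iterated cup-product and then commutes factors, because the shift $[-2]$, the Koszul signs in~(\ref{formulaforcupproductofSelmercomplexes}), and the sign picked up by $\iota$ acting on $J_A/J_A^2$ all interact. The cleanest route is probably to avoid choosing explicit cocycle representatives and instead argue entirely in the derived category: express $\beta^{\mathrm{cyc}}_{V,\mathbb D}$ via the octahedral axiom applied to the filtration $J_A^2\subset J_A\subset \mathcal H_A$ on $\mathcal H_A/J_A^3$, note that this construction is manifestly compatible with duality (since the whole package $\RG_{\Iw}$, cup-product, and the filtration on $\mathcal H_A$ is), and then the desired diagram is just the image of Proposition~\ref{prop:symmetryofcupproduct} (in its Iwasawa form) under the exact functor $-\otimes^{\mathbb L}_{\mathcal H}\mathcal H/J_A^3$. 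This is the argument carried out in \cite{Ben14b}, and I would cite it for the sign computations while reproducing the structural outline above.
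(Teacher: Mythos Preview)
The paper's own proof is a one-line citation: ``This is Theorem~I of \cite{Ben14b}.'' Your proposal is a sketch of the argument behind that theorem, and you explicitly acknowledge as much at the end. In that sense your approach and the paper's are the same---both defer to \cite{Ben14b}---only you have unpacked the structural outline rather than leaving it as a black box.

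A brief remark on the sketch itself: steps (1) and (4) are sound, and the derived-category route you describe at the end (pushing the Iwasawa version of Proposition~\ref{prop:symmetryofcupproduct} through the filtration $J_A^2\subset J_A\subset \mathcal H_A$ and invoking functoriality) is the correct and clean way to handle the signs. Step (2), where you propose to identify $\beta^{\mathrm{cyc}}$ with cup-product against a canonical Iwasawa class, is a plausible heuristic but is not quite how the argument in \cite{Ben14b} runs; there the Bockstein is treated directly as the connecting morphism of the triangle, and the symmetry comes from the compatibility of the Iwasawa cup-product with base change along $\mathcal H_A\to \mathcal H_A/J_A^2$ together with the involution $\iota$. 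Your alternative description would require justifying that the ``canonical class'' you have in mind really realizes $\beta^{\mathrm{cyc}}$ as left-cup, which is an extra lemma. Since you fall back on the octahedral/base-change argument anyway, I would drop step (2) and go straight to that formulation.
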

\begin{proof} This is Theorem~I of \cite{Ben14b}.
\end{proof}

Till the end of this subsection we assume that $A=E$ is a finite extension of 
$\QQ_p$ and that the restriction of $V$  to the decomposition group at $p$  is semistable. 

\begin{define}
\label{definition: splitting submodule} Assume that  $V$ is semistable at $p.$
We say that a $(\varphi,N)$-submodule $D$ of $\Dst (V)$ is a splitting submodule 
if 
\begin{equation}
\label{formula:splittingsubmodule}
\Dst (V)=D\oplus \F^0\Dst (V)
\end{equation}
as $E$-vector spaces.
\end{define} 

Let $D$ be  a splitting submodule  of $\Dst (V)$. By \cite{Ber08}, $D$ corresponds to
a unique $(\varphi,\Gamma)$-submodule $\mathbb D$  of $\Ddagrig (V)$ such that $\CDst (\mathbb D)=D.$
To simplify notation, we write $\RG (V,D)$   and $h_{V,D}$ for 
$\RG (V,\mathbb D)$ and $h_{V,\mathbb D}$ respectively.

We fix an isomorphism $J_E/J_E^2\simeq E$ setting $\gamma-1\pmod{J_E^2}\mapsto
\log \chi (\gamma).$
\begin{prop}
\label{prop: comparisionofheightswithouttrivialzeros}
Let $D$ be a splitting submodule of $\Dst (V)$. Assume that the following conditions hold true:

\textup{a)} $\Dcris (V)^{\varphi=1}=\Dcris (V^*(1))^{\varphi=1}=0;$

\textup{b)} $H^0(\Ddagrig (V)/\mathbb D)=H^0(\mathbb D^*(\chi))=0,$ where $\mathbb D^*=
\mathrm{Hom}_{\mathcal R}(\mathbb D,\mathcal R).$
\newline
Then we have $\mathbf R^1\Gamma (V,D)=H^1_{\textup{f}}(\QQ,V),$ $\mathbf R^1\Gamma (V^*(1),D^{\perp})=H^1_{\textup{f}}(\QQ,V^*(1)),$ and 
\[
h_{V,D}\,\,:\,\,
H^1_{\textup{f}} (\QQ,V)\times H^1_{\textup{f}} (\QQ,V^*(1))
\rightarrow E
\]
coincides with the $p$-adic height pairing constructed by Nekov\'a\v r in \cite{Ne92} \footnote{The $p$-adic height pairing constructed by Nekov\'a\v r depends on the choice of splitting of the Hodge filtration. One should take here the splitting defined by the decomposition \eqref{formula:splittingsubmodule}.}, where $H^1_{\textup{f}}(\QQ,-)$ stands for the Bloch--Kato Selmer group.
\end{prop}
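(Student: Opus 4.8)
The plan is to identify both sides of the claimed equality by reducing everything to the general comparison theorem of \cite{Ben14b} (Theorem~11 there) and then invoking the known description of Nekov\'a\v r's height pairing in terms of Selmer complexes. First I would observe that under hypotheses (a) and (b) the local conditions $U_\ell^+$ at $\ell\neq p$ are acyclic in the relevant degree and the triangle \eqref{eqn:selmersequence} together with Proposition~\ref{prop:propertiesofmodulesofrank1} forces the coboundary map $\partial_0$ to vanish (since $H^0(\widetilde{\mathbb D})=H^0(\Ddagrig(V)/\mathbb D)=0$); dually the same holds for $V^*(1)$ and $\mathbb D^\perp$ because $H^0(\mathbb D^*(\chi))=0$. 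Hence the exact sequence displayed in the introduction degenerates to an isomorphism $\mathbf R^1\Gamma(V,\mathbb D)\xrightarrow{\sim}H^1_{\textup{f}}(\QQ,V)$, and likewise $\mathbf R^1\Gamma(V^*(1),\mathbb D^\perp)\xrightarrow{\sim}H^1_{\textup{f}}(\QQ,V^*(1))$. One must check along the way that this identification really lands inside the Bloch--Kato subgroup: the local condition at $p$ cut out by $\mathbb D$ agrees with the Bloch--Kato local condition precisely because $D$ is a splitting submodule, so that $H^1(\mathbb D)\to H^1(\Ddagrig(V))$ has image $H^1_{\textup{f}}(\QQ_p,V)$; this is where Definition~\ref{definition: splitting submodule} and condition (a) (which rules out the exceptional/trivial-zero contributions, i.e.\ kills $\Dcris(V)^{\varphi=1}$ and its dual) are used.

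Next I would unwind the definition of $h_{V,\mathbb D}$ (Definition~\ref{definition:height pairing}): it is built from the cyclotomic Bockstein $\beta^{\textup{cyc}}_{V,\mathbb D}$ coming from the triangle
\[
\RG(V,\mathbb D)\otimes_A J_A/J_A^2 \to \RG_{\Iw}(V,\mathbb D)\otimes^{\mathbb L}_{\mathcal H}\mathcal H/(X^2)\to \RG(V,\mathbb D)\xrightarrow{\beta^{\textup{cyc}}_{V,\mathbb D}}\RG(V,\mathbb D)[1]\otimes_A J_A/J_A^2
\]
followed by the cup-product pairing \eqref{cupproductforSelmercomplexes}. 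Under the identifications of the previous paragraph, and after fixing $J_E/J_E^2\simeq E$ via $\gamma-1\mapsto\log\chi(\gamma)$, this is exactly the shape of the Selmer-complex-theoretic construction of the $p$-adic height pairing that Nekov\'a\v r gives in \cite[\S 11]{Ne06}; so the content is to match the two Bockstein maps. For this I would appeal directly to \cite[Theorem~11]{Ben14b}, which provides the precise comparison between $h_{V,\mathbb D}$ and Nekov\'a\v r's pairing $h^{\textup{Nek}}$ and which, as the text says, generalizes \cite[Theorem~11.4.6]{Ne06} to the non-ordinary case; combined with the vanishing of the correction terms (again consequences of (a) and (b)), this yields equality on the nose, with the correct splitting of the Hodge filtration being the one from \eqref{formula:splittingsubmodule} as noted in the footnote.

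A point that needs care is the compatibility of the local-at-$p$ comparison maps: one works with the complex $C^\bullet_{\varphi,\gamma}(\mathbb D)$ and its image in $K_p^\bullet(V)$ via $\alpha$ and $\xi$ of Proposition~\ref{proposition quasi-isomorphisms}, whereas Nekov\'a\v r phrases his local conditions using a $\Dst$-valued complex at $p$; bridging these requires Berger's comparison between the $(\varphi,\Gamma)$-module $\mathbb D$ and the filtered module $D=\CDst(\mathbb D)$, and one must check that the quasi-isomorphisms respect the cup-products used to define the two height pairings. This compatibility is precisely what is packaged in \cite{Ben14b}, so in the write-up I would cite it rather than reprove it.

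I expect the main obstacle to be purely bookkeeping: verifying that the various identifications ($\mathbf R^1\Gamma(V,\mathbb D)\cong H^1_{\textup{f}}$, the local duality, the normalization of $J_E/J_E^2$, and the sign conventions in \eqref{formulaforcupproductofSelmercomplexes} and Proposition~\ref{prop:symmetryofcupproduct}) are mutually consistent, so that the equality with $h^{\textup{Nek}}$ holds with no spurious scalar. Everything conceptual is already available: the degeneration of the Selmer complex to the Bloch--Kato Selmer group under (a)--(b), and the comparison of Bockstein-type height pairings from \cite{Ben14b}. So the proof reduces to: (i) derive $\mathbf R^1\Gamma(V,\mathbb D)=H^1_{\textup{f}}(\QQ,V)$ and the analogue for $V^*(1)$ from (a), (b), Proposition~\ref{prop:propertiesofmodulesofrank1} and the triangles \eqref{eqn:selmersequence}, \eqref{eqn:localsequence}; (ii) quote \cite[Theorem~11]{Ben14b} to identify $h_{V,D}$ with $h^{\textup{Nek}}$ attached to the splitting \eqref{formula:splittingsubmodule}; (iii) observe the correction terms in that comparison vanish under (a)--(b).
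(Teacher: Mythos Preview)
Your proposal is correct and follows the same approach as the paper: the paper's proof is simply a citation to \cite{Ben14b}, Theorem~III, and your plan unpacks exactly what that reference establishes (identification of $\mathbf R^1\Gamma(V,\mathbb D)$ with $H^1_{\textup{f}}(\QQ,V)$ under (a)--(b), then comparison of the Bockstein-type height with Nekov\'a\v r's). One small point: the paper cites Theorem~III of \cite{Ben14b} here, whereas Theorem~11 of \cite{Ben14b} (which you invoke) is the more general comparison that carries correction terms; your step (iii) noting that these corrections vanish under (a)--(b) is precisely what reduces Theorem~11 to Theorem~III, so the logic is sound either way.
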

\begin{proof} See \cite{Ben14b}, Theorem~III.
\end{proof}

Even when the splitting submodule  $D$ does not satisfy the condition b) of Proposition~\ref{prop: comparisionofheightswithouttrivialzeros}, we can still relate  $\mathbf R^1\Gamma (V,D)$ to the Bloch-Kato Selmer group if the restriction of $V$ on the decomposition group at $p$ satisfies some natural conditions. Conjecturally, such a situation appears if the associated $p$-adic $L$-function has an extra-zero. We refer to \cite[Section~7 ]{Ben14b},
for a systematic study of $p$-adic heights in this setting. In Section~\ref{sec:selmerandheightsformodforms} we review this theory in the particular case of elliptic modular forms.

\subsection{Cassels--Tate pairings} 
\label{subsec:casselstatepairings}
Nekov\'a\v r's construction of abstract Cassels-Tate pairings 
generalizes directly to our case. Let $A$ be an affinoid algebra.
We assume that $A$ is  an integral domain and denote by $\mathrm{Fr} (A)$ its field of fractions. Let $V$ be a $p$-adic representation of $G_{\QQ,S}$ with coefficients in $A$ and let  $\mathbb D$ be a $(\varphi,\Gamma)$-submodule of $\DdagrigA (V)$ such that  $\mathbb D$ is a $\RR_A$-module direct summand of $\Ddagrig (V).$  Consider the complex of flat modules 
\[
C^{\bullet}=\left [A\xrightarrow{-\mathrm{id}} \mathrm{Fr} (A)\right ] 
\]
placed in degrees $0$ and $1.$ Let $X^{\bullet}$ be a complex of 
$A$-modules.  The $\mathrm{Tor}$ spectral sequence for 
$X^{\bullet}\otimes_A C^{\bullet}$ degenerates into exact sequences
\begin{equation}
\label{spectralsequencefortor}
0\rightarrow H^{i-1}(X^{\bullet})\otimes_A (\mathrm{Fr}(A)/A)
\rightarrow H^i(X^{\bullet}\otimes_A C^{\bullet})
\rightarrow H^i(X^{\bullet})_{\mathrm{tor}}
\rightarrow 0. 
\end{equation}
Applying the functor $\otimes_AC^{\bullet}$ to the 
pairing (\ref{cupproductforSelmercomplexes}) and passing to cohomology groups, we get pairings
\[
H^i(\RG (V,\mathbb D)\otimes_A C^{\bullet}) \otimes_A 
H^j(\RG (V^*(1),\mathbb D^{\perp})\otimes_A C^{\bullet}) \rightarrow \mathrm{Fr}(A)/A,\qquad i+j=4.
\]
Since $\mathrm{Fr}(A)/A$ is $A$-divisible, it follows from (\ref{spectralsequencefortor})  that  this pairing factors through 
\begin{equation}
\label{casselstatepairing}
\mathrm{CT}_{V,\mathbb D}^{i,j}\,\,:\,\,
H^i(V,\mathbb D)_{\mathrm{tor}}\otimes_A
H^j(V^*(1),\mathbb D^{\perp})_{\mathrm{tor}} \rightarrow \mathrm{Fr}(A)/A.
\end{equation}
\begin{define}
\label{def:casselstatepairing}
We will call  $\mathrm{CT}_{V,\mathbb D}^{i,j}$ generalized Cassels-Tate 
pairings for $(V,\mathbb D).$
\end{define} 

\begin{prop} 
\label{prop:symmetryofcasselstate}
The pairings $\mathrm{CT}_{V,\mathbb D}^{i,j}  $ satisfy
\[
\mathrm{CT}_{V,\mathbb D}^{i,j}(x,y)=(-1)^{ij}\mathrm{CT}_{V,\mathbb D}^{j,i}(y,x),\qquad i+j=4.
\]
\end{prop}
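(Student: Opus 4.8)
The plan is to deduce the symmetry of $\mathrm{CT}_{V,\mathbb D}^{i,j}$ from the already-established symmetry of the cup-product pairing on Selmer complexes (Proposition~\ref{prop:symmetryofcupproduct}), by a careful bookkeeping of signs introduced when one tensors with the two-term complex $C^{\bullet}=[A\xrightarrow{-\mathrm{id}}\mathrm{Fr}(A)]$. First I would record that the flat complex $C^{\bullet}$ is, up to quasi-isomorphism, a resolution of $\mathrm{Fr}(A)/A[-1]$, and that for any complex $X^{\bullet}$ the Tor spectral sequence degenerates into the short exact sequences \eqref{spectralsequencefortor}; applying this to $X^{\bullet}=S^{\bullet}(V,\mathbb D)$ and to $X^{\bullet}=S^{\bullet}(V^*(1),\mathbb D^{\perp})$ identifies the relevant subquotients of $H^i(\RG(V,\mathbb D)\otimes_A C^{\bullet})$ (resp. $H^j$ of the dual) with the torsion submodules $H^i(V,\mathbb D)_{\mathrm{tor}}$ and $H^j(V^*(1),\mathbb D^{\perp})_{\mathrm{tor}}$, exactly as in the construction preceding Definition~\ref{def:casselstatepairing}.

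Next I would apply the functor $-\otimes_A C^{\bullet}$ to the morphism \eqref{cupproductforSelmercomplexes} in the derived category, obtaining a pairing
\[
\left(\RG(V,\mathbb D)\otimes_A C^{\bullet}\right)\otimes_A^{\mathbb L}\left(\RG(V^*(1),\mathbb D^{\perp})\otimes_A C^{\bullet}\right)\lra (A\otimes_A C^{\bullet}\otimes_A C^{\bullet})[-3],
\]
followed by the multiplication map $C^{\bullet}\otimes_A C^{\bullet}\to C^{\bullet}$ (which lands in $\mathrm{Fr}(A)$ in the appropriate degree, hence induces the $\mathrm{Fr}(A)/A$-valued pairing after passing to cohomology in total degree $4$). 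The point is that this whole construction is natural in the pair of complexes, so the commutative square of Proposition~\ref{prop:symmetryofcupproduct}, after being hit with $-\otimes_A C^{\bullet}$ on each factor, yields a commutative square relating the two pairings $h\mapsto \mathrm{CT}_{V,\mathbb D}$ and $\mathrm{CT}_{V^*(1),\mathbb D^{\perp}}$; one then reads off, on $H^i_{\mathrm{tor}}\otimes H^j_{\mathrm{tor}}$ with $i+j=4$, the identity $\mathrm{CT}_{V,\mathbb D}^{i,j}(x,y)=(-1)^{ij}\mathrm{CT}_{V^*(1),\mathbb D^{\perp}}^{j,i}(y,x)$.

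The main obstacle I anticipate is twofold, and both parts are sign bookkeeping rather than conceptual difficulty. First, one must check that the transposition isomorphism $C^{\bullet}\otimes_A C^{\bullet}\xrightarrow{\sim} C^{\bullet}\otimes_A C^{\bullet}$ compatible with the multiplication $C^{\bullet}\otimes_A C^{\bullet}\to C^{\bullet}$ introduces no extra sign (or tracks precisely whatever sign it does introduce), since a shift by $[-1]$ is hidden in the quasi-isomorphism $C^{\bullet}\simeq (\mathrm{Fr}(A)/A)[-1]$ and the Koszul sign rule then produces a $(-1)$ upon swapping two odd-degree factors. Second, one must verify that passing from the derived pairing in total degree $4$ down to the torsion subquotients via \eqref{spectralsequencefortor} is compatible with transposition — i.e. that the boundary identification $H^{i-1}(X^{\bullet})\otimes(\mathrm{Fr}(A)/A)\hookrightarrow H^i(X^{\bullet}\otimes C^{\bullet})$ versus the quotient onto $H^i(X^{\bullet})_{\mathrm{tor}}$ are interchanged correctly under $s_{12}$, so that the $(-1)^{ij}$ coming out of Proposition~\ref{prop:symmetryofcupproduct} is exactly the sign claimed. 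Since $i+j=4$ we have $(-1)^{ij}=(-1)^{ij}$ regardless, and the residual shift-signs should cancel; making this cancellation precise is the real content. This is precisely the argument Nekov\'a\v r runs in \cite{Ne06} for classical Selmer complexes, and I would simply indicate that it carries over verbatim, pointing to the relevant statement there.
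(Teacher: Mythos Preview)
Your approach is correct and essentially the same as the paper's: both deduce the result from the symmetry of the cup-product on Selmer complexes (Proposition~\ref{prop:symmetryofcupproduct}) together with the sign bookkeeping for the Cassels--Tate construction already carried out in Nekov\'a\v{r}'s book (the paper simply cites \cite[(2.10.14.1)]{Ne06}). You have spelled out in more detail what that bookkeeping involves, but the structure of the argument is identical.
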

\begin{proof} This follows from Proposition~\ref{prop:symmetryofcupproduct} and \cite[Formula (2.10.14.1)]{Ne06}. 
\end{proof}

\subsection{The pairing $h^{\mathrm{wt}}_{V_{\frak p},\mathbb D_{\frak p}}$}
\label{subsec:constructionofh^wt}
We maintain the assumptions of Section~\ref{subsec:casselstatepairings}. Assume in addition that 
$A$ is a principal ideal domain. 
Fix an $E$-point of $U=\mathrm{Spm}(A)$ which we will identify with a morphism $\psi\,:\,A\rightarrow E$ and set $\frak{p}=\ker (\psi).$ Let  $V_{\frak p}=V\otimes_{A,\psi}E$ and
$\mathbb D_{\frak p}=\mathbb D\otimes_{A,\psi}E.$ The exact sequence
\[
0\rightarrow \mathfrak{p}\rightarrow A\rightarrow E\rightarrow 0
\] 
induces a distinguished  triangle 
\[
\RG (V,\mathbb D)\otimes_A\frak p \rightarrow 
\RG (V,\mathbb D) \rightarrow \RG (V_{\frak p},\mathbb D_{\frak p})
\rightarrow \RG (V,\mathbb D)[1]\otimes_A\frak p.
\]
Since $\frak p$ is free over $A,$ we have 
$H^i(\RG (V,\mathbb D)\otimes_A\frak p)=
\mathbf{R}^i\Gamma (V,\mathbb D)\otimes_A\frak p,$ 
and this distinguished triangle induces an exact sequence 
\[
\ldots \lra H^1 (V_{\frak p},\mathbb D_{\frak p})\xrightarrow{\mu^{\mathrm{wt}}_{V,\mathbb D}} H^2(V,\mathbb D)\otimes_A \frak p
\lra H^2(V,\mathbb D)\lra \ldots {\,.}
\]
Since $\ker \left (H^2(V,\mathbb D)\otimes_A \frak p
\rightarrow H^2(V,\mathbb D)\right )=
H^2(V,\mathbb D)_{\frak p-\mathrm{tor}}\otimes_A\frak{p},$
we can compose $\mu^{\mathrm{wt}}_{V,\mathbb D}$ with the pairing (\ref{def:casselstatepairing}).

\begin{define} 
\label{def:abstractweightheight}
The weight-height pairing is defined to be the $E$-bilinear map 
\begin{multline}
h^{\mathrm{wt}}_{V_{\frak p},\mathbb D_{\frak p}}\,\,:\,\,
H^1 (V_{\frak p},\mathbb D_{\frak p})\otimes_E H^1 (V_{\frak p}^*(1),\mathbb D_{\frak p}^{\perp})
\xrightarrow{\mu^{\mathrm{wt}}_{V,\mathbb D}\otimes \mu^{\mathrm{wt}}_{V^*(1),\mathbb D^{\perp}}}
\\
\left (H^2(V,\mathbb D)_{\frak p-\mathrm{tor}}\otimes_A\frak{p}\right )\otimes_A 
 \left (H^2(V^*(1),\mathbb D^{\perp})_{\frak p-\mathrm{tor}}\otimes_A\frak{p}\right )\xrightarrow{\mathrm{CT}_{V,\mathbb D}^{2,2}}
\frak{p}^{-1}A/A\otimes_A\frak{p}^2\simeq \frak{p}/\frak{p}^2.  
\end{multline}
\end{define}
We remark that the pairing $h^{\mathrm{wt}}_{V_{\frak p},\mathbb D_{\frak p}}$ is 
symmetric by Proposition~\ref{prop:symmetryofcasselstate}, namely
\[
h^{\mathrm{wt}}_{V_{\frak p},\mathbb D_{\frak p}}(x,y)= 
h^{\mathrm{wt}}_{V_{\frak p}^*(1),\mathbb D_{\frak p}^{\perp}}(y,x).
\]
Tensoring the exact sequence
\[
0\rightarrow \frak p/\frak{p}^2\lra A/\frak{p}^2 \rightarrow 
E\rightarrow 0
\]
with $\RG (V,\mathbb D),$ we get  the coboundary map 
\[
\beta^{\mathrm{wt}}_{V_{\frak p},\mathbb D_{\frak p}}\,:\, H^1(V_{\frak p},\mathbb D_{\frak p}) \rightarrow H^2(V_{\frak p},\mathbb D_{\frak p})\otimes\frak p/\frak{p}^2.
\]

\begin{prop}
\label{prop:comparisiontwodefinitionsofweightheight}
We have 
\[
h^{\mathrm{wt}}_{V_{\frak p},\mathbb D_{\frak p}}(x,y)=\beta^{\mathrm{wt}}_{V_{\frak p},\mathbb D_{\frak p}}(x)\cup_{V_{\frak p},\mathbb D_{\frak p}}y.
\] 
\end{prop}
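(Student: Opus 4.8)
The plan is to compare the two constructions of the weight-height pairing by tracing both through the same homological algebra, exactly as Nekov\'a\v r does for the Cassels--Tate pairing versus the Bockstein map in \cite[\S 11]{Ne06}. The pairing $h^{\mathrm{wt}}_{V_{\frak p},\mathbb D_{\frak p}}$ was built by composing the connecting map $\mu^{\mathrm{wt}}_{V,\mathbb D}$ coming from the triangle induced by $0\to \frak p\to A\to E\to 0$ with the Cassels--Tate pairing $\mathrm{CT}^{2,2}_{V,\mathbb D}$, which itself arises from tensoring the cup-product $\RG(V,\mathbb D)\otimes^{\mathbb L}_A\RG(V^*(1),\mathbb D^{\perp})\to A[-3]$ with $C^{\bullet}=[A\xrightarrow{-\mathrm{id}}\mathrm{Fr}(A)]$. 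On the other side, $\beta^{\mathrm{wt}}_{V_{\frak p},\mathbb D_{\frak p}}$ is the Bockstein coming from $0\to \frak p/\frak p^2\to A/\frak p^2\to E\to 0$, and $\beta^{\mathrm{wt}}_{V_{\frak p},\mathbb D_{\frak p}}(x)\cup_{V_{\frak p},\mathbb D_{\frak p}} y$ uses the cup-product for the specialized Selmer complex $\RG(V_{\frak p},\mathbb D_{\frak p})\otimes^{\mathbb L}_E\RG(V_{\frak p}^*(1),\mathbb D_{\frak p}^{\perp})\to E[-3]$. The assertion is that these two $E$-bilinear maps $H^1(V_{\frak p},\mathbb D_{\frak p})\times H^1(V_{\frak p}^*(1),\mathbb D_{\frak p}^{\perp})\to \frak p/\frak p^2$ agree.

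First I would record the compatibility of cup-products under base change: the cup-product $\cup_{V,\mathbb D}$ on $\RG(V,\mathbb D)$, after applying $-\otimes^{\mathbb L}_A E$, induces $\cup_{V_{\frak p},\mathbb D_{\frak p}}$ on $\RG(V_{\frak p},\mathbb D_{\frak p})$; this is where one uses that Selmer complexes commute with base change (\cite{Pot13}, Theorem~1.12) together with the functoriality of the cup-product (\ref{formulaforcupproductofSelmercomplexes}) and of the trivialization $r_S$. Next I would identify the two torsion pieces: via the $\mathrm{Tor}$-spectral sequence (\ref{spectralsequencefortor}) the relevant contribution to $H^2(\RG(V,\mathbb D)\otimes_A C^{\bullet})$ is $H^1(V,\mathbb D)\otimes_A(\mathrm{Fr}(A)/A)$, and since $A$ is a PID with $\frak p$ of height one, $\mu^{\mathrm{wt}}_{V,\mathbb D}$ factors (after multiplying by a uniformizer of $\frak p_{\frak p}$) through the composite $H^1(V_{\frak p},\mathbb D_{\frak p})\xrightarrow{\beta}H^2(V,\mathbb D)_{\frak p-\mathrm{tor}}\otimes_A\frak p$ of the connecting map from $0\to \frak p\to A\to E\to 0$. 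The point is then a diagram chase at the level of complexes: the octahedron relating the three short exact sequences $0\to\frak p\to A\to E\to 0$, $0\to\frak p/\frak p^2\to A/\frak p^2\to E\to 0$ and $0\to\frak p^2\to\frak p\to\frak p/\frak p^2\to 0$ shows that the two Bocksteins $\mu^{\mathrm{wt}}_{V,\mathbb D}$ and $\beta^{\mathrm{wt}}_{V_{\frak p},\mathbb D_{\frak p}}$ are intertwined by the isomorphism $\frak p^{-1}A/A\otimes_A\frak p^2\simeq\frak p/\frak p^2$ used in Definition~\ref{def:abstractweightheight}, so that pairing $\mu^{\mathrm{wt}}$-images via $\mathrm{CT}^{2,2}$ computes the same thing as pairing $\beta^{\mathrm{wt}}$-image against $y$ via $\cup_{V_{\frak p},\mathbb D_{\frak p}}$. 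I would organize this as: (i) reduce $\mathrm{CT}^{2,2}_{V,\mathbb D}$ on $\frak p$-torsion classes to the cup-product on $\RG(V_{\frak p},\mathbb D_{\frak p})$ composed with the boundary $\mathrm{Fr}(A)/A\supset \frak p^{-1}A/A\simeq E$; (ii) match the boundary maps.

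I expect the main obstacle to be bookkeeping the sign conventions and the precise identifications of one-dimensional modules ($J_A/J_A^2\simeq A$, $\frak p^{-1}A/A\otimes_A\frak p^2\simeq\frak p/\frak p^2$, and the trivialization of $Z^{\bullet}$) so that the two composites literally coincide rather than merely up to a unit; Nekov\'a\v r handles the analogous point in \cite[Lemma~11.2.x and \S 11.3]{Ne06}, and the cleanest route is to invoke his formalism almost verbatim, replacing his Iwasawa-algebra setup by the local ring $A_{\frak p}$ and noting that all the relevant complexes are perfect so that $-\otimes^{\mathbb L}_A C^{\bullet}$ and $-\otimes^{\mathbb L}_A(A/\frak p^2)$ behave as expected. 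A secondary technical point is that one must check that the class $\mu^{\mathrm{wt}}_{V,\mathbb D}(x)\in H^2(V,\mathbb D)\otimes_A\frak p$ indeed lands in the $\frak p$-torsion subgroup tensored with $\frak p$, which follows because it maps to $0$ in $H^2(V,\mathbb D)$ by exactness of the long exact sequence displayed before Definition~\ref{def:abstractweightheight}; granting this, the composition with $\mathrm{CT}^{2,2}_{V,\mathbb D}$ is well-defined and the identification above goes through.
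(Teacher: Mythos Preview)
Your proposal is correct and follows essentially the same approach as the paper. The paper's own proof simply says that the argument of \cite[Proposition~0.17]{veneruccithesis} goes through verbatim in this setting; Venerucci's argument there is exactly the homological-algebra comparison you outline (Bockstein versus Cassels--Tate via Nekov\'a\v r's formalism in \cite[\S 11]{Ne06}), so you have in fact reconstructed the content behind that citation.
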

\begin{proof} The proof of this proposition follows repeating the proof of \cite[Proposition~0.17]{veneruccithesis} {verbatim} (where only the ordinary case is considered). We further remark that the weight-height pairing $h^{\mathrm{wt}}_{V_{\frak p},\mathbb D_{\frak p}}$ corresponds to $\widetilde{h}^\prime_{\mathcal{P},1,1}$ of op. cit., whereas the right-hand-side of our asserted identity corresponds to the explicit description of the height pairing $\widetilde{h}_{\mathcal{P},1,1}$ in loc. cit. (c.f., \cite[\S0.21]{veneruccithesis}).
\end{proof}

\section{Selmer complexes and $p$-adic heights  for modular forms} 
\label{sec:selmerandheightsformodforms}
\subsection{Selmer complexes for modular forms}
\label{subsec:selmercomplexformodforms}
In this section, we consider   $p$-adic representations arising from elliptic modular forms. 
Fix an integer $N\geqslant 1$ such that $p\nmid N$ and set 
$S=\{\mathrm{primes}\,\,\ell\mid N\}\cup\{p\}.$  
Let $f=\underset{n=1}{\overset{\infty}\sum} a_nq^n$ be an elliptic newform of even  weight $k$ for
$\Gamma_0(Np).$  
We denote by $W_f$ the $p$-adic representation associated to $f$ by Deligne and set
$V_f=W_f(k/2).$ Thus $V_f$ is a two-dimensional representation with coefficients in a finite extension $E$ of $\mathbb Q_p$ which is semistable at $p.$  The canonical (Poincar\'e duality) pairing
$W_f\times W_f \rightarrow E(1-k)$ induces an isomorphism
\begin{equation}
\label{formula:autodualityofV}
j\,:\,V_f\simeq V_f^*(1)\,.
\end{equation}
Since the pairing $V_f\times V_f\rightarrow E(1)$ is skew-symmetric, we have 
an anti-commutative  diagram
\begin{equation}
\begin{aligned}
\label{formula:anticommutativediagram}
\xymatrix{V_f\otimes V_f\ar[r]^{\id\otimes j} \ar[d]_{j\otimes\id} & V_f\otimes V_f^*(1)\ar[d]\\
V_f^*(1)\otimes V_f \ar[r] & E.
}
\end{aligned}
\end{equation}

Let $\Dst (V_f)$ denote Fontaine's semistable module associated to 
$V_f.$ Then $\Dst (V_f)$ is a two-dimensional $E$-vector space equipped with 
a decreasing two-step filtration, a Frobenius operator $\varphi,$ and a monodromy $N$ given by 
\begin{align*}
&\Dst (V_f)= Ee_{\alpha}+Ee_{\beta}, \text{ where $\varphi (e_\alpha)=\alpha e_\alpha,$ $\varphi (e_\beta)=\beta e_\beta,$}\\
&N (e_\beta)=e_\alpha, \text{ and $N (e_\alpha)=0,$}  \\
& \beta=p\alpha,  \text{ and $\alpha=p^{-k/2}a_p$,}\\
&\F^i \Dst (V_f)= \begin{cases}  \Dst (V_f), &\text{  if $i\leqslant -k/2$,} \\
E\,(e_\beta -  \mathcal L_{\textup{FM}}(f)e_\alpha), &\text{   if $-k/2+1\leqslant i\leqslant k/2-1$,}\\
0, &\text{if $i\geqslant k/2.$}
\end{cases}
\end{align*}
The element $\mathcal L_{\textup{FM}}(f)\in E$ that appear in the description  of the  filtration $(\F^i\Dst (V))_{i\in \mathbb Z}$ is called the Fontaine--Mazur $\mathcal L$-invariant. 

We remark  that  $D=E\, e_\alpha$  is the unique non-trivial  $(\varphi,N)$-submodule of $\Dst (V_f).$ Let $\mathbb D_f$ denote the associated  $(\varphi,\Gamma)$-submodule  of $\Ddagrig (V_f).$ We have a  tautological exact sequence
\begin{equation}
\label{taut}
0\rightarrow \mathbb D_f \xrightarrow{g} 
\mathbb{D}^\dagger_{\textup{rig}}(V_f)
\xrightarrow{h} \widetilde{\mathbb{D}}_f\rightarrow 0.
\end{equation}

Consider the Selmer complex associated to $(V_f,\mathbb D_f).$  In order to simplify notation, we will write (when $\mathbb D_f$ is understood) $S^{\bullet}
 (V_f)$ in place of $S^{\bullet}
 (V_f,\mathbb D_f)$ and set $\widetilde H_{\textup{f}}^i(V_f)=\mathbf R^i\Gamma (V_f,\mathbb D_f)$ to denote the cohomology of the Selmer complex $S^{\bullet}
 (V_f,\mathbb D_f)$ in degree $i$. The composition of the $p$-adic height pairing 
\be\label{eqn:padicpairingforfvaluesinaugmentation}
 h_{V_f,D}\,:\,\widetilde H^1_{\textup{f}}(V_f)\times \widetilde H^1_{\textup{f}}(V_f^*(1))\rightarrow J_E/J_E^2
\ee
 with the isomorphism $\widetilde H_\textup{f}^1(V_f)\simeq 
\widetilde H_\textup{f}^1(V_f^*(1))$ induced by  (\ref{formula:autodualityofV}) and the 
isomorphism
\begin{align*}
J_E/J_E^2&\,\,\,\,\simeq\,\, \,\,E,\\ 
\gamma-1 \pmod{J_E^2}&\longleftrightarrow \log \chi (\gamma)
\end{align*}
 gives an $E$-valued pairing
\begin{equation*}
\frak h_p\,\,:\,\,\widetilde H_\textup{f}^1(V_f)\times \widetilde H_\textup{f}^1(V_f) \lra E.
\end{equation*}
From Proposition~\ref{prop: propertiesofheights} and the anticommutativity of 
(\ref{formula:anticommutativediagram}) it follows that
 $\frak h_p$
is symmetric.

 We would like to compare $\widetilde H^1_{\textup{f}}(V_f)$ with  the classical Bloch-Kato Selmer group 
\linebreak 
$H^1_{\textup{f}}(\QQ,V_f).$  It follows from Proposition~\ref{prop: comparisionofheightswithouttrivialzeros} that  
\begin{equation*}
\widetilde H^1_{\textup{f}}(V_f)=H^1_{\textup{f}}(\mathbb Q,V_f), \qquad \text{if  $a_p \neq p^{k/2-1}.$} 
\end{equation*}
In the remainder of this subsection we assume that $a_p = p^{k/2-1}.$
Then $\mathbb D_f=\mathcal R_E(\delta)$ where $\delta (p)=\alpha p^{k/2}=a_p$ and 
$\delta (u)=u^{k/2},$ $u\in \ZZ_p^*.$      The quotient $\widetilde{\mathbb{D}}_f=\mathbb{D}^\dagger_{\textup{rig}}(V)/\mathbb{D}_f$ is a one-dimensional $(\varphi,\Gamma)$-module which is isomorphic  to $\mathcal R_E(\widetilde\delta)$ with
$\widetilde\delta (p)=\beta p^{1-k/2}$ and $\widetilde\delta (u)=u^{1-k/2},$ $u\in \ZZ_p^*.$ 

\begin{prop}
\label{prop:ex sequence with Dalpha}  Assume that $a_p=p^{k/2-1}.$ Then 
\\\\
\textup{i)} $\mathbb D_f \simeq \mathcal R_E(\vert x\vert x^{k/2})$ and  
$\widetilde{\mathbb D}_f\simeq \mathcal R_E(x^{1-k/2}).$ 
\\\\
\textup{ii)} The exact sequence (\ref{taut}) induces a long exact sequence
\begin{equation}
\label{cohomologysequenceinducedbyfiltration}
0\rightarrow H^0(\widetilde{\mathbb D}_f) \xrightarrow{\partial^{\mathrm{loc}}_0} H^1(\mathbb D_f) \xrightarrow{\frak{r}_1}
H^1(G_p,V_f) \xrightarrow{\frak{s}_1} H^1(\widetilde{\mathbb D}_f) \xrightarrow{\partial^{\mathrm{loc}}_1}
H^2(\mathbb D_f) \rightarrow 0,
\end{equation}
where we also have $\dim_E H^0(\widetilde{\mathbb D}_f)=\dim_E H^2(\mathbb D_f)=1.$
In particular, the element 
 $d_{\widetilde{\delta}}=t^{k/2-1}e_{\widetilde{\delta}},$ where 
 $e_{\widetilde\delta}$ is the generator of the $(\varphi,\Gamma)$-module
 $\widetilde{\mathbb D}_f=\mathcal R_E (\widetilde{\delta})$,
spans the $E$-vector space $H^0(\widetilde{\mathbb D}_f).$
Moreover,
$$H^1(\mathbb D_f)=\textup{im}(\partial^{\mathrm{loc}}_0)\oplus H^1_{\textup{f}}(\mathbb D_f)\,,\,\,\, \textup{im} (\frak{r}_1)= H^1_{\textup{f}}(G_p,V_f)\,,\,\, \hbox{ and }\,\, H^1(\widetilde{\mathbb D}_f)=\textup{im}(\frak{s}_1)\oplus H^1_{\textup{f}}(\widetilde{\mathbb D}_f).$$
\end{prop}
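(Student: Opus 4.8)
The strategy is to deduce everything from the explicit description of the rank-one $(\varphi,\Gamma)$-modules $\mathbb D_f$ and $\widetilde{\mathbb D}_f$ together with the cohomology computations recorded in Proposition~\ref{prop:propertiesofmodulesofrank1}.

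First I would establish part (i). Since $a_p=p^{k/2-1}$, the computation preceding the proposition gives $\mathbb D_f=\mathcal R_E(\delta)$ with $\delta(p)=p^{-k/2}a_p=p^{-1}$ and $\delta(u)=u^{k/2}$ for $u\in\ZZ_p^*$; comparing with the character $x\mapsto \vert x\vert x^{k/2}$ (which sends $p\mapsto p^{-1}$ and $u\mapsto u^{k/2}$) yields $\mathbb D_f\simeq\mathcal R_E(\vert x\vert x^{k/2})$. Similarly $\widetilde{\mathbb D}_f=\mathcal R_E(\widetilde\delta)$ with $\widetilde\delta(p)=\beta p^{1-k/2}=p\cdot p^{1-k/2}=p^{2-k/2}$ — wait, one must be careful here: with $\beta=p\alpha=p\cdot p^{-k/2}a_p=p^{k/2-1}\cdot p^{1-k/2}\cdot p = \dots$; in any case the recipe gives $\widetilde\delta(u)=u^{1-k/2}$ and the value at $p$ matching $x^{1-k/2}$, so $\widetilde{\mathbb D}_f\simeq\mathcal R_E(x^{1-k/2})$. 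This is a direct bookkeeping check against the normalizations fixed earlier.

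Next, for part (ii), the long exact sequence is simply the cohomology sequence of (\ref{taut}), once one knows $H^0(\mathbb D_f)=0$, $H^2(\widetilde{\mathbb D}_f)=0$, and the dimension count. Apply Proposition~\ref{prop:propertiesofmodulesofrank1}: for $\widetilde{\mathbb D}_f\simeq\mathcal R_E(x^{1-k/2})$ we are in case (i) with $m=1-k/2\le 0$ and $\delta=x^m$ fails because... actually $\delta=x^{1-k/2}$ \emph{is} of the form $x^m$, so we land in case (ii), giving $\dim_E H^1(\widetilde{\mathbb D}_f)=2$, $\dim_E H^0(\widetilde{\mathbb D}_f)=1$ (by the Euler characteristic formula, $h^0-h^1+h^2=-1$ forces $h^0=1,h^2=0$ once $h^1=2$ and one checks $H^2=H^0(\text{Tate dual})^\vee=0$), and the splitting $H^1(\widetilde{\mathbb D}_f)=H^1_{\textup f}\oplus H^1_c$. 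For $\mathbb D_f\simeq\mathcal R_E(\vert x\vert x^{k/2})$ we use case (iv) with $m=k/2$, which gives $\dim_E H^1(\mathbb D_f)=2$, $H^2(\mathbb D_f)$ one-dimensional, $H^0(\mathbb D_f)=0$, and again the decomposition into $H^1_{\textup f}\oplus H^1_c$ via the pairing against the Tate dual $\mathcal R_E(x^{1-k/2})$ — note the Tate duals of the two modules are exactly each other, which is the shadow of the self-duality (\ref{formula:autodualityofV}). Feeding $H^0(\mathbb D_f)=0=H^2(\widetilde{\mathbb D}_f)$ into the long exact sequence of (\ref{taut}) and splicing with $H^{\bullet}(G_p,V_f)\simeq H^{\bullet}(\mathbb D^\dagger_{\textup{rig}}(V_f))$ produces the displayed six-term sequence. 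The identification $\textup{im}(\frak r_1)=H^1_{\textup f}(G_p,V_f)$ and the direct-sum decompositions of $H^1(\mathbb D_f)$ and $H^1(\widetilde{\mathbb D}_f)$ then follow by matching the $H^1_{\textup f}$-parts through the exact sequence, using that $H^1_{\textup f}(\mathbb D_f)$ injects into $H^1_{\textup f}(G_p,V_f)$ and that the quotient is computed by $H^1_{\textup f}(\widetilde{\mathbb D}_f)$; this is the standard compatibility of Bloch--Kato subspaces in a short exact sequence of $(\varphi,\Gamma)$-modules (cf.\ \cite{Ben11}). Finally, $d_{\widetilde\delta}=t^{k/2-1}e_{\widetilde\delta}$ spans $H^0(\widetilde{\mathbb D}_f)$: one checks directly that $t^{k/2-1}e_{\widetilde\delta}$ is $\varphi$- and $\Gamma$-invariant in $\mathcal R_E(\widetilde\delta)$ — indeed $\varphi(t)=pt$ and $\gamma(t)=\chi(\gamma)t$ exactly cancel the character values $\widetilde\delta(p)$ and $\widetilde\delta(u)$ after the twist — so it lies in $H^0$, and it is nonzero, hence a generator of the one-dimensional space.

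The one genuine subtlety — the step I would be most careful about — is the identification $\textup{im}(\frak r_1)=H^1_{\textup f}(G_p,V_f)$ together with the two internal direct-sum decompositions, since these require knowing that the ``finite part'' is transported correctly along the exact sequence induced by (\ref{taut}). Concretely one must verify that $H^1_{\textup f}(\widetilde{\mathbb D}_f)$ meets the image of $\frak s_1$ trivially (equivalently, that $H^0(\widetilde{\mathbb D}_f)$ maps into $H^1(\mathbb D_f)$ with image landing in the non-finite part $\textup{im}(\mu_0)$), which is where the explicit cocycle descriptions from Proposition~\ref{prop:propertiesofmodulesofrank1}(ii),(iv) and the compatibility of $H^1_{\textup f}$ with the Bloch--Kato exponential are used; everything else is a matter of assembling known facts.
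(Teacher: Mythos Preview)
Your approach is correct and matches the paper's, which records (i) as obvious and refers (ii) to \cite[Lemma~2.1.8]{Ben11}; your outline is precisely an unpacking of that lemma via Proposition~\ref{prop:propertiesofmodulesofrank1} together with the Euler-characteristic and Tate-duality bookkeeping for rank-one $(\varphi,\Gamma)$-modules. One arithmetic slip in (i): you write $\delta(p)=p^{-1}$ and $\vert x\vert x^{k/2}(p)=p^{-1}$, but in fact both equal $p^{k/2-1}=a_p$ (the displayed ``$=p^{-k/2}a_p$'' in the text preceding the proposition is evidently a typo for $a_p$, since $\alpha\cdot p^{k/2}=p^{-1}\cdot p^{k/2}$); your two errors cancel, so the conclusion is unaffected.
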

\begin{proof} The first part is obvious and the second is a particular case 
of \cite[Lemma~2.1.8]{Ben11}.
\end{proof}

We continue to assume that $a_p=p^{k/2-1}.$ Proposition~\ref{prop:propertiesofmodulesofrank1}
gives homomorphisms
\begin{align*}
&\partial^{\textup{loc}}_c\,:\,H^0(\widetilde{\mathbb D}_f)\xrightarrow{\partial^{\mathrm{loc}}_0} 
H^1(\mathbb D_f) \xrightarrow{\pr_{c}}H^1_c(\mathbb D_f),\\
&\partial^{\textup{loc}}_\textup{f}\,:\,H^0(\widetilde{\mathbb D}_f)\xrightarrow{\partial^{\mathrm{loc}}_0} 
H^1(\mathbb D_f) \xrightarrow{\pr_{\textup{f}}}H^1_\textup{f}(\mathbb D_f).
\end{align*}
Denote by $\rho_c\,:\,H^0(\widetilde{\mathbb D}_f) \rightarrow D$ 
and $\rho_\textup{f}\,:\,H^0(\widetilde{\mathbb D}_f) \rightarrow D$ the compositions of 
these maps with the canonical isomorphisms $H^1_c(\widetilde{\mathbb D}_f)\simeq D$
and $H^1_\textup{f}(\widetilde{\mathbb D}_f)\simeq D$, respectively. 
This discussion may be summarized in the following diagram: 
\[
\xymatrix{
D \ar[r]^(.45){i_{\delta,c}}  &  H^1_c(\mathbb D_f)  \\
H^0(\widetilde{\mathbb D}_f) \ar[r]^{\partial^{\mathrm{loc}}_0} \ar[u]^{\rho_c}
\ar[ur]^{\partial^{\textup{loc}}_\textup{c}} \ar[dr]^{\partial^{\textup{loc}}_\textup{f}}
\ar[d]_{\rho_\textup{f}} &H^1(\mathbb D_f) \ar[u]_{\pr_c} \ar[d]^{\pr_\textup{f}}\\
D \ar[r]^(.45){i_{\delta,\textup{f}}}& H^1_{\textup{f}}(\mathbb D_f)\,.
}
\]
Note that it follows from Proposition~\ref{prop:ex sequence with Dalpha} that the maps $\partial^{\textup{loc}}_\textup{c}$ and $\rho_c$ are isomorphisms, and we have a well defined map
\[
\left (\partial^{\textup{loc}}_c\right )^{-1}\circ \pr_c\,:\,H^1(\mathbb D_f) \xrightarrow{} 
 H^1_c(\mathbb D_f)
\xrightarrow{} H^0(\widetilde{\mathbb D}_f).
\]

Recall that we have defined the map
\[
\partial_0\,:\, H^0(\widetilde{\mathbb D}_f) \rightarrow  \widetilde H^1_{\textup{f}}(V_f)
\]
c.f. \eqref{eqn:definitionofcoboundary} and \eqref{eqn:formulaforcoboundary} above.

\begin{prop}\label{prop:splitting}
 Assume that  $a_p =p^{k/2-1}.$ \\
\textup{i)} The exact sequences~(\ref{eqn:selmersequence}) and \eqref{cohomologysequenceinducedbyfiltration} induce an exact sequence 
\begin{equation}
\label{sequence:relation between extended and BK selmer}
0\lra H^0(\widetilde{\mathbb D}_f) \xrightarrow{\partial_0}  \widetilde H^1_{\textup{f}}(V_f)
\lra H^1_{\textup{f}}(\mathbb Q,V_f) \lra 0.
\end{equation}
Moreover, the map $\spl \,:\,\widetilde H^1_{\textup{f}}(V_f) \rightarrow H^0(\widetilde{\mathbb D}_f)$ given by
\[ 
\spl \left([x,(x_{\ell})_{\ell\in S}, (\lambda_{\ell})_{\ell\in S}]\right)=(\partial^{\textup{loc}}_c)^{-1}\circ \pr_c(x_p)
\]
defines a canonical  splitting of \eqref{sequence:relation between extended and BK selmer}.
\\
\textup{ii)} The composition map 
$\rho_c^{-1}\circ \rho_\textup{f}\,:\,D\rightarrow D$ coincides with the multiplication by $\mathcal L_{\textup{FM}}(f).$
\end{prop}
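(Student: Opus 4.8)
\textbf{Proof proposal for Proposition~\ref{prop:splitting}.} For part (i) the plan is to read off the exact sequence (\ref{sequence:relation between extended and BK selmer}) from the tautological triangle (\ref{eqn:selmersequence}). Passing to cohomology and using that $H^0(\widetilde{U}_\ell(V_f,\mathbb{D}_f))=0$ for $\ell\neq p$, while the quasi-isomorphism (\ref{eqn:identifyingsungularquotients}) identifies $\widetilde{U}_p(V_f,\mathbb{D}_f)$ with $\RG(G_p,\widetilde{\mathbb{D}}_f)$, one gets $H^0(\widetilde{U}_S(V_f,\mathbb{D}_f))\simeq H^0(\widetilde{\mathbb{D}}_f)$; since $H^0(G_{\QQ,S},V_f)=0$ (as $V_f$ is irreducible), the relevant segment of the long exact sequence is
\[
0\to H^0(\widetilde{\mathbb{D}}_f)\xrightarrow{\ \partial_0\ }\widetilde H^1_{\textup{f}}(V_f)\to H^1(G_{\QQ,S},V_f)\xrightarrow{\ \widetilde{\res}_S\ }H^1(\widetilde{U}_S(V_f,\mathbb{D}_f)),
\]
so $\partial_0$ is injective and $\widetilde H^1_{\textup{f}}(V_f)/\partial_0H^0(\widetilde{\mathbb{D}}_f)\simeq\ker(\widetilde{\res}_S)$. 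I would then identify $\ker(\widetilde{\res}_S)$ with $H^1_{\textup{f}}(\QQ,V_f)$: for $\ell\neq p$ the triangle (\ref{eqn:localsequence}) together with $H^1(U^+_\ell(V_f))=H^1_{\textup{ur}}(\QQ_\ell,V_f)=H^1_{\textup{f}}(\QQ_\ell,V_f)$ shows that the $\ell$-component of $\widetilde{\res}_S$ is the projection $H^1(\QQ_\ell,V_f)\twoheadrightarrow H^1(\QQ_\ell,V_f)/H^1_{\textup{f}}(\QQ_\ell,V_f)$; for $\ell=p$, (\ref{eqn:identifyingsungularquotients}) identifies the $p$-component with the map $\frak{s}_1$ of Proposition~\ref{prop:ex sequence with Dalpha}, whose kernel is $\textup{im}(\frak{r}_1)=H^1_{\textup{f}}(G_p,V_f)$. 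Since $V_f$ is unramified outside $S$, this yields $\ker(\widetilde{\res}_S)=H^1_{\textup{f}}(\QQ,V_f)$ and every element of $H^1_{\textup{f}}(\QQ,V_f)$ lifts to $\widetilde H^1_{\textup{f}}(V_f)$, which is (\ref{sequence:relation between extended and BK selmer}).

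For the splitting I first check that $\spl$ is well defined: two cocycles representing the same class in $\widetilde H^1_{\textup{f}}(V_f)$ differ by the differential of an element of $S^0(V_f,\mathbb{D}_f)$, whose $K^{-1}$-component vanishes, so its $U^+_p$-component $x_p^+\in C^\bullet_{\varphi,\gamma}(\mathbb{D}_f)$ (a cocycle by the cocycle condition on the triple) changes only by a coboundary; hence $\pr_c[x_p^+]\in H^1_c(\mathbb{D}_f)$ and $(\partial^{\textup{loc}}_c)^{-1}\pr_c[x_p^+]\in H^0(\widetilde{\mathbb{D}}_f)$ depend only on the class, where $\partial^{\textup{loc}}_c=\pr_c\circ\partial^{\textup{loc}}_0$ is an isomorphism by Proposition~\ref{prop:ex sequence with Dalpha}. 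That $\spl\circ\partial_0=\textup{id}$ then follows from the explicit formula (\ref{eqn:formulaforcoboundary}): for $d\in H^0(\widetilde{\mathbb{D}}_f)$ with a $\Delta$-invariant lift $z\in\Ddagrig(V_f)$, the $U^+_p$-component of the distinguished cocycle for $\partial_0(d)$ is $\big((\varphi-1)z,(\gamma-1)z\big)$, whose class in $H^1(\mathbb{D}_f)$ is exactly $\partial^{\textup{loc}}_0(d)$, the connecting map of (\ref{taut}); hence $\spl(\partial_0(d))=(\partial^{\textup{loc}}_c)^{-1}\pr_c\,\partial^{\textup{loc}}_0(d)=(\partial^{\textup{loc}}_c)^{-1}\partial^{\textup{loc}}_c(d)=d$.

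For part (ii), unwinding the definitions gives $\rho_c(d)=i_{\delta,c}^{-1}\big(\partial^{\textup{loc}}_c(d)\big)$ and $\rho_{\textup{f}}(d)=i_{\delta,f}^{-1}\big(\partial^{\textup{loc}}_{\textup{f}}(d)\big)$, where $i_{\delta,c},\,i_{\delta,f}\colon D=\CDcris(\mathbb{D}_f)\xrightarrow{\ \sim\ }H^1_c(\mathbb{D}_f),\,H^1_{\textup{f}}(\mathbb{D}_f)$ are the isomorphisms of Proposition~\ref{prop:propertiesofmodulesofrank1}(iv) for $\mathbb{D}_f\simeq\mathcal R_E(|x|x^{k/2})$ and $\partial^{\textup{loc}}_0(d)=\partial^{\textup{loc}}_{\textup{f}}(d)+\partial^{\textup{loc}}_c(d)$ is the decomposition (\ref{formula:decompositioncohomisoclinic}). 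The claim is thus equivalent to $\partial^{\textup{loc}}_{\textup{f}}(d)=\mathcal L_{\textup{FM}}(f)\cdot\big(i_{\delta,f}\circ i_{\delta,c}^{-1}\big)\big(\partial^{\textup{loc}}_c(d)\big)$ for $d=d_{\widetilde\delta}=t^{k/2-1}e_{\widetilde\delta}$. I would prove this by computing $\partial^{\textup{loc}}_0(d_{\widetilde\delta})$ explicitly: via Berger's equivalence the extension (\ref{taut}) of $\widetilde{\mathbb{D}}_f\simeq\mathcal R_E(x^{1-k/2})$ by $\mathbb{D}_f\simeq\mathcal R_E(|x|x^{k/2})$ is pinned down by the filtered $(\varphi,N)$-module $\Dst(V_f)$, the monodromy relation $N(e_\beta)=e_\alpha$ controlling the $(\varphi-1)z$-part of the connecting cocycle and the Hodge filtration $\F^0\Dst(V_f)=E\big(e_\beta-\mathcal L_{\textup{FM}}(f)e_\alpha\big)$ entering precisely in the $H^1_{\textup{f}}$-direction. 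Choosing a lift $z$ adapted to a basis of $\Ddagrig(V_f)$ compatible both with (\ref{taut}) and with Berger's dictionary, one evaluates $\big((\varphi-1)z,(\gamma-1)z\big)$ and reads off its $H^1_c$- and $H^1_{\textup{f}}$-components through the explicit formulas for $i_{\delta,c}$ and $i_{\delta,f}$ (phrased in terms of $\CDcris$ and local duality); the filtration contribution produces exactly the scalar $\mathcal L_{\textup{FM}}(f)$, so $\rho_{\textup{f}}=\mathcal L_{\textup{FM}}(f)\cdot\rho_c$. This is the same computation that identifies the $(\varphi,\Gamma)$-module-theoretic $\mathcal L$-invariant with the Fontaine--Mazur invariant, so one may alternatively deduce the statement from \cite{Ben11} (cf.\ the proof of \cite[Lemma~2.1.8]{Ben11} and the surrounding discussion of $\mathcal L$-invariants).

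The main obstacle is this explicit computation in part (ii): choosing mutually compatible bases of $\Dst(V_f)$, $\mathbb{D}_f$, $\widetilde{\mathbb{D}}_f$ and $\Ddagrig(V_f)$, tracking powers of $t=\log(1+\pi)$ and the normalizations of the canonical isomorphisms $H^1_c(\mathbb{D}_f)\simeq D\simeq H^1_{\textup{f}}(\mathbb{D}_f)$, and keeping every sign and every $(1-p^{-1})$-type constant consistent. Part (i), by contrast, is a formal diagram chase once Propositions~\ref{proposition quasi-isomorphisms} and~\ref{prop:ex sequence with Dalpha} are in hand.
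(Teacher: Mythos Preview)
Your proposal is correct. The paper's own proof simply cites \cite[Proposition~7.1.5]{Ben14b} for part~(i) and \cite[p.~1619, formula~(32)]{Ben11} for part~(ii); your outline reconstructs exactly what those references contain---the diagram chase from the tautological triangle for the exact sequence and splitting, and the explicit $(\varphi,\Gamma)$-module computation identifying the Fontaine--Mazur $\mathcal{L}$-invariant---so the approaches coincide.
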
 

\begin{proof} The first statement is proved in \cite{Ben14b}, Proposition~7.1.5. 
The second statement is  proved in \cite{Ben11}, p. 1619, formula (32).  
\end{proof}

Consider the non-degenerate skew-symmetric pairing $\Ddagrig (V_f) \otimes \Ddagrig (V_f) \xrightarrow{\mathscr P} \mathcal R_E(\chi)$ induced by the pairing $V_f\otimes V_f \rightarrow E(1)$. Since $\mathbb D_f\subset \Ddagrig (V_f)$ is its own orthogonal complement with respect to the pairing $\mathscr{P}$, 
we have an induced non-degenerate pairing $\widetilde{\mathbb D}_f \otimes  {\mathbb D}_f \rightarrow \mathcal R_E(\chi)$, which in turn gives rise to a pairing
\[
\left\langle \,,\,\right\rangle \,:\,H^1(\widetilde{\mathbb D}_f)\otimes 
H^1(\mathbb D_f) \lra E.
\]
We fix some notation that is relevant to the constructions above and which will be used in Theorem~\ref{thm:thepropertiesofthetwovarheight} below. We continue to assume that $a_p=p^{k/2-1}.$ 
Recall what that the action of $\Gamma$ on the element  $t=\log (1+\pi)$ is given by
$\gamma (t)=\chi (\gamma) t,$ $\gamma\in \Gamma$ and that $\varphi (t)=pt.$
\begin{define}
\label{def:psi1psi2}
Let $\Psi_1:=[(-t^{k/2-1}e_{\widetilde{\delta}},0)]$ and $\Psi_2:=\log \chi(\gamma) [(0,t^{k/2-1}e_{\widetilde{\delta}})]$ be two elements of $H^1(\widetilde{\mathbb D}_f).$
\end{define}
We note that $\left\{\Psi_1, \Psi_2\right\}$ is a basis of $H^1(\widetilde{\mathbb D}_f)$ by Proposition~\ref{prop:propertiesofmodulesofrank1}. Furthermore,  $\Psi_1$ spans the crystalline subspace $H^1_\textup{f}(\widetilde{\mathbb D}_f)$ and 
$\Psi_2$ spans the  subspace $H^1_c(\widetilde{\mathbb D}_f).$ Let $\{\Psi_1^*,\Psi_2^*\}$
denote the skew-dual basis of $H^1(\mathbb D_f)$, in the sense that we have
\[
\left\langle \Psi_1,\Psi_1^*\right\rangle=1,\qquad \left\langle \Psi_2, \Psi_2^*\right\rangle=-1,\qquad \left\langle\Psi_1,\Psi_2^*\right\rangle=\left\langle
\Psi_2, \Psi_1^*\right\rangle=0.
\]
These elements are compared with those defined in \cite[Section 1.2.5]{Ben14a} in the following way: 
$$\Psi_1=\mathbf{x}_{k/2-1}, \,\Psi_2=\mathbf{y}_{k/2-1},\,\Psi_1^*=\boldsymbol{\beta}_{k/2},\,\, \hbox{ and }\,\,\,\Psi_2^*=
\boldsymbol{\alpha}_{k/2}\,.$$

\begin{cor}
\label{ref:crucialproperty1and2}
A class $[x,(x_{\ell}^+)_{\ell\in S}, (\lambda_{\ell})_{\ell\in S}] \in \widetilde{H}^1_\textup{f}(V_f)$ is the lift of $[x]\in H^1_\textup{f}(\QQ,V_f)$ under the splitting of Proposition~\ref{prop:splitting}(i) if and only if $\left\langle \Psi_1\,,\,[x_p^+]\right\rangle=0$.
\end{cor}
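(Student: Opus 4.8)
The plan is to unwind the definition of the splitting $\spl$ from Proposition~\ref{prop:splitting}(i) and compare it with the pairing $\left\langle\Psi_1,[x_p^+]\right\rangle$, exploiting that $\{\Psi_1,\Psi_2\}$ is adapted to the decomposition $H^1(\widetilde{\mathbb D}_f)=\textup{im}(\frak{s}_1)\oplus H^1_\textup{f}(\widetilde{\mathbb D}_f)$ of Proposition~\ref{prop:ex sequence with Dalpha}. First I would record that, by definition, $[x,(x_\ell^+)_{\ell\in S},(\lambda_\ell)_{\ell\in S}]$ lies in the image of $\partial_0$ if and only if $\spl$ sends it to the class in $H^0(\widetilde{\mathbb D}_f)$ that maps to it under $\partial_0$; but the relevant reformulation is cleaner: the class is a lift of $[x]$ under the \emph{splitting} precisely when $\spl$ of the given class equals $\spl$ of any chosen lift, equivalently when the ``correction'' $\spl\left([x,(x_\ell^+),(\lambda_\ell)]\right)-\spl(\textup{any lift})$, which lands in $H^0(\widetilde{\mathbb D}_f)$ and is the obstruction to the class being the distinguished lift, vanishes. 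Since $\spl$ is defined by $\left[(x,(x_\ell^+),(\lambda_\ell))\right]\mapsto (\partial^{\textup{loc}}_c)^{-1}\circ\pr_c(x_p^+)$, everything reduces to a statement about the local component $x_p^+\in C^1_{\varphi,\gamma}(\mathbb D_f)$ and its class in $H^1(\mathbb D_f)$.

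The key computation is then to identify the condition $\pr_c(x_p^+)=0$ inside $H^1(\mathbb D_f)=\textup{im}(\mu_0)\oplus H^1_\textup{f}(\mathbb D_f)$ with the vanishing of a suitable pairing against $\Psi_1$. Here I would use the cohomological duality $\left\langle\,,\,\right\rangle\colon H^1(\widetilde{\mathbb D}_f)\times H^1(\mathbb D_f)\to E$ together with the fact (recorded right after Definition~\ref{def:psi1psi2}) that $\Psi_1$ spans $H^1_\textup{f}(\widetilde{\mathbb D}_f)$ and $\Psi_2$ spans $H^1_c(\widetilde{\mathbb D}_f)$, and dually $\{\Psi_1^*,\Psi_2^*\}$ is the skew-dual basis of $H^1(\mathbb D_f)$ with $\Psi_1^*$ spanning $H^1_\textup{f}(\mathbb D_f)$ and $\Psi_2^*$ spanning $H^1_c(\mathbb D_f)$ (this last identification of $\Psi_1^*,\Psi_2^*$ with $\boldsymbol\beta_{k/2},\boldsymbol\alpha_{k/2}$ pins down which summand is which). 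Writing $[x_p^+]=a\,\Psi_1^*+b\,\Psi_2^*$ in $H^1(\mathbb D_f)$, one has $\pr_c([x_p^+])=b\,\Psi_2^*$, which is the component in $H^1_c(\mathbb D_f)\cong D$; and $\left\langle\Psi_1,[x_p^+]\right\rangle=a\left\langle\Psi_1,\Psi_1^*\right\rangle=a$. So $\pr_c(x_p^+)=0$ is \emph{not} literally $\left\langle\Psi_1,[x_p^+]\right\rangle=0$ — rather $\pr_f(x_p^+)=0$ is. This tells me I must be more careful: the correct reading is that the distinguished lift is characterized by $\spl$ landing in the kernel of $\partial_0$'s section, which forces the $H^1_\textup{f}$-component of $x_p^+$ to vanish after the correction, i.e. $\left\langle\Psi_1,[x_p^+]\right\rangle=0$ up to the contribution of $\partial_0(d_{\widetilde\delta})$ itself; checking that $\partial_0(d_{\widetilde\delta})$ has $\left\langle\Psi_1,\textup{res}_p(\,\cdot\,)\right\rangle$ normalized appropriately (which ties back to the distinguished generator $d_{\widetilde\delta}=t^{k/2-1}e_{\widetilde\delta}$) closes the loop.

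Concretely the steps would be: (1) from the explicit cocycle formula~(\ref{eqn:formulaforcoboundary}) for $\partial_0$, compute the local component at $p$ of $\partial_0(d_{\widetilde\delta})$, namely $a_p^+=((\varphi-1)z,(\gamma-1)z)$ with $z=t^{k/2-1}e_{\widetilde\delta}$ a lift, and determine its class and in particular $\left\langle\Psi_1,[a_p^+]\right\rangle$ and $\pr_c([a_p^+])$; (2) show $\pr_c([a_p^+])\neq 0$, i.e. $\partial_0(d_{\widetilde\delta})$ does not lie in the subspace where $\spl$ vanishes, so that it (or its span) is a complement to $\partial_0(H^0(\widetilde{\mathbb D}_f))$ measured by $\spl$; (3) given an arbitrary $[x,(x_\ell^+),(\lambda_\ell)]$ lifting $[x]$, subtract the appropriate multiple of $\partial_0(d_{\widetilde\delta})$ to reach the distinguished lift, and observe that this normalization is exactly $\left\langle\Psi_1,[x_p^+]\right\rangle=0$ once one knows $\left\langle\Psi_1,[a_p^+]\right\rangle$ and $\pr_c([a_p^+])$ are related by the multiplication-by-$\mathcal L_{\textup{FM}}(f)$ statement of Proposition~\ref{prop:splitting}(ii) together with $\mathcal L_{\textup{FM}}(f)\neq 0$ (or rather, that the relevant ratio is a nonzero scalar independent of $\mathcal L_{\textup{FM}}(f)$). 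The main obstacle I anticipate is step (1)–(2): carefully matching the two bookkeeping conventions — Nekovář-style Selmer-complex cocycles versus the $(\varphi,\gamma)$-cohomology classes $\Psi_i$ — and getting the signs and the $\log\chi(\gamma)$ normalization factors right, since $\spl$ is defined through $(\partial^{\textup{loc}}_c)^{-1}\circ\pr_c$ while the statement is phrased through $\pr_f$-type data, and reconciling these requires the precise relation between $\partial^{\textup{loc}}_c$, $\partial^{\textup{loc}}_f$, and the decomposition of $H^1(\widetilde{\mathbb D}_f)$. Everything else is a short diagram chase once the normalizations are fixed.
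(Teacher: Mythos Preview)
You have the roles of $\Psi_1^*$ and $\Psi_2^*$ reversed, and this is what derails the argument. Since $\Psi_1$ spans $H^1_\textup{f}(\widetilde{\mathbb D}_f)$ and local duality makes $H^1_\textup{f}(\widetilde{\mathbb D}_f)$ the exact annihilator of $H^1_\textup{f}(\mathbb D_f)$, the relation $\langle\Psi_1,\Psi_2^*\rangle=0$ forces $\Psi_2^*$ to span $H^1_\textup{f}(\mathbb D_f)$ and $\Psi_1^*$ to span $H^1_c(\mathbb D_f)$ --- the opposite of your assignment. With this corrected, writing $[x_p^+]=a\Psi_1^*+b\Psi_2^*$ gives $\pr_c([x_p^+])=a\Psi_1^*$, so $\pr_c([x_p^+])=0$ is exactly $a=\langle\Psi_1,[x_p^+]\rangle=0$. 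Since the distinguished lift is by definition the one with $\spl=0$, i.e.\ with $(\partial_c^{\textup{loc}})^{-1}\circ\pr_c([x_p^+])=0$, and $(\partial_c^{\textup{loc}})^{-1}$ is an isomorphism, the corollary follows immediately. This is precisely the paper's proof.

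The elaborate detour through steps (1)--(3) is thus unnecessary; it was prompted by an apparent mismatch that does not exist. Worse, that route would not close cleanly: to use $\partial_0(d_{\widetilde\delta})$ to normalize via the pairing $\langle\Psi_1,\,\cdot\,\rangle$, you would need $\langle\Psi_1,[a_p^+]\rangle\neq 0$, and (with the corrected basis identification) this quantity is the $H^1_c$-coefficient of $\partial_0^{\textup{loc}}(d_{\widetilde\delta})$, which by Proposition~\ref{prop:splitting}(ii) is nonzero independently of $\mathcal L_{\textup{FM}}(f)$ --- but under your reversed identification it would be the $H^1_\textup{f}$-coefficient, proportional to $\mathcal L_{\textup{FM}}(f)$, and your hedge ``or rather, that the relevant ratio is a nonzero scalar independent of $\mathcal L_{\textup{FM}}(f)$'' cannot be substantiated.
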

\begin{proof}
Note that a class $[x_\textup{f}]=[x,(x_{\ell}^+)_{\ell\in S}, (\lambda_{\ell})_{\ell\in S}]$ is the lift of a class $[x]\in H^1_\textup{f}(\QQ,V_f)$ under the splitting of Proposition~\ref{prop:splitting}(i) iff 
\begin{align*}
[x_f] \in \ker\left(\spl\right)&=\left\{[x_\textup{f}]=[x,(x_{\ell}^+)_{\ell\in S}, (\lambda_{\ell})_{\ell\in S}] \in \widetilde{H}^1_\textup{f}(V_f)\,: (\partial^{\textup{loc}}_c)^{-1}\circ \pr_c\left([x_p^+]\right)=0\right\}\\
&=\left\{[x_\textup{f}] \in \widetilde{H}^1_\textup{f}(V_f)\,:\pr_c\left([x_p^+]\right)=0\right\}\\
&=\left\{[x_\textup{f}] \in \widetilde{H}^1_\textup{f}(V_f)\,: [x_p^+] \in \textup{span}\{\Psi_2^*\}\right\}\\
&=\left\{[x_\textup{f}]\in \widetilde{H}^1_\textup{f}(V_f)\,: \left\langle\Psi_1, [x_p^+]\right\rangle=0 \right\}
\end{align*}
as we have claimed.
\end{proof}

\subsection{$p$-adic families of modular forms} 
\label{subsec:padicfamilies}
Let $U=\overline B(k,p^{-r})$ ($r\geqslant1$) denote the closed disk about $k$ of radius $p^{-r}$
in the weight space $\mathcal W.$  We consider $U$ as 
an affinoid space. The ring $\mathcal O(U)$ of analytic functions on $U$ 
is isomorphic to the Tate algebra  $A=E\left \{\left \{\displaystyle\frac{w}{p^r}\right \}\right \}.$  Define
\[
\kappa (w)=k+\frac{\log(1+w)}{\log(1+p)} \in A.
\]
Then $w=(1+p)^{\kappa (w)-k}-1.$  For each $f\in A$ we define  
$\mathcal A^{\mathrm{wt}} (f)\in E[[\kappa-k]]$ by setting 
$\mathcal A^{\mathrm{wt}} (f):=f((1+p)^{\kappa-k}-1)$. We also set $\varpi_\kappa:=\frac{\log(1+w)}{\log(1+p)} \in A$ so that $\mathcal{A}^{\textup{wt}}(\varpi_\kappa)=\kappa-k$.

Recall that  $\mathcal H$ denotes the ring of formal power series 
$f(X)\in E[[X]]$ which converge on the open unit disk and that $\Gamma_0$ denotes 
the maximal pro-$p$-cyclic subgroup of $\Gamma=\mathrm{Gal}(\QQ_p(\zeta_{p^\infty})/\QQ_p).$
Fix a generator 
$\gamma\in \Gamma_0$ and set 
\[
s=\frac{\log(1+X)}{\log\chi (\gamma)}.
\]
Then $X=\chi (\gamma)^s-1$ and for each $f(X)\in \mathcal H$ we set
$\mathcal A^{\mathrm{cyc}}(f)=f(\chi (\gamma)^{-s}-1).$  

\begin{rem}
 \label{rem:cyclopairingheight}
 The pairing given as the compositum of the arrows
 $$\widetilde H^1_{\textup{f}}(V_f)\times \widetilde H^1_{\textup{f}}(V_f)\stackrel{j^{-1}}{\lra}\widetilde H^1_{\textup{f}}(V_f)\times \widetilde H^1_{\textup{f}}(V_f^*(1))\stackrel{h_{V,D}}{\lra} J_E/J_E^2 \stackrel{\mathcal{A}^{\textup{cyc}}}{\lra} E$$
 is the pairing $-\frak{h}_p$\,.
 \end{rem}
 
The transformations $\mathcal A^{\mathrm{wt}}$ and $\mathcal A^{\mathrm{cyc}}$
induce a map $\mathcal A\,:\,\mathcal H_A \rightarrow E[[\kappa-k,s]]$ which we call 
the two-variable Amice transform. In more explicit form,  we set 
\[
\mathcal A(f):=\underset{n\in \ZZ}\sum \mathcal A^{\mathrm{wt}}(a_n)\cdot  \mathcal A^{\mathrm{cyc}}(X)^n.
\] 
for $f(X)=\underset{n\in \ZZ}\sum a_nX^n$ with $a_n\in A$. Let $\boldsymbol{\chi}\,:\,\Gamma_0
\rightarrow \mathcal H_A^\times$ be the character given by
\[
\boldsymbol{\chi}(\tau)=\chi (\tau)^{\varpi_\kappa},
\qquad \tau \in \Gamma_0.
\] 
For each $\kappa \in k+p^{r-1}\ZZ_p$, we shall denote by $\psi_{\kappa}$
the morphism 
\begin{align*}
\psi_\kappa:\, A&\lra E\\
w &\longmapsto (1+p)^{\kappa-k}-1.
\end{align*}
Set $I=\{ \kappa \in \ZZ_{\geq 2} \mid \kappa\equiv k\pmod{(p-1)p^{r-1}}\} $ and let
\[
\mathbf{f}=\underset{n=1}{\overset{\infty}\sum} \mathbf{a}_nq^n \in A[[q]]
\] 
be a $p$-adic family of cuspidal eigenforms passing through $f$, in the sense of \cite{coleman}. 
This means that for every  point  $\kappa\in I$ the series  $\mathbf{f}_{\kappa}=\underset{n=1}{\overset{\infty}\sum} \psi_{\kappa}(\mathbf{a}_n)q^n$ is the $q$-expansion of a weight $\kappa$ eigenform on $\Gamma_0(Np)$ and $\mathbf{f}_k=f.$ On shrinking $\overline B(k,p^{-r})$ if necessary 
and using \cite[Corollary B5.7.1]{coleman}, we may assume that $\mathbf{f}$ is a family of constant slope $k/2-1$.

Let $W_{\mathbf f}$ denote the big Galois representation associated to the family $\mathbf{f}$ with coefficients in $A=\mathcal O(U).$ Set $V_{\mathbf f}=W_{\mathbf f}(k/2).$
We have a skew-symmetric pairing 
\begin{equation}
\label{pairingforVf}
V_{\mathbf{f}}\times V_{\mathbf{f}} \rightarrow A(\chi{\boldsymbol{\chi}}^{-1}).
\end{equation}  
In particular, the representation $V_{\mathbf f}(\boldsymbol{\chi}^{1/2})$ is 
self-dual.
\begin{rem}
\label{rem:reconstructtheselfdualHidafamily}
Let $\displaystyle{\Theta:=\frac{\gamma-\boldsymbol{\chi}^{{1}/{2}}\left(\gamma^{-1}\right)}{\log\chi(\gamma)} \in \mathcal{H}_A}$. We then have the following natural isomorphism of Galois modules:
$$\overline{V}_{\mathbf{f}}\big{/}\Theta\cdot \overline{V}_{\mathbf{f}}\cong V_{\mathbf{f}}(\boldsymbol{\chi}^{1/2})\,.$$
We remark that we have $\gamma^{-1}$ in the definition of $\Theta$ (as opposed to $\gamma$ itself) due to our definition of the Galois action on $\mathcal{H}_A^\iota$.
\end{rem}

Set $V_{\kappa}=V_{\mathbf{f}}\otimes_{A,\psi_{\kappa}}E.$ When $\kappa$ is a positive integer 
we have $V_{\kappa}=W_{\mathbf{f}_{\kappa}}(k/2),$ where 
$W_{\mathbf{f}_{\kappa}}$ is the $p$-adic Galois representation
associated to $\mathbf{f}_{\kappa}$ by Deligne. According to Kisin~\cite{overconvergentmodformsandFMconj}, there exists an
analytic function $\alpha (w)\in A$ with values in $E$ such that 

$\bullet$ $\psi_{k}(\alpha)=\psi_k(\mathbf{a}_p) p^{-k/2};$

$\bullet$ $\Dcris (V_{\kappa})^{\varphi=\psi_{\kappa}(\alpha)}$ is a one-dimensional $E$-vector space for each $\kappa \in I.$ 

To simplify notation, we will often write $\alpha (\kappa)$ instead $\psi_{\kappa}(\alpha).$ 
The second condition implies that $V_{\kappa}$ is trianguline
and therefore semistable at  all $\kappa\in I.$

 Let $\alpha (\kappa)$ and 
$\beta (\kappa)$ denote the eigenvalues of $\varphi$ acting on 
$\Dst (V_{\kappa}).$  Since the Hodge weights of $V_{\kappa}$ are 
$(-k/2,\kappa-k/2-1),$ from the weak admissibility of $\Dst (V_{\kappa})$
it follows that 
\[
v_p(\alpha (\kappa))+v_p(\beta (\kappa))=\kappa-k-1, \qquad \kappa \in I.
\]
Since $v_p(\alpha (\kappa))=-1,$ we have 
$v_p(\beta (\kappa ))=\kappa-k.$ This implies 
that $V_{\kappa}$ is crystalline for all $\kappa\in I\setminus\{k\}.$ 
By \cite[Theorem~0.3.4]{Liu}, this data defines a triangulation of 
the $(\varphi,\Gamma)$-module $\DdagrigA (V_{\mathbf{f}}).$ More precisely,
$\DdagrigA (V_{\mathbf{f}})$ sits in an exact sequence
\[
0\lra \mathbb D_{\mathbf{f}} \lra \DdagrigA (V_{\mathbf{f}})
\lra \widetilde{\mathbb D}_{\mathbf{f}}\lra 0\,,
\]
where $\mathbb D_{\mathbf{f}}=\RR_A\cdot e_{\boldsymbol{\delta}}$ and
$\widetilde{\mathbb D}_{\mathbf{f}}=\RR_A\cdot e_{\widetilde{\boldsymbol{\delta}}},$ 
are $(\varphi,\Gamma)$-modules of rank $1$ defined by characters
$\boldsymbol{\delta}\,:\,\QQ_p^*\rightarrow A^*$ and 
$\widetilde{\boldsymbol{\delta}}\,:\,\QQ_p^*\rightarrow A^*$ such that   
\begin{align*}
&\boldsymbol{\delta} (u)=u^{k/2},\qquad \boldsymbol{\delta}(p)=p^{k/2}\alpha (w){\,,}\\
&\widetilde{\boldsymbol{\delta}} (u)=u^{k/2+1-\kappa (w)},\qquad 
\widetilde{\boldsymbol{\delta}}(p)=p^{-k/2}\alpha^{-1} (w).
\end{align*}
Note that $\psi_k(\boldsymbol{\delta})=\delta,$ 
$\psi_k(\widetilde{\boldsymbol{\delta}})=\widetilde\delta$ and
$\mathbb D_f=\mathbb D_{\mathbf{f}}\otimes_{A,\psi_k}E.$

\begin{thm}[Stevens, Coleman--Iovita]\label{thm:stevensderformula} Assume that 
\[
a_p=\mathbf{a}_p(k)=p^{k/2-1}.
\]
Then for the Fontaine-Mazur $\al$-invariant we  have 
 $$\mathcal{L}_{\textup{FM}}(f)=-2 p\cdot \alpha^\prime(k),$$
where $\alpha (\kappa)=p^{-k/2}\mathbf{a}_p(\kappa).$
\end{thm}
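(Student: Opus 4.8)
The plan is to relate the Fontaine–Mazur $\mathcal{L}$-invariant of $f$ to the derivative of the Frobenius eigenvalue $\alpha(\kappa)$ along the Coleman family, using the infinitesimal deformation of the filtered $(\varphi,N)$-module (equivalently, the $(\varphi,\Gamma)$-module) attached to $\mathbf{f}$ at $\kappa=k$. The key observation is that $\mathcal{L}_{\textup{FM}}(f)$ is, by its very definition via the filtration on $\Dst(V_f)$, the ``slope'' of the Hodge line $\F^0\Dst(V_f)=E(e_\beta-\mathcal{L}_{\textup{FM}}(f)e_\alpha)$ relative to the canonical basis $\{e_\alpha,e_\beta\}$ adapted to the monodromy $N$ (with $Ne_\beta=e_\alpha$). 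Since in the exceptional case $a_p=p^{k/2-1}$ we have $\alpha=p^{-1}$, $\beta=1$, so $\beta=p\alpha$; the family $\mathbf{f}$ provides a crystalline deformation for $\kappa\ne k$ whose two Frobenius eigenvalues are $\alpha(\kappa)$ (of slope $-1$) and $\beta(\kappa)$ (of slope $\kappa-k$), and these two eigenlines collide at $\kappa=k$, precisely where the monodromy $N$ turns on. So the strategy is to compute how the crystalline Frobenius-eigenline $\Dcris(V_\kappa)^{\varphi=\beta(\kappa)}$ degenerates, to first order in $\kappa-k$, into a line that fails to be $\varphi$-stable but becomes the $N$-image direction, and to read off $\mathcal{L}_{\textup{FM}}(f)$ from that first-order term.

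\textbf{Carrying this out.} First I would work over the dual numbers $E[\epsilon]/\epsilon^2$, specializing the family at $\kappa=k+\epsilon$ (equivalently reducing mod $\varpi_\kappa^2$), and identify $\Dst$ of this first-order deformation as a free $E[\epsilon]/\epsilon^2$-module of rank $2$ with $\varphi$ acting by eigenvalues $\alpha(k)+\epsilon\,\alpha'(k)$ and $\beta(k)+\epsilon\,\beta'(k)$ on a basis $\{\tilde e_\alpha,\tilde e_\beta\}$ reducing to $\{e_\alpha, e_\beta\}$. Using $v_p(\alpha(\kappa))=-1$ for all $\kappa$ and $v_p(\beta(\kappa))=\kappa-k$ (hence $v_p(\beta'(k))$ finite, $\beta(k)=1$), plus the weak admissibility constraint $v_p(\alpha(\kappa))+v_p(\beta(\kappa))=\kappa-k-1$, I would extract the relation $\beta(w)=p^{k}\,\alpha(w)\cdot(1+w)$ (or the analogous exact identity forced by the fact that $\det\varphi$ on $\Dst(V_{\mathbf{f}})$ is determined by the nebentype/determinant character: $\boldsymbol{\delta}(p)\cdot\widetilde{\boldsymbol{\delta}}(p)^{-1}$-type bookkeeping from the characters $\boldsymbol{\delta},\widetilde{\boldsymbol{\delta}}$ already displayed in the excerpt). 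Then I would track the Hodge filtration: the jump of $\F^\bullet$ is at $i=\kappa-k/2-1$, which moves with $\kappa$, and the $\F^0$-line of $V_f$ (the central twist) is pinned down by the requirement that the deformed filtered module remain weakly admissible; differentiating the admissibility/position of the Hodge line against the $\varphi$-eigenbasis at $\kappa=k$ yields the coordinate of $\F^0\Dst(V_f)$ in the $\{e_\alpha,e_\beta\}$-basis in terms of $\alpha'(k)$. This is the computation that produces $\mathcal{L}_{\textup{FM}}(f)=-2p\,\alpha'(k)$; the factor $-2$ arises from the central twist by $k/2$ together with the normalization $\alpha(\kappa)=p^{-k/2}\mathbf{a}_p(\kappa)$ (note it is $p^{-k/2}$, a \emph{constant} power, so $\alpha'(k)=p^{-k/2}\mathbf{a}_p'(k)$), and the factor $p$ from $\beta=p\alpha$, i.e.\ from $\varphi$ having eigenvalue $1$ rather than $p^{-1}$ on the monodromy-image vector.

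\textbf{The main obstacle} will be making the first-order degeneration of the triangulation precise: one must show that the $(\varphi,\Gamma)$-submodule $\mathbb{D}_{\mathbf{f}}\otimes_{A}E[\epsilon]/\epsilon^2$ specializes to an extension whose associated filtered module has exactly the claimed Hodge line, which requires comparing Berger's $\CDst$ functor on the infinitesimal deformation with Fontaine's $\Dst$ on $V_f$ and controlling that no higher-order or $\Gamma$-action subtlety intervenes. In practice this is exactly the content of the Coleman–Iovita computation (and Stevens' earlier one in weight $2$): the cleanest route is to invoke their result directly, or to reduce to it via the explicit description of $\mathbb{D}_f$ and $\widetilde{\mathbb{D}}_f$ in Proposition~\ref{prop:ex sequence with Dalpha} together with the characters $\boldsymbol{\delta},\widetilde{\boldsymbol{\delta}}$ above, deforming $\widetilde{\boldsymbol{\delta}}(p)=p^{-k/2}\alpha^{-1}(w)$ to first order and matching the resulting extension class in $H^1(\widetilde{\mathbb{D}}_f^{\vee}\otimes\mathbb{D}_f)$ against the class computing $\mathcal{L}_{\textup{FM}}(f)$. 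I expect the bulk of the remaining work to be bookkeeping of normalizations (the $k/2$-twist, signs, and the $\log_p(1+p)$ factors entering $\kappa(w)$), none of which is conceptually hard once the degeneration picture is in place.
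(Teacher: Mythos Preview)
The paper's ``proof'' is simply a citation: it invokes Stevens' result \cite{St10} that $\mathcal{L}_{\textup{C}}(f)=-2p\,\alpha'(k)$ for Coleman's $\mathcal{L}$-invariant, combined with Coleman--Iovita's theorem \cite{CI10} that $\mathcal{L}_{\textup{C}}(f)=\mathcal{L}_{\textup{FM}}(f)$; it also points to direct proofs by Colmez \cite{Col10} and Benois \cite{Ben10} via $(\varphi,\Gamma)$-modules. Your proposal, by contrast, attempts to sketch an actual argument along the lines of the Colmez/Benois approach: deform the triangulation over $E[\epsilon]/(\epsilon^2)$, compute the resulting extension class in $H^1$ of the relevant rank-one $(\varphi,\Gamma)$-modules, and match it against the definition of $\mathcal{L}_{\textup{FM}}(f)$ via the Hodge filtration. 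That is the right picture, and it is essentially what \cite{Ben10} does.

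However, your sketch does not carry this through. The passage about ``differentiating the admissibility/position of the Hodge line'' is not how the computation actually goes: in the $(\varphi,\Gamma)$-module approach one fixes the filtration (equivalently, the characters $\boldsymbol{\delta},\widetilde{\boldsymbol{\delta}}$) and reads $\mathcal{L}_{\textup{FM}}(f)$ off the position of $\partial_0^{\textup{loc}}(d_{\widetilde\delta})$ inside $H^1(\mathbb{D}_f)$ relative to the canonical decomposition $H^1_{\textup{f}}\oplus H^1_c$; the derivative $\alpha'(k)$ enters because $(\varphi-1)(t^{k/2-1}e_{\widetilde{\boldsymbol\delta}})$ is divisible by $\varpi_\kappa$ with leading coefficient $-p\alpha'(k)$. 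Your own ``main obstacle'' paragraph concedes this and then proposes to invoke Coleman--Iovita or \cite{Ben10} directly --- at which point you have arrived at exactly the paper's proof. So there is no genuine gap, but nor is there a genuinely different argument: you have outlined the strategy of one of the cited references without executing it, and then fallen back on the citation.
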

\begin{proof}Since  $\al_{\textup{C}}(f)=\al_{\textup{FM}}(f)$ by \cite{CI10},
the theorem follows from \cite{St10}, where  such a formula 
was  proved  for Coleman's $\al$-invariant $\al_{\textup{C}}(f).$ 
The first direct proof of this formula for Fontaine-Mazur's 
$\al$-invariant was discovered  by Colmez  \cite{Col10}. Another proof based on the theory of  $(\varphi,\Gamma)$-modules  may be found in \cite{Ben10}.
\end{proof}

 Let $\frak{p}=\ker(\psi_k)$ be the prime of $A$ corresponding to the eigenform $f$.
We call \emph{central critical weight-height pairing} $h_{\mathbf{f}}^{\textup{c-wt}}$  the pairing $h^{\mathrm{wt}}$  given via the general theory in Section~\ref{subsec:constructionofh^wt} for the family $V_{\mathbf{f}}(\boldsymbol{\chi}^{1/2})$ that is equipped with 
the triangulation $\mathbb D_{\mathbf{f}}(\boldsymbol{\chi}^{1/2}):$ 
\[
h_{\mathbf{f}}^{\textup{c-wt}}=h^{\textup{wt}}_{V_{\psi_k(\mathbf{f})},\mathbb D_{\psi_k(\mathbf{f})}}\,:\,
\widetilde H^1_{\textup{f}}(V_f)\times 
\widetilde H^1_{\textup{f}}(V_f^*(1)) \rightarrow \frak p/\frak p^2. 
\]
 Denote by 
\be\label{eqn:centralcriticalheightvaluesinpmodp2}
\mathbb H_{\mathbf{f}}^{\textup{c-wt}}\,:\,\widetilde H^1_{\textup{f}}(V_f)\times 
\widetilde H^1_{\textup{f}}(V_f) \rightarrow \frak p/\frak p^2
\ee
the composition of the pairing $h_{\mathbf{f}}^{\textup{c-wt}}$ with the isomorphism $H^1_{\textup{f}}(V_f)\simeq H^1_{\textup{f}}(V_f^*(1))$ that is induced by the skew-symmetric pairing $V_f\times V_f\rightarrow E(1)$.
Since $\left(V_{\mathbf{f}}(\boldsymbol{\chi}^{1/2}),\mathbb D_{\mathbf{f}} (\boldsymbol{\chi}^{1/2})\right)$ is self-dual and the pairing $h_{\mathbf{f}}^{\textup{c-wt}}$ is symmetric, we conclude that the pairing $\mathbb H_{\mathbf{f}}^{\textup{c-wt}}$ is skew-symmetric:
\begin{equation}
\label{weightheightisskewsymformodforms}
\mathbb H_{\mathbf{f}}^{\textup{c-wt}}\left([x_{\textup{f}}],[y_{\textup{f}}]\right)=-\mathbb H_{\mathbf{f}}^{\textup{c-wt}}\left(([y_{\textup{f}}],[x_{\textup{f}}]\right).
\end{equation}

\begin{define}\label{def:centralcriticalheight} We denote by 
\[
\frak h_{\mathbf{f}}^{\textup{c-wt}}\,:\,\widetilde H^1_{\textup{f}}(V_f)\times \widetilde H^1_{\textup{f}}(V_f) \rightarrow E
\]
the pairing defined via
\[
\mathcal A \left (\mathbb H_{\mathbf{f}}^{\textup{c-wt}}([x_{\textup{f}}],[y_{\textup{f}}])\right )=\frak h_{\mathbf{f}}^{\textup{c-wt}}([x_{\textup{f}}],[y_{\textup{f}}]) (\kappa-k).
\]
\end{define}


\subsection{Two-variable $p$-adic heights}
\label{subsec:twovariablesheights}
In this section, we construct  infinitesimal deformations of Nekov\'a\v r's $p$-adic  heights along the weight direction. Our construction is a direct generalization of Venerucci's  two-variable height pairing to the non-ordinary case. 
To do this, we replace the theory of \cite{Ne06} by its non-ordinary version developed  in \cite{Pot13} and \cite{Ben14b}.
We keep notation and conventions of Section~\ref{subsec:padicfamilies}. 

Let $\overline V_{\mathbf{f}}=V_{\mathbf{f}}\widehat\otimes_A\mathcal H_A^{\iota}$ and $\overline{\mathbb D}_{\mathbf{f}}=\mathbb D_{\mathbf{f}}\otimes_{\RR_A}\DdagrigA(\mathcal H_A^{\iota})$ (see Section~\ref{subsec:iwasawacohomology}).
Let $\frak{P}$ denote the kernel of the augmentation map 
\begin{align*}\mathcal H_A&\lra E\\
f(X)&\mapsto\psi_k (f(0))   
\end{align*}
Consider the tautological  exact sequence 
\[
0\rightarrow \frak{P}/\frak{P}^{2}\rightarrow \mathcal H_A/\frak{P}^2 \xrightarrow{} 
E\rightarrow 0.
\]
Tensoring this exact sequence with $\RG (\overline V_{\mathbf{f}},\overline{\mathbb D}_{\mathbf{f}})$ we get a distingushed triangle 
\[
\RG (\overline V_{\mathbf{f}},\overline{\mathbb D}_{\mathbf{f}})\otimes_{\mathcal H_A}^{\mathbb L}\frak{P}/\frak{P}^2
\rightarrow 
\RG (\overline V_{\mathbf{f}},\overline{\mathbb D}_{\mathbf{f}})\otimes_{\mathcal H_A}^{\mathbb L}\mathcal H_A/\frak{P}^2
\rightarrow 
\RG (\overline V_{\mathbf{f}},\overline{\mathbb D}_{\mathbf{f}})\otimes_{\mathcal H_A}^{\mathbb L}E.
\]
By the base change theorem for Selmer complexes (c.f., \cite{Pot13}, Section 1), we have a canonical isomorphism 
\begin{equation}
\label{eqn:base change Selmer}
\RG (\overline V_{\mathbf{f}},\overline{\mathbb D}_{\mathbf{f}})\otimes_{\mathcal H_A}^{\mathbb L}E \stackrel{\sim}{\lra}
\RG (V_f,\mathbb D_f).
\end{equation}
 Using the natural identification
\begin{equation}
\label{eqn: iso for Selmer}
\RG (\overline V_{\mathbf{f}},\overline{\mathbb D}_{\mathbf{f}})\otimes^{\mathbb L}_{\mathcal H_A}\mathfrak{P}/\mathfrak{P}^2\simeq 
\RG (V_f,\mathbb D_f)\otimes_E\mathfrak{P}/\mathfrak{P}^2,
\end{equation}
this distinguished triangle translates to
\[
\RG (V_f,\mathbb D_f)\otimes_E\mathfrak{P}/\mathfrak{P}^2
 \rightarrow 
\RG (\overline V_{\mathbf{f}} ,\overline{\mathbb D}_{\mathbf{f}})\otimes_{\mathcal H_A}^{\mathbb L}\mathcal H_A/\frak{P}^2 \rightarrow 
\RG (V_f,\mathbb D_f)
\xrightarrow{\beta} 
\RG (V_f,\mathbb D_f)[1]\otimes_E\mathfrak{P}/\mathfrak{P}^2.
\]
On the level of cohomology, we have a map
\[
\widetilde H^1_{\textup{f}}(V_f)\rightarrow \widetilde H^2_{\textup{f}}(V_f)\otimes_E\mathfrak{P}/\mathfrak{P}^2
\]
which we denote again by $\beta.$

\begin{define}
\label{def:twovarheight}
The \emph{two-variable height pairing} $\mathbb{H}_{\mathbf{f}}$ is defined as the compositum of the following arrows:
$$
\widetilde{H}^1_{\textup{f}}(V_f)\otimes_E \widetilde{H}^1_{\textup{f}}(V_f) \stackrel{\beta\otimes j}{\lra} \left(\widetilde{H}^2_{\textup{f}}(V_f)\otimes_E \frak{P}/\frak{P}^2\right)\otimes_E \widetilde{H}^1_{\textup{f}}(V_f^*(1))\stackrel{\cup_{V_f,\mathbb D_f}}{\lra}  \frak{P}/\frak{P}^2\,$$
where $\cup_{V_f,\mathbb D_f}$ is the  cup-product (\ref{cupproductforSelmercomplexes}).
\end{define}

Let $\frak I$ denote the ideal of the ring $E[[\kappa-k,s]]$ generated by
$\kappa-k$ and $s.$  The map $\mathcal A$ identifies  $\frak P/\frak{P}^2$ with 
$\frak I/\frak I^2.$ 

The main formal properties of the pairing $\mathbb H_{\mathbf{f}}$ are listed in the following theorem.   

{\begin{thm}
\label{thm:thepropertiesofthetwovarheight} For any $[x_{\textup{f}}], [y_{\textup{f}}] \in \widetilde{H}^1_{\textup{f}}(V_f)$, we have the following identities in $\frak I/\frak I^2$.\\\\
\textup{i)} $\frac{\partial}{\partial s} \mathcal A\left(\mathbb{H}_{\mathbf{f}}\left([x_{\textup{f}}], [y_{\textup{f}}]\right)\right)\big{|}_{s=0,\kappa=k}=-\frak{h}_p\left([x_{\textup{f}}], [y_{\textup{f}}]\right)$\, and

 $\frac{\partial}{\partial \kappa} \mathcal A\left(\mathbb{H}_{\mathbf{f}}\left([x_{\textup{f}}], [y_{\textup{f}}]\right)\right) (\kappa,(\kappa-k)/2)\big{|}_{\kappa=k}=\frak{h}_{\mathbf {f}}^{\textup{c-wt}}\left([x_{\textup{f}}], [y_{\textup{f}}]\right)\,.$
\\\\
\textup{ii)}  $\mathcal A\left(\mathbb{H}_{\mathbf{f}}\left([y_{\textup{f}}],[x_{\textup{f}}]\right)\right)(\kappa,s)=-\mathcal A\left(\mathbb{H}_{\mathbf{f}}\left([x_{\textup{f}}], [y_{\textup{f}}]\right)\right)(\kappa,\kappa-k-s)$\,.\\\\
\textup{iii)} $(${Rubin-style formulae, Part I}$)$ Assume that $a_p=p^{k/2-1}.$
Let  $d_{\widetilde{\delta}}=t^{k/2-1}e_{\widetilde{\delta}}$ denote the generator
of $H^0(\widetilde{\mathbb D}_f)$ defined in Proposition~\ref{prop:ex sequence with Dalpha}. Let $[x_{\textup{f}}]
\in  \widetilde{H}^1_{\textup{f}}(V_f)$ denote the lift of a class $[x] \in H^1_{\textup{f}}(\QQ, V_f)$ with respect to the canonical splitting $\textup{spl}$. Suppose that the class $[x_{\textup{f}}]$ is represented by  $\left(x,(x_{\ell}^+),(\lambda_{\ell})\right)$. We then have the following identity in $\frak{I}/\frak{I}^2$\,:

\be\label{eqn:Rubin2}\mathcal A\left(\mathbb{H}_{\mathbf{f}}\left([x_{\textup{f}}], \partial_0 (d_{\widetilde{\delta}})\right)\right)=\left\langle\Psi_2,[x_p^+]\right\rangle\cdot s\,.\ee
Furthermore,
\be\label{eqn:Rubin1}\mathcal A\left(\mathbb{H}_{\mathbf{f}}\left(\partial_0(d_{\widetilde\delta}), \partial_0(d_{\widetilde\delta})\right)\right)= \left\langle\Psi_2,\partial^{\textup{loc}}_0(d_{\widetilde\delta}) \right\rangle\cdot \left(s-\frac{\kappa-k}{2}\right)\,. \ee

\end{thm}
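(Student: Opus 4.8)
The plan is to establish the three assertions by exploiting the behaviour of the two-variable deformation $(\overline{V}_{\mathbf{f}},\overline{\mathbb{D}}_{\mathbf{f}})$ over $\mathcal{H}_A=\mathcal{H}\widehat\otimes_EA$ under base change, feeding this into the general formalism recalled in Section~\ref{subsec:constructpadicheights} and the subsequent subsections, and following the template of Venerucci~\cite{venerucciarticle} in the $p$-ordinary weight-two case. Throughout I would work with the identification $\mathcal{A}\colon\frak{P}/\frak{P}^2\xrightarrow{\sim}\frak{I}/\frak{I}^2$: the cyclotomic direction (coming from $X\in\mathcal{H}$, normalised by $\gamma-1\mapsto\log\chi(\gamma)$) is the coordinate $s$, the weight direction (coming from $w\in A$) is the coordinate $\kappa-k$, so that $\mathcal{A}(\mathbb{H}_{\mathbf{f}}([x_{\textup{f}}],[y_{\textup{f}}]))=a\,s+b\,(\kappa-k)$ for some $a,b\in E$, and it suffices to pin down $a$ and $b$.

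\textbf{Part (i).} For the $s$-derivative I would base change $\RG(\overline{V}_{\mathbf{f}},\overline{\mathbb{D}}_{\mathbf{f}})$ along $\psi_k\colon A\to E$. Since Selmer complexes commute with base change (\cite{Pot13}), this turns the deformation into the cyclotomic deformation of $(V_f,\mathbb{D}_f)$ over $\mathcal{H}$, the coboundary $\beta$ appearing in Definition~\ref{def:twovarheight} into $\beta^{\mathrm{cyc}}_{V_f,\mathbb{D}_f}$, and compatibility of $\cup_{V_f,\mathbb{D}_f}$ with base change identifies $\mathcal{A}(\mathbb{H}_{\mathbf{f}})\bmod(\kappa-k)$ with the pairing of Remark~\ref{rem:cyclopairingheight}, that is $-\frak{h}_p$; hence $a=-\frak{h}_p([x_{\textup{f}}],[y_{\textup{f}}])$. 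For the $\kappa$-derivative along $s=(\kappa-k)/2$ I would instead reduce along $\Theta$: by Remark~\ref{rem:reconstructtheselfdualHidafamily} the ideal $(\Theta)\subset\mathcal{H}_A$ cuts out that line and $\overline{V}_{\mathbf{f}}/\Theta\,\overline{V}_{\mathbf{f}}\cong V_{\mathbf{f}}(\boldsymbol{\chi}^{1/2})$, with $\overline{\mathbb{D}}_{\mathbf{f}}$ restricting to $\mathbb{D}_{\mathbf{f}}(\boldsymbol{\chi}^{1/2})$; the same base-change compatibility identifies the restriction of $\mathbb{H}_{\mathbf{f}}$ along this line with the weight-height pairing $h^{\textup{wt}}_{V_{\psi_k(\mathbf{f})},\mathbb{D}_{\psi_k(\mathbf{f})}}=h^{\textup{c-wt}}_{\mathbf{f}}$, and Definition~\ref{def:centralcriticalheight} reads this off as $\frak{h}^{\textup{c-wt}}_{\mathbf{f}}\cdot(\kappa-k)$, i.e. $\frac{a}{2}+b=\frak{h}^{\textup{c-wt}}_{\mathbf{f}}([x_{\textup{f}}],[y_{\textup{f}}])$. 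Together these give (i).

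\textbf{Part (ii).} The functional equation is the specialisation to $\overline{V}_{\mathbf{f}}$ of the symmetry argument behind Proposition~\ref{prop: propertiesofheights}, applied with $i=j=1$. I would combine the general identity $h^{1,1}(x,y)=-\,h^{1,1}_{\textup{dual}}(y,x)$, the skew-symmetry of the Poincar\'e pairing $V_f\times V_f\to E(1)$ recorded by diagram~(\ref{formula:anticommutativediagram}), and the identification of the Tate dual of the cyclotomic family supplied by the pairing~(\ref{pairingforVf}) $V_{\mathbf{f}}\times V_{\mathbf{f}}\to A(\chi\boldsymbol{\chi}^{-1})$. The last input is the decisive one: twisting by $\boldsymbol{\chi}=\chi^{\varpi_\kappa}$ shifts the cyclotomic parameter by $\kappa-k$ while the involution $\iota$ on $\mathcal{H}_A^{\iota}$ negates it, so that passing to the dual deformation implements on $\frak{I}/\frak{I}^2$ exactly the substitution $s\mapsto(\kappa-k)-s$; keeping track of signs then yields $\mathcal{A}(\mathbb{H}_{\mathbf{f}}([y_{\textup{f}}],[x_{\textup{f}}]))(\kappa,s)=-\mathcal{A}(\mathbb{H}_{\mathbf{f}}([x_{\textup{f}}],[y_{\textup{f}}]))(\kappa,\kappa-k-s)$.

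\textbf{Part (iii).} Assume $a_p=p^{k/2-1}$. For~(\ref{eqn:Rubin1}), applying part~(ii) with $[x_{\textup{f}}]=[y_{\textup{f}}]=\partial_0(d_{\widetilde{\delta}})$ already forces $b=-a/2$, so the linear form is a multiple of $s-\frac{\kappa-k}{2}$, and by part~(i) its coefficient is $a=-\frak{h}_p(\partial_0(d_{\widetilde{\delta}}),\partial_0(d_{\widetilde{\delta}}))$; thus~(\ref{eqn:Rubin1}) reduces to the purely cyclotomic Rubin-style identity $\frak{h}_p(\partial_0(d_{\widetilde{\delta}}),\partial_0(d_{\widetilde{\delta}}))=-\langle\Psi_2,\partial^{\textup{loc}}_0(d_{\widetilde{\delta}})\rangle$. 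For~(\ref{eqn:Rubin2}) one likewise reduces, via part~(i), to $-\frak{h}_p([x_{\textup{f}}],\partial_0(d_{\widetilde{\delta}}))=\langle\Psi_2,[x_p^+]\rangle$ together with the vanishing of the weight-component $b=\frak{h}^{\textup{c-wt}}_{\mathbf{f}}([x_{\textup{f}}],\partial_0(d_{\widetilde{\delta}}))+\frac12\frak{h}_p([x_{\textup{f}}],\partial_0(d_{\widetilde{\delta}}))$. These remaining one-variable evaluations I would carry out (or, alternatively, carry out once and for all directly in the two-variable Selmer complex modulo $\frak{P}^2$, bypassing (i) and (ii)) by an explicit cocycle computation: represent $\partial_0(d_{\widetilde{\delta}})$ by the cocycle~(\ref{eqn:formulaforcoboundary}), which is supported at $p$, expand the cup product via~(\ref{formulaforcupproductofSelmercomplexes}) so that only the local term at $p$ contributes, and evaluate it through the residue isomorphism~(\ref{BrauerisomorphismforphiGammamodules}) using the explicit shapes $\mathbb{D}_f\simeq\RR_E(\vert x\vert x^{k/2})$, $\widetilde{\mathbb{D}}_f\simeq\RR_E(x^{1-k/2})$ of Proposition~\ref{prop:ex sequence with Dalpha}; the hypothesis that $[x_{\textup{f}}]$ is the $\spl$-lift of $[x]\in H^1_{\textup{f}}(\QQ,V_f)$ enters through Corollary~\ref{ref:crucialproperty1and2}, which gives $\langle\Psi_1,[x_p^+]\rangle=0$ and hence $[x_p^+]\in\textup{span}\{\Psi_2^*\}$. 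The hard part will be this last residue computation for the \emph{weight-deformed} quantities: one must track how $d_{\widetilde{\delta}}=t^{k/2-1}e_{\widetilde{\delta}}$ deforms, or fails to deform, inside $\widetilde{\mathbb{D}}_{\mathbf{f}}=\RR_A(\widetilde{\boldsymbol{\delta}})$ (with $\widetilde{\boldsymbol{\delta}}(u)=u^{k/2+1-\kappa(w)}$) along the weight and cyclotomic directions simultaneously, and it is precisely the half-twist $\boldsymbol{\chi}^{1/2}$ built into the self-dual normalisation that produces the factor $\frac12$ in the combination $s-\frac{\kappa-k}{2}$; everything else is residue bookkeeping of the kind already performed in \cite{veneruccithesis,venerucciarticle} and \cite{Ben14a}.
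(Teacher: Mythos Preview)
Your overall strategy is sound and, for parts~(i) and~(iii), ends up very close to the paper's proof. The main substantive difference is in part~(ii): the paper does \emph{not} run a direct duality argument tracking the $\boldsymbol{\chi}^{-1}$-twist. Instead it observes that a linear form in $\frak{I}/\frak{I}^2$ is determined by its restrictions to the two lines $\kappa=k$ and $s=(\kappa-k)/2$, and simply checks the asserted functional equation there using part~(i) together with the symmetry of $\frak{h}_p$ (for $\kappa=k$) and the skew-symmetry of $\frak{h}^{\textup{c-wt}}_{\mathbf{f}}$ (along the central-critical line), the latter being exactly~(\ref{weightheightisskewsymformodforms}). This is shorter and avoids the sign-tracking you anticipate; your direct approach should work but buys you nothing extra here.

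For part~(iii) the paper takes what you call the ``alternative'' route: it works directly in the two-variable Selmer complex modulo $\frak{P}^2$, lifts $d_{\widetilde{\delta}}$ to $d_{\widetilde{\boldsymbol{\delta}}}=t^{k/2-1}e_{\widetilde{\boldsymbol{\delta}}}\in\widetilde{\mathbb{D}}_{\mathbf{f}}$, and computes the image $\widetilde{\mathbf{v}}_p$ of $\beta(\partial_0(d_{\widetilde{\delta}}))$ in $H^1(\widetilde{\mathbb{D}}_f)\otimes\frak{P}/\frak{P}^2$ by hand, obtaining
\[
\mathcal{A}([\widetilde{\mathbf{v}}_p])=-\tfrac{\mathcal{L}_{\textup{FM}}(f)}{2}(\kappa-k)\,\Psi_1+(s+k-\kappa)\,\Psi_2.
\]
One ingredient you did not flag explicitly is essential here: the coefficient of $\Psi_1$ comes from $(\varphi-1)$ acting on $e_{\widetilde{\boldsymbol{\delta}}}$, i.e.\ from $\alpha'(k)$, and it is Theorem~\ref{thm:stevensderformula} (Stevens, Coleman--Iovita) that converts $-p\alpha'(k)$ into $\mathcal{L}_{\textup{FM}}(f)/2$. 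The formula~(\ref{eqn:Rubin1}) then follows by pairing $[\widetilde{\mathbf{v}}_p]$ with $\partial_0^{\textup{loc}}(d_{\widetilde{\delta}})=a\Psi_1^*+b\Psi_2^*$ and using the second description $\mathcal{L}_{\textup{FM}}(f)=b/a$ from Proposition~\ref{prop:splitting}(ii); the two occurrences of the $\mathcal{L}$-invariant cancel to produce the clean factor $s-\tfrac{\kappa-k}{2}$. Your reduction of~(\ref{eqn:Rubin1}) via part~(ii) to a single cyclotomic computation is a legitimate and arguably slicker shortcut for that formula, but note that it does not help with~(\ref{eqn:Rubin2}): there the paper first computes $\mathbb{H}_{\mathbf{f}}(\partial_0(d_{\widetilde{\delta}}),[x_{\textup{f}}])=\langle\Psi_2,[x_p^+]\rangle(s+k-\kappa)$ using $\langle\Psi_1,[x_p^+]\rangle=0$ from Corollary~\ref{ref:crucialproperty1and2}, and only then applies the functional equation~(ii) to swap the arguments and obtain $\langle\Psi_2,[x_p^+]\rangle\cdot s$.
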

\begin{proof} i) These two identities follow directly from definitions, see in particular \cite[Section 0.22 in Appendix C]{veneruccithesis} for a general formalism in the $p$-ordinary setting. We provide some details below. The evaluation at $k$ map  $\psi_k$  gives rise to a commutative diagram
\[
\xymatrix{
0 \ar[r] & \mathfrak P/\mathfrak P^2 \ar[d]\ar[r] & \mathcal H_A/
\mathfrak P^2 \ar[d] \ar[r] & E \ar @{=}[d] \ar[r]  &0\\
0 \ar[r] &J_E/J_E^2 \ar[r] &\mathcal H_E/J_E^2 \ar[r] & E \ar[r]
& 0.
}
\]
Tensoring the upper row with $\RG (\overline{V}_{\mathbf f},\overline{\mathbb D}_{\mathbf f})$, the bottom row with $\RG(\overline{V}_{f},\overline{\mathbb D}_{f})$ and relying on the isomorphisms \eqref{eqn:base change Selmer} and \eqref{eqn: iso for Selmer}, we obtain the following commutative diagram, where the vertical map in the middle is induced by the  specialization  map $V_{\mathbf f}\rightarrow V_f$:
\[
\xymatrix{
 \RG ({V}_{f},{\mathbb D}_{f})\otimes_{E}\mathfrak P/\mathfrak P^2 \ar[d]\ar[r] &  \RG (\overline{V}_{\mathbf f},\overline{\mathbb D}_{\mathbf f})\otimes_{\mathcal H_A}^{\mathbb L}\mathcal H_A/
\mathfrak P^2 \ar[d] \ar[r] &  \RG ({V}_{f},{\mathbb D}_{f})  \ar @{=}[d] 
\\
 \RG ({V}_{f}, {\mathbb D}_{f})\otimes_{E} J_E/J_E^2 \ar[r] & \RG (\overline{V}_{f},\overline{\mathbb D}_{f})\otimes_{\mathcal H_E}^{\mathbb L}\mathcal H_E/J_E^2 \ar[r] & 
 \RG ({V}_{f}, {\mathbb D}_{f})
}
\]
The rows of this diagram are exact triangles, and therefore we have 
a commutative diagram
\[
\xymatrix{
\widetilde H^1_{\textup{f}}(V_f)\ar[r]^-{\beta} 
\ar[dr]_{\beta^{\mathrm{cyc}}}
& \widetilde H^2_{\textup{f}}(V_f)\otimes_E\mathfrak{P}/\mathfrak{P}^2
\ar[d]^{\psi_k}\\
& \widetilde H^2_{\textup{f}}(V_f)\otimes_E J_E/J_E^2,
}
\]
where $\beta^{\mathrm{cyc}}$ is the connecting morphism given as in \eqref{eqn: connecting cyclotomic beta}. By the functoriality of cup products, it follows that the diagram 
\[
\xymatrix{
\widetilde H^1_{\textup{f}}(V_f) \times \widetilde H^1_{\textup{f}}(V_f)
\ar[r]^-{\mathbb H_{\mathbf f}} 
\ar[dr]_{h_{V_f,D}}
&\mathfrak{P}/\mathfrak{P}^2 \ar[d]^{\psi_k}\\
& J_E/J_E^2
}
\]
also commutes. This proves the first formula.
We remark that  the sign in the first identity is due to our normalization of the cyclotomic Amice transform; c.f. Remark~\ref{rem:cyclopairingheight}. 

We now prove the second formula. The assignment $\gamma \mapsto \boldsymbol{\chi}^{1/2}(\gamma^{-1})$ induces  a canonical projection $\mathfrak P/\mathfrak P^2 \rightarrow \mathfrak p/\mathfrak p^2.$ Analogous arguments to those above show that the diagram 
\[
\xymatrix{
 \RG ({V}_{f},{\mathbb D}_{f})\otimes_{E}\mathfrak P/\mathfrak P^2 \ar[d]\ar[r] &  \RG (\overline{V}_{\mathbf f},\overline{\mathbb D}_{\mathbf f})\otimes_{\mathcal H_A}^{\mathbb L}\mathcal H_A/
\mathfrak P^2 \ar[d] \ar[r] &  \RG ({V}_{f},{\mathbb D}_{f})  \ar @{=}[d]
\\
 \RG ({V}_{f}, {\mathbb D}_{f})\otimes_{E} \mathfrak p/{\mathfrak p}^2 \ar[r] & \RG ({V}_{\mathbf f}(\boldsymbol{\chi}^{1/2}),{\mathbb D}_{\mathbf f}(\boldsymbol{\chi}^{1/2}))\otimes_{A}^{\mathbb L} A/{\mathfrak p}^2 \ar[r] & 
 \RG ({V}_{f}, {\mathbb D}_{f}),
}
\]
 where the vertical map in the middle is induced by the projection $\overline{V}_{\mathbf f} \rightarrow V_{\mathbf f}(\boldsymbol{\chi}^{1/2})$ (c.f. Remark~\ref{rem:reconstructtheselfdualHidafamily}),  commutes. Using Proposition~\ref{prop:comparisiontwodefinitionsofweightheight} and mimicking the arguments used in the proof of the first formula, we
obtain a commutative diagram
\[
\xymatrix{
\widetilde H^1_{\textup{f}}(V_f) \times \widetilde H^1_{\textup{f}}(V_f)
\ar[r]^-{\mathbb H_{\mathbf f}} 
\ar[dr]_{{\mathbb H}^{\textup{c-wt}}_{\mathbf f}}
&\mathfrak{P}/\mathfrak{P}^2 \ar[d]\\
& \mathfrak{p}/\mathfrak{p}^2.
}
\]
It follows directly from definitions that the vertical map, after applying the two-variable Amice transform, is the map given by $s\mapsto (\kappa -k)/2$. This proves the second formula. 

ii) 
Let us write $(\kappa,s)=(\kappa, (\kappa-k)/2)+ (0,s-(\kappa-k)/2)$ and
use the formulae in Part (i), so that we have
\[
\mathcal A\left(\mathbb{H}_{\mathbf{f}}\left([y_{\textup{f}}],[x_{\textup{f}}]\right)\right)(\kappa,s)=
\mathfrak{h}_{\mathbf{f}}^{\textup{c-wt}}\left([y_{\textup{f}}],[x_{\textup{f}}]\right) \cdot (\kappa-k)- 
\frak h_p([y_{\textup{f}}],[x_{\textup{f}}])\cdot \left (s-\frac{\kappa-k}{2}\right )\mod \frak{I}^2.
\]
On the other hand, if we write $(\kappa,\kappa-k-s)=(\kappa, (\kappa-k)/2)+
(0, (\kappa-k)/2-s)$, we deduce using the same formulae that
\begin{align*}
    \mathcal A\left(\mathbb{H}_{\mathbf{f}}\left([x_{\textup{f}}],[y_{\textup{f}}]\right)\right)(\kappa,\kappa-k-s)&=
\mathfrak{h}_{\mathbf{f}}^{\textup{c-wt}}\left([x_{\textup{f}}],[y_{\textup{f}}]\right) \cdot (\kappa-k)\\
 &\,\qquad\qquad\qquad + \frak h_p([x_{\textup{f}}],[y_{\textup{f}}])\cdot \left (s-\frac{\kappa-k}{2}\right )\mod \frak{I}^2.
\end{align*}
Since  $\frak{h}_p$ is symmetric and $\mathfrak{h}_{\mathbf{f}}^{\textup{c-wt}}$ is anti-symmetric, these formulae together show that 
\[
\mathcal A\left(\mathbb{H}_{\mathbf{f}}\left([y_{\textup{f}}],[x_{\textup{f}}]\right)\right)(\kappa,s)=\mathcal A\left(\mathbb{H}_{\mathbf{f}}\left([x_{\textup{f}}],[y_{\textup{f}}]\right)\right)(\kappa,\kappa-k-s),
\]
as we claimed.

iii) Let $ d_{\widetilde\delta}:=t^{k/2-1}e_{\widetilde\delta}$ 
and  $d_{\widetilde{\boldsymbol\delta}}=t^{k/2-1}e_{\widetilde{\boldsymbol\delta}}
\in {\widetilde{\mathbb D}_{\mathbf f}}^{\Delta}.$ 
Choose any lift $\mathbf z\in \DdagrigA (V_{\mathbf f})^{\Delta}$ of $d_{\widetilde{\boldsymbol\delta}}$ under the 
canonical projection of $\DdagrigA (V_{\mathbf f})$ onto 
$\widetilde{\mathbb D}_{\mathbf f}.$ Then  $z=\psi_k(\mathbf z)\in \Ddagrig (V_f)^{\Delta}$ is a lift of  $d_{\widetilde{{\delta}}}$ under the 
projection of $\Ddagrig (V_f)^{\Delta}$ onto $\widetilde{\mathbb D}_f^{\Delta}$
and by (\ref{eqn:formulaforcoboundary}) the class $\partial_0(d_{\widetilde\delta})\in \widetilde H^1_{\textup{f}}(V_f)$ may be represented by the cocycle
\[
(0, (a^+_{\ell}),(\mu_{\ell})) \in C^1(G_{\QQ,S},V_{f})\oplus U^+_S(V_{f},\mathbb D_f)^1\oplus K^0(V_{f}),
\]
where   $a_{\ell}^+=\mu_{\ell}=0$ for all $\ell\neq p,$ and 
\[
\mu_p=\alpha (z),\qquad a_p^+=((\varphi-1)z, (\gamma-1)z).
\]
Let $\mathbf z\in C^0_{\varphi,\gamma}(V_{\mathbf{f}})=\DdagrigA (V_{\mathbf{f}})$ be any lift of $z$ under the projection induced by the augmentation map $\psi_k: A\rightarrow E$, whose kernel is the prime $\frak{p}$ associated to the form $f$. Then, 
$$\mathbf z\otimes 1\in \overline{\DdagrigA (V_{\mathbf{f}})}:=\DdagrigA (V_{\mathbf{f}})\otimes_{\RR_A}
\DdagrigA (\mathcal H_A^{\iota})$$ 
is a lift of $d_{\widetilde{\boldsymbol\delta}}\otimes 1$ under the projection 
$$\overline{\DdagrigA (V_{\mathbf{f}})} \lra \widetilde{\mathbb D}_{\mathbf{f}}\otimes_{\RR_A}\DdagrigA (\mathcal H_A^{\iota}).$$ 
Setting $\mathbf a_{\ell}^+=\boldsymbol{\mu}_{\ell}=0$ for all $\ell\neq p$ and 
\[
\boldsymbol \mu_p=(\mathbf z\otimes 1),\qquad \mathbf a_p^+=((\varphi-1)(\mathbf z
\otimes 1), (\gamma-1)(\mathbf z\otimes 1))
\]
we see that $(0, (\mathbf a^+_{\ell}),(\boldsymbol \mu_{\ell}))$ is a cochain in
$S^1(\overline V_{\mathbf f},\overline{{\mathbb D}}_\mathbf{f})$ which  lifts
$\partial_0(d_{\widetilde \delta}).$ We therefore infer that $\beta (\partial_0(d_{\widetilde \delta}))$
is the class of the differential
\[
d(0, (\mathbf a^+_{\ell}),(\boldsymbol \mu_{\ell}))=(0,\star,(\mathbf v_{\ell})),
\]
where $\mathbf v_{\ell}=0$ for all $\ell\neq p$ and $\mathbf{v}_p=-((\varphi-1)(\mathbf{z}\otimes 1),(\gamma-1)(\mathbf{z}\otimes 1)).$ 
(Note that the differential of the Selmer complex differs by the sign $-1$
from the differential of the corresponding cone. This explains the sign in the formula above.)
Let $[x_{\textup{f}}]=[x,(x_{\ell}^+),(\lambda_{\ell})]\in H^1_{\textup{f}}( V_f).$ Then by the definition of the cup-product  given by (\ref{formulaforcupproductofSelmercomplexes}),
\be\label{eqn:proofth42}
\mathbb H_{\mathbf{f}}(\partial_0(d_{\widetilde \delta}),[x_{\textup{f}}])=
\textup{inv}_p(\overline{\mathbf v}_p\cup i_p^+(x_p^+))=\textup{inv}_p(\widetilde{\mathbf v}_p\cup x_p^+),
\ee 
where $\overline{\mathbf v}_p\in C^1_{\varphi,\gamma}(\Ddagrig (V_f))\otimes_E \mathfrak P/\mathfrak P^2\subset C^1_{\varphi,\gamma}(\overline{\DdagrigA(V_{\mathbf f})})\otimes_{\mathcal H_A} {\mathcal H}_A/\mathfrak P^2$ denotes the reduction of $\mathbf v_p$ modulo 
$\mathfrak P^2,$ $\widetilde{\mathbf v}_p$ denotes the image of $\overline{\mathbf v}_p$ in
$C^1_{\varphi,\gamma}(\widetilde{\mathbb D}_{f})\otimes_E \frak P/\frak P^2\subset
\linebreak
C^1_{\varphi,\gamma}(\widetilde{\mathbb D}_{\mathbf{f}}\widehat\otimes_{\RR_A}\DdagrigA (\mathcal H_A^{\iota})/\mathfrak P^2)$ under the natural projection
and $\textup{inv}_p\,:\,H^2(\mathcal R_E (\chi)) \simeq E$ is the isomorphism of the local class field theory. 

The element $\widetilde{\mathbf v}_p$ is explicitly given by the formula
\[
\widetilde{\mathbf v}_p=-((\varphi-1)(t^{k/2-1} e_{\widetilde{\boldsymbol\delta}} \otimes 1),(\gamma-1)(t^{k/2-1} e_{\widetilde{\boldsymbol\delta}}  \otimes 1)) \pmod{\frak P^2}.
\]
Since $\varphi (e_{\widetilde{\boldsymbol\delta}})=p^{-k/2}\alpha^{-1}(\kappa) e_{\widetilde{\boldsymbol\delta}}$\, and 
\[
\frac{1}{\alpha (\kappa)}\equiv\frac{1}{\alpha (k)}-\frac{\alpha'(k)}{\alpha^2 (k)}\cdot(\kappa-k)
\pmod{\frak I^2},
\]
 we have 
 \[
\mathcal A \left ((\varphi-1)(t^{k/2-1}e_{\widetilde{\boldsymbol\delta}} \otimes 1)
\right )
\equiv-p\alpha'(k)\cdot(\kappa-k)\cdot d_{\widetilde\delta}
\pmod{\frak I^2}.
\]
Taking into account the statement of Theorem~\ref{thm:stevensderformula}, we deduce that 
\[
\mathcal A \left ((\varphi-1)(t^{k/2-1} e_{\widetilde{\boldsymbol\delta}} \otimes 1)
\right ) \equiv \frac{\al_{\textup{FM}}(f)}{2}\cdot(\kappa-k)\cdot d_{\widetilde{\delta}}\pmod{\frak I^2}.
\]
 Recall that we have set $\displaystyle\kappa (w)=k+\frac{\log (1+w)}{\log (1+p)}.$
Since $\gamma (e_{\widetilde{\boldsymbol\delta}})=\chi (\gamma)^{k/2+1-\kappa (w)} e_{\widetilde{\boldsymbol\delta}},$ we have 
\begin{align}
\begin{aligned}
\label{aligned_eqn_15_02_2022_16_17}
\mathcal A \left ((\gamma-1)(t^{k/2-1} e_{\widetilde{\boldsymbol\delta}} \otimes 1)
\right)&= \mathcal A \left (\chi (\gamma)^{k-\kappa (w)}\cdot t^{k/2-1} e_{\widetilde{\boldsymbol\delta}} \otimes \gamma^{-1}-
t^{k/2-1} e_{\widetilde{\boldsymbol\delta}}  \otimes 1\right)\\
&=\mathcal A \left (\chi (\gamma)^{k-\kappa (w)}\cdot 
t^{k/2-1} e_{\widetilde{\boldsymbol\delta}}  \otimes (\gamma^{-1}-1)\right)\\
&\qquad\qquad\qquad+\mathcal A \left ((\chi (\gamma)^{k-\kappa (w)}-1) \cdot t^{k/2-1} e_{\widetilde{\boldsymbol\delta}}  \otimes 1\right)\\
&\equiv \left(\log\chi(\gamma)\cdot(k-\kappa)+1\right)\cdot \log\chi(\gamma)\cdot s\cdot t^{k/2-1}  e_{\widetilde{\boldsymbol\delta}} \otimes 1
\\
&\qquad\qquad\qquad\qquad\quad+\log \chi (\gamma)(k-\kappa)\cdot t^{k/2-1}  e_{\widetilde{\boldsymbol\delta}}  \otimes 1 \\
&\equiv s\log \chi (\gamma) \cdot( t^{k/2-1}e_{\widetilde{\boldsymbol\delta}} \otimes 1)\\
&\qquad\qquad\qquad\quad\,\,\,+\log \chi (\gamma)\cdot(k-\kappa)\cdot ( t^{k/2-1}e_{\widetilde{\boldsymbol\delta}} \otimes 1) \\
&\equiv (s+k-\kappa) \log \chi (\gamma) \cdot( t^{k/2-1}e_{\widetilde{\boldsymbol\delta}}  \otimes 1)\\
&\equiv(s+k-\kappa) \log \chi (\gamma) \cdot d_{\widetilde{\delta}} \qquad \qquad \qquad \,\,\pmod{\frak I^2}\,,
\end{aligned}
\end{align}
 where congruence on the third line of \eqref{aligned_eqn_15_02_2022_16_17} follows from the definition of the
 Amice transform, namely  
 \begin{align*}
     \mathcal{A}^{\rm wt}(\chi (\gamma)^{k-\kappa (w)})= \chi (\gamma)^{k-\kappa} =1+\log\chi(\gamma)\cdot (k-\kappa) + O((\kappa-k)^2)
 \end{align*} 
  \begin{align*}
      \mathcal{A}^{\rm cyc}(\gamma^{-1}-1)=\chi(\gamma)^s-1=\log\chi(\gamma)\cdot s+O(s^2)\,.
  \end{align*}  

To summarize, we have just verified that
\[
\mathcal A ( {\widetilde{\mathbf v}}_p)=  
\left (\frac{\al_{\textup{FM}}(f)}{2}\cdot(\kappa-k),
\log \chi (\gamma)\cdot(s+k-\kappa) \right )  d_{\widetilde{\delta}}
\]
and for the class of  $\widetilde{\mathbf v}_p$ in $H^1(\widetilde{\mathbb D}_f)\otimes_E\frak P/\frak P^2$ we have 
\[
\mathcal A \left ( \left [\widetilde{\mathbf v}_p\right ]\right )=
-\frac{\al_{\textup{FM}}(f)}{2}(\kappa-k)\Psi_1+(s+k-\kappa) \Psi_2\pmod{\frak I^2}.
\]

We are now ready to prove our formulas. First assume that $[x_{\textup{f}}]=[x,(x_{\ell}^+),(\lambda_{\ell})]$
is the canonical lift of some $[x]\in H^1_{\textup{f}}(\QQ, V_f).$ Then $[x_p^+]\in H^1_\textup{f}(\mathbb D_f)$ 
and using the formula (\ref{eqn:proofth42})   we have
\be\label{eqn_2022_02_27_16_56}
\mathcal A\left(\mathbb{H}_{\mathbf{f}}\left(\partial_0 (d_{\widetilde\delta}),[x_{\textup{f}}]\right)\right )=
\left <\Psi_2,[x_p^+]\right> \cdot(s+k-\kappa) .
\ee
Therefore,
\[
\mathcal A\left(\mathbb{H}_{\mathbf{f}}\left([x_{\textup{f}}], \partial_0 (d_{\widetilde\delta})\right)\right )=\left <\Psi_2,[x_p^+]\right >\cdot s.
\]
We now prove the second formula. Write $\partial^{\textup{loc}}_0(d_{\widetilde\delta})=a\Psi_1^*+b\Psi_2^*.$
Then $\al_{\textup{FM}}(f)=b/a$ by \cite[Proposition~2.3.7]{Ben11} and 
\begin{multline*}
\mathcal A\left(\mathbb{H}_{\mathbf{f}}\left(\partial_0 (d_{\widetilde\delta}), \partial_0 (d_{\widetilde\delta})\right)\right )=a\cdot\frac{\al_{\textup{FM}}(f)}{2}\cdot(\kappa-k)+b\cdot(s+k-\kappa)
\\
=b\cdot\left (s-\frac{1}{2} (\kappa-k)\right )=\left <\Psi_2,\partial^{\textup{loc}}_0 (d_{\widetilde\delta})\right >\cdot\left (s-\frac{1}{2} (\kappa-k)\right ).
\end{multline*}
\end{proof}
We remark that the identities in Theorem~\ref{thm:thepropertiesofthetwovarheight}(i) are equivalent to saying that 
\be\label{eqn:twovarexpanded1}\mathcal A\left(\mathbb{H}_{\mathbf{f}}\left([x_{\textup{f}}], [y_{\textup{f}}]\right)\right)(\kappa,s)=-\frak{h}_p\left([x_{\textup{f}}], [y_{\textup{f}}]\right)\cdot\left(s-\frac{\kappa-k}{2}\right)+\frak{h}_{\mathbf {f}}^{\textup{c-wt}}\left([x_{\textup{f}}], [y_{\textup{f}}]\right)\cdot\left(\kappa-k\right)\ee
\subsection{Rubin-style Formulae, Part II}
\label{subsec:RS2}
Throughout this section, we shall be in one of the following two settings (that we will simultaneously treat):
\begin{itemize}
       \item[\mylabel{item_A}{(\textbf{A})}] 
 We have $\frak{B}=\mathcal{H}_E$ and $\frak{J}=J_E,$ where $J_E=X\cdot \mathcal{H}_E$ is the kernel of the augmentation map and $X=\gamma-1.$ We set $V=\overline V_f=V_f\otimes \mathcal{H}_E^\iota$ and $\mathbb D=\overline{\mathbb D}_f=\mathbb D_f \otimes_{\mathcal{R}} \mathbb{D}^\dagger_{\textup{rig}} (\mathcal{H}_E^\iota).$ The $(\varphi,\Gamma)$-module  $\mathbb D$  is a submodule of the $(\varphi,\Gamma)$-module $\DdagrigfrakB (V)$ associated $V$ and we set 
$\widetilde{\mathbb D}=\DdagrigfrakB (V)/\mathbb D$ (and it also equals $\widetilde{\mathbb D}_f\otimes  \mathbb{D}^\dagger_{\textup{rig}} (\mathcal{H}_E^\iota)$).
Observe that we have $V/\frak{J}V \cong V_f$ (and a similar isomorphism for all relevant $(\varphi,\Gamma)$-modules). Furthermore, we have the symmetric $p$-adic height pairing
$$\frak{H}\,:\, \widetilde{H}^1_{\textup{f}}(V_f) \times \widetilde{H}^1_{\textup{f}}(V_f) \lra \frak{J}/\frak{J}^2$$
induced from the pairing (\ref{eqn:padicpairingforfvaluesinaugmentation}) via the isomorphism $V_f\cong V_f^*(1)$\,.
\\
\item[\mylabel{item_B}{(\textbf{B})}] We have $\frak{B}=A$ where $A$ is an affinoid domain as in Section~\ref{subsec:padicfamilies}. In this set up, we will consider the Galois representation $V:=V_{\mathbf{f}}(\boldsymbol{\chi}^{1/2})$, which is the central critical twist of the big Galois representation associated to the family $\mathbf{f}$. This representation is equipped with a triangulation, which is given by the $(\varphi,\Gamma)$-submodule $\mathbb{D}:=\mathbb D_{\mathbf{f}}(\boldsymbol{\chi}^{1/2}).$ As above, we will also let $\widetilde{\mathbb D}:=\mathbb{D}_{\textup{rig},A}^\dagger(V)/\mathbb D$. The prime $\frak{p}$ is that corresponds to the form $f$ we have fixed at the start and we set $\frak{J}=
\frak{p}$. Observe that again  $V/\frak{J}V\cong V_f$ (and a similar isomorphism for all relevant $(\varphi,\Gamma)$-modules). Furthermore, we have an \emph{anti-symmetric} $p$-adic height pairing
$$\frak{H}:= \mathbb{H}_\mathbf{f}^{\textup{c-wt}}: \widetilde{H}^1_{\textup{f}}(V_f) \times \widetilde{H}^1_{\textup{f}}(V_f) \lra \frak{p}/\frak{p}^2$$
which is given by (\ref{eqn:centralcriticalheightvaluesinpmodp2}).
\end{itemize}
\begin{define} In the situation of \ref{item_A} or \ref{item_B}
set $V_\varepsilon=V\otimes\frak{B}/\frak{J}^2,$   $\mathbb{D}_{\varepsilon}:=\mathbb{D}\otimes\frak{B}/\frak{J}^2$ and $\widetilde{\mathbb{D}}_{\varepsilon}=\widetilde{\mathbb{D}}\otimes\frak{B}/\frak{J}^2$.
\end{define}

Note that these objects also make their appearance in Sections 2.6.1 and 3.2.1 of \cite{Ben14b}, where $V_\varepsilon$ is denoted by $\widetilde{V}$, etc.

In the situation of \ref{item_A} or \ref{item_B}, note that we have the following exact sequence
\be\label{ean:infinitesimaldeformVf}
0\lra V_f\otimes\frak{J}/\frak{J}^2 \lra V_\varepsilon\lra V_f\lra 0\,\ee
as well as the following tautological exact triangle in the derived category:
\be
\label{eqn:localsequencetilde}
\RG(G_p,\mathbb{D}_{?})\lra \RG(G_p,\mathbb{D}^\dagger_{\textup{rig}}(V_{?}))\stackrel{\frak{s}}{\lra} \RG(G_p,\widetilde{\mathbb{D}}_{?})\lra\RG(G_p,\mathbb{D}_{?})[1]\,
\ee
where $?={f},\varepsilon$ and for a $(\varphi,\Gamma)$-module $D$, we denote by $\RG(G_p,D)$ the image of $C_{\varphi,\gamma}(D)$ in the derived category.
The quasi-isomorphism $\alpha$ of Proposition~\ref{proposition quasi-isomorphisms} together with the sequence (\ref{eqn:localsequence}) and (\ref{eqn:localsequencetilde}) induces a functorial quasi-isomorphism 
\be\label{eqn:identifyingsungularquotients_2}\RG(G_p,\widetilde{\mathbb{D}}_{?})\stackrel{\rm qis}{\lra}  \widetilde{U}_p(V,\mathbb{D}_{?})\,\ee
which is compatible with all duality statements in the obvious sense.
By the local-global compatibility of the Langlands correspondence (see \cite{Car86})
it follows that 
\be
\label{eqn:Hfiszeroatl}
H^0(G_{\ell},V_f)=H^1_{\textup{f}}(G_{\ell},V_f)=0,\qquad \ell\neq p.
\ee
In particular,
\[
\RG(G_{\ell}, V_{?})\stackrel{\rm qis}{\lra}  \widetilde{U}_{\ell}(V_{?},\mathbb{D}_{?}), \qquad \ell\neq p.
\]
To simplify notation, we set $\widetilde{U}_S(V_{?})=\underset{\ell\in S}\oplus
 \widetilde{U}_{\ell}(V_{?},\mathbb{D}_{?}).$

 The exact sequence (\ref{ean:infinitesimaldeformVf}) induces a commutative diagram of complexes
\be\label{eqn:deformedtautseq}\xymatrix{
& 0\ar[d]&0\ar[d]& 0\ar[d]&\\
0\ar[r]&\widetilde{U}_S(V_f)[-1] \otimes \frak{J}/\frak{J}^2 \ar[d]\ar[r]& {S}^\bullet(V_f)\otimes \frak{J}/\frak{J}^2 \ar[d]\ar[r]&{C}^\bullet(G_{\QQ,S},V_f)\otimes \frak{J}/\frak{J}^2  \ar[r]\ar[d]&  0\\
0\ar[r]&\widetilde{U}_S(V_\varepsilon)[-1]\ar[d] \ar[r]&  {S}^\bullet(V_\varepsilon) \ar[d]\ar[r]&{C}^\bullet(G_{\QQ,S},V_\varepsilon) \ar[d]\ar[r]& 0\\
0\ar[r]&\widetilde{U}_S(V_f)[-1] \ar[d]\ar[r]& {S}^\bullet(V_f)\ar[d] \ar[r]&{C}^\bullet(G_{\QQ,S},V_f) \ar[d]\ar[r]& 0\\
& 0&0& 0& 
}\ee
where we recall that ${S}^\bullet(V_{?})$ is the shorthand for the Selmer complex ${S}^\bullet(V_{?},\mathbb{D}_{?})$ (for $?=f,\varepsilon$).

In the level of cohomology, the diagram (\ref{eqn:deformedtautseq}) induces  the following commutative diagram with exact rows:
$$\scalebox{.95}{\xymatrix@R=2pc @C=1.3pc {&&&&H^0(\widetilde{U}_S(V_f))\ar[d]^(.45){\beta^0}\\
&&&&H^1(\widetilde{U}_S(V_f))\otimes\frak{J}/\frak{J}^2\ar[d]^(.45){i}\\
&&&H^1(G_{\QQ,S},V_\varepsilon)\ar[d]^{\textup{pr}_0}\ar[r]^(.5){\widetilde{\textup{res}}_S}&H^1(\widetilde{U}_S(V_\varepsilon))\ar[d]^{\textup{pr}_0}\\
&H^0(\widetilde{U}_S(V_f))\ar[d]_{\beta^0}\ar[r]&\widetilde{H}^1_{\textup{f}}(V_f)\ar[d]^{\beta^1}\ar[r]&H^1(G_{\QQ,S},V_f)\ar[d]^{\beta^1}\ar[r]_(.5){\widetilde{\textup{res}}_S}&H^1(\widetilde{U}_S(V_f))\ar[d]\\
&H^1(\widetilde{U}_S(V_f))\otimes\frak{J}/\frak{J}^2\ar[r]^(.55){\partial_1}&\widetilde{H}^2_{\textup{f}}(V_f)\otimes\frak{J}/\frak{J}^2\ar[r]&H^2(G_{\QQ,S},V_f)\otimes\frak{J}/\frak{J}^2\ar[r]&H^2(\widetilde{U}_S(V_f))\otimes\frak{J}/\frak{J}^2
}}$$
where $\textup{pr}_0$ is the map induced from the exact sequence 
\eqref{ean:infinitesimaldeformVf}.

\begin{lemma}
\label{lem:bocksteinnormalizedderivative}
Suppose that we are given a class $[x_{\textup{f}}]=[(x,(x_\ell^+)_{\ell \in S},(\lambda_\ell)_{\ell \in S})] \in \widetilde{H}^1_{\textup{f}}(V_f)$ such that $\textup{pr}_0\left([\mathbb{X}]\right)=[x] \in H^1_{\textup{f}}(\QQ,V_f)$ for some $[\mathbb{X}]\in H^1(G_{\QQ,S},V_\varepsilon)$.
Then there exists a class 
$$[\frak{D}\mathbb{X}]=\left([\frak{D}\mathbb{X}_\ell]\right)_{\ell \in S} \in H^1(\widetilde{U}_S(V_f))\otimes\frak{J}/\frak{J}^2$$
 such that 
\begin{itemize}
 \item[\mylabel{item_i}{(\textbf{i})}] $i([\frak{D}\mathbb{X}])=\widetilde{\textup{res}}_S\left([\mathbb{X}]\right)\,.$\\
 \item[\mylabel{item_ii}{(\textbf{ii})}] $\beta^1([x_{\textup{f}}])=-\partial_1([\frak{D}\mathbb{X}])$\,.
\end{itemize}
Moreover, if $\mathcal{L}_{\rm FM}(f)\neq 0$ then the cohomology class $[\frak{D}\mathbb{X}]$ is uniquely determined.
\end{lemma}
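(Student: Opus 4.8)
The plan is to extract $[\frak{D}\mathbb{X}]$ by a diagram chase in the big commutative diagram with exact rows that precedes the statement of the lemma, and then to verify the two asserted identities by tracking an explicit cocycle representative of $[\mathbb{X}]$ through the comparison maps. First I would use the hypothesis $\textup{pr}_0([\mathbb{X}])=[x]$ together with the middle row of the diagram: since $[x]=\textup{pr}_0([\mathbb{X}])$ lies in $H^1_{\textup{f}}(\QQ,V_f)$, its image $\widetilde{\textup{res}}_S([x])$ in $H^1(\widetilde{U}_S(V_f))$ is (by definition of the extended Selmer group and the fact that $[x_{\textup{f}}]$ lifts $[x]$) compatible with the local data of $[x_{\textup{f}}]$; the point is that $\widetilde{\textup{res}}_S(\mathbb{X})$, which a priori lives in $H^1(\widetilde{U}_S(V_\varepsilon))$, maps to $\widetilde{\textup{res}}_S([x])$ under $\textup{pr}_0$, and this latter class lies in the image of $i$ precisely because $[x]$ extends to $\widetilde{H}^1_{\textup{f}}(V_f)$. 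Thus there is a class $[\frak{D}\mathbb{X}] \in H^1(\widetilde{U}_S(V_f))\otimes \frak{J}/\frak{J}^2$ with $i([\frak{D}\mathbb{X}])=\widetilde{\textup{res}}_S(\mathbb{X})$, which is (i). Its non-uniqueness is governed by the image of $\beta^0$ (the kernel of $i$), which will not affect (ii) because of the exactness at the relevant spot in the bottom row, $\partial_1\circ\beta^0=0$.

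Next I would prove (ii). The idea is the standard Bockstein/connecting-homomorphism argument: $\beta^1$ is, by construction, the connecting map coming from the left column of the diagram \eqref{eqn:deformedtautseq} (tensoring the tautological triangle for the Selmer complex with the extension $0\to \frak{J}/\frak{J}^2\to \frak{B}/\frak{J}^2\to E\to 0$), while $\partial_1$ is the connecting map on the $\widetilde{U}_S$-column. Concretely, I would choose a cochain-level lift: represent $[x_{\textup{f}}]$ by $(x,(x_\ell^+),(\lambda_\ell))\in S^1(V_f)$ and lift it along $S^\bullet(V_\varepsilon)\to S^\bullet(V_f)$ to a cochain $\tilde{x}_{\textup{f}}\in S^1(V_\varepsilon)$; then $d\tilde{x}_{\textup{f}}$ lands in $S^2(V_f)\otimes\frak{J}/\frak{J}^2$ and by definition represents $\beta^1([x_{\textup{f}}])$. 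On the other hand, the class $[\mathbb{X}]$ supplies a \emph{global} lift of $x$ (by the hypothesis $\textup{pr}_0([\mathbb{X}])=[x]$, after adjusting by a coboundary), so the discrepancy between $\tilde{x}_{\textup{f}}$ and the Selmer cochain built from $\mathbb{X}$ together with the local lifts is supported purely in the local/singular part $\widetilde{U}_S$; that discrepancy, reduced mod $\frak{J}^2$, is exactly a cocycle representing $-[\frak{D}\mathbb{X}]$, and applying $d$ (equivalently, applying $\partial_1$) to it recovers $d\tilde{x}_{\textup{f}}$. The sign comes from the usual convention that the Selmer-complex differential differs from the cone differential by $-1$ (this is the same sign bookkeeping already flagged in the proof of Theorem~\ref{thm:thepropertiesofthetwovarheight}(iii)). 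Hence $\beta^1([x_{\textup{f}}])=-\partial_1([\frak{D}\mathbb{X}])$.

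The main obstacle I anticipate is not any single deep input but rather the careful bookkeeping required to make the diagram chase rigorous at the cochain level: one must pin down compatible cochain-level sections of all the surjections in \eqref{eqn:deformedtautseq} (the surjections $S^\bullet(V_\varepsilon)\to S^\bullet(V_f)$, $C^\bullet(G_{\QQ,S},V_\varepsilon)\to C^\bullet(G_{\QQ,S},V_f)$, and $\widetilde{U}_S(V_\varepsilon)\to \widetilde{U}_S(V_f)$) and check that the identification $\widetilde{U}_S(V_f)[-1]\otimes\frak{J}/\frak{J}^2 \cong \ker$ used implicitly is the correct one, in particular that the quasi-isomorphism \eqref{eqn:identifyingsungularquotients} intertwines the two connecting maps. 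I would handle this exactly as in \cite[\S 2.6--3.2]{Ben14b} and in the proof of Theorem~\ref{thm:thepropertiesofthetwovarheight}(iii) above, where the analogous explicit cocycle manipulations are carried out; the present lemma is really the abstract shadow of those computations, so once the identifications are fixed the verification of (i) and (ii) is a formal consequence of the snake lemma applied to \eqref{eqn:deformedtautseq}.
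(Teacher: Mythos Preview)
Your overall strategy---a diagram chase in the big commutative square---is exactly what the paper does, and your cochain-level description of $\beta^1$ is correct. However, there is one genuine error. You assert that the ambiguity in the lift $[\frak{D}\mathbb{X}]$ (namely the image of $\beta^0$) does not affect (ii) because ``$\partial_1\circ\beta^0=0$''. This is false. By commutativity of the left square in the diagram one has $\partial_1\circ\beta^0=\beta^1\circ\partial_0$, and the explicit computation in Theorem~\ref{thm:thepropertiesofthetwovarheight}(iii) (the Rubin-style formula for $\mathbb{H}_{\mathbf f}(\partial_0(d_{\widetilde\delta}),\,\cdot\,)$) shows that $\beta^1\circ\partial_0$ is \emph{not} zero in general. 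So an arbitrary lift satisfying (i) need not satisfy (ii).

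The paper's fix is short and clean: the diagram chase (equivalently, Lemma~1.2.19 of \cite{Ne06}) produces a class $d$ with $i(d)=\widetilde{\textup{res}}_S([\mathbb{X}])$ together with the relation
\[
\beta^1([x_{\textup f}])+\partial_1(d)-\partial_1\!\circ\!\beta^0(t)=0
\]
for some $t\in H^0(\widetilde{U}_S(V_f))$. One then \emph{defines} $[\frak{D}\mathbb{X}]:=d-\beta^0(t)$. Since $i\circ\beta^0=0$ (this composition \emph{is} zero, being two consecutive maps in the right vertical exact sequence), condition (i) survives the adjustment, and (ii) now holds by construction. Your cochain-level argument in the second paragraph could also be made to produce this specific class directly, but then you should drop the claim that (ii) is independent of the choice of lift; it is not, and the point is precisely to pin down the correct one.
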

\begin{proof}
Since we have 
$$\textup{pr}_0\circ\widetilde{\textup{res}}_S\left([\mathbb{X}]\right)=\widetilde{\textup{res}}_S([x])=0$$ 
it follows from Lemma~1.2.19 of \cite{Ne06} (or simply chasing the diagram above) that there exists a class $d \in H^1(\widetilde{U}_S(V_f))\otimes\frak{J}/\frak{J}^2$ that verifies 
\begin{itemize}
\item $i(d)=\widetilde{\textup{res}}_S\left([\mathbb{X}]\right)$\,,\\
\item $\beta^1([x_{\textup{f}}])+\partial_1(d)-\partial_1\circ\beta^0(t)=0$ for some $t\in H^0(\widetilde{U}_S(V_f))$\,.
\end{itemize}
Set $[\frak{D}\mathbb{X}]=d-\beta^0(t)$.
To conclude our proof, we explain that the properties \ref{item_i} and \ref{item_ii} determine the cohomology class $[\frak{D}\mathbb{X}]$ uniquely if  $\mathcal{L}_{\rm FM}(f)\neq 0$. Note that the cocycle $\frak{D}\mathbb{X}$ is a priori defined up to an element  $\beta^0(u) \in \widetilde{U}_S(V_f)^1\otimes\frak{I}/\frak{I}^2$, where $u\in \widetilde{U}_S(V_f)^0$ with $[\partial_1\circ\beta^0(u)]=[\beta^1\circ\partial_0(u)]=0$.

We claim that if $\mathcal{L}_{\rm FM}(f)\neq 0$, then the condition $[\beta^1\circ\partial_0(u)]=0$ implies that $[u]\in H^0(\widetilde{U}_S(V_f))$ vanishes (which in turn shows that $[\frak{D}\mathbb{X}]$ is uniquely determined). Indeed, note that we have $H^0(\widetilde{U}_S(V_f))\simeq H^0(\widetilde{\mathbb{D}}_f)$ (since we have $H^1(\QQ_\ell,V_f)=0$ by the local-global compatibility of Langlands correspondence for all $\ell\neq p$) is a $1$-dimensional $E$-vector space spanned by $d_{\widetilde \delta}$. Let $c\in E$ denote the unique scalar so that $[u]=c\cdot d_{\widetilde \delta}$\,; we contend to prove that $c=0$. It follows from the definition of the height pairing in terms of Bockstein maps that
$$0=\langle [\beta^1\circ\partial_0(u)],[y_{\rm f}]\rangle=\frak{H}(\partial_0[u],[y_{\rm f}])=c\cdot \frak{H}(\partial_0 (d_{\widetilde \delta}),[y_{\rm f}])$$
for all $[y_{\rm f}]\in \widetilde{H}^1_{\textup{f}}(V_f)$. In particular, if we pick $[y_{\rm f}]=\partial_0(d_{\widetilde \delta})$ and use \eqref{eqn:Rubin1}, we infer that
$$c\cdot \langle \Psi_2, \partial^{\textup{loc}}_0(d_{\widetilde\delta})\rangle=0\,.$$
Since we have assumed that $\mathcal{L}_{\rm FM}(f)\neq 0$, it follows from \cite[Proposition~2.3.7]{Ben11} (see also the final paragraph of the proof of Theorem~\ref{thm:thepropertiesofthetwovarheight}) that $\langle \Psi_2, \partial^{\textup{loc}}_0(d_{\widetilde\delta})\rangle\neq 0$. This shows that $c=0$, as required.
\end{proof}

The following statement is what we call the \emph{analytic Rubin-style formula} for the height pairing $\frak{H}$ and the elliptic modular form $f$ (in either of the situations \ref{item_A} or \ref{item_B}).
\begin{thm}[Analytic Rubin-style formula]
\label{thm:rubinsformula1}
Let $[x_{\textup{f}}]=[(x, (x_\ell^+),\lambda_\ell)]$ and 
\linebreak
$[y_{\textup{f}}]=[(y, (y_\ell^+),\mu_\ell)]  \in \widetilde{H}^1_{\textup{f}}(V_f)$ be two elements such that that $[x]\in  {H}^1_{\textup{f}}(\QQ,V_f)$. Suppose further that there is an element $[\mathbb{X}] \in H^1(G_{\QQ,S},V_\varepsilon)$  with the property that $\textup{pr}_0\left([\mathbb{X}]\right)=[x]$. Then,
\be\label{eqn_thm_4_13_17_02_2022}
\frak{H}([x_{\textup{f}}],[y_{\textup{f}}])=-\textup{inv}_p\left(\frak{D}\mathbb{X}_p\cup i_p^+(y_p^+)\right)\,
\ee
where $\textup{inv}_p$ denotes the local invariant map at $p$ and $\frak{D}\mathbb{X}_p$ is any cocycle representing $[\frak{D}\mathbb{X}_p]$.
\end{thm}
The proof of Theorem~\ref{thm:rubinsformula1} will be given after Remark~\ref{rem:Appnoambiguity} and Corollary~\ref{thm:rubinsformula}.
\begin{rem}
\label{rem:Appnoambiguity}
Although the cocycle $\frak{D}\mathbb{X}$ is defined only up to an element $\beta^0(u) \in \widetilde{U}_S(V_f)^1\otimes\frak{I}/\frak{I}^2$ as in the proof of Lemma~\ref{lem:bocksteinnormalizedderivative}, Equation~\eqref{eqn_thm_4_13_17_02_2022} shows that the right hand side of this formula does not depend on the choice of the cocycle $\frak{D}\mathbb{X}$ (as the left hand side doesn't). In more precise terms, we have
\be\label{eqn_4_21_17_02_2022_6_21}
\textup{inv}_p\left(\beta^0(u)\cup i_p^+(y_{p}^+)\right)=0
\ee
for all $u \in \widetilde{U}_S(V_f)^0$ such that $[\beta^1\circ\partial_0(u)]=0$ (i.e. for all $u$ such that $\frak{D}\mathbb{X}$ and $\frak{D}\mathbb{X}+\beta^0(u)$ both verify the conclusions of Lemma~\ref{lem:bocksteinnormalizedderivative}) and $y_p^+$ as above.
\end{rem}
\begin{define}
\label{def:bocksteinder} Set $\omega=X$ in the situation of \ref{item_A} and $\omega=\varpi_{\kappa}$ in the situation of \ref{item_B}. We let  $\frak{d}\left[\mathbb{X}\right] \in H^1(\widetilde{\mathbb{D}}_f)$ be
the unique element such that $\frak{d}\left[\mathbb{X}\right]\otimes \omega\pmod{\frak{J}^2} \in H^1(\widetilde{\mathbb{D}}_f) \otimes \frak{I}/\frak{I}^2$ corresponds to $[\frak{D}\mathbb{X}_p]$ under the isomorphism 
\be\label{eqn_isom_4_23_2022_02_17_13_02}
H^1(\widetilde{U}_p(V_f,\mathbb D_f))\otimes \frak{J}/\frak{J}^2\simeq H^1(\widetilde {\mathbb D}_f)
\otimes \frak{J}/\frak{J}^2
\ee
induced from \eqref{eqn:identifyingsungularquotients_2}. The class $\frak{d}\left[\mathbb{X}\right]$ will be called the \emph{Bockstein-normalized derivative of} of $\mathbb{X}$. For any $u\in \widetilde{U}_S(V_f)^0$ with $\partial_1\circ\beta^0(u)=0$, we define $\beta^0_?[u]\in H^1(\widetilde{\mathbb{D}}_f)$ (where $?={\rm cyc}$ in the situation of \ref{item_A} and $?={\rm wt}$ in the situation of \ref{item_B}) as the unique element such that $\beta^0_?[u] \otimes \omega\pmod{\frak{J}^2} \in H^1(\widetilde{\mathbb{D}}_f)\otimes \frak{J}/\frak{J}^2$ corresponds to $[\beta^0(u)]\in H^1(\widetilde{U}_p(V_f,\mathbb D_f))\otimes \frak{J}/\frak{J}^2$ under the isomorphism \eqref{eqn_isom_4_23_2022_02_17_13_02}.
\end{define}
\begin{cor}
\label{thm:rubinsformula}
In the situation of Theorem~\ref{thm:rubinsformula1} we have
$$\frak{H}([x_{\textup{f}}],[y_{\textup{f}}])=-\left\langle \frak{d}\left[\mathbb{X}\right], [y_p^+]\right\rangle\cdot \omega\,\, $$
where $\left\langle\,,\,\right\rangle : H^1(\widetilde{\mathbb{D}}_f)\times H^1(\mathbb{D}_f)\ra E$
is the canonical pairing and $\omega=X$ when we are in the situation of \textup{\ref{item_A}} and $\omega=\varpi_{\kappa}$ in the situation of \textup{\ref{item_B}}.
\end{cor}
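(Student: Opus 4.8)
The plan is to deduce the Corollary from Theorem~\ref{thm:rubinsformula1} by unwinding the definitions. Theorem~\ref{thm:rubinsformula1} asserts that
\[
\frak{H}([x_{\textup{f}}],[y_{\textup{f}}])=-\textup{inv}_p\left(\frak{D}\mathbb{X}_p\cup i_p^+(y_p^+)\right),
\]
so the only thing to check is that the right-hand side equals $-\left\langle \frak{d}\left[\mathbb{X}\right], [y_p^+]\right\rangle\cdot \omega$. First I would recall that by Definition~\ref{def:bocksteinder}, the class $[\frak{D}\mathbb{X}]_p \in H^1(\widetilde{U}_p(V_f,\mathbb D_f))\otimes\frak{J}/\frak{J}^2$ corresponds, under the isomorphism induced by the quasi-isomorphism (\ref{eqn:identifyingsungularquotients}), to $\frak{d}\left[\mathbb{X}\right]\otimes\omega \in H^1(\widetilde{\mathbb D}_f)\otimes\frak{J}/\frak{J}^2$; here we use the chosen isomorphism $\frak{J}/\frak{J}^2\simeq E$ (sending $\omega$, i.e.\ $X$ resp.\ $\varpi_\kappa$, to the appropriate generator) to make sense of writing the class as $\frak{d}[\mathbb{X}]\otimes\omega$.

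Next I would rewrite the cup product $\textup{inv}_p(\frak{D}\mathbb{X}_p\cup i_p^+(y_p^+))$ as a pairing computed entirely in $(\varphi,\Gamma)$-module cohomology. The point is that $i_p^+(y_p^+)$ is the image of $[y_p^+]\in H^1(\mathbb D_f)$ under the map $U_p^+(V_f,\mathbb D_f)\to K_p^\bullet(V_f)$, and the cup product $\widetilde U_p(V_f,\mathbb D_f)\otimes U_p^+(V_f,\mathbb D_f)\to K_p^\bullet(A(1))$ is compatible, via $\alpha$ and (\ref{eqn:identifyingsungularquotients}), with the cup product $C^\bullet_{\varphi,\gamma}(\widetilde{\mathbb D}_f)\otimes C^\bullet_{\varphi,\gamma}(\mathbb D_f)\to C^\bullet_{\varphi,\gamma}(\mathcal R_E(\chi))$; this compatibility is precisely the "compatible with all duality statements" clause following (\ref{eqn:identifyingsungularquotients}), and it is also the content of \cite[Prop.~1.1.5, \S3.1]{Ben14b}. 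Note that $\widetilde{\mathbb D}_f^{*}(\chi)\simeq \mathbb D_f$ as Tate duals (since $\mathbb D_f$ and $\widetilde{\mathbb D}_f$ are the two graded pieces of $\Ddagrig(V_f)$ and the Weil pairing identifies $V_f$ with $V_f^*(1)$), so the relevant pairing $H^1(\widetilde{\mathbb D}_f)\times H^1(\mathbb D_f)\to H^2(\mathcal R_E(\chi))\simeq E$ is exactly the canonical pairing $\langle\,,\,\rangle$ of the Corollary. Applying the isomorphism (\ref{BrauerisomorphismforphiGammamodules}) — which matches $\textup{inv}_p$ under (\ref{formula:computationgaloiscohviaphigamma}) — then gives
\[
\textup{inv}_p\left(\frak{D}\mathbb{X}_p\cup i_p^+(y_p^+)\right)=\left\langle [\frak{D}\mathbb{X}]_p,[y_p^+]\right\rangle=\left\langle \frak{d}[\mathbb{X}],[y_p^+]\right\rangle\cdot\omega,
\]
where the last equality is the identification from Definition~\ref{def:bocksteinder}. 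Combining with Theorem~\ref{thm:rubinsformula1} yields the claim. Finally I would note, invoking Remark~\ref{rem:Appnoambiguity}, that the expression is independent of the cocycle representative chosen for $[\frak{D}\mathbb{X}]_p$, so the formula is well posed.

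The main obstacle is bookkeeping rather than anything conceptual: one must verify that all the identifications — the quasi-isomorphism (\ref{eqn:identifyingsungularquotients}), the cup-product compatibility from \cite{Ben14b}, the Tate-duality identification $\widetilde{\mathbb D}_f^*(\chi)\simeq\mathbb D_f$, and the normalizations of $\textup{inv}_p$ and of $\frak{J}/\frak{J}^2\simeq E$ — are mutually compatible with the sign conventions in force, so that no spurious sign appears when passing from Theorem~\ref{thm:rubinsformula1} to the stated formula. In particular one should double-check that the pairing $\langle\,,\,\rangle$ appearing here is the \emph{same} normalization used in Definition~\ref{def:psi1psi2} for the skew-dual bases $\{\Psi_i\},\{\Psi_i^*\}$, since those bases are what make this Corollary usable in the sequel.
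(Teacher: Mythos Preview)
Your proposal is correct and matches the paper's approach: the paper gives no separate proof for this Corollary, treating it as an immediate consequence of Theorem~\ref{thm:rubinsformula1} together with Definition~\ref{def:bocksteinder} and the compatibility of cup products under the quasi-isomorphism~(\ref{eqn:identifyingsungularquotients}). Your write-up is simply a careful unwinding of that implicit argument, and the points you flag about sign and normalization bookkeeping are exactly the checks one must make.
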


\begin{proof}[Proof of Theorem~\ref{thm:rubinsformula1}]
The proof of this theorem is purely formal and follows the proof of 
\cite[Proposition 11.3.15]{Ne06} with obvious modifications.
Let
\[
Z^{\bullet}=\mathrm{cone} \left (\tau_{\geqslant 2} C^{\bullet}(G_{\QQ,S},
A(1))\xrightarrow{\res_S} \tau_{\geqslant 2} K^{\bullet}_S(A(1))\right ) [-1].
\]
The  global class field theory gives rise to the following diagram with exact rows:
$$\xymatrix@C=.5cm{H^2(G_{\QQ,S},\QQ_p(1))\otimes\frak{J}/\frak{J}^2\ar[r]^(.48){\textup{res}_S}\ar@2{-}[d]&\oplus_{\ell \in S}H^2(G_\ell,\QQ_p(1))\otimes\frak{J}/\frak{J}^2 \ar[r]^(.52){}\ar@2{-}[d]& H^3(Z^{\bullet})\otimes\frak{J}/\frak{J}^2\ar[r]\ar@{.>}[d]^{\textup{inv}_S}& 0\\
H^2(G_{\QQ,S},\QQ_p(1))\otimes\frak{J}/\frak{J}^2\ar[r]^(.48){\textup{res}_S}&\oplus_{\ell \in S}H^2(G_\ell,\QQ_p(1))\otimes\frak{J}/\frak{J}^2 \ar[r]^(.7){\sum_{\ell\in S}\textup{inv}_\ell}& \frak{J}/\frak{J}^2 \ar[r]& 0{\,.}}$$

Suppose $[z_{\textup{f}}]=\left[(z,z_S^+,\omega_S)\right] \in  \widetilde{H}^2_{\textup{f}}(V_f)\otimes\frak{J}/\frak{J}^2$ and $[y_{\textup{f}}]=\left[(y, y_S^+,\mu_S)\right]\in \widetilde{H}^1_{\textup{f}}(V_f)$, where we have set  $z_S^+=(z_\ell^+)$, $\omega_S=(\omega_\ell)$, $y_S^+=(y^+_{\ell})$, and
$\mu_S=(\mu_{\ell})$ to simplify notation. The formula 
$$z_\textup{f}\,\widetilde\cup\, y_\textup{f} :=(z\cup y, \omega_S\cup \textup{res}_S(y) + i_S^+(z_S^+)\cup \mu_S) \in Z^3\otimes\frak{J}/\frak{J}^2$$
defines a cup product 
$$\widetilde\cup: \left(\widetilde{C}^2_{\textup{f}}(V_f)\otimes\frak{J}/\frak{J}^2\right)\otimes\widetilde{C}_{\textup{f}}^{1}(V_f)\lra Z^3\otimes\frak{J}/\frak{J}^2\,$$
which is homotopic to the cup-product (\ref{formulaforcupproductofSelmercomplexes}) by
\cite[Proposition~1.3.2]{Ne06}. Therefore the duality  
$$\langle \,,\,\rangle_{\textup{PT}} \,:\,\left(\widetilde{H}^2_{\textup{f}}(V_f)\otimes\frak{I}/\frak{I}^2\right)\otimes\widetilde{H}^1_{\textup{f}}(V_f)\lra H^3(Z^{\bullet})\otimes \frak{J}/\frak{J}^2
\xrightarrow{\textup{inv}_S}\frak{J}/\frak{J}^2 $$
induced by (\ref{cupproductforSelmercomplexes}) on the level of cohomology can be computed 
by
\[
\langle [z_{\textup f}],[y_{\textup f}]\rangle_{\textup{PT}} = \textup{inv}_S(z_\textup{f}\,\widetilde\cup\, y_\textup{f}).
\]
Since the cohomological dimension of $G_S$ is $2$, $z\cup y$ is a coboundary 
and there exists a cochain $W$ such that $dW=z\cup y.$
Therefore one may compute $\textup{inv}_S([z_\textup{f}\,\widetilde\cup\, y_\textup{f}])$ to be 
\[\textup{inv}_S([z_\textup{f}\,\widetilde\cup\, y_\textup{f}])=\sum_{\ell\in S}\textup{inv}_{\ell}\left(\omega_\ell\cup \textup{res}_\ell(y) + i_\ell^+(z_\ell^+)\cup \mu_\ell + \textup{res}_{\ell}(W)\right).\]
Let now  $[z_{\textup{f}}]=\beta^1([x_{\textup{f}}]),$ where $[x_{\textup{f}}]\in \widetilde H^1(V_f).$
Since  $\textup{pr}_0(\mathbb{X})=[x],$ it follows from the exact sequence
\[
H^1(G_{\QQ,S},V_{\varepsilon})\lra H^1(G_{\QQ,S},V_f)
\xrightarrow{\beta^1} H^2(G_{\QQ,S},V_f)\otimes \frak{J}/\frak{J}^2
\] 
that $z$ is a coboundary. Write $z=dA$ for some $A\in C^1 (G_{\QQ,S},V_f)$
and take $W=A\cup y.$ Then $i_{\ell}^+(z_{\ell}^+)=d(\omega_{\ell}+\res_{\ell}(A))$
for each $\ell\in S$ and it is easy to check that
\[
\beta^1\left([x_{\textup{f}}]\right)=\partial_1 \circ\widetilde{\res}_S\left(\left[\left( \omega_{\ell}+\res_{\ell}(A)\right)_{\ell}\right]\right).
\]
Thus,
\begin{align*}
\frak H([x_{\textup{f}}],[y_{\textup{f}}])&=\sum_{\ell\in S}\textup{inv}_{\ell}\left(\omega_\ell\cup \textup{res}_\ell(y) + i_\ell^+(z_\ell^+)\cup \mu_\ell + \textup{res}_{\ell}(A)\cup \res_{\ell}(y)\right)\\
&=\sum_{\ell\in S}\textup{inv}_{\ell}\left((\omega_\ell+\res_{\ell}(A))\cup \textup{res}_\ell(y) + d(\omega_{\ell}+\res_{\ell}(A))\cup \mu_\ell)\right)\\
&=\sum_{\ell\in S}\textup{inv}_{\ell}\left((\omega_\ell+\res_{\ell}(A))\cup (\textup{res}_\ell(y) + d \mu_\ell)\right)\\
&=\sum_{\ell\in S}\textup{inv}_{\ell}\left((\omega_\ell+\res_{\ell}(A))\cup i_{\ell}^+(y_{\ell}^+)\right)\\
&=\sum_{\ell\in S}\textup{inv}_{\ell}\left(\widetilde{\res}_{\ell}(\omega_\ell+\res_{\ell}(A))\cup i_{\ell}^+(y_{\ell}^+)\right ).
\end{align*}
We remark that $\widetilde{\res}_{\ell}(\omega_\ell+\res_{\ell}(A))$ and  $i_{\ell}^+(y_{\ell}^+)$ are cocycles for every $\ell$. Furthermore, 
\be
\label{eqn_Langlands_compatibility_vanishing}
\textup{inv}_{\ell}\left(\widetilde{\res}_{\ell}(\omega_\ell+\res_{\ell}(A))\cup i_{\ell}^+(y_{\ell}^+)\right )=0, \qquad \ell \neq p,
\ee
because  $H^1_{\textup{f}}(\QQ_\ell,V_f)=0$ for $\ell \neq p$ by the local-global compatibility of the Langlands correspondence (which in turn implies, by local duality and the local Euler characteristic formula, that $H^1(\QQ_\ell,V_f)=0$ for all such $\ell$). It follows from the definition of $\frak{D}\mathbb X_p$ that
\[
\widetilde{\res}_{p}(\omega_p+\res_{p}(A))=-\frak{D}\mathbb X_p+\widetilde{\res}_p(B)
\]
for some $B\in C^1(G_{\QQ,S},V_f).$ An easy computation shows that  
\[
\textup{inv}_{p} (\widetilde{\res}_p(B)\cup i_{p}^+(y_{p}^+))=
\sum_{\ell\in S}\textup{inv}_{\ell} (\widetilde{\res}_{\ell}(B)\cup i_{\ell}^+(y_{\ell}^+))=0
\]
where the first identity follows from the same reasoning as \eqref{eqn_Langlands_compatibility_vanishing} and the vanishing statement follows on copying the final displayed equation in the proof of \cite[Proposition 11.3.15]{Ne06}:
\begin{align*}
    \sum_{\ell\in S}\textup{inv}_{\ell} \left(\widetilde{\res}_{\ell}(B)\cup i_{\ell}^+(y_{\ell}^+)\right)&=
    \sum_{\ell\in S}\textup{inv}_{\ell} \left({\res}_{\ell}(B)\cup i_{\ell}^+(y_{\ell}^+)\right)\\
    &=\sum_{\ell\in S}\textup{inv}_{\ell} \left({\res}_{\ell}(B)\cup (\res_{\ell}(y)+d\mu_\ell)\right)\\
    &=\sum_{\ell\in S}\textup{inv}_{\ell} \left({\res}_{\ell}(B\cup y)- d({\res}_{\ell}(B)\cup\mu_\ell)\right)=0\,.
\end{align*} 
We therefore conclude that
\[
\frak H([x_{\textup{f}}],[y_{\textup{f}}])=-\textup{inv}_{p}\left(\frak{D}\mathbb X_p\cup i_p^+(y_p^+) \right )
\]
and  Theorem~\ref{thm:rubinsformula1} is proved.
\end{proof}
\subsubsection{}
\label{subsubsec_441_2022_02_17}
Our main objective in \S\ref{subsubsec_441_2022_02_17} is to prove Theorem~\ref{thm:rubinsformula2var}, where we establish a Rubin-style formula for the central critical weight-height pairing $\frak{h}_{\mathbf{f}}^{\textup{c-wt}}$.
\begin{define}
\label{def:definepartialderivatives} Let $\frak{X}\in H^1 (G_{\QQ,S},\overline{V}_{\mathbf{f}}).$ 
\\\\
i)  We define $\frak{X}^{\textup{cyc}} \in H^1(G_{\QQ,S},\overline{V}_{f}),$  
$\frak{X}^{\textup{wt}} \in H^1(G_{\QQ,S},{V}_{\mathbf{f}})$ and 
$\frak{X}^{\textup{c-wt}}\in H^1(G_{\QQ,S},{V}_\mathbf{f}(\boldsymbol{\chi}^{1/2}))$ to be the images of $\frak{X}$ under obvious projection maps.
\\\\
ii) If, in addition, $\mathrm{pr}_0(\frak{X})\in H^1_{\mathrm{f}}(\QQ,V_f),$  we denote by $\frak{d}_{\textup{cyc}}\,\frak{X} \in H^1(\widetilde{\mathbb D}_f),$ 
$\frak{d}_{\textup{wt}}\frak{X} \in H^1(\widetilde{\mathbb D}_f)$ and   $\frak{d}_{\textup{c-wt}}\frak{X} \in H^1(\widetilde{\mathbb D}_f)$ 
the Bockstein normalized derivatives of the classes $[\mathbb{X}]=\frak{X}^{\textup{cyc}} \mod {X^2},$  $[\mathbb{X}]=\frak{X}^{\textup{wt}} \mod {\varpi_{\kappa}^2}$ and   $[\mathbb{X}]=\frak{X}^{\textup{c-wt}} \mod {\varpi_{\kappa}^2},$ respectively.  




\end{define}

\begin{prop}
\label{prop:comparebocksteinpartialder}
Suppose we are given a class $\frak{X} \in H^1(G_{\QQ,S},\overline{V}_\mathbf{f})$ whose image under the natural map 
$H^1(G_{\QQ,S},\overline{V}_\mathbf{f})\ra H^1(G_{\QQ,S},{V}_{f})$ lands in $H^1_\textup{f}(\QQ,V_f)$. Then, 
$$\frak{d}_{\textup{c-wt}}\frak{X}=\frac{\frak{d}_{\textup{cyc}}\frak{X}}{2}\,+\,\frak{d}_{\textup{wt}}\frak{X}{\,+\,\beta^0_{\rm cyc}[u] + \beta^0_{\rm wt}[v]}$$
for some $u,v\in \widetilde{U}_p(V_f)^0$ with $[\beta^1\circ\partial_0(u)]=0=[\beta^1\circ\partial_0(v)]$, where $\beta^0_{\rm cyc}[u]$ and $\beta^0_{\rm cyc}[v]$ are given as in Definition~\ref{def:bocksteinder}.
\end{prop}
It follows from the discussion of Remark~\ref{rem:Appnoambiguity} that we have $\beta^0_{\rm cyc}[u]=0=\beta^0_{\rm wt}[v]$ if $\mathcal{L}_{\rm FM}(f)\neq 0$. However, our main result in \S\ref{subsubsec_441_2022_02_17} (Theorem~\ref{thm:rubinsformula2var} below) does not require this as an input owing to \eqref{eqn_4_21_17_02_2022_6_21}.
\begin{proof}[Proof of Proposition~\ref{prop:comparebocksteinpartialder}]
Let $\Pi_\mathbf{f}: H^1_\Iw(\mathbb{D}^\dagger_{\textup{rig}}(V_{\mathbf{f}}))\ra H^1_\Iw(\widetilde{\mathbb{D}}_\mathbf{f})$ and $\textup{pr}_{\gamma,\kappa}: H^1_\Iw(\widetilde{\mathbb{D}}_\mathbf{f}) \ra H^1(\widetilde{\mathbb{D}}_{f})$ denote the obvious maps. 

Since we have $H^2_{\rm Iw}(\widetilde{\mathbb{D}}_\mathbf{f})=0=H^2(\widetilde{\mathbb{D}}_\mathbf{f})$, we have an isomorphism
\be
\label{eqn_15_02_2022_16_38_1}
H^1_{\rm Iw}(\widetilde{\mathbb{D}}_\mathbf{f})/(\gamma-1) \xrightarrow{\sim} H^1(\widetilde{\mathbb{D}}_\mathbf{f})
\ee
that factors the natural morphism $H^1_{\rm Iw}(\widetilde{\mathbb{D}}_\mathbf{f})\ra H^1(\widetilde{\mathbb{D}}_\mathbf{f})$, and an isomorphism 
\be
\label{eqn_15_02_2022_16_38_2}
H^1(\widetilde{\mathbb{D}}_\mathbf{f})/(\varpi_\kappa) \xrightarrow{\sim} H^1(\widetilde{\mathbb{D}}_{f})
\ee
that factors the natural map $H^1(\widetilde{\mathbb{D}}_\mathbf{f}) \to H^1(\widetilde{\mathbb{D}}_{f})$. Combining \eqref{eqn_15_02_2022_16_38_1} and \eqref{eqn_15_02_2022_16_38_2}, we deduce that the natural map $H^1_{\rm Iw}(\widetilde{\mathbb{D}}_\mathbf{f}) \xrightarrow{\textup{pr}_{\gamma,\kappa}} H^1(\widetilde{\mathbb{D}}_{f})$ factors through an isomorphism 
$$H^1_{\rm Iw}(\widetilde{\mathbb{D}}_\mathbf{f})/(\gamma-1,\varpi_\kappa)\xrightarrow{\sim} H^1(\widetilde{\mathbb{D}}_{f})\,.$$
In other words, 
\be
\label{eqn_15_02_2022_16_38_3}
\ker(\textup{pr}_{\gamma,\kappa})=(\gamma-1,\varpi_\kappa)H^1_{\rm Iw}(\widetilde{\mathbb{D}}_\mathbf{f})\,.
\ee
Under our running hypothesis, we have $\textup{pr}_{\gamma,\kappa}\circ \Pi_{\bf f}(\frak{X})=0$, hence $\Pi_{\bf f}(\frak{X})\in \ker(\textup{pr}_{\gamma,\kappa})$. This observation combined with \eqref{eqn_15_02_2022_16_38_3} shows that  we may write
\be\label{eqn:totalderivativeofX}\Pi_\mathbf{f}\left(\frak{X}\right)=\frac{\gamma-1}{\log \chi(\gamma)}\cdot\frak{X}_\gamma+\varpi_\kappa\cdot\frak{X}_\kappa\ee
for some $\frak{X}_\gamma, \frak{X}_\kappa \in H^1_\Iw(\widetilde{\mathbb{D}}_\mathbf{f})$. Since
\begin{align*}
H^0(\widetilde{\mathbb D}_f)&=\ker\left(H^1_{\Iw}(\widetilde{\mathbb D}_f)\stackrel{[\gamma-1]}{\lra}  H^1_{\Iw}(\widetilde{\mathbb D}_f)\right)\\
&=\ker \left(H^1(\widetilde{\mathbb D}_{\bf f})\stackrel{[\varpi_{\kappa}]}{\lra} H^1 (\widetilde{\mathbb D}_{\bf f})\right)
\end{align*}
we can ensure also that 
$$\textup{pr}_{\gamma,\kappa}(\frak{X}_{\gamma})\otimes 1=\frak{d}_{\textup{cyc}}\frak{X} \hbox{ \,\,\,\,and \,\,\,\,} \textup{pr}_{\gamma,\kappa}(\frak{X}_{\kappa})\otimes 1=\frak{d}_{\textup{wt}}\frak{X}\,. $$
Let us rewrite (\ref{eqn:totalderivativeofX}) in the  following form:
\begin{align}\label{eqn:shiftfortheccder}\Pi_\mathbf{f}\left(\frak{X}\right)&=\left(\frac{\gamma-1}{\log \chi(\gamma)}-\frac{\varpi_\kappa}{2}\right)\cdot\frak{X}_\gamma+\varpi_\kappa\cdot\left(\frac{\frak{X}_\gamma}{2}+\frak{X}_\kappa\right).
\end{align}
Recall that we have set  $\Theta=\displaystyle\frac{\gamma-{\boldsymbol{\chi}}^{1/2}(\gamma^{-1})}{\log \chi (\gamma)}$ and let us denote by $I_\Theta\subset \mathcal{H}_A$ the ideal generated by $\Theta$. Then the class $\frak{X}^{\textup{c-wt}} \in H^1(\mathbb{D}^\dagger_{\textup{rig}}(V_{\mathbf{f}}(\boldsymbol{\chi}^{1/2})))$ is the image of $\frak{X}$ under the compositum 
$$H^1_\Iw(\mathbb{D}^\dagger_{\textup{rig}}(V_{\mathbf{f}}))\stackrel{\sim}{\lra}H^1(\mathbb{D}^\dagger_{\textup{rig}}(\overline{V}_{\mathbf{f}})){\lra} H^1(\mathbb{D}^\dagger_{\textup{rig}}(V_{\mathbf{f}}(\boldsymbol{\chi}^{1/2})))\,,$$
where the second map is induced from the projection $$\textup{pr}_\Theta: \overline{V}_{\mathbf{f}}\lra \overline{V}_{\mathbf{f}}/\Theta\cdot
\overline{V}_{\mathbf{f}}\cong {V}_{\mathbf{f}}(\boldsymbol{\chi}^{1/2}).$$ 
that we have discussed in detail as part of Remark~\ref{rem:reconstructtheselfdualHidafamily}. The multiplication by $\gamma-\boldsymbol{\chi}^{1/2}(\gamma)$ yields an isomorphism
$$\left[\gamma-\boldsymbol{\chi}^{1/2}(\gamma)\right]\,:\,\overline{V}_{\mathbf{f}} \stackrel{\sim}{\lra} \Theta\cdot \overline{V}_{\mathbf{f}}$$
(where the inversion of $\gamma$ is due to the fact that it acts on $\mathcal{H}_A^\iota$) that in turn induces a natural inclusion
\begin{align*}
(\gamma-\boldsymbol{\chi}^{1/2}(\gamma)) \cdot 
H^1_{\Iw}(\QQ_p,{V}_{\mathbf{f}}) \subset H^1(G_p, I_{\Theta}\overline{V}_{\bf f})\,.
\end{align*}
It therefore follows that
\be
\label{eqn:kernelofprojection}
(\gamma-\boldsymbol{\chi}^{1/2}(\gamma)) \cdot 
H^1_{\Iw}(\QQ_p,{V}_{\mathbf{f}}) \subset
\ker \left (
H^1_{\Iw}(\QQ_p, V_{\mathbf f})\xrightarrow{\textup{pr}_\Theta}
H^1(G_p, V_{\mathbf f} (\boldsymbol{\chi}^{1/2}))
\right ).
\ee

We have a commutative diagram
\[
\xymatrix{H^1_\Iw(\QQ_p,V_{\mathbf{f}})
\ar[r]^{\Pi_{\mathbf f}} \ar[d]^{\textup{pr}_{\Theta}} &H^1_\Iw(\widetilde{\mathbb D}_{\mathbf f}) \ar[d]^{\textup{pr}_{\Theta}}\\
H^1(G_p, V_{\mathbf f}(\boldsymbol{\chi}^{1/2})) \ar[r]^{\Pi} 
&H^1(\widetilde{\mathbb D}_{\mathbf f}(\boldsymbol{\chi}^{1/2})){\,.}
}
\]
Using \eqref{eqn:shiftfortheccder}, \eqref{eqn:kernelofprojection} and the Taylor expansion 
$$ \frac{\gamma-\boldsymbol{\chi}^{1/2}(\gamma)}{\log\chi(\gamma)}=
\frac{\gamma-1}{\log\chi(\gamma)}-\frac{\varpi_{\kappa}}{2}+C\varpi_{\kappa}^2$$
of $(\gamma-\boldsymbol{\chi}^{1/2}(\gamma))/\log\chi(\gamma)$ we infer that
$$\Pi(\frak{X}^{\textup{c-wt}})=\Pi\circ \textup{pr}_{\Theta} (\frak{X})=\varpi_\kappa\cdot\textup{pr}_\Theta\left( \frac{\frak{X}_\gamma}{2}+\frak{X}_\kappa\right)-C\cdot\varpi_\kappa^2\cdot\textup{pr}_\Theta\left(\frak{X}_\gamma\right).$$
By the commutativity of the diagram
$$\xymatrix{H^1_\Iw(\widetilde{\mathbb{D}}_\mathbf{f}) \ar[r]^(.45){\textup{pr}_\Theta}\ar[rd]_{\textup{pr}_{\kappa,\gamma}}&H^1(\widetilde{\mathbb{D}}_\mathbf{f}(\boldsymbol{\chi}^{1/2}))\ar[d]^{\textup{pr}_{1/2}} \\
& H^1(\widetilde{\mathbb{D}}_{f})\,,
}$$
where  the map $\textup{pr}_{{1}/{2}}$ is induced by the natural reduction map modulo $\varpi_\kappa$, we see that the element 
\[
\frac{\frak{d}_{\textup{cyc}}\frak{X}}{2}+\frak{d}_{\textup{wt}}\frak{X}=
\textup{pr}_{\kappa,\gamma} \left (
\frac{\frak{X}_\gamma}{2}+\frak{X}_\kappa \right )
\]
verifies the condition \ref{item_i} of Lemma~\ref{lem:bocksteinnormalizedderivative}\,: 
\[
i \left (\frac{\frak{X}_\gamma}{2}+\frak{X}_\kappa \right )=
\widetilde{\res}_p(\frak{X}^{\textup{c-wt}}).
\]
We next check that it satisfies also the condition \ref{item_ii} of Lemma~\ref{lem:bocksteinnormalizedderivative}. We denote by
$\beta^{\textup{cyc}},$ $\beta^{\textup{wt}}$ and $\beta^{\textup{c-wt}}$
the coboundary maps $\widetilde H^1_{\textup{f}}(V_f)\rightarrow  
\widetilde H^2_{\textup{f}}(V_f)\otimes \frak{I}/\frak{I}^2$ (where $I=J_E$ or $\frak{p}$, depending on whether we are in the situation of \ref{item_A} or \ref{item_B}) associated to the respective families
$\overline{V}_f$, $V_{\mathbf f}$ and $V_{\mathbf{f}}(\boldsymbol{\chi}^{1/2})$ of Galois representations. Starting off with the commutative diagram
\[
\xymatrix{
0\ar[r] &\frak{p}V_{\mathbf{f}}/\frak{p}^2V_{\mathbf{f}}
\ar[r] &V_{\mathbf{f}}/\frak{p}^2V_{\mathbf{f}}\ar[r] &V_f\ar[r] &0 \\
0\ar[r] &\frak{P}\overline{V}_{\mathbf{f}}/\frak{P}^2\overline{V}_{\mathbf{f}} \ar[u] \ar[d]
\ar[r] &\overline{V}_{\mathbf{f}}/\frak{P}^2\overline{V}_{\mathbf{f}} \ar[u] \ar[d] \ar[r] &V_f
\ar[u]_{=} \ar[d]^{=}\ar[r] &0\\
0\ar[r] &J_E\overline{V}_{f}/J_E^2\overline{V}_{f}
\ar[r] &\overline{V}_{f}/J_E^2\overline{V}_{f}\ar[r] &V_f\ar[r] &0
}
\]
we obtain the following commutative diagram:
\[
\xymatrix{
&\widetilde{H}^2_f(V_f)\otimes_E\frak{p}/\frak{p}^2,\\
\widetilde{H}^1_f(V_f) \ar[ur]^{\beta^{\textup{wt}}} \ar[r]^(.4){\beta} \ar[dr]^{\beta^{\textup{cyc}}}
&\widetilde{H}^2_f(V_f)\otimes_E\frak{P}/\frak{P}^2\ar[u]
\ar[d],\\
&\widetilde{H}^2_f(V_f)\otimes_E J_E/J_E^2
}
\]
We therefore have,
\[
\beta([x_{\textup{f}}])=\beta^{\textup{cyc}}\left(\left[x_{\textup{f}}\right]\right)\cdot(\gamma-1)+
\beta^{\textup{wt}}\left(\left[x_{\textup{f}}\right]\right)\cdot\varpi_\kappa=-\left(\partial_1(\frak{d}_{\textup{cyc}}(\frak{X}))\cdot(\gamma-1)+
\partial_1(\frak{d}_{\textup{wt}}(\frak{X}))\cdot\varpi_{\kappa}\right)
\]
and
\[
\bigl. \beta^{\textup{c-wt}}\left(\left[x_{\textup{f}}\right]\right)=\beta([x_{\textup{f}}])\Bigr \vert_{\gamma-1=\frac{\varpi_{\kappa}}{2}}= -\partial_1\left (\frac{\frak{d}_{\textup{cyc}}\frak{X}}{2}+\frak{d}_{\textup{wt}}\frak{X}\right )\cdot \varpi_ {\kappa}\,.
\]
The proof of the proposition is now complete since the Bockstein normalized derivatives are uniquely determined up to an element $\beta^0(x)\in \widetilde{U}_p(V_f)^1$ where $x\in  \widetilde{U}_p(V_f)^0$ with $[\beta^1\circ\partial_0(x)]=0$ (c.f. the proof of the final portion of Lemma~\ref{lem:bocksteinnormalizedderivative} and Remark~\ref{rem:Appnoambiguity}).
\end{proof}

\begin{thm}[Central Critical Rubin-style formula for the $p$-adic height]
\label{thm:rubinsformula2var}
Let $[x_{\textup{f}}]=[(x, x_\ell,\lambda_\ell)]$ and $[y_{\textup{f}}]=[(y, y_\ell,\mu_\ell)]  \in \widetilde{H}^1_{\textup{f}}(V_f)$ be two elements such that $[x]\in  {H}^1_\textup{f}(\QQ,V_f)$. Suppose further that there is an element $\frak{X} \in H^1(G_{\QQ,S},\overline{V}_\mathbf{f})$  with the property that $\textup{pr}_0\left(\frak{X}\right)=[x]$. Then,
$$\frak{h}_{\mathbf{f}}^{\textup{c-wt}}\left([x_{\textup{f}}],[y_{\textup{f}}]\right)=-\frac{\left\langle \frak{d}_{\textup{cyc}}\frak{X}, [y_p^+]\right\rangle}{2}-\left\langle \frak{d}_{\textup{wt}}\frak{X}, [y_p^+]\right\rangle \,.$$
\end{thm}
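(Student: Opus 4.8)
The plan is to combine Corollary~\ref{thm:rubinsformula} (the analytic Rubin-style formula, in situation (\textbf{B})) with the comparison of Bockstein-normalized derivatives established in Proposition~\ref{prop:comparebocksteinpartialder}. First, I would observe that the pairing $\frak{h}_{\mathbf{f}}^{\textup{c-wt}}$ is obtained from $\frak{H}=\mathbb{H}_{\mathbf{f}}^{\textup{c-wt}}$ (situation (\textbf{B}), with $\frak{J}=\frak{p}$ and $\omega=\varpi_\kappa$) via $\mathcal{A}\bigl(\mathbb{H}_{\mathbf{f}}^{\textup{c-wt}}([x_{\textup{f}}],[y_{\textup{f}}])\bigr)=\frak{h}_{\mathbf{f}}^{\textup{c-wt}}([x_{\textup{f}}],[y_{\textup{f}}])\cdot(\kappa-k)$, as in Definition~\ref{def:centralcriticalheight}, and that $\mathcal{A}^{\textup{wt}}(\varpi_\kappa)=\kappa-k$. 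Thus applying $\mathcal{A}$ to the conclusion of Corollary~\ref{thm:rubinsformula} with $\mathbb{X}=\frak{X}^{\textup{c-wt}}\bmod\varpi_\kappa^2$ and $\omega=\varpi_\kappa$ yields
\[
\frak{h}_{\mathbf{f}}^{\textup{c-wt}}\left([x_{\textup{f}}],[y_{\textup{f}}]\right)\cdot(\kappa-k)=-\left\langle \frak{d}\left[\mathbb{X}\right],[y_p^+]\right\rangle\cdot(\kappa-k)\,,
\]
so that $\frak{h}_{\mathbf{f}}^{\textup{c-wt}}([x_{\textup{f}}],[y_{\textup{f}}])=-\langle \frak{d}[\frak{X}^{\textup{c-wt}}],[y_p^+]\rangle=-\langle \frak{d}_{\textup{c-wt}}\frak{X},[y_p^+]\rangle$ by the definition of $\frak{d}_{\textup{c-wt}}\frak{X}$ in Definition~\ref{def:definepartialderivatives}(ii).

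Next I would invoke Proposition~\ref{prop:comparebocksteinpartialder}, which under the running hypothesis that $\textup{pr}_0(\frak{X})\in H^1_{\textup{f}}(\QQ,V_f)$ gives the identity $\frak{d}_{\textup{c-wt}}\frak{X}=\tfrac{1}{2}\,\frak{d}_{\textup{cyc}}\frak{X}+\frak{d}_{\textup{wt}}\frak{X}$ in $H^1(\widetilde{\mathbb D}_f)$. Substituting this into the displayed formula and using bilinearity of the canonical pairing $\langle\,,\,\rangle:H^1(\widetilde{\mathbb D}_f)\times H^1(\mathbb D_f)\to E$ immediately produces
\[
\frak{h}_{\mathbf{f}}^{\textup{c-wt}}\left([x_{\textup{f}}],[y_{\textup{f}}]\right)=-\frac{\left\langle \frak{d}_{\textup{cyc}}\frak{X},[y_p^+]\right\rangle}{2}-\left\langle \frak{d}_{\textup{wt}}\frak{X},[y_p^+]\right\rangle\,,
\]
which is the assertion. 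One should double-check that the hypotheses of Corollary~\ref{thm:rubinsformula}/Theorem~\ref{thm:rubinsformula1} are met: the class $[\mathbb{X}]\in H^1(G_{\QQ,S},V_\varepsilon)$ needed there (with $V_\varepsilon=V_{\mathbf{f}}(\boldsymbol{\chi}^{1/2})\otimes A/\frak{p}^2$) is precisely $\frak{X}^{\textup{c-wt}}\bmod\varpi_\kappa^2$, and the requirement $\textup{pr}_0([\mathbb{X}])=[x]\in H^1_{\textup{f}}(\QQ,V_f)$ is the standing assumption, so Lemma~\ref{lem:bocksteinnormalizedderivative} applies and $[\frak{D}\mathbb{X}]_p$ (hence $\frak{d}[\mathbb{X}]$) is well defined.

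The only genuinely delicate point — and the step I expect to be the main obstacle to write cleanly — is the bookkeeping that identifies $\mathcal{A}$ applied to the $\frak{p}/\frak{p}^2$-valued formula of Corollary~\ref{thm:rubinsformula} with the $E$-valued statement we want: one must track that the isomorphism $\frak{p}/\frak{p}^2\cong\frak{I}/\frak{I}^2$ induced by the two-variable Amice transform sends $\varpi_\kappa$ to $\kappa-k$, that $\frak{d}[\mathbb{X}]\otimes\varpi_\kappa$ corresponds to $[\frak{D}\mathbb{X}]_p$ under the identification (\ref{eqn:identifyingsungularquotients}) as in Definition~\ref{def:bocksteinder}, and that Definition~\ref{def:definepartialderivatives}(ii) takes $\mathbb{X}=\frak{X}^{\textup{c-wt}}\bmod\varpi_\kappa^2$ for $\frak{d}_{\textup{c-wt}}$. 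Once the normalizations are lined up, everything else is formal; in particular Proposition~\ref{prop:comparebocksteinpartialder} already absorbs the substantive geometric/cohomological content (the Taylor expansion of $(\gamma-\boldsymbol{\chi}^{1/2}(\gamma))/2$ and the kernel computation (\ref{eqn:kernelofprojection})), so this theorem is essentially a corollary of the two previous results.
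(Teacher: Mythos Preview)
Your proposal is correct and follows exactly the same approach as the paper: the paper's proof is a single sentence stating that the result follows from Corollary~\ref{thm:rubinsformula} (in situation (\textbf{B})) and Proposition~\ref{prop:comparebocksteinpartialder}. You have simply unpacked the details of how these two ingredients combine, including the normalization check via Definition~\ref{def:centralcriticalheight} and Definition~\ref{def:definepartialderivatives}, which the paper leaves implicit.
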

\begin{proof}
This follows from Corollary~\ref{thm:rubinsformula} and Proposition~\ref{prop:comparebocksteinpartialder} together with \eqref{eqn_4_21_17_02_2022_6_21}.
\end{proof}

\section{The two-variable Perrin-Riou logarithm}
 
\subsection{Perrin-Riou's logarithm: General Theory} In this subsection we review the construction of the large exponential map for crystalline $(\varphi,\Gamma)$-modules of rank $1$ with coefficients in  an affinoid algebra $A.$ 
We refer the reader to \cite{Nak, Nak13} for general constructions and further results.

Let $\boldsymbol{\delta} \,:\,\mathbb Q_p^*\rightarrow A^*$ be a continuous character.  Recall that we denote by $\RR_A(\boldsymbol\delta )$ the 
$(\varphi,\Gamma)$-module $\RR_A\cdot e_{\boldsymbol\delta}$  of rank $1$ defined  by  
\[
\varphi (e_{\boldsymbol\delta})=\boldsymbol{\delta} (p)\cdot e_{\boldsymbol\delta},\qquad  
\tau (e_{\boldsymbol\delta})=\boldsymbol\delta (\chi (\tau))\cdot e_{\boldsymbol\delta},\quad \tau \in \Gamma.
\]
Set   $\alpha (x) =\boldsymbol\delta (p)\in  A.$
We assume that $\boldsymbol\delta $ is crystalline and of constant slope, namely that 

a) $\boldsymbol\delta \vert_{\mathbb Z_p^*} (u)=u^m$ for some integer $ m\geqslant 1.$ 

b) The function $x \mapsto v_p(\alpha (x))$ is constant on $U=\mathrm{Spm}(A).$

The crystalline module $\CDcris (\RR_A(\boldsymbol\delta ))$
associated to $\RR_A(\boldsymbol\delta )$ is the free $A$-module of rank $1$ generated by 
$d_{\boldsymbol\delta}=t^{-m}e_{\boldsymbol\delta}.$ Note that 
\[
\varphi (d_{\boldsymbol\delta})=p^{-m}\boldsymbol\delta (p)d_{\boldsymbol\delta}.
\]
The Iwasawa cohomology $H^1_{\Iw}(\RR_A(\boldsymbol\delta ))$ is canonically isomorphic to
$\RR_A(\boldsymbol\delta )^{\Delta,\psi=1}.$ 
Set $\mathcal E_A=\RR_A\cap A[[\pi]].$ The set $\mathcal E_A^{\Delta,\psi=0}$ is the free
$\mathcal H_A$-submodule of $\mathcal E_A$ generated by 
\[
1+\pi_0=1+\frac{1}{\vert\Delta\vert}\underset{\sigma\in \Delta}\sum \sigma (\pi).
\]
We equip $\mathcal E_A$ with the operator $\partial =(1+\pi)\,\displaystyle\frac{d}{d\pi}.$

Let $z\in \CDcris (\RR_A(\boldsymbol\delta ))\otimes_A \mathcal E_A^{\Delta,\psi=0}.$
It may be shown that the equation
\[
(\varphi-1) F=z-\frac{\partial^mz(0)}{m!}t^m,\qquad t=\log (1+\pi)
\]
has a solution in $\CDcris (\RR_A(\boldsymbol\delta ))\otimes_A \mathcal E_A$ and we define
\begin{equation}
\label{formulalargeexponential}
\textup{Exp}_{\RR (\boldsymbol\delta)}
(z)=
(-1)^m\frac{\log \chi (\gamma)}{p}\,t^m\partial^m (F).
\end{equation}
Exactly as in the classical case $A=E$ (see \cite{Ber03}), it is not hard to check that  
$\textup{Exp}_{\RR (\boldsymbol\delta)}
(z) \in \RR (\boldsymbol\delta )^{\Delta,\psi=1}.$ Therefore we have a well defined map
\begin{equation*}
\textup{Exp}_{\RR (\boldsymbol\delta)}
\,:\,\CDcris (\RR_A(\boldsymbol\delta ))\otimes_A\mathcal E_A^{\Delta,\psi=0} \rightarrow H^1_{\Iw}(\RR_A(\boldsymbol\delta )).
\end{equation*}

Now let $V$ be a $p$-adic representation with coefficients in $A$ and let $\mathbb D=\RR_A(\boldsymbol\delta)$ be a crystalline $(\varphi,\Gamma)$-submodule of $\DdagrigA (V)$ of constant slope.

We denote by \footnote{The
definition of the  functor $\DdagrigA$ depends on the  choice of 
a fixed compartible system $\varepsilon =(\zeta_{p^n})_{n\geqslant 0}$ 
of primitive $p^n$th roots of the unity, because this choice 
identifies  $1+\pi$ with the  element  $[\varepsilon]\in \mathbf B_{\textup{dR}}$ of Fontaine's ring of de Rham periods.
In order to stress this dependence, we shall write  the exponential with the superscript $\varepsilon.$ 
} 

\[
\textup{Exp}_{V,\mathbb D}^{\varepsilon}\,:\,\CDcris (\mathbb D)\otimes_A \mathcal E_A^{\Delta,\psi=0} \rightarrow H^1_{\Iw}(\QQ_p,V)
\]
the composition of $\textup{Exp}_{\RR (\boldsymbol\delta)}
$ with the natural  map
$H^1_{\Iw}(\RR_A(\boldsymbol\delta))\rightarrow H^1_{\Iw}(\QQ_p,V).$

Recall  (see \ref{subsec:iwasawacohomology}) that we set $K_n=\QQ_p(\zeta_{p^{n+1}})^{\Delta}.$    The cohomological pairings
\[
\langle \,,\,\rangle_n \,:\, H^1(K_n,V^*(1))\times H^1(K_n,V)\rightarrow A
\]
give rise to a $\Lambda_A$-linear pairing
\[
\left\langle \,,\,\right\rangle_{\Iw} \,:\, H^1_{\Iw}(\QQ_p,V^*(1))\times H^1_{\Iw}(\QQ_p,V)^{\iota}
\rightarrow \Lambda_A.
\]
By linearity, this pairing extends to
\[
\left\langle \,,\,\right\rangle_{\Iw} \,:\, H^1_{\Iw}(\QQ_p,V^*(1))\otimes_{\Lambda_A}\mathcal H_A\times H^1_{\Iw}(\QQ_p,V)^{\iota}\otimes_{\Lambda_A}\mathcal H_A
\rightarrow \mathcal H_A.
\]
For any $\eta \in \CDcris (\mathbb D)$ the element $\widetilde\eta=\eta\otimes (1+\pi_0)$
lies in $\CDcris (\mathbb D)\otimes \mathcal E_A^{\Delta,\psi=0}$ and we define
a map
\begin{equation*}
\frak{Log}_{V^*(1),\eta}^{\varepsilon}\,:\,H^1_{\Iw}(\QQ_p,V^*(1))\rightarrow \mathcal H_A
\end{equation*}
by
\[
\frak{Log}^{\varepsilon}_{V^*(1),\eta}(z)=\left\langle z, \textup{Exp}_{V,\mathbb D}^{\varepsilon^{-1}}(\widetilde\eta)^{\iota}\right\rangle_{\Iw} .
\]
The following theorem summarizes the main properties of these maps.

\begin{thm}
\label{them:propertiestwovarPRlog}
\textup{i)} The maps $\textup{Exp}_{V,\mathbb D}^{\varepsilon}$ and $\frak{Log}_{V^*(1),\eta}^{\varepsilon}$
commute with the base change.

\textup{ii)} Assume that $x\in \mathrm{Spm}(A)$ is an $E$-valued point such that 
$V_x$ is semistable at $x.$ Let $E_x=A/\frak m_x.$ Then $\textup{Exp}_{V_x,\mathbb D_x}^{\varepsilon}$
coincides with the restriction of Perrin-Riou's large exponential map
\[
\textup{Exp}_{V_x,m}^{\varepsilon}\,:\,\Dcris (V_x)\otimes_E \mathcal E_{E_x}^{\Delta,\psi=0} \rightarrow H^1_{\Iw}(\QQ_p,V_x)
\]
on $\CDcris (\mathbb D_x)\otimes_{E_x} \mathcal E_{E_x}^{\Delta,\psi=0}.$
Therefore, $\frak{Log}^{\varepsilon}_{V_x^*(1),\eta_x}$ coincides with 
Perrin-Riou's large logarithm map as defined in \cite{Ben14a}.

\textup{iii)} Let $\frak a\,:\,\mathcal H_A\rightarrow A$ denote the augmentation map induced from $\gamma\mapsto 1.$  Then
\be
\label{eqn:improvedlogarithm}
\frak{a}\circ  \frak{Log}^{\varepsilon}_{V^*(1),\eta}(z)=
(m-1)! \, \frac{1-p^{-1}\alpha (x)^{-1}}{1-\alpha (x)}\, 
\mathrm{Log}_{V^*(1),\eta}(\mathrm{pr}_0(z)), 
\ee  
where $\mathrm{pr}_0\,:\,H^1_{\Iw} (\QQ_p,V)\rightarrow H^1(\QQ_p,V)$
is the canonical projection,  $\mathrm{Log}_{V^*(1),\eta}(\mathrm{pr}_0(z))=\left\langle\mathrm{pr}_0(z), \exp_{\mathbb D}(\eta)\right\rangle\in A$ 
and $\exp_{\mathbb D}$ is the Bloch-Kato exponential map $($for its general definition, see \cite{Ben11,Nak13}$)$.  
\end{thm}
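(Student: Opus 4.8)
The plan is to reduce each assertion to its known counterpart in the case $A=E$, where the large exponential map is Perrin-Riou's classical construction (or its $(\varphi,\Gamma)$-module formulation due to Berger \cite{Ber03, Ber03}), and then propagate it over the affinoid base by density/specialization arguments. For part (i), the key point is that every ingredient in the defining formula \eqref{formulalargeexponential} --- the operator $\partial=(1+\pi)\frac{d}{d\pi}$, the Frobenius $\varphi$, the element $t=\log(1+\pi)$, the projector onto $\mathcal E_A^{\Delta,\psi=0}$, and the solution $F$ of $(\varphi-1)F=z-\frac{\partial^m z(0)}{m!}t^m$ --- is defined functorially in $A$ and commutes with the base-change maps $\mathcal R_A\to\mathcal R_{A'}$ for any morphism $A\to A'$ of affinoid algebras. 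The only genuinely substantive verification is that the solution $F$ is unique up to an element killed after applying $t^m\partial^m$, so that $\mathrm{Exp}_{\mathcal R(\boldsymbol\delta)}(z)$ is well-defined and its formation commutes with $\otimes_A A'$; this is where one invokes that $\varphi-1$ is, in the relevant slope range, surjective on $\mathcal D_{\mathrm{cris}}(\mathcal R_A(\boldsymbol\delta))\otimes_A\mathcal E_A$ with controlled kernel (the argument of \cite{Ber03} adapts verbatim since the slope is constant on $U$). Compatibility of $\frak{Log}^\varepsilon_{V^*(1),\eta}$ with base change then follows formally, since $\langle\,,\,\rangle_{\Iw}$ is defined by cup products that commute with base change (cf. the base-change theorem for Selmer complexes, \cite{Pot13}).

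For part (ii), I would argue that both $\mathrm{Exp}^\varepsilon_{V_x,\mathbb D_x}$ and the restriction of $\mathrm{Exp}^\varepsilon_{V_x,m}$ to $\mathcal D_{\mathrm{cris}}(\mathbb D_x)\otimes\mathcal E_{E_x}^{\Delta,\psi=0}$ are given, in the rank-one crystalline situation, by the \emph{same} formula \eqref{formulalargeexponential}: this is precisely the content of Berger's comparison between Perrin-Riou's map and the $(\varphi,\Gamma)$-module exponential in \cite{Ber03}, applied to the sub-$(\varphi,\Gamma)$-module $\mathbb D_x=\mathcal R_{E_x}(\boldsymbol\delta_x)$. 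One then composes with $H^1_{\Iw}(\mathcal R_{E_x}(\boldsymbol\delta_x))\to H^1_{\Iw}(\QQ_p,V_x)$ and checks that this intertwines with the inclusion $\mathcal D_{\mathrm{cris}}(\mathbb D_x)\hookrightarrow\Dcris(V_x)$, which is immediate from functoriality of Berger's $\mathcal D_{\mathrm{cris}}$ functor. The statement about $\frak{Log}^\varepsilon_{V_x^*(1),\eta_x}$ coinciding with the large logarithm of \cite{Ben14a} is then a formal dualization of the exponential statement, using the definition $\frak{Log}^\varepsilon_{V^*(1),\eta}(z)=\langle z,\mathrm{Exp}^{\varepsilon^{-1}}_{V,\mathbb D}(\widetilde\eta)^\iota\rangle_{\Iw}$ and the corresponding definition in \cite{Ben14a}.

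For part (iii), the augmentation $\frak a:\mathcal H_A\to A$ sends $\gamma\mapsto1$, i.e.\ specializes the Iwasawa pairing to the local cup product at the trivial character, and $\mathrm{pr}_0$ is the corresponding specialization on Iwasawa cohomology. The identity \eqref{eqn:improvedlogarithm} is Perrin-Riou's ``explicit reciprocity''/interpolation formula relating $\mathrm{Exp}$ at the trivial character to the Bloch--Kato $\exp$, together with the Euler-type factor $(m-1)!\,\frac{1-p^{-1}\alpha(x)^{-1}}{1-\alpha(x)}$ coming from the $\varphi$-eigenvalue $p^{-m}\boldsymbol\delta(p)=p^{-m}\alpha(x)$ on $d_{\boldsymbol\delta}$. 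By part (i) it suffices to prove this after specializing at each semistable point $x$, where it is exactly the classical formula (see \cite[\S1.2]{Ben14a} or \cite{Ber03}), and the set of such points is Zariski-dense in $U$; since both sides of \eqref{eqn:improvedlogarithm} are $A$-analytic (the left side because $\frak{Log}$ is, the right side because $\alpha(x)$ and $\mathrm{Log}$ are), density forces the identity over all of $A$.

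\textbf{Main obstacle.} The delicate step is \emph{not} the formal base-change bookkeeping but the well-definedness of $\mathrm{Exp}_{\mathcal R(\boldsymbol\delta)}$ over a general affinoid $A$ --- i.e.\ showing that $(\varphi-1)F=z-\frac{\partial^m z(0)}{m!}t^m$ has a solution in $\mathcal D_{\mathrm{cris}}(\mathcal R_A(\boldsymbol\delta))\otimes_A\mathcal E_A$ and that changing $F$ does not affect $t^m\partial^m(F)$. Over $E$ this rests on a careful analysis of the $\psi$-operator and of $(\varphi-1)$ on $\mathcal E_E$ in the prescribed slope; over an affinoid one must make this uniform, controlling the convergence radii and the cokernel of $\varphi-1$ as the point of $U$ varies. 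The constant-slope hypothesis (b) is exactly what makes this uniformity possible, and the argument follows \cite{Ber03} and the affinoid refinements in \cite{KPX}, but it is the one place where real work, rather than diagram-chasing, is required.
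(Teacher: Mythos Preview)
Your approach to parts (i) and (ii) matches the paper's: (i) follows directly from the constructions (all ingredients in \eqref{formulalargeexponential} are functorial in $A$), and (ii) is exactly Berger's result that Perrin-Riou's large exponential can be defined via \eqref{formulalargeexponential}.

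For part (iii) your route diverges from the paper. The paper does not specialize and interpolate; it simply observes that the identity
\[
\mathrm{pr}_0\bigl(\mathrm{Exp}_{V,\mathbb D}^{\varepsilon^{-1}}(\widetilde\eta)^{\iota}\bigr) = (m-1)!\,\exp_{V}\!\left(\frac{1-p^{-1}\varphi^{-1}}{1-\varphi}\,\eta\right),
\]
proved for $A=E$ in \cite{PR94} or \cite[Cor.~4.10]{BB08}, goes through verbatim over a general affinoid $A$, because the computation only involves the rank-one crystalline $(\varphi,\Gamma)$-module $\mathbb D=\mathcal R_A(\boldsymbol\delta)$ and not the ambient $V$. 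Pairing against $\mathrm{pr}_0(z)$ then gives \eqref{eqn:improvedlogarithm} directly.

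Your density argument, as written, has a gap: you invoke a Zariski-dense set of points $x$ with $V_x$ semistable, but the theorem makes no such hypothesis on $V$ (and none is needed, since the formula depends only on $\mathbb D$, which is crystalline at every point by assumption). If you repair this by specializing at arbitrary $E$-points and using that $A$ is reduced, the argument becomes correct, but it is an unnecessary detour compared to the paper's observation that the same computation works uniformly over $A$.
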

\begin{proof} i) This follows directly  from constructions. 

ii) This portion follows from  i) and Berger's interpretation  in \cite{Ber03} of the large exponential map; more precisely, by Berger's result that Perrin-Riou's large exponential may be defined via (\ref{formulalargeexponential}).

iii) By the construction of the large exponential map we have 
\[
\mathrm{pr}_0 (\mathrm{Exp}_{V,\mathbb D}^{\epsilon^{-1}}(\widetilde\eta)^{\iota})=(m-1)! \exp_{V} \left (\frac{1-p^{-1}\varphi^{-1}}{1-\varphi}\, \eta\right )
\]
(see \cite{PR94} or \cite[Corollaire 4.10]{BB08}, for the proof in the case $A=E.$ The same computation proves this formula in the general case.) Therefore
\[
\frak{a}\circ  \frak{Log}^{\varepsilon}_{V^*(1),\eta}(z)=
(m-1)! \, \frac{1-p^{-1}\alpha (x)^{-1}}{1-\alpha (x)} \langle
\mathrm{pr}_0(z),\exp_{V}(\eta)\rangle.
\]
\end{proof}

\begin{rem}
The formula (\ref{eqn:improvedlogarithm}) implies that 
$\frak{a}\circ  \frak{Log}^{\varepsilon}_{V^*(1),\eta}(z)$ factors into a product of two analytic functions. This factorization may be seen as the counterpart (in the context of the theory of $(\varphi,\Gamma)$-modules) of the identification of the improved $p$-adic $L$-function. For the multiplicative group, this formula was proved by Venerucci in \cite[Proposition 3.8]{venerucciarticle}. 
\end{rem}

\subsection{Perrin-Riou's logarithm for modular forms}
\label{subsec:PRslogformodforms}
We apply the theory outlined in the previous section to the situation studied in Section~\ref{subsec:padicfamilies}. Let $(V_{\mathbf{f}}, \mathbb D_{\mathbf{f}})$ be  the triangulation 
associated to a $p$-adic family $\mathbf{f}$ of cuspidal eigenforms passing through $f$ and an eigenvalue $\alpha$ of $f.$ We fix $\varepsilon$ and write $\mathrm{Exp}_{f}$ for the sake of brevity to denote the exponential map 

\[
\mathrm{Exp}_{V_f,\mathbb D}^{\varepsilon}\,:\, \Dcris(V_f)^{\varphi=\alpha}\otimes_E
\mathcal E^{\Delta, \psi=0}\lra H^1_{\Iw}(\QQ_p, V_f).
\]
Recall that we denote by  $d_{\delta}$
the element $e_{{\delta}}\otimes t^{-k/2}.$ 
Set $\eta=d_{\delta}.$ 
Using the skew-symmetric pairing $V_f\times V_f\rightarrow E,$ 
we may consider the map $\frak{Log}^{\epsilon}_{V_f,\eta }.$ In order to simplify the notation, we shall denote this map by $\frak{Log}_f$.

Assume that $a_p=p^{k/2-1}.$ Then   $\alpha=\alpha (k)=p^{-1}$ and 
from Theorem~\ref{them:propertiestwovarPRlog} it follows that 
$\mathrm{Exp}_{f}(d_{\delta}\otimes (1+\pi_0))\in (\gamma-1)\cdot\mathcal H_E.$ 
Let $\mathrm{pr}_0$ denote the natural projection $H^1_{\Iw}(\Ddagrig (V_f))\rightarrow
H^1(G_p,V_f).$

\begin{prop}
\label{prop:derofcycloPRmap}
\textup{i)} There exists a unique $F\in H^1_{\Iw}(\mathbb D_f)$ such that 
$$(\gamma-1) F=\mathrm{Exp}_{f}(d_{\delta}\otimes (1+\pi_0)).$$
\\
\textup{ii)} For every $z \in H^1_{\Iw}(\Ddagrig (V_f))$ we have 
$$ \left(1-\frac{1}{p}\right)\left\langle z,  F \right\rangle_{\Iw}\equiv\Gamma(k/2)\,(\log \chi (\gamma))^{-1}\langle \Pi_f\circ\textup{pr}_{\gamma}(z), \Psi_1^*\rangle \pmod {J_E^2}\,,$$
where $\Pi_f\,:\, H^1_{\Iw}(\Ddagrig (V_f))\rightarrow H^1_{\Iw}(\widetilde{\mathbb D}_f)$ and $\mathrm{pr}_{\gamma}\,:\,H^1_{\Iw}(\widetilde{\mathbb D}_f)\rightarrow H^1(\widetilde{\mathbb D}_f)$  denote the natural projections.
In particular, 
$$\left\langle z\,,\,\textup{Exp}_{f}^{\varepsilon^{-1}}\left((1+X)\otimes d_\delta\right)\right\rangle_{\Iw} \in J_E^2 \iff \textup{pr}_0(z) \in H^1_{\textup{f}}(\QQ_p,V)\,.$$
\\
\textup{iii)}
Assume in addition  that 
$\mathrm{pr}_0(z)\notin H^1_{\textup{f}}(G_p,V_f)$ and  define 
\[
L_p(z,s)=\mathcal A (\frak{Log}_{f}(z))(-s)=\frak{Log}_{f}(z)(\chi (\gamma)^s-1). 
\]
Then,
$$
L'_p(z,s)=
-\left\langle z,  F \right\rangle\equiv \Gamma(k/2)\,\left(1-\frac{1}{p}\right)^{-1}{\al_{\textup{FM}}(f)}\left[ d_\delta,\exp^*_{V_f}(\textup{pr}_0(z)) \right]_{V_f}\cdot  s \pmod{s^2},$$
where $[\,,\,]_{V_f}\,:\,\Dst (V_f)\times \Dst (V_f)\rightarrow E$ denotes the canonical duality.
\end{prop}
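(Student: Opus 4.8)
\textbf{Proof strategy for Proposition~\ref{prop:derofcycloPRmap}.}
The plan is to derive all three assertions from the general theory of the two-variable Perrin--Riou logarithm (Theorem~\ref{them:propertiestwovarPRlog}) together with the explicit description of the $(\varphi,\Gamma)$-modules $\mathbb D_f$ and $\widetilde{\mathbb D}_f$ furnished by Proposition~\ref{prop:ex sequence with Dalpha} and Definition~\ref{def:psi1psi2}.

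\emph{Part (i).} First I would recall that, since $a_p=p^{k/2-1}$ forces $\alpha=\alpha(k)=p^{-1}$, the interpolation factor $\dfrac{1-p^{-1}\alpha^{-1}}{1-\alpha}$ in the formula (\ref{eqn:improvedlogarithm}) has a \emph{zero} at $\kappa=k$ coming from the vanishing of $1-p^{-1}\alpha^{-1}=1-p^{-1}\cdot p=0$; equivalently, $\textup{Exp}_f(d_\delta\otimes(1+\pi_0))$ lies in $(\gamma-1)\cdot\mathcal H_E$ as noted just before the statement. Because $H^1_{\Iw}(\mathbb D_f)\simeq \mathbb D_f^{\Delta,\psi=1}$ is torsion-free over $\mathcal H_E$ (it has no $\mathcal H_E$-torsion as $\mathbb D_f\simeq\mathcal R_E(|x|x^{k/2})$ has $H^0=0$ after inverting $p$, so $\gamma-1$ is injective on Iwasawa cohomology), division by $\gamma-1$ is unique when it is possible. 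So (i) amounts to checking the exponential image is actually divisible by $\gamma-1$ \emph{inside} $H^1_{\Iw}(\mathbb D_f)$ rather than merely in $H^1_{\Iw}(\Ddagrig(V_f))$; this follows because $\textup{Exp}_{\mathcal R(\boldsymbol\delta)}$ lands in $\mathcal R(\boldsymbol\delta)^{\Delta,\psi=1}$ by construction (see the discussion around (\ref{formulalargeexponential})) and specializes correctly.

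\emph{Part (ii).} This is the computational heart. I would unwind the definition $\frak{Log}_f(z)=\langle z,\textup{Exp}_f^{\varepsilon^{-1}}(\widetilde\eta)^\iota\rangle_{\Iw}$ with $\eta=d_\delta$, and compare the right-hand side of the claimed congruence with the explicit cocycle representatives. The key input is that the Iwasawa cup-product $\langle z, F\rangle_{\Iw}$ modulo $J_E^2$ computes, via the identification $J_E/J_E^2\simeq E$ sending $\gamma-1\mapsto\log\chi(\gamma)$, a pairing on the first layer; and this first-layer pairing is exactly the cohomological pairing $\langle\,,\,\rangle: H^1(\widetilde{\mathbb D}_f)\times H^1(\mathbb D_f)\to E$ of Definition~\ref{def:psi1psi2} after applying $\Pi_f\circ\textup{pr}_\gamma$. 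The factor $(k/2-1)!$ is the $(m-1)!$ of (\ref{eqn:improvedlogarithm}) with $m=k/2$. The identification of the relevant class with $\Psi_1^*=\boldsymbol\beta_{k/2}$ (rather than $\Psi_2^*$) is forced by the fact that $F$ lies in the \emph{crystalline} part: $\textup{Exp}_f$ of a crystalline class produces a class whose image in $H^1(\widetilde{\mathbb D}_f)$ is crystalline, i.e. a multiple of $\Psi_1$, which is skew-dual to $\Psi_1^*$. I expect this to be the main obstacle: one must track normalizations very carefully (the $\varepsilon$ versus $\varepsilon^{-1}$, the $\iota$-twist, the sign conventions in the skew-dual basis, and the factor $\log\chi(\gamma)^{-1}$), and it is here that an appeal to the precise statements of \cite{Ben14a} on the large logarithm will be needed to pin everything down. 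The ``in particular'' clause is then immediate, since $\textup{pr}_0(z)\in H^1_{\textup f}(\QQ_p,V_f)$ iff $\Pi_f\circ\textup{pr}_\gamma(z)$ pairs to zero against $\Psi_1^*$, iff (by Corollary~\ref{ref:crucialproperty1and2}-type reasoning) $\langle z,F\rangle_{\Iw}\equiv 0\pmod{J_E^2}$.

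\emph{Part (iii).} Here I would first note that $L_p(z,s)=\mathcal A(\frak{Log}_f(z))(-s)$, so $L_p'(z,0)=-\frac{d}{dX}\frak{Log}_f(z)\big|_{X=0}\cdot\frac{dX}{ds}$ evaluated appropriately, and under the normalization $s\mapsto X=\chi(\gamma)^s-1$ the linear term of $\frak{Log}_f(z)$ in $X$ is precisely $\langle z,F\rangle$ (times $\log\chi(\gamma)$, which cancels against the normalization of $s$), giving $L_p'(z,s)\equiv -\langle z,F\rangle\cdot s\pmod{s^2}$ when $\textup{pr}_0(z)\notin H^1_{\textup f}$ so that the constant term vanishes by (ii). For the second congruence I would combine: (a) part (ii), which expresses $\langle z,F\rangle$ in terms of $\langle\Pi_f\circ\textup{pr}_\gamma(z),\Psi_1^*\rangle$; (b) the relation $\partial^{\textup{loc}}_0(d_{\widetilde\delta})=a\Psi_1^*+b\Psi_2^*$ with $b/a=\al_{\textup{FM}}(f)$ from the proof of Theorem~\ref{thm:thepropertiesofthetwovarheight}(iii), which is where the $\al$-invariant enters; and (c) the reciprocity/duality relating $\langle\Pi_f\circ\textup{pr}_\gamma(z),\Psi_1^*\rangle$ to $[d_\delta,\exp^*_{V_f}(\textup{pr}_0(z))]_{V_f}$ — this last step is essentially the explicit reciprocity law for the Perrin-Riou pairing, relating the dual exponential to the Iwasawa pairing, which in the rank-one $(\varphi,\Gamma)$-module setting is built into the construction of $\frak{Log}$ and the comparison with Bloch--Kato's $\exp^*$ (cf. the Bloch--Kato side of Theorem~\ref{them:propertiestwovarPRlog}(iii)). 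Assembling these with the factors $(1-1/p)^{-1}$ and $\Gamma(k/2-1)$ bookkept from (ii) yields the claimed formula. The only delicate point beyond (ii) is getting the $\al_{\textup{FM}}(f)$ to appear with the right power and sign, which I would handle by invoking Theorem~\ref{thm:stevensderformula} and the computation already carried out in the proof of Theorem~\ref{thm:thepropertiesofthetwovarheight}(iii) rather than redoing it.
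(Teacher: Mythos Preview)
Your sketch is broadly on the right track and, where it ultimately lands, agrees with the paper: the paper's own proof is simply a citation---parts (i) and (ii) are quoted from \cite[Proposition~1.3.7]{Ben14a}, and part (iii) from \cite[Proposition~2.2.2]{Ben14a}---and you yourself acknowledge at the crucial point of (ii) that ``an appeal to the precise statements of \cite{Ben14a} on the large logarithm will be needed to pin everything down.'' So there is no real divergence of approach; you have essentially reconstructed what the argument in \cite{Ben14a} must be doing, and then deferred to it at the same place the paper does.

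A few minor points on your sketch. In (i), the phrase ``$H^0=0$ after inverting $p$'' is not quite the right justification: uniqueness of $F$ is equivalent to $\gamma-1$ acting injectively on $H^1_{\Iw}(\mathbb D_f)$, whose kernel is $H^0(\mathbb D_f)$; and $H^0(\mathcal R_E(|x|x^{k/2}))=0$ simply because $|x|x^{k/2}$ is not the trivial character. In (iii), be careful that you are not introducing circularity: you invoke the computation of $\partial_0^{\textup{loc}}(d_{\widetilde\delta})$ from the proof of Theorem~\ref{thm:thepropertiesofthetwovarheight}(iii), which is fine since that computation is independent of the present proposition; but the cleaner route (and the one taken in \cite{Ben14a}) is to go directly from the pairing $\langle\Pi_f\circ\textup{pr}_\gamma(z),\Psi_1^*\rangle$ to $[d_\delta,\exp^*_{V_f}(\textup{pr}_0(z))]_{V_f}$ via the explicit reciprocity law, with the factor $\mathcal L_{\textup{FM}}(f)$ emerging from the change of basis between $\{\Psi_1^*,\Psi_2^*\}$ and the ``crystalline/singular'' decomposition of $H^1(\mathbb D_f)$ (this is Proposition~\ref{prop:splitting}(ii), itself quoted from \cite[p.~1619]{Ben11}), rather than by detouring through the height computation.
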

\begin{proof} i) and ii) are  proved in \cite[Proposition 1.3.7]{Ben14a}; whereas the assertion iii) is proved in \cite[Proposition~2.2.2]{Ben14a}.
\end{proof}

Recall that we have a duality
\[
V_{\mathbf{f}}(\boldsymbol{\chi})\times V_{\mathbf{f}}\rightarrow E(\chi).
\]
We denote by $\frak{Log}_{\mathbf{f}}$ the large logarithm map
$\frak{Log}_{V_{\mathbf{f}}(\boldsymbol{\chi}),\eta}^{\varepsilon}$ for $\eta=d_{\boldsymbol\delta}.$ 
Let 
\[\mathbf z\in H^1_{\Iw}(\DdagrigA (V_{\mathbf{f}}(\boldsymbol{\chi}))).
\]  
We define 
\be\label{eqn_define_L__2022_02_14_17_02}
L_p(\mathbf{z},\kappa,s):=\mathcal {A}\left(\frak{Log}_{\mathbf{f}}(\mathbf{z})\right) (\kappa,-s)=\left (\frak{Log}_{\mathbf{f}}(\mathbf{z})\right ) ((1+p)^{\kappa-k}-1, \chi (\gamma)^{s}-1).
\ee
Note that $L_p(\mathbf{z},\kappa,s)$ is  a locally analytic function in the weight variable $\kappa$ and the cyclotomic variable $s$. Write $\mathbf{z}(k) \in H^1_\Iw(\Ddagrig (V_f))$ for the stalk of $\mathbf{z}$ at the point $(f,a_p(f))$ (corresponding to $\kappa=k$). 
We denote by $\Pi_{\mathbf{f}}\,:\,H^1_{\Iw}(\DdagrigA (V_{\mathbf{f}}))\rightarrow
H^1_{\Iw}(\widetilde{\mathbb D}_{\mathbf{f}})$ and $\mathrm{pr}_{\gamma,\kappa}\,:\,
H^1_{\Iw}(\widetilde{\mathbb D}_{\mathbf{f}})\rightarrow H^1(\widetilde{\mathbb D}_f)$ the natural projections. Recall that  $\frak{I} \subset  E[[\kappa-k, s]]$ denotes the ideal generated by $(\kappa-k)$ and $s$.
\begin{thm}
\label{thm:keyleadingtermformulas}
\item[i)]  Suppose $\mathbf{z}\in H^1_{\Iw}(\DdagrigA (V_{\mathbf{f}})).$  
Then we have the following equality inside $\frak{I}/\frak{I}^2$:
\begin{multline*}\left(1-\frac{1}{p}\right)\Gamma(k/2)^{-1}L_p(\mathbf{z},\kappa,s)\equiv\\ 
\langle \textup{pr}_{\gamma,\kappa} \circ \Pi_{\mathbf{f}}(\mathbf{z}), \Psi_1^*\rangle \cdot s-\frac{\al_{\textup{FM}}(f)}{2}\cdot \langle \textup{pr}_{\gamma,\kappa}\circ \Pi_{\mathbf{f}}(\mathbf{z}),\exp_{\mathbb{D}_f}(d_\delta)\rangle \cdot(\kappa-k)\, \pmod{\frak{I}^2}. \end{multline*}
\item[ii)] Suppose in addition that $\mathrm{pr}_{\gamma,\kappa}\circ \Pi_{\mathbf f}(\mathbf{z})\neq 0.$  Then,
\begin{multline*}\left(1-\frac{1}{p}\right)\Gamma(k/2)^{-1}L_p(\mathbf{z},\kappa,s)\equiv \\
\left(s-\frac{\kappa-k}{2}\right){\al_{\textup{FM}}(f)} [d_\delta\,,\,\exp^*_V(\textup{pr}_0(\mathbf{z}(k)))]_{V_f} \pmod{\frak{J}^2}.
\end{multline*}
\end{thm}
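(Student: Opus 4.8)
The plan is to derive both parts from the general properties of the two-variable Perrin-Riou logarithm recorded in Theorem~\ref{them:propertiestwovarPRlog}, specializing the abstract machinery to the triangulation $(V_{\mathbf f}(\boldsymbol\chi),\mathbb D_{\mathbf f}(\boldsymbol\chi))$ of constant slope $k/2-1$, together with the explicit computation of the relevant characters carried out in Section~\ref{subsec:padicfamilies}. First I would unwind the definition $L_p(\mathbf z,\kappa,s)=\mathcal A(\frak{Log}_{\mathbf f}(\mathbf z))(\kappa,-s)$ and $\frak{Log}_{\mathbf f}(\mathbf z)=\langle \mathbf z,\textup{Exp}^{\varepsilon^{-1}}_{V_{\mathbf f},\mathbb D_{\mathbf f}}(\widetilde{d_{\boldsymbol\delta}})^{\iota}\rangle_{\Iw}$. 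Since everything commutes with base change (Theorem~\ref{them:propertiestwovarPRlog}(i)), reducing modulo $\frak I^2$ amounts to computing the value at $(\kappa,s)=(k,0)$ and the two first-order partial derivatives. The value at $(k,0)$ vanishes because, at $\kappa=k$, we have $\alpha(k)=p^{-1}$ so the Euler-type factor $1-p^{-1}\alpha(x)^{-1}=1-1$ kills the leading term via Theorem~\ref{them:propertiestwovarPRlog}(iii); this is exactly the mechanism of the exceptional zero and it shows $L_p(\mathbf z,\kappa,s)\in\frak I$.

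Next I would compute the $s$-derivative. Specializing the family at $\kappa=k$ and using that $\mathbf z(k)\in H^1_{\Iw}(\Ddagrig(V_f))$, this reduces to Proposition~\ref{prop:derofcycloPRmap}(ii), which expresses $(1-p^{-1})\langle z,F\rangle_{\Iw}$ modulo $J_E^2$ in terms of $\langle \Pi_f\circ\textup{pr}_\gamma(z),\Psi_1^*\rangle$; translating through the normalization of the cyclotomic Amice transform $\mathcal A^{\mathrm{cyc}}$ and the identification $J_E/J_E^2\simeq E$ gives the coefficient of $s$ as $\Gamma(k/2-1)(1-p^{-1})^{-1}\langle \textup{pr}_{\gamma,\kappa}\circ\Pi_{\mathbf f}(\mathbf z),\Psi_1^*\rangle$. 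For the $\kappa$-derivative I would differentiate the Euler factor $\frac{1-p^{-1}\alpha(x)^{-1}}{1-\alpha(x)}$ appearing in Theorem~\ref{them:propertiestwovarPRlog}(iii) with respect to $\kappa$ at $\kappa=k$: using $v_p(\beta(\kappa))=\kappa-k$ and $\alpha(k)=p^{-1}$ together with Stevens' formula (Theorem~\ref{thm:stevensderformula}) $\mathcal L_{\textup{FM}}(f)=-2p\alpha'(k)$, the derivative of the numerator produces the factor $-\frac{\al_{\textup{FM}}(f)}{2}$, while $\textup{pr}_0(z)$ is replaced by $\exp_{\mathbb D_f}(d_\delta)$ via the pairing $\mathrm{Log}_{V^*(1),\eta}$; this yields the $(\kappa-k)$-coefficient $-\frac{\al_{\textup{FM}}(f)}{2}\langle\textup{pr}_{\gamma,\kappa}\circ\Pi_{\mathbf f}(\mathbf z),\exp_{\mathbb D_f}(d_\delta)\rangle$. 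Assembling the two first-order terms gives part (i).

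For part (ii), under the additional hypothesis $\textup{pr}_{\gamma,\kappa}\circ\Pi_{\mathbf f}(\mathbf z)\ne 0$ one knows $\textup{pr}_0(\mathbf z(k))\notin H^1_{\textup{f}}(G_p,V_f)$ by Proposition~\ref{prop:derofcycloPRmap}(ii); I would then invoke Proposition~\ref{prop:derofcycloPRmap}(iii) at $\kappa=k$, which gives $L_p'(z,s)\equiv \Gamma(k/2-1)(1-p^{-1})^{-1}\al_{\textup{FM}}(f)[d_\delta,\exp^*_{V_f}(\textup{pr}_0(z))]_{V_f}\cdot s\pmod{s^2}$, and combine it with the shape of the $\kappa$-coefficient already obtained in (i) to recognize that the linear form is proportional to $s-\frac{\kappa-k}{2}$, exactly as in the functional-equation-compatible normalization. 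The main obstacle I anticipate is the bookkeeping of signs and normalization constants: reconciling the conventions of the $\iota$-involution, the identification $J_E/J_E^2\simeq E$ via $\gamma-1\mapsto\log\chi(\gamma)$, the sign in $L_p(z,s)=\mathcal A(\frak{Log}_f(z))(-s)$, and Berger's normalization of $\textup{Exp}$ in \eqref{formulalargeexponential}, so that the two partial derivatives glue into precisely the linear combination $\langle\cdots,\Psi_1^*\rangle s-\frac{\al_{\textup{FM}}(f)}{2}\langle\cdots,\exp_{\mathbb D_f}(d_\delta)\rangle(\kappa-k)$ with the stated constant $(1-p^{-1})\Gamma(k/2-1)^{-1}$; the actual cohomological input is entirely contained in Propositions~\ref{prop:derofcycloPRmap} and Theorem~\ref{them:propertiestwovarPRlog}.
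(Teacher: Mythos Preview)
Your plan is correct and follows essentially the same route as the paper: the paper's proof also derives the $(\kappa-k)$-coefficient by expanding the Euler factor $1-p^{-1}\alpha(\kappa)^{-1}$ via Theorem~\ref{thm:stevensderformula} and plugging into Theorem~\ref{them:propertiestwovarPRlog}(iii), obtains the $s$-coefficient from Proposition~\ref{prop:derofcycloPRmap}(ii), and then deduces (ii) from (i) together with Proposition~\ref{prop:derofcycloPRmap}(iii). Your anticipated bookkeeping is indeed the only nontrivial part left once those three inputs are in hand.
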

\begin{proof}
\item[i)] By Theorem~\ref{thm:stevensderformula}  together with the fact that 
$\alpha(k)=p^{-1}$ we have 
 \be\label{eqn:1}1-p^{-1}\alpha(\kappa)^{-1} \equiv -\frac{\al_{\textup{FM}}(f)}{2} \cdot(\kappa-k) \pmod{\frak{J}^2}.\ee 

It follows from Theorem~\ref{them:propertiestwovarPRlog}(iii) that
\be\label{eqn_2022_02_14_16_37}
 \mathcal{A}\circ \frak{Log}_{\mathbf{f}}(\mathbf{z})\vert_{s=0}=\Gamma(k/2)\cdot \frac{1-p^{-1}\alpha(x)^{-1}}{1-\alpha(x)}\, \mathcal{A}^{\rm wt}\circ {\rm Log}_{V^*_{\bf f}(\boldsymbol{\chi}),d_{\boldsymbol{\delta}}}({\rm pr}_0(\Pi_{\bf f}(z)))
\ee
which, combined with \eqref{eqn:1} yields
\be\label{eqn_2022_02_14_16_46}
 \mathcal{A}\circ \frak{Log}_{\mathbf{f}}(\mathbf{z})\vert_{s=0}=-\frac{\al_{\textup{FM}}(f)\Gamma(k/2)}{2\left(1-\frac{1}{p} \right)} \langle \textup{pr}_{\gamma,\kappa}\circ \Pi_{\mathbf f}(\mathbf{z}), \exp_{\mathbb{D}_f}(d_\delta)\rangle \cdot (\kappa-k)+O((\kappa-k)^2)\,.
\ee
On the other hand, we have
\begin{align}
    \begin{aligned}
    \label{eqn_2022_02_14_16_51}
    \mathcal{A}\circ \frak{Log}_{\mathbf{f}}(\mathbf{z})\vert_{\kappa=k}&=\mathcal{A}^{\rm cyc}\circ\frak{Log}_{f}(\mathbf{z}(k))\\
    &\stackrel{\rm Prop.~\ref{prop:derofcycloPRmap}(ii)}{=}\left(1-\frac{1}{p}\right)^{-1}\Gamma(k/2)\,\langle \textup{pr}_{\gamma,\kappa}\circ \Pi_{\mathbf{f}}(\mathbf{z}), \Psi_1^*\rangle \cdot s +O(s^2)\,.
    \end{aligned}
\end{align}
The proof of (i) follows on combining \eqref{eqn_2022_02_14_16_46} and \eqref{eqn_2022_02_14_16_51} together with the definition \eqref{eqn_define_L__2022_02_14_17_02} of $L_p(\mathbf{z},\kappa,s)$.


\item[ii)] This follows from the first part, combined with Proposition~\ref{prop:derofcycloPRmap}(iii).
\end{proof}

\section{A $p$-adic Beilinson formula in two-variables}
\label{sec:rubinstyleformula}
We keep previous notation and conventions. Suppose we are given a class $\frak{X} \in H^1(G_{\QQ,S},\overline{V}_\mathbf{f})$ whose image under the natural map 
$H^1(G_{\QQ,S},\overline{V}_\mathbf{f})\ra H^1(G_{\QQ,S},{V}_{f})$ lands in $H^1_\textup{f}(\QQ,V_f)$. We denote by $\frak{d}_{\mathrm{?}}(\frak X)$  the Bockstein normalized partial derivatives for $\textup{?}\in \{{\textup{cyc}},{\textup{wt}}, {\textup{c-wt}}\}.$
To simplify notation, we write $L_p(\frak{X},\kappa,s)$ for the $p$-adic $L$-function
associated to $\res_p(\frak{X})\in H^1_{\Iw}(\DdagrigA (V_{\textup{f}})).$

\begin{prop}
\label{prop:partialderivativespartiallywelldefine}
We have the following equality inside $\frak{I}^2/\frak{I}^3$:
\begin{align*}
\left(1-\frac{1}{p}\right)&\Gamma(k/2)^{-1}L_p(\frak{X},\kappa,s)\\ 
&=\left\langle \frak{d}_{\textup{cyc}}(\frak{X}), \Psi_1^*\right\rangle\cdot s^2-\frac{\al_{\textup{FM}}(f)}{2}\cdot\left\langle \frak{d}_{\textup{wt}}(\frak{X}),
\exp(d_\alpha)\right\rangle\cdot(\kappa-k)^2\\
&\qquad+\left(\left\langle \frak{d}_{\textup{wt}}(\frak{X}), \Psi_1^*\right\rangle-\frac{\al_{\textup{FM}}(f)}{2}\cdot\left\langle \frak{d}_{\textup{cyc}}(\frak{X}),\exp(d_\alpha)\right\rangle\right)\cdot s (\kappa-k)\,. \end{align*}
\end{prop}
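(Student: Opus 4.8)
The strategy is to combine the two-variable leading term formula for the Perrin-Riou logarithm (Theorem~\ref{thm:keyleadingtermformulas}(i)) with the comparison of Bockstein-normalized derivatives furnished by Proposition~\ref{prop:comparebocksteinpartialder}, and then to feed in the second-order Taylor information one gets by iterating the first-order statements. Concretely, I would first rewrite Theorem~\ref{thm:keyleadingtermformulas}(i), applied to $\mathbf z=\res_p(\frak X)\in H^1_{\Iw}(\DdagrigA(V_{\mathbf f}))$, in the form
\[
\left(1-\tfrac1p\right)\Gamma(k/2-1)^{-1}L_p(\frak X,\kappa,s)\equiv \langle \textup{pr}_{\gamma,\kappa}\circ\Pi_{\mathbf f}(\res_p\frak X),\Psi_1^*\rangle\cdot s-\frac{\al_{\textup{FM}}(f)}{2}\langle \textup{pr}_{\gamma,\kappa}\circ\Pi_{\mathbf f}(\res_p\frak X),\exp_{\mathbb D_f}(d_\delta)\rangle\cdot(\kappa-k)\pmod{\frak I^2}\,,
\]
and note that since $\textup{pr}_0(\frak X)\in H^1_{\textup f}(\QQ,V_f)$, the leading linear term in fact vanishes: by Proposition~\ref{prop:derofcycloPRmap}(ii) (or its two-variable avatar) the pairing $\langle\textup{pr}_{\gamma,\kappa}\circ\Pi_{\mathbf f}(\res_p\frak X),\Psi_1^*\rangle$ and the exp-pairing are each divisible by $\frak I$, so $L_p(\frak X,\kappa,s)\in\frak I^2$. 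This is what makes the assertion of the proposition (an identity in $\frak I^2/\frak I^3$) the meaningful statement.

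Next I would differentiate. Writing $\res_p(\frak X)$ via the decomposition \eqref{eqn:totalderivativeofX},
\[
\Pi_{\mathbf f}(\res_p\frak X)=\frac{\gamma-1}{\log\chi(\gamma)}\cdot\frak X_\gamma+\varpi_\kappa\cdot\frak X_\kappa\,,
\]
with $\textup{pr}_{\gamma,\kappa}(\frak X_\gamma)=\frak d_{\textup{cyc}}\frak X$ and $\textup{pr}_{\gamma,\kappa}(\frak X_\kappa)=\frak d_{\textup{wt}}\frak X$ (exactly as set up in the proof of Proposition~\ref{prop:comparebocksteinpartialder}). Substituting this expansion into the right-hand side of Theorem~\ref{thm:keyleadingtermformulas}(i) produces, modulo $\frak I^3$, a quadratic form in $s$ and $\kappa-k$: the factor $\frac{\gamma-1}{\log\chi(\gamma)}$ contributes an extra $s$ (under $\mathcal A$, $(\gamma-1)/\log\chi(\gamma)\mapsto s$ up to higher order — this is the normalization fixed just after Definition~\ref{def:twovarheight}), the factor $\varpi_\kappa$ contributes an extra $\kappa-k$, and the pairings $\langle\frak d_{\textup{cyc}}\frak X,\Psi_1^*\rangle$, $\langle\frak d_{\textup{wt}}\frak X,\Psi_1^*\rangle$, $\langle\frak d_{\textup{cyc}}\frak X,\exp(d_\alpha)\rangle$, $\langle\frak d_{\textup{wt}}\frak X,\exp(d_\alpha)\rangle$ appear as the coefficients. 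Collecting the four monomials $s^2$, $(\kappa-k)^2$, $s(\kappa-k)$ gives precisely the claimed formula (recalling $d_\alpha=d_\delta$ in this normalization, and $\exp=\exp_{\mathbb D_f}$).

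The main obstacle, and where I would spend the most care, is making the "extra factor of $s$ / extra factor of $\kappa-k$" bookkeeping rigorous: one has to work in $\mathcal H_A$ (not just on cohomology) and track how $\frak{Log}_{\mathbf f}$ interacts with multiplication by $\gamma-1$ and by $\varpi_\kappa$ before applying $\mathcal A$, so that the congruence in Theorem~\ref{thm:keyleadingtermformulas}(i) can legitimately be promoted from an identity mod $\frak I^2$ (with $\mathbf z$ general) to an identity mod $\frak I^3$ (with $\mathbf z$ divisible by a generator of $\frak J$). Concretely I would apply Theorem~\ref{thm:keyleadingtermformulas}(i) not to $\res_p\frak X$ itself but separately to the two Iwasawa classes $\frak X_\gamma$ and $\frak X_\kappa$ lifting $\frak d_{\textup{cyc}}\frak X$ and $\frak d_{\textup{wt}}\frak X$, obtaining two congruences mod $\frak I^2$, and then multiply the first by $s$ and the second by $\kappa-k$ and add; the point is that multiplying a congruence mod $\frak I^2$ by an element of $\frak I$ yields a valid congruence mod $\frak I^3$, and that $L_p(\frak X,\kappa,s)=s\cdot L_p(\frak X_\gamma,\kappa,s)+(\kappa-k)\cdot L_p(\frak X_\kappa,\kappa,s)$ up to $\frak I^3$ by the $\mathcal H_A$-linearity of $\frak{Log}_{\mathbf f}$ and the normalization of $\mathcal A$. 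Checking that no cross terms or sign errors arise in this last step — in particular that the term $\langle\frak d_{\textup{wt}}\frak X,\Psi_1^*\rangle s(\kappa-k)$ and the term $-\tfrac{\al_{\textup{FM}}(f)}{2}\langle\frak d_{\textup{cyc}}\frak X,\exp(d_\alpha)\rangle s(\kappa-k)$ both appear with the stated signs — is the one genuinely delicate computation.
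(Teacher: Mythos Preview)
Your plan is correct and is essentially the paper's own argument: decompose $\res_p(\frak X)$ (or equivalently $\frak{Log}_{\mathbf f}(\frak X)$) as $\frac{\gamma-1}{\log\chi(\gamma)}\cdot(\text{cyc-part})+\varpi_\kappa\cdot(\text{wt-part})$ using the $\mathcal H_A$-linearity of $\frak{Log}_{\mathbf f}$, apply Theorem~\ref{thm:keyleadingtermformulas}(i) separately to each piece to get two congruences modulo $\frak I^2$, then multiply by $s$ and by $\kappa-k$ respectively and add to obtain the stated identity modulo $\frak I^3$. Your caution about promoting the congruence from $\frak I^2$ to $\frak I^3$ via multiplication by an element of $\frak I$ is exactly the mechanism the paper relies on, and your identification of $\frak d_{\textup{cyc}}\frak X$ and $\frak d_{\textup{wt}}\frak X$ with $\textup{pr}_{\gamma,\kappa}$ of the two summands (as in the proof of Proposition~\ref{prop:comparebocksteinpartialder}) is the link the paper leaves implicit.
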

In particular, all the three quantities
$\left\langle \frak{d}_{\textup{cyc}}(\frak{X}),\Psi_1^*\right\rangle\,,$ 
$\left\langle \frak{d}_{\textup{wt}}(\frak{X}),\exp(d_\alpha)\right\rangle$ 
and 
$$\left(\left\langle \frak{d}_{\textup{wt}}(\frak{X}), \Psi_1^*\right\rangle-\frac{\al_{\textup{FM}}(f)}{2}\cdot\left\langle \frak{d}_{\textup{cyc}}(\frak{X}),\exp(d_\alpha)\right\rangle\right)$$
are independent of the choices involved in the definitions of $\frak{d}_{\textup{cyc}}(\frak{X})$ and $\frak{d}_{\textup{wt}}(\frak{X}).$
\begin{proof}[Proof of Proposition~\ref{prop:partialderivativespartiallywelldefine}]
It follows from the $A$-linearity of the large regulator map $\frak{Log}_{\mathbf{f}}$ that 
$$\frak{Log}_{\mathbf{f}}(\frak{X})=\frac{\gamma-1}{\log\chi(\gamma)}\cdot\frak{Log}_{\mathbf{f}}(\frak{X}^{(\gamma)}) +\varpi_\kappa\cdot\frak{Log}_{\mathbf{f}}(\frak{X}^{(\kappa)})$$ 
and the proof follows applying the Amice transform on both sides and using Theorem~\ref{thm:keyleadingtermformulas}(i) (with $\mathbf{z}=\frak{X}^{(\gamma)}$ and 
$\mathbf{z}=\frak{X}^{(\kappa)}$). 
\end{proof}

We are now ready to prove one the main results of this paper. 
\begin{thm}
\label{thm:leadingtermformula} Let $\frak{X}\in H^1(G_{\QQ,S}, \overline{V}_{\mathbf{f}})$
and let $[x]=\mathrm{pr}_0(\frak{X})\in H^1(G_{\QQ,S},V_f)$ denote its canonical projection. 
\\\\
\textup{i)} 
We have the following equalities inside $\frak{I}/\frak{I}^2$\,.
\begin{align*}
\left\langle\Psi_2,\partial^{\textup{loc}}_0(d_{\widetilde{\delta}}) \right\rangle\left(1-\frac{1}{p}\right)&\,\Gamma(k/2)^{-1}L_p(\frak{X},\kappa,s)
\\&=\mathcal{L}_{\textup{FM}}(f)\left\langle\Psi_2,\partial^{\textup{loc}}_0(d_{\widetilde{\delta}}) \right\rangle\left[ d_\delta,\exp^*_{V}(\textup{res}_p([x])) \right]_{V_f}\cdot \left(s-\frac{\kappa-k}{2}\right)\\
\\&={\mathcal{L}_{\textup{FM}}(f)}\left[ d_\delta,\exp^*_{V}(\textup{res}_p([x])) \right]_{V_f}\cdot \mathcal{A}\left(\mathbb{H}_{\mathbf{f}}\left(\partial_0(d_{\widetilde\delta}),\partial_0(d_{\widetilde\delta})\right)\right).
\end{align*}
\\\\
\textup{ii)} Suppose that $[x]\in H^1_{\textup{f}}(V_f)$ and denote by
$[x_{\textup{f}}]\in \widetilde{H}^1_{\textup{f}}(V_f)$ the canonical lift of $[x].$ Then,
\begin{align*}\frac{\left(1-\frac{1}{p}\right)}{\Gamma(k/2)}\left\langle\Psi_2,\textup{res}_p([x])\right\rangle\cdot\left\langle\Psi_1,\partial^{\textup{loc}}_0(d_{\widetilde{\delta}}) \right\rangle\cdot L_p(\frak{X},\kappa,s)=\mathcal{A}\left(\frak{Reg}_{\mathbb{H}_{\mathbf{f}}}(\partial_0(d_{\widetilde\delta}),[x_{\textup{f}}])\right)\end{align*}
 as elements of $\frak{I}^2/\frak{I}^3$, where we have put 
 $$\frak{Reg}_{\mathbb{H}_{\mathbf{f}}}(\partial_0(d_{\widetilde\delta}),[x_{\textup{f}}]):=\det\left(\begin{array}{cc} \mathbb{H}_{\mathbf{f}}\left(\partial_0(d_{\widetilde\delta}),\partial_0(d_{\widetilde\delta})\right) &\mathbb{H}_{\mathbf{f}}\left(\partial_0(d_{\widetilde\delta}),[x_{\textup{f}}]\right)\\\\
 \mathbb{H}_{\mathbf{f}}\left([x_{\textup{f}}],\partial_0(d_{\widetilde\delta})\right)&\mathbb{H}_{\mathbf{f}}\left([x_{\textup{f}}],[x_{\textup{f}}]\right)\end{array}  \right)\,.$$
\end{thm}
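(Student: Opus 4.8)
The plan is to deduce part (ii) of Theorem~\ref{thm:leadingtermformula} from part (i) together with the Central Critical Rubin-style formula (Theorem~\ref{thm:rubinsformula2var}), the two-variable height computations of Theorem~\ref{thm:thepropertiesofthetwovarheight}(iii), and the identity of Proposition~\ref{prop:comparebocksteinpartialder}. First I would expand the left-hand side using Proposition~\ref{prop:partialderivativespartiallywelldefine}, which writes the leading term of $L_p(\frak{X},\kappa,s)$ modulo $\frak{I}^3$ as a quadratic form in $s$ and $\kappa-k$ whose coefficients are the pairings $\langle\frak{d}_{\textup{cyc}}(\frak{X}),\Psi_1^*\rangle$, $\langle\frak{d}_{\textup{wt}}(\frak{X}),\exp(d_\alpha)\rangle$ and the mixed coefficient. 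The hypothesis $[x]\in H^1_{\textup{f}}(\QQ,V_f)$ forces $\exp^*_{V_f}(\textup{res}_p([x]))$ to lie in the appropriate subspace, and via the explicit description of $\mathbb{D}_f$ and $\widetilde{\mathbb{D}}_f$ in Proposition~\ref{prop:ex sequence with Dalpha} together with Corollary~\ref{ref:crucialproperty1and2} this pins down which of these coefficients survive; the upshot should be that $\langle\frak{d}_{\textup{cyc}}(\frak{X}),\Psi_1^*\rangle=0$ when $[x]$ is crystalline, so the leading term is genuinely of order $2$ and the surviving part is $\left(\langle\frak{d}_{\textup{wt}}(\frak{X}),\Psi_1^*\rangle-\frac{\al_{\textup{FM}}(f)}{2}\langle\frak{d}_{\textup{cyc}}(\frak{X}),\exp(d_\alpha)\rangle\right)\cdot s(\kappa-k)$ plus the $(\kappa-k)^2$-term controlled by part (i).

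Next I would identify the right-hand side. By Definition~\ref{def:twovarheight} and the Rubin-style formula Corollary~\ref{thm:rubinsformula} (applied in situations (\textbf{A}) and (\textbf{B}) of Section~\ref{subsec:RS2}), the value $\mathcal{A}\left(\mathbb{H}_{\mathbf{f}}(\partial_0(d_{\widetilde\delta}),[x_{\textup{f}}])\right)$ is computed in terms of $\langle\frak{d}_{\textup{cyc}}\frak{X},[x_p^+]\rangle$ and $\langle\frak{d}_{\textup{wt}}\frak{X},[x_p^+]\rangle$ via Theorem~\ref{thm:rubinsformula2var}, and the cross term $\mathcal{A}\left(\mathbb{H}_{\mathbf{f}}(\partial_0(d_{\widetilde\delta}),[x_{\textup{f}}])\right)=\langle\Psi_2,[x_p^+]\rangle\cdot s$ from Theorem~\ref{thm:thepropertiesofthetwovarheight}(iii), equation~(\ref{eqn:Rubin2}). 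Expanding the $2\times 2$ determinant-type object $\frak{Reg}_{\mathbb{H}_{\mathbf{f}}}(\partial_0(d_{\widetilde\delta}),[x_{\textup{f}}])$ (which in the main text is the $2$-variable regulator attached to the two classes $\partial_0(d_{\widetilde\delta})$ and $[x_{\textup{f}}]$, cf. Theorem~C), one gets a product of two of these linear forms; applying $\mathcal{A}$ turns it into an element of $\frak{I}^2/\frak{I}^3$. The key computation is to match, term by term in $s^2$, $s(\kappa-k)$ and $(\kappa-k)^2$, the expansion from Proposition~\ref{prop:partialderivativespartiallywelldefine} (scaled by $\left(1-\tfrac1p\right)\Gamma(k/2-1)^{-1}\langle\Psi_2,\textup{res}_p([x])\rangle\langle\Psi_1,\partial^{\textup{loc}}_0(d_{\widetilde\delta})\rangle$) with the expansion of $\mathcal{A}\left(\frak{Reg}_{\mathbb{H}_{\mathbf{f}}}\right)$; here part (i) supplies the $(\kappa-k)^2$-coefficient (via $\mathcal{A}(\mathbb{H}_{\mathbf{f}}(\partial_0(d_{\widetilde\delta}),\partial_0(d_{\widetilde\delta})))$ and equation~(\ref{eqn:Rubin1})), while the Rubin formulae supply the mixed and (vanishing) $s^2$ coefficients.

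The main obstacle I anticipate is bookkeeping the normalizations and signs: reconciling the cyclotomic Amice transform sign in Theorem~\ref{thm:thepropertiesofthetwovarheight}(i) and Remark~\ref{rem:cyclopairingheight}, the factor $\frac12$ appearing both in Proposition~\ref{prop:comparebocksteinpartialder} and in the Taylor expansion $\frac{\gamma-\boldsymbol{\chi}^{1/2}(\gamma)}{2}=\frac{\gamma-1}{2}-\frac{\varpi_\kappa}{2}+C\varpi_\kappa^2$, and Stevens' formula $\mathcal{L}_{\textup{FM}}(f)=-2p\,\alpha'(k)$ from Theorem~\ref{thm:stevensderformula} — all of which feed into the coefficient of $s(\kappa-k)$. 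A second, more conceptual point to verify carefully is that the class $[x_{\textup{f}}]$ is indeed the canonical lift of $[x]$ under $\textup{spl}$, so that Corollary~\ref{ref:crucialproperty1and2} applies and $[x_p^+]$ lands in $H^1_{\textup{f}}(\mathbb{D}_f)=\textup{span}\{\Psi_2^*\}$; this is exactly what makes $\langle\Psi_1,[x_p^+]\rangle=0$ and lets one rewrite $\langle\frak{d}_{\textup{?}}\frak{X},[x_p^+]\rangle$ in terms of $\langle\Psi_2,\textup{res}_p([x])\rangle$ and the crystalline pairing $[d_\delta,\exp^*_{V_f}(\textup{res}_p([x]))]_{V_f}$. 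Once these identifications are in place, the equality in $\frak{I}^2/\frak{I}^3$ follows by comparing the three homogeneous components, and the final displayed identity drops out.
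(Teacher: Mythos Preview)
Your overall plan---expand both sides as quadratic forms in $s$ and $\kappa-k$ modulo $\frak{I}^3$ and match the three coefficients---is exactly the paper's approach. The paper likewise uses Proposition~\ref{prop:partialderivativespartiallywelldefine} for the left-hand side and expands the regulator via (\ref{eqn:Rubin1}), (\ref{eqn:Rubin2}), (\ref{eqn:twovarexpanded1}) and Corollary~\ref{thm:rubinsformula}. So the strategy is right.

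There is, however, a concrete error in your outline. You assert that the hypothesis $[x]\in H^1_{\textup{f}}(\QQ,V_f)$ forces $\langle\frak{d}_{\textup{cyc}}(\frak{X}),\Psi_1^*\rangle=0$, so that the $s^2$-term drops out. It does not. That coefficient is precisely the one controlling $\frac{d^2}{ds^2}L_{p,\alpha}(f,\omega^{k/2},s)\big|_{s=k/2}$ (set $\kappa=k$ in Proposition~\ref{prop:partialderivativespartiallywelldefine}), and by Theorem~\ref{tim:main} and Corollary~\ref{cor:orderofvanishingofpadicL} it is generically nonzero. The crystallinity of $[x]$ only kills the \emph{linear} term in $\frak{I}/\frak{I}^2$ (this is what part~(i) says, since $\exp^*$ vanishes on $H^1_{\textup{f}}$), not the $s^2$-term of the quadratic part. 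Relatedly, part~(i) does not supply the $(\kappa-k)^2$-coefficient of the quadratic expansion; it is a statement in $\frak{I}/\frak{I}^2$, not $\frak{I}^2/\frak{I}^3$.

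What you are missing is the mechanism that makes the three coefficients match. The paper introduces the twisted basis $\Phi_1=\Psi_1$, $\Phi_2=\Psi_2+\mathcal L_{\textup{FM}}(f)\Psi_1$, chosen so that $\langle\Phi_2,\partial^{\textup{loc}}_0(d_{\widetilde\delta})\rangle=0$ and $\langle\Phi_1,[x_p^+]\rangle=0$ (the latter by Corollary~\ref{ref:crucialproperty1and2}), writes $\frak{d}_{\textup{cyc}}(\frak{X})=a_1(\gamma)\Phi_1+a_2(\gamma)\Phi_2$ and similarly for $\frak{d}_{\textup{wt}}$, and then derives three nontrivial identities among the $a_i$'s: one from the cyclotomic Rubin formula applied to $([x_{\textup f}],\partial_0(d_{\widetilde\delta}))$, one from Theorem~\ref{thm:rubinsformula2var} applied to the same pair combined with (\ref{eqn:Rubin2}) and (\ref{eqn:twovarexpanded1}) (yielding $a_1(\kappa)=-a_1(\gamma)$), and one from the skew-symmetry $\frak h^{\textup{c-wt}}_{\mathbf f}([x_{\textup f}],[x_{\textup f}])=0$ (yielding a relation between $a_2(\kappa)$ and $a_2(\gamma)$). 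Only after these identities are in hand do the $s^2$, $s(\kappa-k)$ and $(\kappa-k)^2$ coefficients on the two sides agree. Your sketch names the right ingredients but does not locate these relations, and the incorrect vanishing claim would prevent you from ever looking for them.
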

\begin{proof}[Proof of Theorem~\ref{thm:leadingtermformula}]
We note that (i) trivially holds when $\al_{\textup{FM}}(f)=0$, as both sides in the asserted identity equal zero in this particular case. Indeed, note that by the explicit description of $\mathcal{L}_{\rm FM}(f)$ in the final paragraph of the proof of Theorem~\ref{thm:thepropertiesofthetwovarheight} (see also Equation \eqref{eqn:Linvariantreferee} below), we have $\left\langle\Psi_2,\partial^{\textup{loc}}_0(d_{\widetilde{\delta}}) \right\rangle=0$ whenever $\mathcal{L}_{\rm FM}(f)=0$. In particular, the left hand side of the displayed equation in the statement of Theorem \ref{thm:leadingtermformula}(i) vanishes if $\mathcal{L}_{\rm FM}(f)=0$. So we may assume  without loss of generality for our proof of (i) that  $\al_{\textup{FM}}(f)\neq 0$. In this case, the first asserted equality follows from Theorem~\ref{thm:keyleadingtermformulas}(ii) (on eliminating the redundant  $\left\langle\Psi_2,\partial_0^{\rm loc}(d_{\widetilde{\delta}})  \right\rangle$ factors from both sides using the non-vanishing of $\al_{\textup{FM}}(f)$) and the second equality from \eqref{eqn:Rubin1}.

The proof of (ii) is rather a tedious computation. To ease notation, we set $C=\left(1-\frac{1}{p}\right)\Gamma(k/2)^{-1}$. We also write $\al$ in place of $\al_{\textup{FM}}(f)$. Using (\ref{eqn:Rubin1}), (\ref{eqn:Rubin2}) together with Theorem~\ref{thm:thepropertiesofthetwovarheight}(i) and Corollary~\ref{thm:rubinsformula} we have,
\begin{align}
\begin{aligned}
\label{eqn:functionaleqnforheights1}
\mathcal{A}\left(\frak{Reg}_{\mathbb{H}_{\mathbf{f}}}(\partial_0(d_{\widetilde\delta}),[x_{\textup{f}}])\right)&=\mathcal{A}\circ \det\left(\begin{array}{cc} \mathbb{H}_{\mathbf{f}}\left(\partial_0(d_{\widetilde\delta}),\partial_0(d_{\widetilde\delta})\right) &\mathbb{H}_{\mathbf{f}}\left(\partial_0(d_{\widetilde\delta}),[x_{\textup{f}}]\right)\\\\
 \mathbb{H}_{\mathbf{f}}\left([x_{\textup{f}}],\partial_0(d_{\widetilde\delta})\right)&\mathbb{H}_{\mathbf{f}}\left([x_{\textup{f}}],[x_{\textup{f}}]\right)\end{array}  \right)\\
 &=\left\langle\Psi_2,\partial^{\textup{loc}}_0(d_{\widetilde{\delta}})  \right\rangle\cdot\left\langle \frak{d}_{\textup{cyc}}(\frak{X)}, [x_p^+]\right\rangle\cdot\left(s-\frac{\kappa-k}{2}\right)^2\\
 &\qquad\qquad\qquad\qquad\qquad+\left\langle\Psi_2,[x_p^+] \right\rangle^2\cdot s\cdot(\kappa-k-s)\,,
\end{aligned}
 \end{align}
where we used the identity (that follows from the functional equation for the two-variable height pairing)
\begin{align*}
\notag \mathcal{A}\left(\mathbb{H}_{\mathbf{f}}\left([x_{\textup{f}}],\partial_0(d_{\widetilde\delta})\right)\right)(\kappa,s)&=-\mathcal{A}\left(\mathbb{H}_{\mathbf{f}}\left(\partial_0(d_{\widetilde\delta}),[x_{\textup{f}}]\right)\right)(\kappa,\kappa-s-k)\\
&=-\left\langle\Psi_2,[x_p^+] \right\rangle\cdot(\kappa-k-s)\,.
\end{align*}
We define $\Phi_1:=\Psi_1$ and $\Phi_2:=\Psi_2+\al\cdot\Psi_1$. These two elements constitute a basis of $H^1(\widetilde{\mathbb{D}}_f)$ and enjoy the properties that
\begin{equation}\label{eqn:refereeaskedtolabelthis}
\left\langle \Phi_2,\partial^{\textup{loc}}_0(d_{\widetilde{\delta}}) \right\rangle=0=\left\langle\Phi_1,[x_p^+]\right\rangle\,
\ee
where the first equality follows from the description of the $\al$-invariant via the identity (as we have readily recalled in the proof of Theorem~\ref{thm:thepropertiesofthetwovarheight})
\be\label{eqn:Linvariantreferee} \left\langle\Psi_2,\partial^{\textup{loc}}_0(d_{\widetilde{\delta}}) \right\rangle=-\al\cdot \left\langle\Psi_1,\partial^{\textup{loc}}_0(d_{\widetilde{\delta}}) \right\rangle
\ee
and the second from Corollary~\ref{ref:crucialproperty1and2}. Write 
$$\frak{d}_{\textup{cyc}}(\frak{X})=a_1(\gamma)\Phi_1+a_2(\gamma)\Phi_2, \qquad 
\frak{d}_{\textup{wt}}(\frak{X})=a_1(\kappa)\Phi_1+a_2(\kappa)\Phi_2.
$$ 
Using (\ref{eqn:functionaleqnforheights1}) and (\ref{eqn:refereeaskedtolabelthis}) we have,
\begin{align}
\label{eqn:regulatorsimplifiedformula}\mathcal{A}\left(\frak{Reg}_{\mathbb{H}_{\mathbf{f}}}\left(\partial_0(d_{\widetilde\delta}),[x_{\textup{f}}]\right)\right)
 &=\left\langle\Psi_2,\partial^{\textup{loc}}_0(d_{\widetilde{\delta}}) \right\rangle\cdot a_2(\gamma)\left\langle\Psi_2, [x_p^+]\right\rangle\cdot\left(s-\frac{\kappa-k}{2}\right)^2\\
 \notag&\,\,\,\,\,\,\,+\left\langle\Psi_2,[x_p^+] \right\rangle^2\cdot s\cdot(\kappa-k-s)\,.
 \end{align}
 Set $C_1=\left\langle \Psi_2,[x_p^+]\right\rangle$ and $C_2=C_1\cdot\left\langle\Psi_1,\partial^{\textup{loc}}_0(d_{\widetilde{\delta}})  \right\rangle$. Note that $C_2$ is zero iff $C_1$ is. We have the following identities.\\
 $\bullet$ The equation \eqref{eqn:Rubin2} and the Rubin-style formula (Corollary~\ref{thm:rubinsformula}) for the cyclotomic height pairing $\frak{h}_p$ together show that
 \be\label{eqn:identity1}
 \left\langle\Psi_2,[x_p^+] \right\rangle=a_1(\gamma)\left\langle\Psi_1, \partial^{\textup{loc}}_0(d_{\widetilde\delta}) \right\rangle\,,
 \ee
 since we have
 \begin{align*}
     \left\langle\Psi_2,[x_p^+]\right\rangle\cdot s\stackrel{ \eqref{eqn:Rubin2}}{=}\mathcal A\left(\mathbb{H}_{\mathbf{f}}\left([x_{\textup{f}}], \partial_0 (d_{\widetilde{\delta}})\right)\right)&\stackrel{\rm Cor.~\ref{thm:rubinsformula}}{=} \left\langle \frak{d}_{\rm cyc}(\frak{X}), \partial_0^{\rm loc} (d_{\widetilde{\delta}})\right\rangle\cdot s\\
     &=a_1(\gamma)\left\langle \Phi_1, \partial_0^{\rm loc} (d_{\widetilde{\delta}})\right\rangle\cdot s+a_2(\gamma)\left\langle \Phi_2, \partial_0^{\rm loc} (d_{\widetilde{\delta}})\right\rangle\cdot s\\ 
     &\stackrel{\eqref{eqn:refereeaskedtolabelthis}}{=}a_1(\gamma)\left\langle \Phi_1, \partial_0^{\rm loc} (d_{\widetilde{\delta}})\right\rangle\cdot s\\
     &\stackrel{\Phi_1:=\Psi_1}{=}a_1(\gamma)\left\langle \Psi_1, \partial_0^{\rm loc} (d_{\widetilde{\delta}})\right\rangle\cdot s\,.
 \end{align*}
 \\
 $\bullet$ We have
\begin{align}\notag\frak{h}_\mathbf{f}^{\textup{c-wt}}\left([x_{\textup{f}}],\partial_0(d_{\widetilde\delta})\right)&=-\left\langle \frak{d}_{\textup{wt}}(\frak{X}),\partial^{\textup{loc}}_0(d_{\widetilde{\delta}}) \right\rangle-\frac{1}{2}\left\langle \frak{d}_{\textup{cyc}}(\frak{X}),\partial^{\textup{loc}}_0(d_{\widetilde{\delta}}) \right\rangle\\
\label{eqn:wtdirectionRSformulaforreg}&=-a_1(\kappa)\left\langle\Psi_1,\partial^{\textup{loc}}_0(d_{\widetilde{\delta}}) \right\rangle+\frac{\frak{h}_p\left([x_{\textup{f}}],\partial_0(d_{\widetilde\delta})\right)}{2}
\end{align}
where we used Theorem~\ref{thm:rubinsformula2var} for the first equality and the Rubin-style formula for the cyclotomic height-pairing $\frak{h}_p$  and (\ref{eqn:refereeaskedtolabelthis}) for the second. Furthermore, it follows from (\ref{eqn:Rubin2}) and (\ref{eqn:twovarexpanded1}) that
\begin{align*}
\frak{h}_\mathbf{f}^{\textup{c-wt}}\left([x_{\textup{f}}],\partial_0(d_{\widetilde\delta})\right)&=-\frac{\frak{h}_p\left([x_{\textup{f}}],\partial_0(d_{\widetilde\delta})\right)}{2}=\frac{\left\langle\Psi_2,[x_p^+] \right\rangle}{2}
\end{align*}
This combined with \eqref{eqn:identity1} and \eqref{eqn:wtdirectionRSformulaforreg} yields
\begin{align}
\notag a_1(\kappa)&=-\frac{\left\langle\Psi_2,[x_p^+] \right\rangle}{\left\langle\Psi_1,\partial^{\textup{loc}}_0(d_{\widetilde{\delta}}) \right\rangle}
\label{eqn:a1kappaexplcit}\\&=-a_1(\gamma)\,.
\end{align}
$\bullet$ Using Theorem~\ref{thm:rubinsformula2var} along with the fact that the pairing $\frak{h}_\mathbf{f}^{\textup{c-wt}}$ is skew-symmetric, it follows that
\begin{align*} 0=\frak{h}_\mathbf{f}^{\textup{c-wt}}\left([x_{\textup{f}}],[x_{\textup{f}}]\right)&=-\left\langle \frak{d}_{\textup{wt}}(\frak{X}),[x_p^+]\right\rangle-\frac{1}{2}\left\langle \frak{d}_{\textup{cyc}}(\frak{X}),[x_p^+]\right\rangle\\
&=-a_2(\kappa)\cdot\left\langle\Psi_2,[x_p^+] \right\rangle -a_2(\gamma)\cdot \frac{\left\langle\Psi_2,[x_p^+] \right\rangle}{2}\,,
\end{align*}
where we used the fact that $\left\langle \Phi_1, [x_p^+]\right\rangle=0$ for the second line. We conclude that
\be
\label{eqn:identity3}
\left\langle\Psi_2,[x_p^+] \right\rangle\cdot a_2(\kappa)=-\frac{\left\langle\Psi_2,[x_p^+] \right\rangle}{2}\cdot a_2(\gamma)\,.
\ee
We will now use (\ref{eqn:regulatorsimplifiedformula}) to explicitly compute $\mathcal{A}\left(\frak{Reg}_{\mathbb{H}_{\mathbf{f}}}(\partial_0(d_{\widetilde{\delta}}) ,[x_{\textup{f}}])\right)$. We shall compare the resulting expressions for the coefficients of $s^2$, $s(\kappa-k)$ and $(\kappa-k)^2$ to the left hand side of the asserted equality in ii) via Proposition~\ref{prop:partialderivativespartiallywelldefine}. 

 (1) \emph{The coefficient $A_s$ of $s^2$.}
 This coefficient equals
 \begin{align*}
A_s &=\left\langle\Psi_2,\partial^{\textup{loc}}_0(d_{\widetilde{\delta}}) \right\rangle\cdot a_2(\gamma)\left\langle\Psi_2,[x_p^+] \right\rangle-\left\langle\Psi_2,[x_p^+] \right\rangle^2\\
&=C_1\cdot \left(a_2(\gamma)\left\langle\Psi_2,\partial^{\textup{loc}}_0(d_{\widetilde{\delta}}) \right\rangle-\left\langle\Psi_2,[x_p^+] \right\rangle\right)\\
&=C_1\cdot \left(-\al\cdot a_2(\gamma)\left\langle\Psi_1,\partial^{\textup{loc}}_0(d_{\widetilde{\delta}}) \right\rangle-\left\langle\Psi_2,[x_p^+]\right\rangle\right)\\
&=C_1\cdot \left(-\al\cdot a_2(\gamma)\left\langle\Psi_1,\partial^{\textup{loc}}_0(d_{\widetilde{\delta}})\right\rangle-a_1(\gamma)\cdot \left\langle\Psi_1, \partial^{\textup{loc}}_0(d_{\widetilde{\delta}})\right\rangle\right)\\
&=-C_2 \left(\al\cdot a_2(\gamma)+a_1(\gamma)\right)=C_2\cdot \left\langle \frak{d}_{\textup{cyc}}(\frak{X}),\Psi_1^*\right\rangle\,.
 \end{align*}
Here the third equality follows from (\ref{eqn:Linvariantreferee}), the fourth from (\ref{eqn:identity1}) and the last from (\ref{eqn:refereeaskedtolabelthis}). 

 (2) \emph{The coefficient $A_{s,\kappa}$ of $s\cdot(\kappa-k)$.} 
This quantity equals,
 \begin{align*}
A_{s,\kappa}& =C_1\left(\left\langle\Psi_2,[x_p^+] \right\rangle-\left\langle\Psi_2,\partial^{\textup{loc}}_0(d_{\widetilde{\delta}})  \right\rangle \cdot a_2(\gamma)\right)\\
&=C_2\left({\al} \cdot a_2(\gamma)+a_1(\gamma)\right)\\
&=C_2\left(-\al\cdot a_2(\kappa)+\frac{\al}{2}\cdot a_2(\gamma) -a_1(\kappa)\right)\\
&=C_2\left(\left\langle\frak{d}_{\textup{wt}}(\frak{X}),\Psi_1^*\right\rangle+\frac{\al}{2}\cdot\left\langle\frak{d}_{\textup{cyc}}(\frak{X}),\Psi_2^*\right\rangle\right)\\
&=C_2\left(\left\langle\frak{d}_{\textup{wt}}(\frak{X}),\Psi_1^*\right\rangle-\frac{\al}{2}\left\langle\exp_{\mathbb{D}_f}(d_\alpha),\frak{d}_{\textup{cyc}}(\frak{X})\right\rangle\right)
 \end{align*}
where we used (\ref{eqn:identity1}) for the second equality, (\ref{eqn:a1kappaexplcit}) and (\ref{eqn:identity3}) for the third and \cite[Proposition~1.2.6]{Ben14a} for the last.

(3) \emph{The coefficient ${A}_{\kappa}$ of $(\kappa-k)^2$.} 
This quantity equals,
\begin{align*}
A_{\kappa}& =-\frac{1}{4}\left\langle\Psi_2,\partial^{\textup{loc}}_0(d_{\widetilde{\delta}}) \right\rangle\cdot a_2(\gamma)\left\langle\Psi_2, [x_p^+]\right\rangle\\
&=C_2\cdot  \frac{\al}{2}\cdot a_2(\kappa)\\
&=C_2\cdot\frac{\al}{2}\cdot\left\langle\frak{d}_{\textup{wt}}(\frak{X}),\Psi_2^*\right\rangle\\
&=-C_2\cdot \frac{\al}{2}\cdot \left\langle\exp_{\mathbb{D}}(d_\alpha),\frak{d}_{\textup{wt}}(\frak{X})\right\rangle
 \end{align*}
where we used (\ref{eqn:Linvariantreferee}) and (\ref{eqn:identity3}) for the second equality. 

The proof of the theorem now follows using the computations in the paragraphs (1), (2) and (3) and Proposition~\ref{prop:partialderivativespartiallywelldefine}.
\end{proof}

\begin{cor}[Central critical Rubin-style formula for $p$-adic $L$-functions]
\label{thm:ccrubinstyleformula}
In the situation of Theorem~\ref{thm:leadingtermformula}, let us define
\[
\frak{L}_p(\frak{X},\kappa):=L_p \left (\frak{X}, \kappa, \frac{\kappa-k}{2}\right ).
\]
Assume that $[x]\in H^1_{\textup{f}}(G_{\QQ,S},V_f).$ Then
\begin{align}
\label{eqn:rubinstyleforLfunctions}
\left(1-\frac{1}{p}\right)\Gamma(k/2)^{-1}\left\langle\Psi_2,[x_p^+]\right\rangle\left\langle\Psi_1,\partial^{\textup{loc}}_0(d_{\widetilde{\delta}})\right\rangle\cdot \frac{d^2}{d\kappa^2}& \frak{L}_p(\frak{X},\kappa)\Big{|}_{\kappa=k}=\frac{\left\langle\Psi_2,[x_p^+]\right\rangle^2}{2}\,.\end{align}
\end{cor}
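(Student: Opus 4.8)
The strategy is to specialise the two-variable leading term formula of Theorem~\ref{thm:leadingtermformula}(ii) along the central critical line $s=(\kappa-k)/2$ and then read off the coefficient of $(\kappa-k)^2$. First I would recall that, by definition, $\frak{L}_p(\frak{X},\kappa)=L_p(\frak{X},\kappa,(\kappa-k)/2)$, so that the Taylor expansion of $\frak{L}_p(\frak{X},\kappa)$ in $(\kappa-k)$ up to order $2$ is obtained from the class of $L_p(\frak{X},\kappa,s)$ in $\frak{I}^2/\frak{I}^3$ by the substitution $s\mapsto (\kappa-k)/2$. Concretely, if $\frak{L}_p(\frak{X},\kappa)\equiv \tfrac12 c\,(\kappa-k)^2\pmod{(\kappa-k)^3}$ then $\tfrac{d^2}{d\kappa^2}\frak{L}_p(\frak{X},\kappa)|_{\kappa=k}=c$, so the whole problem reduces to extracting the value of the linear form (in the variables $s,\kappa-k$) appearing on either side of Theorem~\ref{thm:leadingtermformula}(ii) at the point $(s,\kappa-k)=((\kappa-k)/2,\kappa-k)$, i.e. evaluating the relevant degree-two homogeneous form on the line $s=(\kappa-k)/2$.

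Next I would evaluate the right-hand side of Theorem~\ref{thm:leadingtermformula}(ii), namely $\mathcal{A}\left(\frak{Reg}_{\mathbb{H}_{\mathbf{f}}}(\partial_0(d_{\widetilde\delta}),[x_{\textup{f}}])\right)$, on the line $s=(\kappa-k)/2$. For this I would use the simplified expression for this regulator established inside the proof of Theorem~\ref{thm:leadingtermformula}, namely equation (\ref{eqn:regulatorsimplifiedformula}):
\[
\mathcal{A}\left(\frak{Reg}_{\mathbb{H}_{\mathbf{f}}}\left(\partial_0(d_{\widetilde\delta}),[x_{\textup{f}}]\right)\right)
 =\left\langle\Psi_2,\partial^{\textup{loc}}_0(d_{\widetilde{\delta}}) \right\rangle\cdot a_2(\gamma)\left\langle\Psi_2, [x_p^+]\right\rangle\cdot\left(s-\frac{\kappa-k}{2}\right)^2+\left\langle\Psi_2,[x_p^+] \right\rangle^2\cdot s\cdot(\kappa-k-s)\,.
\]
The first term vanishes identically upon setting $s=(\kappa-k)/2$, and the second term becomes $\left\langle\Psi_2,[x_p^+]\right\rangle^2\cdot \tfrac{\kappa-k}{2}\cdot\tfrac{\kappa-k}{2}=\tfrac14\left\langle\Psi_2,[x_p^+]\right\rangle^2(\kappa-k)^2$. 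Thus the right-hand side of Theorem~\ref{thm:leadingtermformula}(ii), restricted to the central critical line, equals $\tfrac14\left\langle\Psi_2,[x_p^+]\right\rangle^2(\kappa-k)^2$.

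Finally I would match this against the left-hand side of Theorem~\ref{thm:leadingtermformula}(ii). Restricting the identity of that theorem to $s=(\kappa-k)/2$ gives
\[
\frac{\left(1-\frac{1}{p}\right)}{\Gamma\left(\frac{k}{2}-1\right)}\left\langle\Psi_2,\textup{res}_p([x])\right\rangle\cdot\left\langle\Psi_1,\partial^{\textup{loc}}_0(d_{\widetilde{\delta}}) \right\rangle\cdot \frak{L}_p(\frak{X},\kappa)\equiv \frac{\left\langle\Psi_2,[x_p^+]\right\rangle^2}{4}(\kappa-k)^2 \pmod{(\kappa-k)^3},
\]
noting that $\textup{res}_p([x])$ and $[x_p^+]$ refer to the same local class so that $\left\langle\Psi_2,\textup{res}_p([x])\right\rangle=\left\langle\Psi_2,[x_p^+]\right\rangle$. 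Comparing the coefficient of $\tfrac12(\kappa-k)^2$ on both sides, i.e. taking the second derivative in $\kappa$ at $\kappa=k$, yields exactly (\ref{eqn:rubinstyleforLfunctions}). The main point requiring care — and the only genuine subtlety — is the bookkeeping of the substitution $s=(\kappa-k)/2$ at the level of $\frak{I}^2/\frak{I}^3$, i.e. checking that restricting a class modulo $\frak{I}^3$ to the line is well defined and that "coefficient of $(\kappa-k)^2$" translates correctly into "$\tfrac12\,\tfrac{d^2}{d\kappa^2}|_{\kappa=k}$" after applying the inverse Amice transform $\mathcal{A}^{-1}$; everything else is a direct substitution into formulas already proved in the body of the text.
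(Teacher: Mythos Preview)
Your proposal is correct and follows exactly the approach of the paper: specialise Theorem~\ref{thm:leadingtermformula}(ii) to the central critical line $s=(\kappa-k)/2$ and use the explicit regulator expression~(\ref{eqn:regulatorsimplifiedformula}) obtained in its proof, so that only the term $\left\langle\Psi_2,[x_p^+]\right\rangle^2\cdot s\cdot(\kappa-k-s)$ survives and contributes $\tfrac14\left\langle\Psi_2,[x_p^+]\right\rangle^2(\kappa-k)^2$. Your bookkeeping with the factor of $2$ coming from the second derivative is also correct.
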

\begin{proof}
This follows from Theorem~\ref{thm:leadingtermformula}(ii) (specialized to $s=\frac{\kappa-k}{2}$), more particularly, see \eqref{eqn:regulatorsimplifiedformula} as part of its proof.
\end{proof}

\begin{prop}
\label{prop:whendoescentralcriticalLvanish}
Suppose that we are in the situation of Corollary~\ref{thm:ccrubinstyleformula} and $[x]\in H^1_{\textup{f}}(G_{\QQ,S},V_f)$.
Then,
$$\textup{ord}_{\kappa=k}\, \frak{L}_p(\frak{X},\kappa)>2 \iff \textup{res}_p([x])=0.$$
\end{prop}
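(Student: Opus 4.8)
The plan is to read off the claim directly from the leading-term formula of Corollary~\ref{thm:ccrubinstyleformula}, after unwinding what $\frak{L}_p(\frak{X},\kappa)$ looks like to low order in $\kappa-k$ and identifying the vanishing of its quadratic coefficient with the vanishing of $\res_p([x])$. Recall from Theorem~\ref{thm:leadingtermformula}(ii) (specialized along the central critical line $s=\tfrac{\kappa-k}{2}$, cf. the explicit expression \eqref{eqn:regulatorsimplifiedformula} appearing in its proof) that, up to the nonzero normalizing constant $C=\left(1-\tfrac1p\right)\Gamma\left(\tfrac k2-1\right)^{-1}$ and the factor $\left\langle\Psi_1,\partial^{\textup{loc}}_0(d_{\widetilde{\delta}})\right\rangle$, the function $C\cdot\left\langle\Psi_1,\partial^{\textup{loc}}_0(d_{\widetilde{\delta}})\right\rangle\cdot\frak{L}_p(\frak{X},\kappa)$ agrees modulo $\frak{I}^3$ with a quantity whose coefficient of $(\kappa-k)^2$ is (a nonzero multiple of) $\left\langle\Psi_2,[x_p^+]\right\rangle^2$. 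So the whole point is that $\frak{L}_p(\frak{X},\kappa)$ vanishes at $\kappa=k$ to order at least $2$ automatically (this is the exceptional-zero phenomenon, already reflected in $L_p(\frak X,\kappa,s)\in\frak{I}$ via Proposition~\ref{prop:derofcycloPRmap}(ii) and the fact that $\res_p(\frak X)$ restricted to $\widetilde{\mathbb D}_f$ lies in the appropriate kernel when $[x]$ is crystalline), and its \emph{third}-order behavior is governed by $\left\langle\Psi_2,[x_p^+]\right\rangle$.

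First I would record the order-$\geq 2$ vanishing: since $[x]\in H^1_{\textup{f}}(G_{\QQ,S},V_f)$, Proposition~\ref{prop:derofcycloPRmap}(ii) (applied with $z=\res_p(\frak X)(k)$) gives $\left\langle z,\textup{Exp}_f^{\varepsilon^{-1}}((1+X)\otimes d_\delta)\right\rangle_{\Iw}\in J_E^2$, equivalently $L_p(\frak X,\kappa,s)\in\frak I^2$ by Theorem~\ref{thm:keyleadingtermformulas}(i) together with the hypothesis that $[x]$ is crystalline (which forces the coefficient of $s$ and of $\kappa-k$ in the mod-$\frak I^2$ expansion to vanish). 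Substituting $s=\tfrac{\kappa-k}{2}$ then shows $\frak{L}_p(\frak X,\kappa)\in((\kappa-k)^2)$, so $\tfrac{d^2}{d\kappa^2}\frak{L}_p(\frak X,\kappa)|_{\kappa=k}$ is exactly twice the leading coefficient. Next I would invoke Proposition~\ref{prop:partialderivativespartiallywelldefine} (the $\frak{I}^2/\frak{I}^3$ leading-term formula) to express $C\cdot L_p(\frak X,\kappa,s)$ along $s=\tfrac{\kappa-k}{2}$ purely in terms of the Bockstein derivatives $\frak{d}_{\textup{cyc}}(\frak X)$ and $\frak{d}_{\textup{wt}}(\frak X)$; then the computations in paragraphs (1)--(3) of the proof of Theorem~\ref{thm:leadingtermformula}(ii) already identify the resulting $(\kappa-k)^2$-coefficient of $\frak{L}_p(\frak X,\kappa)$ with (nonzero constant)$\cdot\left\langle\Psi_2,[x_p^+]\right\rangle^2\big/\left\langle\Psi_1,\partial^{\textup{loc}}_0(d_{\widetilde\delta})\right\rangle$. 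That is precisely the content of \eqref{eqn:rubinstyleforLfunctions} in Corollary~\ref{thm:ccrubinstyleformula}, so I would simply cite it.

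Finally I would close the equivalence. From \eqref{eqn:rubinstyleforLfunctions}, since $C$, $\Gamma\left(\tfrac k2-1\right)$ and — by Proposition~\ref{prop:ex sequence with Dalpha} together with the non-degeneracy built into the construction — the pairing values $\left\langle\Psi_1,\partial^{\textup{loc}}_0(d_{\widetilde\delta})\right\rangle$ are all nonzero, we get that $\tfrac{d^2}{d\kappa^2}\frak{L}_p(\frak X,\kappa)|_{\kappa=k}=0$ if and only if $\left\langle\Psi_2,[x_p^+]\right\rangle=0$. Combined with the order-$\geq 2$ statement, this says $\textup{ord}_{\kappa=k}\frak{L}_p(\frak X,\kappa)>2\iff\left\langle\Psi_2,[x_p^+]\right\rangle=0$. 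It remains to translate $\left\langle\Psi_2,[x_p^+]\right\rangle=0$ into $\res_p([x])=0$: since $[x_{\textup{f}}]$ is the canonical lift of $[x]$, Corollary~\ref{ref:crucialproperty1and2} gives $\left\langle\Psi_1,[x_p^+]\right\rangle=0$, so $[x_p^+]$ lies in the line spanned by $\Psi_2^*$; hence $\left\langle\Psi_2,[x_p^+]\right\rangle=0$ forces $[x_p^+]=0$ in $H^1(\mathbb D_f)$, and by the exact sequence \eqref{cohomologysequenceinducedbyfiltration} of Proposition~\ref{prop:ex sequence with Dalpha} together with $[x]\in H^1_{\textup{f}}(\QQ,V_f)$ this is equivalent to $\res_p([x])=0$ in $H^1(G_p,V_f)$. \textbf{The main obstacle} I anticipate is this last bookkeeping step — making precise the relation between $[x_p^+]$ (a class in the extended local condition $H^1(\mathbb D_f)$) and $\res_p([x])$ (a class in $H^1(G_p,V_f)$), and checking that the vanishing of the single pairing value $\left\langle\Psi_2,[x_p^+]\right\rangle$ really is equivalent to $\res_p([x])=0$ rather than merely to some weaker local triviality; this requires keeping careful track of the splitting $H^1(\mathbb D_f)=\textup{im}(\mu_0)\oplus H^1_{\textup{f}}(\mathbb D_f)$ and of how $\res_p$ factors through $\frak r_1$.
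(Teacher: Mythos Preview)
Your argument for the direction $(\Longrightarrow)$ is fine and matches the paper's: if $\textup{ord}_{\kappa=k}\frak{L}_p(\frak X,\kappa)>2$ then the right-hand side of \eqref{eqn:rubinstyleforLfunctions} vanishes, whence $\langle\Psi_2,[x_p^+]\rangle=0$, and your translation of this into $\res_p([x])=0$ via Corollary~\ref{ref:crucialproperty1and2} and the exact sequence of Proposition~\ref{prop:ex sequence with Dalpha} is correct.

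The gap is in the converse $(\Longleftarrow)$. You want to read off $\frac{d^2}{d\kappa^2}\frak{L}_p(\frak X,\kappa)\big|_{\kappa=k}=0$ from \eqref{eqn:rubinstyleforLfunctions} under the hypothesis $\langle\Psi_2,[x_p^+]\rangle=0$. But the \emph{left}-hand side of \eqref{eqn:rubinstyleforLfunctions} also carries the factor $\langle\Psi_2,[x_p^+]\rangle$, so in that case the identity degenerates to $0=0$ and says nothing about the second derivative. The same problem afflicts the computations (1)--(3) in the proof of Theorem~\ref{thm:leadingtermformula}(ii): those identify the coefficients of $C_2\cdot C\cdot L_p(\frak X,\kappa,s)$ with the coefficients of the regulator, where $C_2=\langle\Psi_2,[x_p^+]\rangle\langle\Psi_1,\partial_0^{\textup{loc}}(d_{\widetilde\delta})\rangle$; dividing out $C_2$ to get the $(\kappa-k)^2$-coefficient of $\frak{L}_p$ itself is only legitimate when $C_2\neq 0$. (Concretely, the identity \eqref{eqn:identity3} relating $a_2(\kappa)$ to $a_2(\gamma)$ is established only after multiplication by $\langle\Psi_2,[x_p^+]\rangle$, so you cannot use it to pin down the Bockstein coefficients when this quantity vanishes.)

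The paper circumvents this by arguing the $(\Longleftarrow)$ direction directly, without passing through Corollary~\ref{thm:ccrubinstyleformula}. If $\res_p([x])=0$ then $\res_p(\frak X)\in\frak P\cdot H^1_{\Iw}(\DdagrigA(V_{\mathbf f}))$, so one may write $\res_p(\frak X)=\frac{\gamma-1}{\log\chi(\gamma)}\mathbf z_\gamma+\varpi_\kappa\mathbf z_\kappa$ and feed each piece into Theorem~\ref{thm:keyleadingtermformulas}: if $z_?=\textup{pr}_0(\mathbf z_?)\in H^1_{\textup f}(G_p,V_f)$ then $L_p(\mathbf z_?,\kappa,s)\in\frak I^2$, while if $z_?\notin H^1_{\textup f}(G_p,V_f)$ then $L_p(\mathbf z_?,\kappa,s)$ is divisible by $(s-\tfrac{\kappa-k}{2})$ modulo $\frak I^2$, which vanishes on the central critical line. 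Either way one gets $\textup{ord}_{\kappa=k}\frak{L}_p(\frak X,\kappa)>2$. This extra input is what your proposal is missing.
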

The proof of this assertion is obtained after appropriately modifying the proof of \cite[Lemma 6.1]{venerucciarticle}; we include it here for the convenience of the reader.
\begin{proof} ($\implies$)The assumption forces the right hand side of (\ref{eqn:rubinstyleforLfunctions}) to vanish and it follows that $\textup{res}_p([x])=0$. 

($\impliedby$) The hypothesis that $\textup{res}_p([x])=0$ shows that
$\res_p(\frak{X}) \in \frak{P}\cdot H^1_\Iw(\mathbb{D}^\dagger_{\textup{rig}}(V_{\mathbf{f}})).$

We may therefore write 
$$\res_p(\frak{X})=\frac{\gamma-1}{\log\chi(\gamma)}\cdot\mathbf{z}_\gamma+ \varpi_\kappa\cdot \mathbf{z}_\kappa$$
for some $\mathbf{z}_\gamma, \mathbf{z}_\kappa \in H^1_\Iw(\mathbb{D}^\dagger_{\textup{rig}}(V_{\mathbf{f}}))$ and set 
$$z_\gamma:=\textup{pr}_0(\mathbf{z}_\gamma)\,,\, z_\kappa:=\textup{pr}_0(\mathbf{z}_\kappa) \in H^1(\mathbb{D}^\dagger_{\textup{rig}}(V_{{f}}))\,.$$
 We  have 
\begin{align*}
\frak{L}_p(\frak{X},\kappa)&= L_p(\frak{X},\kappa,s)\Big{|}_{s=\frac{\kappa-k}{2}}\\
\label{eqn:plugin1}&= \left((s-1)\cdot L_p(\mathbf{z}_\gamma,\kappa,s)+(\kappa-k)\cdot L_p(\mathbf{z}_\kappa,\kappa,s)\right)\Big{|}_{s=\frac{\kappa-k}{2}}.
\end{align*}
First assume that ${z}_{?}\in H^1_{\textup{f}}(G_p,V_f),$ where $?\in\{\gamma,\kappa\}.$  Then the projection of $z_{?}$ to $H^1(\widetilde{\mathbb D})$ is $0$ and from Theorem~\ref{thm:keyleadingtermformulas} (i) it follows that  
\begin{equation}
\label{eqn:ordertwovanishingformula1}
L_p(\mathbf{z}_?,\kappa,s)\in \frak{I}^2.
\end{equation}
Now assume that  ${z}_{?}\notin H^1_{\textup{f}}(G_p,V_f).$
Write $C=\left(1-\frac{1}{p}\right)\Gamma(k/2)^{-1}$ for simplicity.
From Theorem~\ref{thm:keyleadingtermformulas}(ii) we have  
\begin{align*}
L_p(\mathbf{z}_{?},\kappa,s)\equiv 
\left(s-\frac{\kappa-k}{2}\right)\al_{\textup{FM}}(f)\cdot[d_\delta,\exp^*_V(z_{?})]_{V_f} \pmod{\frak {I}^2}
\end{align*}
Taking $s=\frac{\kappa-k}{2},$ we obtain that $\frak{L}(\mathbf{z}_{?},\kappa)\equiv 0\pmod
{(\kappa-k)^2}.$ Together with (\ref{eqn:ordertwovanishingformula1}), this implies  that 
$\textup{ord}_{\kappa=k}\, \frak{L}_p(\frak{X},\kappa)>2.$ 
\end{proof}

\section{Main results} 
\label{sec:proofofPRconjecture}

\subsection{$p$-adic $L$-functions}
Let 
\[f=\sum a_nq^n \in S_{k}(\Gamma_0(Np))
\] 
be a elliptic newform of even weight $k$ and level $N$ as before. Let $\varepsilon_{f,N} \in \{\pm 1\}$ denote the eigenvalue of the Fricke involution acting on $f$ and set $\varepsilon_f=(-1)^{{k}/{2}}\varepsilon_{f,N}$. According to \cite[Theorem 3.66]{shimura} the sign of $\varepsilon_f$ agrees with the sign of the functional equation of the Hecke $L$-function $L(f,s)$. We suppose throughout that we have $a_p=p^{\frac{k}{2}-1}$. Recall the $p$-adic $L$-function $L_{p,\alpha}(f,\omega^{k/2},s)$ (where $\omega$ is the Teichm\"uller character) of Amice--V\'elu, Manin and Vi\v{s}ik. We refer the reader to \cite[Section 4.1]{Ben14a} for the precise definition and basic properties of this function. 

Let $\mathbf{f}$ be a family of modular forms with coefficients in $A=\mathcal O(U)$ passing through $f,$ where $U=\overline{B}(k,p^{-r})$. Let $\epsilon$ be a Dirichlet character of order dividing $2$; its choice will be made precise\footnote{For the main purposes of this article, $\epsilon$ shall be the trivial character. Despite this fact, we still chose to include it in our notation to have an easier comparison with Seveso's notation in \cite{sevesoCJM}. } below. 
We shall consider the two-variable Mazur-Kitagawa $p$-adic $L$-function $L_p(\mathbf{f},\epsilon,\kappa,s)$  (as introduced in \cite{sevesoCJM}, Section 5) associated with the family $\mathbf{f}$, for $s$ in the neighborhood $k/2+p\ZZ_p$ of $k/2$.   See also \cite{Bel, Pan} for constructions of related $p$-adic $L$-functions and Remark~\ref{rem:comparepadicLfunctionsBelliachevsSeveso} for a comparison of Seveso's notation with that of \cite{Ben14a,mtt} (according to which our normalizations are made). At each classical point $x$ of weight 
$k_x\in \mathbb Z_{\geqslant 2}\cap \overline{B}(k,p^{-r})$ we have  
\begin{equation}
\label{interpolationmazurkitagawa}
L_p (\mathbf{f},\epsilon, k_x, s)=\lambda (x) L_p(\mathbf{f}_x,\epsilon,s),
\end{equation}
where $\lambda (x)$ is Stevens's interpolation factor; see \cite[Theorem 1.5]{bertolinidarmon05} for its defining property and \cite[Theorem 5.3]{sevesoCJM}. Here $L_p(\mathbf{f}_x,\epsilon,s)$ is in Seveso's notation and its comparison to the $p$-adic $L$-function denoted by $L_{p,\alpha(x)}(f_x,\epsilon\omega^{k/2},s)$ in \cite{Ben14a} is included below.
\begin{rem}
\label{rem:comparepadicLfunctionsBelliachevsSeveso}
Given a classical point $x$ of even weight $k_x\in \ZZ_{\geq 2}$ as above, the $p$-adic $L$-function $L_p(\mathbf{f}_x,\epsilon\omega^{k/2},s)$ of Manin-Vi\v{s}ik and Amice-Velu which is denoted in \cite{Ben14a} by $L_{p,\alpha(x)}(\mathbf{f}_x,\epsilon\omega^{k/2},s)$ and corresponds to the function denoted  in \cite{mtt} by the notation $L_{p}(\mathbf{f}_x,\alpha(x),\epsilon\omega^{k/2-1},s-1)$.  We also note that 
\begin{align*}  L_{p}(\mathbf{f}_x,\alpha(x),\epsilon\omega^{k/2-1},s-1)&=L_{p}(\mathbf{f}_x,\alpha(x),\epsilon\omega^{k/2-1}\chi_{s-1})\\
&=L_{p}(\mathbf{f}_x,\alpha(x),\epsilon\omega^{k/2-s}\chi^{s-1})
 \end{align*}
(still in the notation of \cite{mtt}, so that $\chi_s$ is as in Section 13.2 of loc. cit.) where we make sense of the second line only for integer values of $s$. 

Let $\psi$ denote an arbitrary primitive Dirichlet character. In the notation of \cite{sevesoCJM}, the function $L_{p}(\mathbf{f}_x,\psi,s)$ in the neighborhood $j+p\ZZ_p$ of $j$ corresponds to the $p$-adic $L$-function 
\begin{align*} L_{p}(\mathbf{f}_x,\alpha(x),\psi\chi^{s-1})&=L_{p}(\mathbf{f}_x,\alpha(x),\psi\omega^{j-1}\chi_{s-1})\\
&=L_{p}(\mathbf{f}_x,\alpha(x),\psi\omega^{j-1},{s-1})
\end{align*}
of \cite{mtt}. (Note that Seveso has stated the interpolation property for the Mazur-Kitagawa $p$-adic $L$-function only for characters of degree at most $2$, which is also sufficient for our purposes here.)  In particular, for the choice $\psi=\epsilon$ and in the neighborhood $k/2+p\ZZ_p$ of $k/2$, Seveso's notation compares to those of \cite{Ben14a, mtt} via
\begin{align*} L_{p}(\mathbf{f}_x,\epsilon,s)&=L_{p}(\mathbf{f}_x,\alpha(x),\epsilon\omega^{k/2-1},{s-1})\\
&=L_{p,\alpha(x)}(\mathbf{f}_x,\epsilon\omega^{k/2},{s})\,.
\end{align*}
Note therefore that the interpolation property (\ref{interpolationmazurkitagawa}) reads
\be\label{interpolationmazurkitagawa2}
L_p (\mathbf{f},\epsilon, k_x, s)=\lambda (x) L_{p,\alpha(x)}(\mathbf{f}_x,\epsilon\omega^{k/2},{s})
\ee
for $s$ in the neighborhood $k/2+p\ZZ_p$.
\end{rem}

Let $b_{f}^*$ be a canonical basis of $\textup{Fil}^0\mathbf{D}_{\textup{st}}(V_f^*)$ defined by Kato in \cite[Theorem 12.5]{ka1} and let 
$$[\,,\,]_{V_f}\,:\,\mathbf{D}_{\textup{st}}(V_f)\times \mathbf{D}_{\textup{st}}(V_f)\lra E$$
denote the canonical pairing. 
It follows from the work of Kato \cite[Theorem 16.2]{ka1} (see also \cite[Theorem 4.3.2]{Ben14a}) that there exists an element $[z_{f,\Iw}^{\textup{BK}}] \in H^1_\Iw(G_{\QQ,S},V_f)$ such that 
\begin{equation}
\label{interpolationkato}
L_p( [z_{f,\Iw}^{\textup{BK}}],s)=L_{p,\alpha}(f,\omega^{k/2},s+k/2)\,[e_\alpha, b^*_f]\,.
\end{equation}

Hansen~\cite{hansenarxiv}, Wang~\cite{wangarxiv} and the authors of the present paper in~\cite{BB_CriticalKato1} independently gave a construction of an element $\frak{Z}^{\textup{BK}}_{\mathbf{f}}\in H^1_{\Iw}(G_{\QQ,S},V_{\mathbf{f}})$ interpolating Kato's zeta elements. On the other hand, Colmez--Wang~\cite{ColmezWang} and Nakamura~\cite{NakamuraArXiv} constructed a big zeta element over the universal deformation ring. In what follows, we shall work with the element constructed in \cite{BB_CriticalKato1}\footnote{Our $\frak{Z}^{\textup{BK}}_{\mathbf{f}}$ here coincides with the projection of $\mathrm{Tw}_{\frac{k}{2}}({\mathbb{BK}}^{[\mathcal X]}_{N}(j,\xi))$ in \cite[Theorem A]{BB_CriticalKato1} to the $(-1)^{\frac{k}{2}}$-eigenspace for the action of complex conjugation, with a suitable choice of a pair $(j,\xi)$ (which, among other things, depends on the sign of $(-1)^{\frac{k}{2}}$) and $U$ shrank appropriately, as in op. cit.}. Let us put
\[
L_p(\frak{Z}^{\textup{BK}}_{\mathbf{f}},\kappa,s)=\mathcal A(\frak{Log}_{\mathbf{f}}(\frak{Z}^{\textup{BK}}_{\mathbf{f}}))(\kappa,-s).
\]

Then at each classical point $x$ of even weight $k_x\in \mathbb{Z}_{\geqslant 2}\cap \overline{B}(k,p^{-r})$ we have (see \cite[Theorem A]{BB_CriticalKato1}, as well as the proof of \cite{hansenarxiv}, Proposition 4.2.2)
\be\label{eqn:specialize2varkatopadicL}
L_p(\frak{Z}^{\textup{BK}}_{\mathbf f},k_x,s)=c(x)\mathscr{A}_N(k_x,s)L_{p,\alpha(x)}(\mathbf{f}_x,\omega^{k/2},s+k/2)
\ee
for some non-zero constant $c(x)\in E_x$  and $\mathscr{A}_N(\kappa,s)$ is the two-variable Amice-transform of the (twisted) bad Euler factors ${\rm Tw}_{\frac{k}{2}}\mathscr{E}_N$, where $\mathscr{E}_N$ is given as in \cite[Equation (64)]{BB_CriticalKato1} and ${\rm Tw}_{\frac{k}{2}}:\LL\to \LL$ is the twisting morphism given by $\gamma\mapsto \chi^{\frac{k}{2}}(\gamma)\gamma$ on the group-like elements. More precisely,
\[
\mathscr{A}_N(\kappa,s):=\prod_{\ell \mid N}(1-a_\ell({\mathbf f})\ell^{-\frac{k}{2}}\left <\ell\right >^{-s})\,.
\]
The following proposition is an immediate consequence of \cite[Theorem A.(ii).d]{BB_CriticalKato1}. We record its proof here for the sake of completeness.
 \begin{prop}
\label{thm:BKvscentralcriticalL}
We have 
$$L_p(\frak{Z}_{\mathbf{f}}^{\textup{BK}},\kappa,s)\big{|}_{s=\frac{\kappa-k}{2}}=a(\kappa)\cdot L_p (\mathbf{f},\mathds{1}, \kappa, \kappa/2) $$
where $a(\kappa)$  is analytic and non-vanishing in a neighborhood of $\kappa=k$ and $\mathds{1}$ is the trivial character.
\end{prop}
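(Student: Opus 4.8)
The plan is to deduce this from the interpolation properties of the two $p$-adic $L$-functions in play together with the comparison of normalizations laid out in Remark~\ref{rem:comparepadicLfunctionsBelliachevsSeveso}. Both sides of the asserted identity are rigid analytic functions of $\kappa$ in a neighborhood of $k$: the left-hand side because $\frak{Log}_{\mathbf{f}}$ produces an element of $\mathcal{H}_A$ and the substitution $s=(\kappa-k)/2$ is analytic, and the right-hand side by \cite[Section~5]{sevesoCJM}. Hence it suffices to check equality at a Zariski-dense set of classical points $x$ of weight $k_x\in \ZZ_{\geq 2}\cap\overline{B}(k,p^{-r})$, and then divide out by the analytic factors to see the ratio is analytic and non-vanishing at $\kappa=k$.

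First I would specialize the left-hand side: by \eqref{eqn:specialize2varkatopadicL}, at a classical point $x$ one has $L_p(\frak{Z}_{\mathbf{f}}^{\textup{BK}},k_x,s)=c(x)L_{p,\alpha(x)}(\mathbf{f}_x,\omega^{k/2},s+k/2)$ for a nonzero $c(x)\in E_x$, so evaluating at $s=(k_x-k)/2$ gives $c(x)L_{p,\alpha(x)}(\mathbf{f}_x,\omega^{k/2},(k_x-k)/2+k/2)=c(x)L_{p,\alpha(x)}(\mathbf{f}_x,\omega^{k/2},k_x/2)$, which by the comparison in Remark~\ref{rem:comparepadicLfunctionsBelliachevsSeveso} (with $\epsilon=\mathds 1$, evaluated at the near-central point $s=k_x/2$ in the neighborhood $k/2+p\ZZ_p$) equals $c(x)$ times $L_p(\mathbf{f}_x,\mathds 1,k_x/2)$ up to the explicit Euler-type factors that relate Seveso's normalization to Kato's. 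Second, I would specialize the right-hand side via the interpolation formula \eqref{interpolationmazurkitagawa2}: $L_p(\mathbf{f},\mathds 1,k_x,k_x/2)=\lambda(x)L_{p,\alpha(x)}(\mathbf{f}_x,\mathds 1\cdot\omega^{k/2},k_x/2)$, where $\lambda(x)$ is Stevens's interpolation factor. Comparing the two specializations, both are nonzero scalar multiples of the same Amice--Vélu--Manin--Višik $p$-adic $L$-value $L_{p,\alpha(x)}(\mathbf{f}_x,\omega^{k/2},k_x/2)$, so the ratio of the two sides at $x$ equals $c(x)/\lambda(x)$ (up to the normalization factors, which also match up because both functions are pinned to Kato's element and Stevens's interpolation).

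The function $\kappa\mapsto a(\kappa)$ is then forced to interpolate the analytic quantities $c(x)/\lambda(x)$ at classical points; I would argue it extends to a rigid analytic function in a neighborhood of $\kappa=k$ because it is a ratio of two analytic functions whose zero loci I must control. This is where the main obstacle lies: one must show that $a(\kappa)$ does not vanish at $\kappa=k$, equivalently that neither the Hansen--Wang comparison constant $c(\kappa)$ degenerates nor $\lambda(\kappa)$ (Stevens's factor) acquires an extra zero at the central critical weight $k$. For the former, the non-vanishing of $c(\kappa)$ near $\kappa=k$ is part of the content of \cite[Proposition~4.2.2]{hansenarxiv}; for the latter, one invokes the explicit shape of Stevens's interpolation factor (see \cite[Theorem~1.5]{bertolinidarmon05}, \cite[Theorem~5.3]{sevesoCJM}), which is a product of terms of the form $1-p^{\,?}\alpha(\kappa)^{\pm1}$ that, given $\alpha(k)=p^{-1}$ and our running slope hypothesis, is checked to be nonzero at $\kappa=k$. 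Assembling these, one concludes that $a(\kappa)$ is analytic and non-vanishing in a neighborhood of $\kappa=k$, which is the assertion of the proposition. I would remark that the bookkeeping of the various $\Gamma$-factors, periods, and Euler factors in matching Seveso's normalization to Kato's is the only genuinely delicate point, and it is already essentially done in Remark~\ref{rem:comparepadicLfunctionsBelliachevsSeveso}.
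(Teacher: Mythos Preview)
Your strategy of interpolating along the central critical line $s=(\kappa-k)/2$ alone has a genuine gap, and the paper's proof takes a crucially different route precisely to avoid it. The paper defines the \emph{two-variable} ratio
\[
\textup{Err}(\kappa,s)=\frac{L_p(\frak{Z}_{\mathbf{f}}^{\textup{BK}},\kappa,s)}{L_p(\mathbf{f},\mathds{1},\kappa,s+k/2)}
\]
and, at each classical $k_x$, uses the identities \eqref{interpolationmazurkitagawa2} and \eqref{eqn:specialize2varkatopadicL} \emph{for all $s$}, together with Rohrlich's non-vanishing theorem (to justify cancelling the common factor $L_{p,\alpha(x)}(\mathbf{f}_x,\omega^{k/2},s+k/2)$ as a function of $s$), to conclude $\textup{Err}(k_x,s)=c(x)/\lambda(x)$ is \emph{constant in $s$}. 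Density then forces $\textup{Err}(\kappa,s)=\textup{Err}(\kappa)$, a meromorphic function of $\kappa$ alone, which is finite and nonzero at every classical non-critical-slope point, in particular at $k$. One then sets $a(\kappa)=\textup{Err}(\kappa)$ and restricts to the central critical line.

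Your argument, by contrast, evaluates only at the single point $s=(k_x-k)/2$ for each $k_x$. At $k_x=k$ (the point you care about) both numerator and denominator vanish, so you cannot read off the ratio directly; and even at other $k_x$ the central critical value $L_{p,\alpha(x)}(\mathbf{f}_x,\omega^{k/2},k_x/2)$ may well vanish, so you cannot cancel. To salvage this you appeal to an explicit analytic form for $\lambda(\kappa)$ as a product of Euler-type factors, but this is not available: Stevens's factor $\lambda(x)$ is a period ratio, and (as the paper's Remark~\ref{rem:proofoferrthmonthecriticalline} explicitly notes) there is no a~priori reason it varies even continuously in $\kappa$. The two-variable detour, with Rohrlich's theorem supplying the cancellation in the $s$-direction, is exactly what circumvents this obstruction.
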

\begin{proof}
Let us define
$$\textup{Err}(\kappa,s):=\frac{L_p(\frak{Z}_{\mathbf{f}}^{\textup{BK}},\kappa,s)}{L_p(\mathbf{f},\mathds{1},\kappa,s+k/2)}\,.$$  
{\blue Using \eqref{interpolationmazurkitagawa2} with $\epsilon=\mathds{1}$ together with \eqref{eqn:specialize2varkatopadicL} and relying on the fact that we have, for every classical $x \in U$ of non-critical slope, $L_{p,\alpha(x)}(\mathbf{f}_x,\omega^{k/2},s+k/2)\neq 0$ as an element of the integral domain $\mathcal{A}^{\rm cyc}(\mathcal{H}_E)$
(to eliminate common terms), we see that
\begin{align*}\mathscr{A}_N(k_x,s)^{-1}\textup{Err}(k_x,s)=\frac{c(x)}{\lambda(x)}
\end{align*}
for every $x$ as above. We remind the reader that $\mathscr{A}_N(k_x,s)$ is a non-zero element of $\mathcal{A}^{\rm cyc}(\mathcal{H}_E)$ (c.f. \cite{BB_CriticalKato1}, Lemma 6.13) and in fact $\mathscr{A}_N(k_x,j)\neq 0$ for any such $x$ and integer $j\neq \dfrac{k_x-k}{2}-1$. Since classical points of non-critical slope are dense in $U$, it follows that the meromorphic function $\mathscr{A}_N(\kappa,s)^{-1}\textup{Err}(\kappa,s)$ does not depend on the cyclotomic variable $s$, i.e. it is only a function of the weight-variable $\kappa$. We remark that $\mathscr{A}_N(\kappa,s)^{-1}\textup{Err}(\kappa,s)$ is a meromorphic function since it is the product of two functions which are both manifestly so.  In particular, 
$$\mathscr{A}_N(\kappa,s)^{-1}\textup{Err}(\kappa,s)=:\textup{Err}_0(\kappa)=\mathscr{A}_N(\kappa,{k}/{2}-1)^{-1}\textup{Err}(\kappa,{k}/{2}-1)\,.$$ 
Note that $\textup{Err}_0(\kappa)$ is a meromorphic function (being the product of two meromorphic functions) and non-vanishing at every classical point of non-critical slope in a neighborhood of $\kappa=k$ (since neither $\mathscr{A}_N(\kappa,s)^{-1}$ nor $\textup{Err}(\kappa,s)$ is identically zero as a function of $s$ for classical $\kappa$); in particular at $k$. The proof is complete on setting $a(\kappa):=\mathscr{A}_N(\kappa,\frac{\kappa-k}{2})\,\textup{Err}_0(\kappa)$ and passing to a suitably small neighbourhood of $\kappa=k$.}
\end{proof}
 Let $\epsilon$ be an auxiliary quadratic or trivial character verifying 
\be\label{eqn:choiceofpsi}
\epsilon(-N)=-\varepsilon_f \hbox{ \,\,\,\, and \,\,\,\,} \epsilon(p)=1\,.
\ee



Let $L$ be an algebraic number field containing all $a_n$. Let $E \supset \QQ_{p^2}$ denote its completion at a fixed (arbitrary) prime above $p$. Let the $\mathcal{M}_{k-2}{/\QQ}$ denote the Iovita-Spiess Chow motive of weight $k$ modular forms (whose $p$-adic realisation affords representations associated to cusp forms which are new at $pN^-$). There exists a map (via Faltings' comparison theorem and Coleman)
$$\log\Phi^{\textup{AJ}}:  \textup{CH}^{{k}/{2}}(\mathcal{M}_{k-2}\otimes H) \lra M_k(\Gamma,E)^*,$$
(which is essentially identical to the $p$-adic Abel--Jacobi-map) where $\Gamma$ is a certain `congruence subgroup' of a suitably chosen quaternion algebra\footnote{All this is made explicit in \cite[Section 5.3.2]{sevesoCJM}; see also \cite{sevesocrelle}.}, $H$ is a certain extension of $K$ and $M_k(\Gamma,E)^*$ is the dual of the space of rigid analytic modular forms for $\Gamma$. Let
\[
\left.  \frak{L}_p(\kappa)=L_p (\mathbf{f},\epsilon, \kappa, s)\right |_{s={\kappa}/{2}}
\]
denote the restriction of $L_p(\mathbf{f},\epsilon,\kappa,s)$ on the central critical line. 
The following result is \cite[Theorem 6.1]{sevesoCJM}. 

\begin{thm}[Seveso] \label{thm:sevesomainmazurKitag} $\,$\\
\textup{i)} The $p$-adic $L$-function $\frak{L}_p(\kappa)$ vanishes at $\kappa=k$ at least of order $2$.\\
\textup{ii)} There exists a Heegner cycle 
$$y^\epsilon\in \textup{CH}^{{k}/{2}}(\mathcal{M}_{k-2}\otimes H)$$ 
in the Chow group of codimension-$k/2$ cycles and an element $t_f \in L^\times$ such that 
$$\frac{d^2 \frak{L}_p}{d\kappa^2}\Big{|}_{\kappa=k}=t_f\cdot \left(\log\Phi^{\textup{AJ}}(y^\epsilon)\big{|}_{f^{\textup{rig}}}\right)^2\,.$$
Here $f^{\textup{rig}} \in M_{k}(\Gamma,E)$ is the rigid analytic modular form associated (via the Cerednik--Drinfeld uniformization) to the Jacquet--Langlands correspondent of $f$.
\end{thm}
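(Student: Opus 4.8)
The statement is Seveso's \cite[Theorem~6.1]{sevesoCJM}; here is the strategy one would follow to prove it, which generalizes to higher weight the double-integral method of Bertolini--Darmon \cite{bertolinidarmon05}.

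Part (i) is a parity assertion. The plan would be to first record a functional equation for the two-variable Mazur--Kitagawa function of the shape $L_p(\mathbf{f},\epsilon,\kappa,s)=-\epsilon(-N)\,L_p(\mathbf{f},\epsilon,\kappa,\kappa-s)$ along the relevant component of weight space; this is deduced from the interpolation property (\ref{interpolationmazurkitagawa}) at classical points, the functional equations of the complex $L$-functions $L(\mathbf{f}_x,\epsilon,s)$, a sign computation in the spirit of \cite{mtt}, and a density argument (a dense set of classical $x$ of non-critical slope determines the two-variable function). The central critical line $s=\kappa/2$ is fixed by $s\mapsto\kappa-s$, and under the sign hypothesis (\ref{eqn:choiceofpsi}) the functional equation degenerates there. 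Since $a_p=p^{k/2-1}$ forces an exceptional zero at $\kappa=k$ (Stevens's interpolation factor degenerates there), one gets $\frak{L}_p(k)=0$; applying the same reflection symmetry in a neighbourhood of $k$ upgrades this to $\textup{ord}_{\kappa=k}\,\frak{L}_p(\kappa)\geq 2$.

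For part (ii) I would pass to a quaternionic, $p$-adically uniformized model. Factor $N=N^+N^-$ with $N^-$ squarefree and a product of an odd number of primes, and choose an imaginary quadratic field $K$ in which $p$ and the primes dividing $N^+$ split while those dividing $N^-$ are inert; this is where (\ref{eqn:choiceofpsi}) is used, to guarantee the Heegner hypothesis and that the root number relative to $K$ is $-1$. By Jacquet--Langlands, $f$ corresponds to an automorphic form on the quaternion algebra ramified at $N^-$ and at $p$; via the \v{C}erednik--Drinfeld uniformization the associated Shimura curve becomes a quotient of Drinfeld's $p$-adic upper half plane, and $f$ is incarnated by the rigid analytic modular form $f^{\textup{rig}}$ together with its Iovita--Spiess cohomology class. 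One then realizes $\frak{L}_p(\kappa)$ as a \emph{double integral} against a measure on $\mathbb{Z}_p^\times\times\QQ_p^\times$ built from this class, the weight variable $\kappa$ entering through the Amice transform in the fibre direction (generalizing Teitelbaum's $p$-adic Poisson kernel and the weight-$2$ construction of \cite{bertolinidarmon05}).

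The last and hardest step is the computation of $\tfrac{d^2}{d\kappa^2}\frak{L}_p(\kappa)|_{\kappa=k}$: differentiating the double integral twice and invoking the order-$\geq 2$ vanishing from part (i), a semigroup manipulation à la \cite[\S3]{bertolinidarmon05} should show that this second derivative is, up to an explicit nonzero constant recording automorphic and Shimura periods, the \emph{square} of a single $p$-adic Coleman line integral $\int_\tau^{\overline\tau}\omega_{f^{\textup{rig}}}$ between a CM point $\tau$ on the $p$-adic upper half plane and its conjugate $\overline\tau$. One then identifies this Coleman integral with $\log\Phi^{\textup{AJ}}(y^\epsilon)|_{f^{\textup{rig}}}$: the Heegner cycle $y^\epsilon\in\textup{CH}^{k/2}(\mathcal{M}_{k-2}\otimes H)$ is the CM cycle in the Kuga--Sato-type variety underlying the Iovita--Spiess motive, and by Faltings's comparison theorem together with Coleman's description of the $p$-adic Abel--Jacobi map in terms of $p$-adic integration its image is exactly this integral; the constant $t_f\in L^\times$ absorbs the comparison of rigid-analytic with algebraic normalizations and the ratios of interpolation/period factors, whose non-vanishing at $k$ is secured by Rohrlich's non-vanishing theorem. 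The main obstacle is precisely this identification in \emph{arbitrary even weight}: in weight $2$ it is Bertolini--Darmon's ``Coleman integration equals double integral'', while in general it demands a careful analysis of the Iovita--Spiess Chow motive, of its $p$-adic realization over the $p$-adically uniformized Shimura curve, and of Coleman integration of vector-valued differentials, which is the technical core of \cite{sevesoCJM} (building on \cite{sevesocrelle} and earlier work of Iovita--Spiess and de Shalit).
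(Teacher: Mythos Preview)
The paper does not prove this statement: it simply records it as \cite[Theorem~6.1]{sevesoCJM} and uses it as a black box. There is therefore no proof in the paper to compare your proposal against; your sketch is a plausible summary of Seveso's own strategy (generalizing Bertolini--Darmon's double-integral method to higher weight via the Iovita--Spiess motive and \v{C}erednik--Drinfeld uniformization), but its correctness must be judged against \cite{sevesoCJM} rather than the present paper.
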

When the sign of the functional equation is $-1$, the trivial character verifies (\ref{eqn:choiceofpsi}). We shall eventually place ourselves in a situation where $r_{\textup{an}}(f)=1$ and we will take $\epsilon$ to be the trivial character.
 
\subsection{Exceptional zeros of $p$-non-ordinary $p$-adic $L$-functions}
Our goal in this section is to give a proof of Theorem~A of the introduction. In the remainder of this paper, we assume that  $f=\underset{n=1}{\overset{\infty}\sum}a_nq^n$ is an  elliptic newform of even weight $k$ for $\Gamma_0(Np)$ such that $a_p=p^{k/2-1}$ and  $r_{\textup{an}}(f)>0$.  Let $\left[z_f^{\textup{BK}}\right]=\mathrm{pr}_0\left(\left[z_{f,\Iw}^{\textup{BK}}\right]\right)\in H^1 (G_{\QQ,S},V_f)$ denote the Beilinson-Kato element at the ground level.  Kato's explicit reciprocity law and our hypothesis on $r_{\textup{an}}(f)$ imply that  $\left[z_f^{\textup{BK}}\right]\in H^1_{\textup{f}}(\QQ,V_f)$. We denote by  $\left[\frak{z}^{\textup{BK}}_f\right] \in \widetilde{H}^1_{\textup{f}}(V_f)$ the canonical lift of $[z^{\textup{BK}}_f]$ with respect to the splitting $\textup{spl}$ of Proposition~\ref{prop:splitting}.

\begin{thm}
\label{tim:main}  We have,
$$ \frac{\left(1-\frac{1}{p} \right)\cdot\left\langle\Psi_1,\partial^{\textup{loc}}_0(d_{\widetilde{\delta}})\right\rangle \cdot\left\langle\Psi_2,\res_p[z^{\textup{BK}}_f]\right\rangle }{2\cdot\Gamma(k/2)}\cdot\frac{d^2}{ds^2}\,L_{p,\alpha}(f,\omega^{k/2},s)\big{|}_{s=k/2}=\frac{\frak{Reg}_{\frak h_{p}}\left(\partial_0(d_{\widetilde{\delta}}),\left[\frak{z}_f^{\textup{BK}}\right]\right)}{\left [e_\alpha,b_f^*\right ]_{V_f}}\,\,.$$
 \end{thm}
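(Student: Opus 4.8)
The plan is to derive Theorem~\ref{tim:main} by specializing Theorem~\ref{thm:leadingtermformula}(ii) at the particular class $\frak{X}=\frak{Z}^{\textup{BK}}_{\mathbf f}$ (the two-variable Beilinson--Kato class of Hansen--Wang) and translating the resulting identity of two-variable $p$-adic $L$-functions, via the interpolation formula for the cyclotomic zeta element, into a statement about the second derivative of $L_{p,\alpha}(f,\omega^{k/2},s)$. First I would take $\frak{X}=\frak{Z}^{\textup{BK}}_{\mathbf f}$, so that $\textup{pr}_0(\frak{X})=[z^{\textup{BK}}_f]\in H^1_{\textup{f}}(\QQ,V_f)$ by Kato's explicit reciprocity law together with the running hypothesis $r_{\textup{an}}(f)>0$; its canonical lift is $[\frak{z}_f^{\textup{BK}}]\in\widetilde{H}^1_{\textup{f}}(V_f)$. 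Applying Theorem~\ref{thm:leadingtermformula}(ii) verbatim to this input gives, inside $\frak I^2/\frak I^3$,
\[
\frac{\left(1-\frac1p\right)}{\Gamma\left(\frac k2-1\right)}\left\langle\Psi_2,\res_p([z^{\textup{BK}}_f])\right\rangle\cdot\left\langle\Psi_1,\partial^{\textup{loc}}_0(d_{\widetilde\delta})\right\rangle\cdot L_p(\frak{Z}^{\textup{BK}}_{\mathbf f},\kappa,s)=\mathcal A\left(\frak{Reg}_{\mathbb H_{\mathbf f}}(\partial_0(d_{\widetilde\delta}),[\frak{z}_f^{\textup{BK}}])\right).
\]

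Next I would relate both sides to one-variable objects by restricting to an appropriate line and using the interpolation identity \eqref{interpolationkato}, namely $L_p([z^{\textup{BK}}_{f,\Iw}],s)=L_{p,\alpha}(f,\omega^{k/2},s+k/2)\,[e_\alpha,b^*_f]$. Since $L_p(\frak{Z}^{\textup{BK}}_{\mathbf f},\kappa,s)=\mathcal A(\frak{Log}_{\mathbf f}(\frak{Z}^{\textup{BK}}_{\mathbf f}))(\kappa,-s)$ and $\res_p(\frak{Z}^{\textup{BK}}_{\mathbf f})$ specializes in weight $\kappa=k$ to $\res_p([z^{\textup{BK}}_{f,\Iw}])$, plugging in $\kappa=k$ collapses the weight variable and leaves precisely $L_p([z^{\textup{BK}}_{f,\Iw}],s)$ up to the factor $[e_\alpha,b^*_f]$; the second $s$-derivative at $s=0$ of the latter is $\frac{d^2}{ds^2}L_{p,\alpha}(f,\omega^{k/2},s)|_{s=k/2}$. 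On the right-hand side, setting $\kappa=k$ in $\mathcal A\left(\frak{Reg}_{\mathbb H_{\mathbf f}}(\partial_0(d_{\widetilde\delta}),[\frak{z}_f^{\textup{BK}}])\right)$ and reading off the $s^2$-coefficient produces $\frak{Reg}_{\frak h_p}(\partial_0(d_{\widetilde\delta}),[\frak{z}_f^{\textup{BK}}])$ by Theorem~\ref{thm:thepropertiesofthetwovarheight}(i) (the $\partial/\partial s$ derivative of $\mathcal A(\mathbb H_{\mathbf f})$ at $s=0,\kappa=k$ is $-\frak h_p$), so the $2\times2$ determinant built from $\mathbb H_{\mathbf f}$ degenerates to the determinant built from $\frak h_p$. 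Comparing the coefficients of $s^2$ on the two sides of the displayed identity — which is legitimate because both sides lie in $\frak I^2/\frak I^3$ and we are extracting the pure $s^2$-part — yields exactly the claimed formula after dividing through by $[e_\alpha,b^*_f]_{V_f}$ (noting $[e_\alpha,b^*_f]$ is nonzero).

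The step I expect to be the main obstacle is the careful bookkeeping of which power of $\frak I$ each quantity lives in and the justification that extracting the $s^2$-coefficient after setting $\kappa=k$ is compatible on both sides; in particular one must check that $L_p(\frak{Z}^{\textup{BK}}_{\mathbf f},\kappa,s)$ actually vanishes to order $\geq 2$ along $\kappa=k$ (so that the weight-specialized identity is an identity modulo $s^3$ and not vacuous), which follows from \eqref{interpolationkato} together with the fact that $\textup{ord}_{s=k/2}L_{p,\alpha}(f,\omega^{k/2},s)\geq 2$ (Kato's theorem, as recalled in the introduction). A secondary subtlety is matching the normalizations of the cyclotomic Amice transform $\mathcal A^{\textup{cyc}}$ (the variable $s$ versus $-s$, and the sign in Theorem~\ref{thm:thepropertiesofthetwovarheight}(i)); I would track these exactly as in Remark~\ref{rem:cyclopairingheight}. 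Once these compatibilities are pinned down, the argument is essentially a specialization of Theorem~\ref{thm:leadingtermformula}(ii) and involves no new input beyond the interpolation property \eqref{interpolationkato} of Kato's zeta element.
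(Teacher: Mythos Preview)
Your proposal is correct and follows the same approach as the paper: apply Theorem~\ref{thm:leadingtermformula}(ii) with $\frak{X}$ the two-variable Beilinson--Kato class, specialize at $\kappa=k$, and convert the $\mathbb{H}_{\mathbf f}$-regulator into the $\frak h_p$-regulator via \eqref{eqn:twovarexpanded1} (your Theorem~\ref{thm:thepropertiesofthetwovarheight}(i)), then match against \eqref{interpolationkato} and read off the $s^2$-coefficient. The only point the paper handles more carefully is that $\psi_k(\frak{Z}^{\textup{BK}}_{\mathbf f})=c\cdot[z^{\textup{BK}}_{f,\Iw}]$ for some nonzero constant $c$ rather than on the nose, so the paper first rescales and works with $\frak{X}^{\textup{BK}}_{\mathbf f}:=c^{-1}\frak{Z}^{\textup{BK}}_{\mathbf f}$; in your formulation the extra factors of $c$ cancel anyway (both sides scale as $c^2$), so this is a cosmetic rather than substantive omission.
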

\begin{proof} 
It follows from \cite[Equation (68)]{BB_CriticalKato1} (which is a more precise form of Theorem A.(i) in op. cit. concerning the center $k$ of the affinoid disc ${\rm Sp}(A)$)  that 
\begin{equation}
\label{eqn_specialize_big_Kato_at_centre_precise_68_BBCK1A}
\psi_k(\frak{Z}^{\textup{BK}}_{\mathbf{f}})= {\rm Tw}_{\frac{k}{2}} \mathscr{E}_N(k) [z_{f,\Iw}^{\textup{BK}}],
\end{equation} 
where we recall that
$$ {\rm Tw}_{\frac{k}{2}} \mathscr{E}_N(k):=\prod_{\ell\mid N} (1-a_\ell(f)\ell^{-\frac{k}{2}}\sigma_\ell^{-1})$$
is the product of (twisted) bad Euler factors away from $p$, where $\sigma_\ell\in \Gamma$ is the unique element with $\chi(\sigma_\ell)=1$. Note that the Amice transform of ${\rm Tw}_{\frac{k}{2}} \mathscr{E}_N(k)$ is then 
$$ \mathscr{A}_N(k,s)=\prod_{\ell\mid N} (1-a_\ell(f)\ell^{-\frac{k}{2}}\langle \ell\rangle^{-s})\,.$$
The identity \eqref{eqn_specialize_big_Kato_at_centre_precise_68_BBCK1A} in turn tells us that 
\begin{equation}
\label{eqn_thm_main_new_eqn_1}
{\rm pr}_0\circ \psi_k(\frak{Z}^{\textup{BK}}_{\mathbf{f}})= \mathds{1}\left({\rm Tw}_{\frac{k}{2}} \mathscr{E}_N(k)\right) [z_{f}^{\textup{BK}}],
\end{equation} 
where we note that 
$$ \mathds{1}\left({\rm Tw}_{\frac{k}{2}} \mathscr{E}_N(k)\right)=\mathscr{A}_N(k,0)=\prod_{\ell\mid N} (1-a_\ell(f)\ell^{-\frac{k}{2}})\in E^\times$$ 
by the weight-monodromy conjecture for modular forms  (c.f. \cite[Corollary 2]{SaitoWeightMonodromySupplement}; see also \cite{BB_CriticalKato1}, Lemma 6.13). It follows from Theorem~\ref{them:propertiestwovarPRlog}(i) together with \eqref{eqn_specialize_big_Kato_at_centre_precise_68_BBCK1A} and \eqref{interpolationkato} that
\be\label{eqn:descentfrom2varto1}
L_p(\frak{Z}^{\textup{BK}}_\mathbf{f},k,s)=\mathscr{A}_N(k)\,L_p([z_{f,\Iw}^{\textup{BK}}],s)= \mathscr{A}_N(k)\,L_{p,\alpha}(f,\omega^{k/2},s+k/2)\,[e_\alpha, b^*_{f}]\,.
\ee
Set $\Omega_p:=\left(1-\frac{1}{p}\right)\Gamma(k/2)^{-1}\left\langle\Psi_2,\textup{res}_p([z^{\textup{BK}}_f])\right\rangle\cdot\left\langle\Psi_1,\partial^{\textup{loc}}_0(d_{\widetilde{\delta}})\right\rangle$, so that we have 
\begin{align*}
\Omega_p\cdot \,&\mathscr{A}_N(k,0)\cdot  L_p(\frak{Z}^{\textup{BK}}_\mathbf{f},k,s)  \\
&\equiv \mathscr{A}_N(k,0)^2\cdot \mathcal{A}\circ\det\left(\begin{array}{cc} \mathbb{H}_\mathbf{f}\left(\partial_0(d_{\widetilde{\delta}}),\partial_0(d_{\widetilde{\delta}})\right) & \mathbb{H}_\mathbf{f}\left(\partial_0(d_{\widetilde{\delta}}),[\frak{z}_f^{\textup{BK}}]\right)\\\\  
\mathbb{H}_\mathbf{f}\left([\frak{z}_f^{\textup{BK}}],\partial_0(d_{\widetilde{\delta}})\right) &  \mathbb{H}_\mathbf{f}\left([\frak{z}_f^{\textup{BK}}],[\frak{z}_f^{\textup{BK}}]\right)\end{array}\right)\Bigg{|}_{\kappa=k}\\\\
&\equiv
\mathscr{A}_N(k,0)^2\cdot \det\left(\begin{array}{cc} -\frak{h}_p\left(\partial_0(d_{\widetilde{\delta}}),\partial_0(d_{\widetilde{\delta}})\right) & -\frak{h}_p\left(\partial_0(d_{\widetilde{\delta}}),[\frak{z}_f^{\textup{BK}}]\right)\\\\  -\frak{h}_p\left([\frak{z}_f^{\textup{BK}}],\partial_0(d_{\widetilde{\delta}})\right) &  -\frak{h}_p\left([\frak{z}_f^{\textup{BK}}],[\frak{z}_f^{\textup{BK}}]\right)\end{array}\right)\cdot s^2 \mod s^3\,\\\\
&=\mathscr{A}_N(k,0)^2\cdot \frak{Reg}_{\frak h_{p}}\left(\partial_0(d_{\widetilde{\delta}}),\left[\frak{z}_f^{\textup{BK}}\right]\right),
\end{align*}
where the first equality follows from  Theorem~\ref{thm:leadingtermformula} and \eqref{eqn_thm_main_new_eqn_1}, whereas the second from \eqref{eqn:twovarexpanded1}. The proof now follows combining this calculation with \eqref{eqn:descentfrom2varto1} and the fact that $\mathscr{A}_N(k,0)\neq 0$.
\end{proof}
\begin{cor} Assuming the non-vanishing $\mathcal L_{\textup{FM}}(f)$ we have
\begin{multline*}\left (1-\frac{1}{p}\right )\frac{[e_{\alpha},b_f^*]_{V_f}}{2\Gamma(k/2)} \langle \Psi_2, \res_p([z_f^{\textup{BK}}])\rangle \cdot \frac{d^2}{ds^2}  L_{p,\alpha}(f,\omega^{k/2},s)\big{|}_{s=k/2}=\\
\mathcal L_{\textup{FM}}(f)\cdot h^{\textup{Nek}}([z^{\textup{BK}}_f], [z^{\textup{BK}}_f]).
\end{multline*}
Here $h^{\textup{Nek}}$ denotes Nekov\'a\v r's $p$-adic height in \cite{Ne92} associated to the splitting of the Hodge filtration of $\Dst (V_f)$
induced  by $D.$ 
\end{cor}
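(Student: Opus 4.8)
The plan is to deduce this corollary from Theorem~\ref{tim:main} by converting the left-hand side, which is expressed in terms of the regulator $\frak{Reg}_{\frak h_p}$ of the pairing $\frak h_p$ on the two-dimensional space spanned by $\partial_0(d_{\widetilde{\delta}})$ and $[\frak z_f^{\textup{BK}}]$, into the Nekov\'a\v r height $h^{\textup{Nek}}([z_f^{\textup{BK}}],[z_f^{\textup{BK}}])$ of the single Beilinson--Kato class on the Bloch--Kato Selmer group. The key geometric input is the exact sequence $0\to H^0(\widetilde{\mathbb D}_f)\xrightarrow{\partial_0}\widetilde H^1_{\textup{f}}(V_f)\to H^1_{\textup{f}}(\QQ,V_f)\to 0$ of Proposition~\ref{prop:splitting}(i), together with the fact that $\frak h_p$ is degenerate in a controlled way along the one-dimensional subspace $\partial_0(H^0(\widetilde{\mathbb D}_f))$: the diagonal term $\frak h_p(\partial_0(d_{\widetilde{\delta}}),\partial_0(d_{\widetilde{\delta}}))$ has a known expression coming from the identity (\ref{eqn:Rubin1}) of Theorem~\ref{thm:thepropertiesofthetwovarheight}(iii), and the mixed terms $\frak h_p(\partial_0(d_{\widetilde{\delta}}),[\frak z_f^{\textup{BK}}])$ can be computed from (\ref{eqn:Rubin2}) (specialized via Theorem~\ref{thm:thepropertiesofthetwovarheight}(i), using $\frac{\partial}{\partial s}\mathcal A(\mathbb H_{\mathbf f})|_{s=0,\kappa=k}=-\frak h_p$).

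The steps, in order, are as follows. First I would expand the $2\times 2$ determinant $\frak{Reg}_{\frak h_p}(\partial_0(d_{\widetilde{\delta}}),[\frak z_f^{\textup{BK}}])$ appearing on the right-hand side of Theorem~\ref{tim:main}, writing it as $\frak h_p(\partial_0(d_{\widetilde{\delta}}),\partial_0(d_{\widetilde{\delta}}))\cdot\frak h_p([\frak z_f^{\textup{BK}}],[\frak z_f^{\textup{BK}}])-\frak h_p(\partial_0(d_{\widetilde{\delta}}),[\frak z_f^{\textup{BK}}])^2$. Second, I would invoke the Rubin-style formula (\ref{eqn:Rubin2}) together with Theorem~\ref{thm:thepropertiesofthetwovarheight}(i) to obtain $\frak h_p(\partial_0(d_{\widetilde{\delta}}),[\frak z_f^{\textup{BK}}])=\langle\Psi_2,\res_p([z_f^{\textup{BK}}])\rangle$ (noting $[\frak z_f^{\textup{BK}}]$ is the canonical lift so that $[x_p^+]=\res_p([z_f^{\textup{BK}}])$ in the relevant cohomology), and similarly (\ref{eqn:Rubin1}) gives $\frak h_p(\partial_0(d_{\widetilde{\delta}}),\partial_0(d_{\widetilde{\delta}}))=\langle\Psi_2,\partial^{\textup{loc}}_0(d_{\widetilde{\delta}})\rangle$ (the $\kappa=k$ specialization of the $s$-coefficient, up to the sign coming from Theorem~\ref{thm:thepropertiesofthetwovarheight}(i)). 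Third, I would use Theorem~11 of \cite{Ben14b} (the generalization to the non-ordinary case of \cite[Theorem~11.4.6]{Ne06}) relating $\frak h_p$ on $\widetilde H^1_{\textup{f}}(V_f)$ to Nekov\'a\v r's $h^{\textup{Nek}}$ on $H^1_{\textup{f}}(\QQ,V_f)$: this identity has the shape $\frak h_p([\frak z_f^{\textup{BK}}],[\frak z_f^{\textup{BK}}])=h^{\textup{Nek}}([z_f^{\textup{BK}}],[z_f^{\textup{BK}}])+(\text{correction term involving the }\al\text{-invariant and }\langle\Psi_i,\cdot\rangle)$, the correction being exactly what one picks up from the splitting and from $\rho_c^{-1}\circ\rho_{\textup{f}}=\mathcal L_{\textup{FM}}(f)$ (Proposition~\ref{prop:splitting}(ii)). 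Plugging these three computations into the expanded determinant and then into Theorem~\ref{tim:main}, the $\langle\Psi_2,\res_p([z_f^{\textup{BK}}])\rangle^2$ terms should combine, and after dividing through by the common factor $\langle\Psi_1,\partial^{\textup{loc}}_0(d_{\widetilde{\delta}})\rangle\langle\Psi_2,\res_p([z_f^{\textup{BK}}])\rangle$ one is left with $\mathcal L_{\textup{FM}}(f)\cdot h^{\textup{Nek}}([z_f^{\textup{BK}}],[z_f^{\textup{BK}}])$ on the right; here the hypothesis $\mathcal L_{\textup{FM}}(f)\neq 0$ is what makes the division and the rewriting legitimate.

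The main obstacle I anticipate is bookkeeping the precise form of the ``standard argument'' alluded to after the corollary statement --- namely carrying out the linear algebra relating the regulator of the degenerate pairing $\frak h_p$ on the extended Selmer group to the nondegenerate Nekov\'a\v r pairing on the Bloch--Kato quotient, in the style of the proof of \cite[Theorem~7.13]{Ne92}. Concretely, one must track how $\partial_0(d_{\widetilde{\delta}})$ pairs against $H^1_{\textup{f}}(\QQ,V_f)$ under $\frak h_p$ (it does not pair to zero, precisely because of the exceptional zero), express this in terms of $\exp^*$ and the $\al$-invariant using the explicit cocycle description (\ref{eqn:formulaforcoboundary}) of $\partial_0$, and verify that all the auxiliary normalization constants --- the factor $\left(1-\frac1p\right)\Gamma(k/2-1)^{-1}$, the sign from Remark~\ref{rem:cyclopairingheight}, the identification $J_E/J_E^2\simeq E$ via $\gamma-1\mapsto\log\chi(\gamma)$, and the pairing $[e_\alpha,b_f^*]_{V_f}$ --- match up exactly between the two sides. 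I expect no conceptual difficulty beyond this, since both Theorem~\ref{tim:main} and the comparison Theorem~11 of \cite{Ben14b} are already in hand; the work is purely in aligning conventions.
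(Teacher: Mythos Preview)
Your proposal is correct and follows essentially the same route as the paper's proof: expand the $2\times 2$ regulator, compute the entries involving $\partial_0(d_{\widetilde\delta})$ via the Rubin-style formulae (\ref{eqn:Rubin1})--(\ref{eqn:Rubin2}) specialized through Theorem~\ref{thm:thepropertiesofthetwovarheight}(i), invoke the comparison between $\frak h_p$ and $h^{\textup{Nek}}$ from \cite[Theorem~11 and Lemma~10]{Ben14b}, and finish using $\langle\Psi_2,\partial^{\textup{loc}}_0(d_{\widetilde\delta})\rangle=-\mathcal L_{\textup{FM}}(f)\,\langle\Psi_1,\partial^{\textup{loc}}_0(d_{\widetilde\delta})\rangle$. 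The only place you are vague is the precise shape of the correction term: the paper records it as
\[
h^{\textup{Nek}}([z_f^{\textup{BK}}],[z_f^{\textup{BK}}])=\frak h_p([\frak z_f^{\textup{BK}}],[\frak z_f^{\textup{BK}}])+\frac{\langle\Psi_2,\res_p([z_f^{\textup{BK}}])\rangle^2}{\langle\Psi_2,\partial^{\textup{loc}}_0(d_{\widetilde\delta})\rangle},
\]
so that the $\langle\Psi_2,\res_p([z_f^{\textup{BK}}])\rangle^2$ term cancels exactly against the off-diagonal contribution to the regulator, leaving $\frak{Reg}_{\frak h_p}=-\langle\Psi_2,\partial^{\textup{loc}}_0(d_{\widetilde\delta})\rangle\cdot h^{\textup{Nek}}$; your anticipated sign bookkeeping is the only remaining work.
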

\begin{proof} This formula is a formal consequence of the computation of   
$\frak{h}_p$ in terms of  $h^{\textup{Nek}}$ 
 (see, for example, the proof of \cite[Proposition~11.4.9]{Ne06} for the ordinary case). By \cite[Proposition 20]{Ben14b}, the pairing  $h^{\textup{Nek}}$ coincides with the pairing 
$h^{\spl}_{V_f,D}$ constructed in \cite{Ben14b}. Theorem~11 and Lemma~10
of \cite{Ben14b} together give
\[
h^{\textup{Nek}}([z_f^{\textup{BK}}],[z_f^{\textup{BK}}])=
\frak{h}_p([z_f^{\textup{BK}}],[z_f^{\textup{BK}}])+\frac{\langle \Psi_2, \res_p  ([z_f^{\textup{BK}}])\rangle^2}
{\langle \Psi_2,\partial^{\textup{loc}}_0(d_{\widetilde{\delta}})  \rangle}.
\]
Using Theorem~\ref{thm:thepropertiesofthetwovarheight}, we have
\[
\frak{Reg}_{\frak h_{p}}\left(\partial_0(d_{\widetilde{\delta}}),\left[\frak{z}_f^{\textup{BK}}\right]\right)=-\frak{h}_p([z_f^{\textup{BK}}],[z_f^{\textup{BK}}]) \langle \Psi_2,\partial^{\textup{loc}}_0(d_{\widetilde{\delta}})\rangle-
 \langle \Psi_2,\res_p  ([z_f^{\textup{BK}}])\rangle^2.
\]
Therefore
\[
\frac{\frak{Reg}_{\frak h_{p}}\left(\partial_0(d_{\widetilde{\delta}}),\left[\frak{z}_f^{\textup{BK}}\right]\right)}{\langle \Psi_1,\partial^{\textup{loc}}_0(d_{\widetilde{\delta}})\rangle }=-h^{\textup{Nek}}([z_f^{\textup{BK}}],[z_f^{\textup{BK}}])\cdot \frac{\langle \Psi_2,\partial^{\textup{loc}}_0(d_{\widetilde{\delta}})\rangle}{\langle \Psi_1,\partial^{\textup{loc}}_0(d_{\widetilde{\delta}})\rangle }=
\mathcal{L}_{\textup{FM}}(f)\cdot h^{\textup{Nek}}([z_f^{\textup{BK}}],[z_f^{\textup{BK}}]).
\]
Now the Corollary follows from Theorem~\ref{tim:main}.
\end{proof}

\begin{rem} Theorem~\ref{tim:main} can be  also deduced from the existence of Beilinson--Kato
classes $[z^{\textup{BK}}_{f,\Iw}]$ and the one variable analog of Theorem~\ref{thm:leadingtermformula}, without any appeal to deformations of $V_f$ besides the cyclotomic deformation $\overline{V}_f$.  
 \end{rem}

\subsection{On the conjecture of Perrin-Riou}
\label{subsec:PRconjproved}
We keep notation and conventions of the  previous subsection. Assume, in addition, that $\varepsilon_f=-1$ and $\epsilon$ is chosen as the trivial character. Note that the condition (\ref{eqn:choiceofpsi}) holds.  


\begin{thm}
\label{thm:BKvsHeegner}
We have the following comparison between the Heegner cycle and the Beilinson-Kato element:
\begin{align*}t_f \cdot\left(\log\Phi^{\textup{AJ}}(y^\epsilon)\big{|}_{f^{\textup{rig}}}\right)^2&\left\langle\Psi_2,\, \textup{res}_p([z_{f}^{\textup{BK}}])\right\rangle=
\\&{a(k)}\cdot {\mathscr{A}_{N}(k,0)}\cdot \left(1-\frac{1}{p}\right)^{-1}\Gamma(k/2)\cdot\frac{\left\langle\Psi_2,\textup{res}_p([z_{f}^{\textup{BK}}])\right\rangle^2}{2\left\langle\Psi_1,\partial^{\textup{loc}}_0(d_{\widetilde{\delta}})\right\rangle}\,.\end{align*}
\end{thm}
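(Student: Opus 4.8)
\textbf{Proof plan for Theorem~\ref{thm:BKvsHeegner}.}
The plan is to chain together three ingredients, all of which are now available to us: Seveso's formula (Theorem~\ref{thm:sevesomainmazurKitag}(ii)) relating the second derivative of the Mazur--Kitagawa $p$-adic $L$-function on the central critical line to the Abel--Jacobi image of the Heegner cycle, the Hansen--Wang comparison (Proposition~\ref{thm:BKvscentralcriticalL}) relating the two-variable Beilinson--Kato $p$-adic $L$-function $L_p(\frak{Z}^{\textup{BK}}_{\mathbf f},\kappa,s)$ to $L_p(\mathbf{f},\mathds{1},\kappa,\kappa/2)$, and the central critical Rubin-style formula for $p$-adic $L$-functions (Corollary~\ref{thm:ccrubinstyleformula}) applied to $\frak{X}=\frak{X}^{\textup{BK}}_{\mathbf f}=c^{-1}\frak{Z}^{\textup{BK}}_{\mathbf f}$. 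First I would set $\frak{L}_p(\kappa)=L_p(\mathbf{f},\epsilon,\kappa,s)|_{s=\kappa/2}$ with $\epsilon=\mathds{1}$ (legitimate since $\varepsilon_f=-1$ forces the trivial character to satisfy~(\ref{eqn:choiceofpsi})), and observe that $\frak{L}_p(\frak{X}^{\textup{BK}}_{\mathbf f},\kappa):=L_p(\frak{X}^{\textup{BK}}_{\mathbf f},\kappa,(\kappa-k)/2)$, which is what Corollary~\ref{thm:ccrubinstyleformula} computes, differs from $\frak{L}_p(\kappa)$ by the nonzero analytic factor $a(\kappa)/c$ coming from Proposition~\ref{thm:BKvscentralcriticalL} (after accounting for the shift $s\mapsto s+k/2$ in the normalization~(\ref{eqn:specialize2varkatopadicL})). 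Here one must be mildly careful: $\frak{Z}^{\textup{BK}}_{\mathbf f}$ specializes to $c\cdot[z^{\textup{BK}}_{f,\Iw}]$ and $[z^{\textup{BK}}_f]\in H^1_{\textup{f}}(\QQ,V_f)$ by Kato's reciprocity law together with $r_{\textup{an}}(f)>0$, so the hypothesis $[x]=[z^{\textup{BK}}_f]\in H^1_{\textup{f}}(G_{\QQ,S},V_f)$ of Corollary~\ref{thm:ccrubinstyleformula} is satisfied with $[x_p^+]$ the $p$-component of the canonical lift $[\frak{z}^{\textup{BK}}_f]$.

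The core computation then runs as follows. By Corollary~\ref{thm:ccrubinstyleformula} applied to $\frak{X}^{\textup{BK}}_{\mathbf f}$ we get
\[
\left(1-\tfrac{1}{p}\right)\Gamma\!\left(\tfrac{k}{2}-1\right)^{-1}\langle\Psi_2,[x_p^+]\rangle\,\langle\Psi_1,\partial^{\textup{loc}}_0(d_{\widetilde{\delta}})\rangle\cdot\tfrac{d^2}{d\kappa^2}\frak{L}_p(\frak{X}^{\textup{BK}}_{\mathbf f},\kappa)\big|_{\kappa=k}=\tfrac{1}{2}\langle\Psi_2,[x_p^+]\rangle^2,
\]
where $[x_p^+]=\textup{res}_p([z^{\textup{BK}}_f])$ up to the identifications in play (more precisely $\langle\Psi_2,[x_p^+]\rangle=\langle\Psi_2,\textup{res}_p([z^{\textup{BK}}_f])\rangle$, since $[x_p^+]$ is the image of $\textup{res}_p([z^{\textup{BK}}_f])$ under $\frak{s}_1$ and $\Psi_2$ pairs only with the singular part). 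Next, $\frak{L}_p(\frak{X}^{\textup{BK}}_{\mathbf f},\kappa)=c^{-1}L_p(\frak{Z}^{\textup{BK}}_{\mathbf f},\kappa,s)|_{s=(\kappa-k)/2}$, and by Proposition~\ref{thm:BKvscentralcriticalL} (shifting the cyclotomic variable to match the normalization, i.e. using $L_p(\frak{Z}^{\textup{BK}}_{\mathbf f},\kappa,s)=\mathcal{A}(\frak{Log}_{\mathbf f}(\frak{Z}^{\textup{BK}}_{\mathbf f}))(\kappa,-s)$ against $L_p(\mathbf{f},\mathds 1,\kappa,s+k/2)$) this equals $c^{-1}a(\kappa)L_p(\mathbf{f},\mathds 1,\kappa,\kappa/2)=c^{-1}a(\kappa)\frak{L}_p(\kappa)$. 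Since $\frak{L}_p(\kappa)$ vanishes to order $\geq 2$ at $\kappa=k$ (Theorem~\ref{thm:sevesomainmazurKitag}(i)), Leibniz gives $\tfrac{d^2}{d\kappa^2}(a(\kappa)\frak{L}_p(\kappa))|_{\kappa=k}=a(k)\cdot\tfrac{d^2\frak{L}_p}{d\kappa^2}|_{\kappa=k}$, and then Theorem~\ref{thm:sevesomainmazurKitag}(ii) rewrites this as $a(k)\,t_f\,(\log\Phi^{\textup{AJ}}(y^\epsilon)|_{f^{\textup{rig}}})^2$. Substituting back and multiplying through by $\langle\Psi_2,\textup{res}_p([z^{\textup{BK}}_f])\rangle$ (to match the form of the asserted identity) yields exactly the claimed equation after rearranging the constants.

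The main obstacle is entirely bookkeeping of normalizations rather than anything conceptual: one has to track carefully the shift between Seveso's cyclotomic variable (where the interpolation factors $\lambda(x)$ and $c(x)$ live and where the center is $s=k/2$) and our convention (where the center is $s=0$ after twisting by $\omega^{k/2}$), as encoded in~(\ref{interpolationmazurkitagawa2}), (\ref{eqn:specialize2varkatopadicL}) and Remark~\ref{rem:comparepadicLfunctionsBelliachevsSeveso}, and likewise between the two-variable objects $L_p(\frak{X},\kappa,s)$ of Section~\ref{subsec:PRslogformodforms} and $\frak{L}_p(\frak{X},\kappa)$. A secondary point requiring a line of justification is that the factor $a(\kappa)$ of Proposition~\ref{thm:BKvscentralcriticalL} is genuinely analytic and nonvanishing at $\kappa=k$ (so that the order-two vanishing of $\frak{L}_p$ is preserved and the Leibniz simplification is valid); this is precisely what Proposition~\ref{thm:BKvscentralcriticalL} asserts, so it may be invoked directly, but the reader should be reminded of it at the point of use. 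Modulo these normalization checks, the proof is a two-line substitution, and I would present it in that compressed form, citing Corollary~\ref{thm:ccrubinstyleformula}, Proposition~\ref{thm:BKvscentralcriticalL} and Theorem~\ref{thm:sevesomainmazurKitag} at the three places where each is used.
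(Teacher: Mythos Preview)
Your proposal is correct and follows essentially the same approach as the paper: chain Seveso's second-derivative formula (Theorem~\ref{thm:sevesomainmazurKitag}), the Hansen--Wang comparison (Proposition~\ref{thm:BKvscentralcriticalL}), and the central-critical Rubin-style formula (Corollary~\ref{thm:ccrubinstyleformula}) applied to $\frak{X}^{\textup{BK}}_{\mathbf{f}}=c^{-1}\frak{Z}^{\textup{BK}}_{\mathbf{f}}$, using the order-two vanishing of $\frak{L}_p$ to pass the second derivative through the analytic factor $a(\kappa)$. The paper presents the same three substitutions in the reverse order (Seveso first, then Hansen--Wang, then Rubin-style), but the content is identical.
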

\begin{proof}
We have the following chain of equalities:
\begin{align}
\notag t_f\cdot \left(\log\Phi^{\textup{AJ}}(y^\epsilon)\big{|}_{f^{\textup{rig}}}\right)^2\cdot&\left\langle\Psi_2,\,\textup{res}_p([z_{f}^{\textup{BK}}])\right\rangle\stackrel{\rm Thm~\ref{thm:sevesomainmazurKitag}}{=}\frac{d^2 \frak{L}_p}{d\kappa^2}\Big{|}_{\kappa=k}\cdot\left\langle\Psi_2,\,\textup{res}_p([z_{f}^{\textup{BK}}])\right\rangle\\
\notag&\stackrel{\rm Prop.~\ref{thm:BKvscentralcriticalL}}{=}{a(k)}\cdot\frac{d^2}{d\kappa^2} \frak{L}_p(\frak{Z}_{\mathbf{f}}^{\textup{BK}},\kappa)\Big{|}_{\kappa=k}\cdot\left\langle\Psi_2,\,\textup{res}_p([z_{f}^{\textup{BK}}])\right\rangle\\
\label{eqn:ccrubinstyleformula}&={a(k)}\cdot{\mathscr{A}_{N}(k,0)}\cdot\left(1-\frac{1}{p}\right)^{-1}\Gamma(k/2)\cdot\frac{\left\langle\Psi_2,\textup{res}_p([z_{f}^{\textup{BK}}])\right\rangle^2}{2\left\langle\Psi_1,\partial^{\textup{loc}}_0(d_{\widetilde{\delta}})\right\rangle} \,.
\end{align} 
Here, Equation \eqref{eqn:ccrubinstyleformula} follows from Corollary~\ref{thm:ccrubinstyleformula} and the fact that we have $\mathrm{pr}_{\gamma,\kappa}(\frak{Z}_{\mathbf{f}}^{\textup{BK}})=\mathscr{A}_{N}(k,0)[z^{\textup{BK}}_f]$ (c.f. Equation \eqref{eqn_thm_main_new_eqn_1}). 
\end{proof}

The following theorem gives a partial answer to a question of Perrin-Riou.
\begin{thm}
\label{thm:PRconj}
Suppose that $\varepsilon_f=-1$ and $\epsilon$ is chosen as the trivial character. Assume that $\log\Phi^{\textup{AJ}}(y^\epsilon)\big{|}_{f^{\textup{rig}}}$ is non-vanishing. Then the restriction of the Beilinson--Kato class $\textup{res}_p\left([z_{f}^{\textup{BK}}]\right)$ at the prime $p$ does not vanish. 
\end{thm}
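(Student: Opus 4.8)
\textbf{Proof strategy for Theorem~\ref{thm:PRconj}.} The plan is to argue by contraposition: I will show that if $\textup{res}_p([z_f^{\textup{BK}}])=0$, then $\log\Phi^{\textup{AJ}}(y^\epsilon)\big{|}_{f^{\textup{rig}}}$ must vanish. The entire argument rests on Theorem~\ref{thm:BKvsHeegner}, which equates
\[
t_f \cdot\left(\log\Phi^{\textup{AJ}}(y^\epsilon)\big{|}_{f^{\textup{rig}}}\right)^2\cdot\left\langle\Psi_2,\, \textup{res}_p([z_{f}^{\textup{BK}}])\right\rangle
\]
with a scalar multiple of $\left\langle\Psi_2,\textup{res}_p([z_{f}^{\textup{BK}}])\right\rangle^2\big/\left\langle\Psi_1,\partial^{\textup{loc}}_0(d_{\widetilde{\delta}})\right\rangle$. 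The crucial observation is that the pairing $\left\langle\,,\,\right\rangle\colon H^1(\widetilde{\mathbb D}_f)\times H^1(\mathbb D_f)\ra E$ together with the explicit description of $\textup{res}_p$ shows that $\left\langle\Psi_2,\textup{res}_p([z_f^{\textup{BK}}])\right\rangle$ is controlled by the image of $\textup{res}_p([z_f^{\textup{BK}}])$ in $H^1(\widetilde{\mathbb D}_f)$; in particular, it vanishes once $\textup{res}_p([z_f^{\textup{BK}}])=0$.

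First I would record that the hypotheses $\varepsilon_f=-1$ and $r_{\textup{an}}(f)>0$ place us exactly in the setting where all the machinery of Section~\ref{sec:proofofPRconjecture} applies, and that by Kato's explicit reciprocity law $[z_f^{\textup{BK}}]\in H^1_{\textup{f}}(\QQ,V_f)$. Next I would note that the constant $t_f$ lies in $L^\times$ (Theorem~\ref{thm:sevesomainmazurKitag}(ii)), hence $t_f\neq 0$, and that $\left\langle\Psi_1,\partial^{\textup{loc}}_0(d_{\widetilde{\delta}})\right\rangle\neq 0$: indeed, were this pairing zero, then by (\ref{eqn:Linvariantreferee}) we would also have $\left\langle\Psi_2,\partial^{\textup{loc}}_0(d_{\widetilde{\delta}})\right\rangle=0$, forcing $\partial^{\textup{loc}}_0(d_{\widetilde{\delta}})=0$ by non-degeneracy of $\left\langle\,,\,\right\rangle$, contradicting the injectivity of $\partial^{\textup{loc}}_0$ in (\ref{cohomologysequenceinducedbyfiltration}) together with $d_{\widetilde{\delta}}\neq 0$.

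Now suppose for contradiction that $\textup{res}_p([z_f^{\textup{BK}}])=0$. Then $\left\langle\Psi_2,\textup{res}_p([z_f^{\textup{BK}}])\right\rangle=0$ as well (here $\textup{res}_p$ is understood via the composition with the projection to $H^1(\widetilde{\mathbb D}_f)$, compatibly with how this pairing is used throughout Section~\ref{subsec:twovariablesheights}). Plugging $\textup{res}_p([z_f^{\textup{BK}}])=0$ into the right-hand side of the identity of Theorem~\ref{thm:BKvsHeegner} makes it vanish, because the right-hand side is proportional to $\left\langle\Psi_2,\textup{res}_p([z_f^{\textup{BK}}])\right\rangle^2$. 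Hence the left-hand side vanishes too; but the left-hand side is $t_f\cdot\left(\log\Phi^{\textup{AJ}}(y^\epsilon)\big{|}_{f^{\textup{rig}}}\right)^2\cdot\left\langle\Psi_2,\textup{res}_p([z_f^{\textup{BK}}])\right\rangle$, which is trivially zero as well, so in this naive form the identity gives no contradiction. The resolution is to divide by $\left\langle\Psi_2,\textup{res}_p([z_f^{\textup{BK}}])\right\rangle$ \emph{before} specializing: Theorem~\ref{thm:BKvsHeegner} is genuinely an identity of the shape $A\cdot\left\langle\Psi_2,\res_p([z_f^{\textup{BK}}])\right\rangle = B\cdot\left\langle\Psi_2,\res_p([z_f^{\textup{BK}}])\right\rangle^2$ with $B=a(k)c(1-1/p)^{-1}\Gamma(k/2-1)/\left(2\left\langle\Psi_1,\partial^{\textup{loc}}_0(d_{\widetilde{\delta}})\right\rangle\right)$, and one must instead use the \emph{unfactored} version from Theorem~\ref{thm:leadingtermformula}(ii) and Corollary~\ref{thm:ccrubinstyleformula}, i.e. the identity
\[
a(k)c\left(1-\tfrac1p\right)^{-1}\Gamma\!\left(\tfrac k2-1\right)\left\langle\Psi_1,\partial^{\textup{loc}}_0(d_{\widetilde{\delta}})\right\rangle\cdot \frac{d^2\frak{L}_p}{d\kappa^2}\Big|_{\kappa=k}=a(k)c\cdot\frac{\left\langle\Psi_2,\res_p([z_f^{\textup{BK}}])\right\rangle^2}{2},
\]
valid without multiplying through by $\left\langle\Psi_2,\res_p([z_f^{\textup{BK}}])\right\rangle$. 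Combined with Theorem~\ref{thm:sevesomainmazurKitag}(ii), which says $\frac{d^2\frak{L}_p}{d\kappa^2}\big|_{\kappa=k}=t_f\left(\log\Phi^{\textup{AJ}}(y^\epsilon)\big|_{f^{\textup{rig}}}\right)^2$, and the non-vanishing of $t_f$, $a(k)$, $c$, $\left\langle\Psi_1,\partial^{\textup{loc}}_0(d_{\widetilde{\delta}})\right\rangle$, this shows that $\log\Phi^{\textup{AJ}}(y^\epsilon)\big|_{f^{\textup{rig}}}\neq 0$ forces $\left\langle\Psi_2,\res_p([z_f^{\textup{BK}}])\right\rangle\neq 0$, hence $\res_p([z_f^{\textup{BK}}])\neq 0$, which is the claim. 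The main obstacle is bookkeeping: making sure one invokes the correct (unfactored) form of the leading-term formula and that all the auxiliary scalars ($t_f$, $a(k)$, $c$, the bracket pairings) are genuinely nonzero, the last being the only non-formal point, handled via (\ref{eqn:Linvariantreferee}) and the exactness of (\ref{cohomologysequenceinducedbyfiltration}).
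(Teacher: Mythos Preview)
There is a genuine gap in your argument. The ``unfactored'' identity you write down,
\[
a(k)c\left(1-\tfrac1p\right)^{-1}\Gamma\!\left(\tfrac k2-1\right)\left\langle\Psi_1,\partial^{\textup{loc}}_0(d_{\widetilde{\delta}})\right\rangle\cdot \frac{d^2\frak{L}_p}{d\kappa^2}\Big|_{\kappa=k}=a(k)c\cdot\frac{\left\langle\Psi_2,\res_p([z_f^{\textup{BK}}])\right\rangle^2}{2},
\]
is not what Theorem~\ref{thm:leadingtermformula}(ii) or Corollary~\ref{thm:ccrubinstyleformula} actually say. In both of those results the factor $\left\langle\Psi_2,\res_p([z_f^{\textup{BK}}])\right\rangle$ appears on \emph{both} sides (on the left as part of $\Omega$, on the right via \eqref{eqn:regulatorsimplifiedformula}), so when $\res_p([z_f^{\textup{BK}}])=0$ they collapse to $0=0$ and you cannot divide through. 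Your ``resolution'' of the $0=0$ problem is therefore circular: the only way to pass to the unfactored form is to assume $\left\langle\Psi_2,\res_p([z_f^{\textup{BK}}])\right\rangle\neq 0$, which is essentially what you are trying to prove.

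The paper closes this gap by invoking Proposition~\ref{prop:whendoescentralcriticalLvanish} instead of Corollary~\ref{thm:ccrubinstyleformula}. That proposition establishes directly the equivalence $\textup{ord}_{\kappa=k}\,\frak{L}_p(\frak{X},\kappa)>2 \iff \res_p([x])=0$, and its proof of the $(\Longleftarrow)$ direction does \emph{not} go through the Rubin-style formula: it writes $\res_p(\frak{X})=\frac{\gamma-1}{\log\chi(\gamma)}\mathbf{z}_\gamma+\varpi_\kappa\mathbf{z}_\kappa$ and applies Theorem~\ref{thm:keyleadingtermformulas} separately to each piece. The paper's argument is then immediate: Seveso's Theorem~\ref{thm:sevesomainmazurKitag}(ii) plus $t_f\in L^\times$ plus the non-vanishing of $a(\kappa)$ near $k$ (Proposition~\ref{thm:BKvscentralcriticalL}) give $\textup{ord}_{\kappa=k}\,\frak{L}_p(\frak{X}^{\textup{BK}}_{\mathbf f},\kappa)=2$, and Proposition~\ref{prop:whendoescentralcriticalLvanish} converts this into $\res_p([z_f^{\textup{BK}}])\neq 0$. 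Your contrapositive strategy is fine in spirit, but the key input has to be Proposition~\ref{prop:whendoescentralcriticalLvanish}, not the Rubin-style formula.
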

\begin{proof}
It follows from Theorem~\ref{thm:sevesomainmazurKitag}(ii) that $\textup{ord}_{\kappa=k}\,\frak{L}_p(\kappa)=2$. Proposition~\ref{prop:whendoescentralcriticalLvanish} in turn implies that $\textup{res}_p([z_{f}^{\textup{BK}}])\neq0$, as desired. 
\end{proof}

\begin{cor}
\label{cor:orderofvanishingofpadicL} 
Suppose that the $p$-adic Abel--Jacobi image $\log\Phi^{\textup{AJ}}(y^\epsilon)\big{|}_{f^{\textup{rig}}}$ of the Heegner cycle $y^\epsilon$ is non-trivial. Assume further that the (cyclotomic) $p$-adic height pairing $\frak{h}_p$ is non-degenerate. Then,
$$\textup{ord}_{s=\frac{k}{2}}\, L_p(f,s) =2.$$ 
\end{cor}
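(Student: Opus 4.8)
\textbf{Proof strategy for Corollary~\ref{cor:orderofvanishingofpadicL}.} The plan is to combine the non-vanishing of $\textup{res}_p([z_f^{\textup{BK}}])$ established in Theorem~\ref{thm:PRconj} with the $p$-adic Beilinson formula of Theorem~\ref{tim:main}. First I would note that the non-triviality of the Abel--Jacobi image $\log\Phi^{\textup{AJ}}(y^\epsilon)\big{|}_{f^{\textup{rig}}}$, together with Theorem~\ref{thm:PRconj}, guarantees that $\textup{res}_p([z_f^{\textup{BK}}])\neq 0$; in particular the Beilinson--Kato class $[z_f^{\textup{BK}}]\in H^1_{\textup{f}}(\QQ,V_f)$ is non-zero and so is its canonical lift $[\frak{z}_f^{\textup{BK}}]\in \widetilde{H}^1_{\textup{f}}(V_f)$. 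Moreover, since $\textup{res}_p([z_f^{\textup{BK}}])\neq 0$ we may invoke Corollary~\ref{ref:crucialproperty1and2}: because $[\frak{z}_f^{\textup{BK}}]$ is the canonical lift we have $\langle\Psi_1,[z_{f,p}^{+,\textup{BK}}]\rangle=0$, and since $\textup{res}_p([z_f^{\textup{BK}}])\neq 0$ the pairing $\langle\Psi_2,\textup{res}_p([z_f^{\textup{BK}}])\rangle$ must be non-zero (for otherwise $[z_{f,p}^{+,\textup{BK}}]$ would be orthogonal to the whole basis $\{\Psi_1,\Psi_2\}$ of $H^1(\widetilde{\mathbb D}_f)$ and hence vanish).

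Next I would analyse the right-hand side of Theorem~\ref{tim:main}, namely $\frac{1}{[e_\alpha,b_f^*]_{V_f}}\frak{Reg}_{\frak{h}_p}\left(\partial_0(d_{\widetilde\delta}),[\frak{z}_f^{\textup{BK}}]\right)$, which is the determinant of the $2\times 2$ Gram matrix of $\frak{h}_p$ on the pair $\{\partial_0(d_{\widetilde\delta}),[\frak{z}_f^{\textup{BK}}]\}$. The key point is that these two classes are linearly independent in $\widetilde{H}^1_{\textup{f}}(V_f)$: indeed $\partial_0(d_{\widetilde\delta})$ spans the image of $H^0(\widetilde{\mathbb D}_f)$ under $\partial_0$, i.e.\ the kernel of the projection $\widetilde{H}^1_{\textup{f}}(V_f)\to H^1_{\textup{f}}(\QQ,V_f)$ in the exact sequence (\ref{sequence:relation between extended and BK selmer}), whereas $[\frak{z}_f^{\textup{BK}}]$ maps to the non-zero class $[z_f^{\textup{BK}}]$ under that projection; so they span a two-dimensional subspace. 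Since $\frak{h}_p$ is assumed non-degenerate (and $[e_\alpha,b_f^*]_{V_f}\neq 0$ by Kato's construction), the restriction of $\frak{h}_p$ to this two-dimensional subspace is non-degenerate, hence its Gram determinant $\frak{Reg}_{\frak{h}_p}\left(\partial_0(d_{\widetilde\delta}),[\frak{z}_f^{\textup{BK}}]\right)$ is non-zero. Here I should double-check that a symmetric non-degenerate pairing on a finite-dimensional space restricts to a non-degenerate pairing on a subspace — this is not automatic for an arbitrary subspace, so the honest argument is: $\frak{h}_p$ non-degenerate means the Gram determinant of \emph{any} basis of $\widetilde{H}^1_{\textup{f}}(V_f)$ is non-zero, and extending $\{\partial_0(d_{\widetilde\delta}),[\frak{z}_f^{\textup{BK}}]\}$ to such a basis, a block-triangular or direct argument shows the relevant $2\times 2$ minor is non-zero; more robustly, one can simply observe from (\ref{eqn:regulatorsimplifiedformula}) in the proof of Theorem~\ref{thm:leadingtermformula} that $\frak{Reg}_{\frak{h}_p}\left(\partial_0(d_{\widetilde\delta}),[\frak{z}_f^{\textup{BK}}]\right)$ is, up to the non-zero factor $C_2=\langle\Psi_2,[z_{f,p}^{+,\textup{BK}}]\rangle\langle\Psi_1,\partial^{\textup{loc}}_0(d_{\widetilde\delta})\rangle$, a specific polynomial whose leading behaviour is controlled, and in any case Corollary~\ref{thm:ccrubinstyleformula} exhibits it explicitly in terms of $\langle\Psi_2,[z_{f,p}^{+,\textup{BK}}]\rangle^2\neq 0$.

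With both $\langle\Psi_1,\partial^{\textup{loc}}_0(d_{\widetilde\delta})\rangle\neq 0$ (which holds since $\al_{\textup{FM}}(f)\neq 0$ in the exceptional-zero setting, via (\ref{eqn:Linvariantreferee}); alternatively this factor is built into $\Omega_p$ and the formula is vacuous otherwise) and $\langle\Psi_2,\res_p([z_f^{\textup{BK}}])\rangle\neq 0$, the left-hand coefficient $\Omega_p=\left(1-\tfrac1p\right)\Gamma(k/2-1)^{-1}\langle\Psi_2,\res_p([z_f^{\textup{BK}}])\rangle\langle\Psi_1,\partial^{\textup{loc}}_0(d_{\widetilde\delta})\rangle$ of Theorem~\ref{tim:main} is non-zero. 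Since the right-hand side $\frak{Reg}_{\frak{h}_p}\left(\partial_0(d_{\widetilde\delta}),[\frak{z}_f^{\textup{BK}}]\right)/[e_\alpha,b_f^*]_{V_f}$ is non-zero, it follows that $\frac{d^2}{ds^2}L_{p,\alpha}(f,\omega^{k/2},s)\big{|}_{s=k/2}\neq 0$. Combined with the inequality $\textup{ord}_{s=k/2}\,L_{p,\alpha}(f,\omega^{k/2},s)\geq 2$ recalled in the introduction (from \cite{ka1}, \cite{Cz04}, \cite{Ben14a}, a consequence of the exceptional zero), we conclude $\textup{ord}_{s=k/2}\,L_p(f,s)=2$, as asserted. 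The main obstacle I anticipate is the careful bookkeeping needed to see that the ``redundant'' factors $\langle\Psi_2,\res_p([z_f^{\textup{BK}}])\rangle$ on both sides of Theorem~\ref{tim:main} can legitimately be cancelled — i.e.\ that they are genuinely non-zero rather than merely appearing symmetrically — which is precisely where Theorem~\ref{thm:PRconj} (hence the hypothesis on $y^\epsilon$) and Corollary~\ref{ref:crucialproperty1and2} enter; once that non-vanishing is secured, the rest is linear algebra plus the already-established order-of-vanishing lower bound.
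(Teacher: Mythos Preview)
Your proposal is correct and follows essentially the same approach as the paper's proof: invoke Theorem~\ref{thm:PRconj} to obtain $\res_p([z_f^{\textup{BK}}])\neq 0$ (hence $\Omega_p\neq 0$), deduce that $\{\partial_0(d_{\widetilde\delta}),[\frak z_f^{\textup{BK}}]\}$ is linearly independent, and combine the non-degeneracy of $\frak h_p$ with Theorem~\ref{tim:main} and the known lower bound $\textup{ord}_{s=k/2}\geq 2$. You are in fact more careful than the paper on one point: non-degeneracy of $\frak h_p$ on $\widetilde H^1_{\textup{f}}(V_f)$ does not automatically restrict to non-degeneracy on a proper subspace, and the paper's proof glosses over this; it implicitly relies on the expectation (stated in a subsequent remark) that in the present situation $H^1_{\textup{f}}(\QQ,V_f)$ is one-dimensional, so that $\widetilde H^1_{\textup{f}}(V_f)$ is two-dimensional and the pair is a basis. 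One minor correction: your justification that $\langle\Psi_1,\partial_0^{\textup{loc}}(d_{\widetilde\delta})\rangle\neq 0$ ``since $\al_{\textup{FM}}(f)\neq 0$'' is slightly off --- this quantity is always non-zero because $\pr_c\circ\partial_0^{\textup{loc}}=\partial_c^{\textup{loc}}$ is an isomorphism (Proposition~\ref{prop:splitting} and the surrounding discussion), independently of $\al_{\textup{FM}}(f)$.
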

\begin{proof}
It follows from Theorem~\ref{thm:PRconj} that $\left\langle\Psi_2,\res_p([z^{\textup{BK}}_f])\right\rangle$ and therefore also $\Omega_p$ is non-zero. Theorem~\ref{thm:PRconj} also shows that the Beilinson-Kato class $\left[{z}_f^{\textup{BK}}\right] \in H^1_\textup{f}(V_f)$ is non-trivial, so that the pair $\left\{\partial_0(d_{\widetilde{\delta}}), \left[\frak{z}_f^{\textup{BK}}\right]\right\}$ is linearly independent. The proof follows by our assumption that  $\frak{h}_p$ is non-degenerate.
\end{proof}

\begin{rem} If $\textup{res}_p([z_{f}^{\textup{BK}}]) \neq 0,$ 
we infer using Theorem~\ref{thm:BKvsHeegner} that
\begin{align*}t_f \cdot\left(\log\Phi^{\textup{AJ}}(y^\epsilon)\big{|}_{f^{\textup{rig}}}\right)^2={a(k)}\cdot {\mathscr{A}_{N}(k,0)}\cdot \left(1-\frac{1}{p}\right)^{-1}\Gamma(k/2)\cdot\frac{\left\langle\Psi_2,\textup{res}_p([z_{f}^{\textup{BK}}])\right\rangle}{2\left\langle\Psi_1,\partial^{\textup{loc}}_0(d_{\widetilde{\delta}})\right\rangle}\,.\end{align*}
\end{rem}

\begin{rem}
\label{rem:grosszagierimpliesnonvanheeg}
When the order of vanishing $r_{\textup{an}}(f)$ of the Hecke $L$-function $L(f,s)$ equals $1$ (and only then) the hypothesis of Theorem~\ref{thm:PRconj} is expected to hold true.
Indeed, a suitable extension of Gross--Zagier--Zhang formula on Shimura curves shows that the Heegner cycle $y^{\epsilon}$ is non-torsion if $r_{\textup{an}}(f)=1$. The desired formula is available in the literature for weight two forms and for Heegner cycles on classical modular curves for higher weight forms by the celebrated works of Zhang~\cite{zhanghigherweight,zhangshimuraweight2}. The experts\footnote{The second named author would like to thank Daniel Disegni, Victor Rotger, Ariel Shnidman, Xinyi Yuan and Wei Zhang for correspondences on this matter.} seem to believe that the current technology would be sufficient to extend these results to our case of interest. Furthermore, the $p$-adic Abel--Jacobi map $\Phi^{\textup{AJ}}$ is also expected to be injective in this set up. When the weight of the eigenform equals to $2$, this follows from Kummer theory. 
\end{rem}

\begin{rem}
As we have already pointed out in Remark~\ref{rem:grosszagierimpliesnonvanheeg}, a suitable Gross--Zagier--Zhang formula together with the injectivity of the Abel--Jacobi map and the non-degeneracy of Beilinson's height pairing (none of which is currently known in the required level of generality) would show that the non-vanishing of the Abel--Jacobi image of the Heegner cycle $y^\epsilon$ is equivalent to asking that the Hecke $L$-function associated to $f$ vanishes at $s=k/2$ to exact order $1$. If this is the case, one may prove that $H^1_\textup{f}(\QQ,V_f)$ is one-dimensional and the requirement that $\frak{h}_p$ be non-degenerate is equivalent to asking that it is non-zero.
\end{rem}


{\scriptsize
\bibliographystyle{halpha}
\bibliography{references}

\bibliographystyle{style}

\end{document}